% !TeX spellcheck = en_GB
\documentclass[11pt]{scrartcl}
\usepackage[utf8]{inputenc}
\RequirePackage[T1]{fontenc}
\usepackage{lmodern}
\usepackage{alphabeta}

\usepackage{config}
\usepackage{titling}
\usepackage{xspace}
\usepackage{textpos}

\usepackage[a4paper, top=3cm, bottom=2cm, left=2.5cm, right=2.5cm, includefoot]{geometry}

\usepackage{soul}

\usepackage{tikz}
\usepackage{tikzmacros}

\usetikzlibrary{shapes,decorations}
\usetikzlibrary{decorations.markings}
\usetikzlibrary{arrows}
\usetikzlibrary{arrows}
\usetikzlibrary{shapes.geometric}
\usetikzlibrary{calc}
\usetikzlibrary{automata,positioning,arrows.meta}
\usetikzlibrary{decorations.markings}
\usetikzlibrary{decorations.pathmorphing}
\usetikzlibrary{matrix}
\usetikzlibrary{shadows}
\usetikzlibrary{backgrounds}

\newdimen\arrowsize
\newlength{\arrowlength}
\newlength{\arrowangle}
\newlength{\arrowthickness}

\setlength{\arrowlength}{0.15cm}
\setlength{\arrowangle}{5.5cm}
\setlength{\arrowthickness}{0.07cm}
 \pgfarrowsdeclare{slim}{slim}
 {
 \arrowsize=0.7pt
 \advance\arrowsize by .5\pgflinewidth
 \pgfarrowsleftextend{-4\arrowsize-.5\pgflinewidth}
 \pgfarrowsrightextend{.5\pgflinewidth}
 }
 {
 \advance\arrowsize by .5\pgflinewidth
 \pgfsetdash{}{0pt}
 \pgfsetroundjoin
 \pgfsetroundcap
 \pgfpathmoveto{\pgfpointpolar{\arrowangle}{\arrowlength}}
 \pgfpathlineto{\pgfpointorigin}
 \pgfusepathqstroke
 \pgfpathmoveto{\pgfpointpolar{\arrowangle}{\arrowlength}}
 \pgfpathcurveto{\pgfpointpolar{\arrowangle*1.04}{\arrowthickness+0.5*(\arrowlength-\arrowthickness)}}%
 {\pgfpointpolar{\arrowangle*1.1}{\arrowthickness+0.3*(\arrowlength-\arrowthickness)}}
 {\pgfpointpolar{180}{\arrowthickness}}
 \pgfpathlineto{\pgfpointorigin}
 \pgfusepathqfill
% %
 \pgfpathmoveto{\pgfpointpolar{-\arrowangle}{\arrowlength}}
 \pgfpathlineto{\pgfpointorigin}
 \pgfusepathqstroke
 \pgfpathmoveto{\pgfpointpolar{-\arrowangle}{\arrowlength}}
 \pgfpathcurveto{\pgfpointpolar{-\arrowangle*1.04}{\arrowthickness+0.5*(\arrowlength-\arrowthickness)}}%
 {\pgfpointpolar{-\arrowangle*1.1}{\arrowthickness+0.3*(\arrowlength-\arrowthickness)}}
 {\pgfpointpolar{180}{\arrowthickness}}
 \pgfpathlineto{\pgfpointorigin}
 \pgfusepathqfill
 }

\tikzstyle{vertex}=[circle,inner sep=1,minimum size =2mm,semithick,fill=orange!10, draw=black]
\tikzstyle{point}=[circle,inner sep=1,fill=black, draw=black]
\tikzstyle{brace}=[thin,decorate,decoration=brace]
\tikzstyle{ie}=[thin,dashed,gray]
\tikzstyle{edge}=[draw=black]
\tikzstyle{arc}=[draw=black, >=Stealth, ->]

\newcommand{\hh}{\end{document}}

\usepackage[inline]{enumitem}

\setlength{\parindent}{0pt}
\setlength{\parskip}{2pt}

\hypersetup{
colorlinks=true,
linkcolor=AO!65!black,
citecolor=AO!65!black,
urlcolor=AppleGreen!65!black,
bookmarksopen=true,
bookmarksnumbered,
bookmarksopenlevel=2,
bookmarksdepth=3
}

\title{Generating strongly $2$-connected digraphs}
\predate{}
\date{}
\postdate{}

\preauthor{}
\DeclareRobustCommand{\authorthing}{
    \begin{center}
	    Meike Hatzel\thanks{\href{mailto:research@meikehatzel.com}{research@meikehatzel.com}, Meike Hatzel's research was supported by the Federal Ministry of Education and
        Research (BMBF) and by a fellowship within the IFI programme of the German Academic Exchange Service (DAAD) and by the Institute for Basic Science (IBS-R029-C1).} \\ 
		{\small Discrete Mathematics Group, Institute for Basic Science (IBS), Daejeon, South Korea}\\
		\medskip
	    Stephan Kreutzer\thanks{\href{mailto:stephan.kreutzer@tu-berlin.de}{stephan.kreutzer@tu-berlin.de}} \\
		{\small Technical University of Berlin, Germany}\\
		\medskip
        Evangelos Protopapas\thanks{\href{mailto:vagelis.protopapas@gmail.com}{vagelis.protopapas@gmail.com}, Evangelos Protopapas was supported by the French-German Collaboration ANR/DFG Project UTMA (ANR-20-CE92-0027).} \\
		{\small LIRMM, University of Montpellier, CNRS, Montpellier, France}\\
		\medskip
        Florian Reich\thanks{\href{mailto:florian.reich@uni-hamburg.de}{florian.reich@uni-hamburg.de}, Florian Reich was supported by the Studienstiftung des deutschen Volkes.} \\
		{\small University of Hamburg, Germany}\\
		\medskip
        Giannos Stamoulis\thanks{\href{mailto:giannos.stamoulis@mimuw.edu.pl}{giannos.stamoulis@mimuw.edu.pl}, Giannos Stamoulis was supported by the project BOBR, which is funded by the European Research Council (ERC) under the European Union’s Horizon 2020 research and innovation programme with grant agreement No.~948057. During the research stage of this work, G.~S.~was affiliated with LIRMM, University of Montpellier, CNRS, Montpellier, France, and supported by the ANR project ESIGMA (ANR-17-CE23-0010) and the French-German Collaboration ANR/DFG Project UTMA (ANR-20-CE92-0027).} \\
		{\small Institute of Informatics, University of Warsaw, Poland}\\
		\medskip
	    Sebastian Wiederrecht\thanks{\href{mailto:wiederrecht@kaist.ac.kr}{wiederrecht@kaist.ac.kr}}\\
		{\small School of Computing, KAIST, Daejeon, South Korea}\\
\end{center}}
\author{\authorthing}
\postauthor{}

% turn off comments easily for presentation
\newif\ifcomment
\commentfalse
\commenttrue

\setlength{\parindent}{0pt}
\setlength{\parskip}{2pt}

\begin{document}
\maketitle

\begin{abstract}
\noindent We prove that there exist four operations such that given any two strongly $2$-connected digraphs $H$ and $D$ where $H$ is a butterfly-minor of $D$, there exists a sequence $D_0,\dots, D_n$ where $D_0=H$, $D_n=D$ and for every $0\leq i\leq n-1$, $D_i$ is a strongly $2$-connected butterfly-minor of $D_{i+1}$ which is obtained by a single application of one of the four operations.

As a consequence of this theorem, we obtain that every strongly $2$-connected digraph can be generated from a concise family of strongly $2$-connected digraphs by using these four operations.
\smallskip

\noindent\textbf{Keywords.} braces, strongly $2$-connected digraphs, butterfly-minor, splitter theorem
\end{abstract}

\begin{textblock}{20}(12.2,0.7)
   \includegraphics[width=80px]{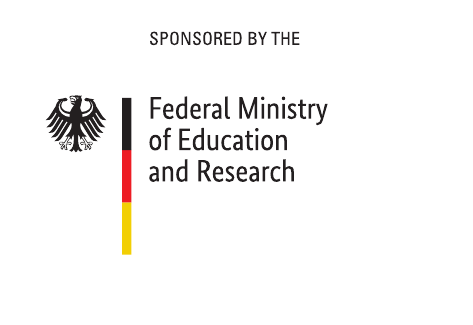}%
\end{textblock}
\begin{textblock}{20}(12.55, 1.7)
\includegraphics[width=40px]{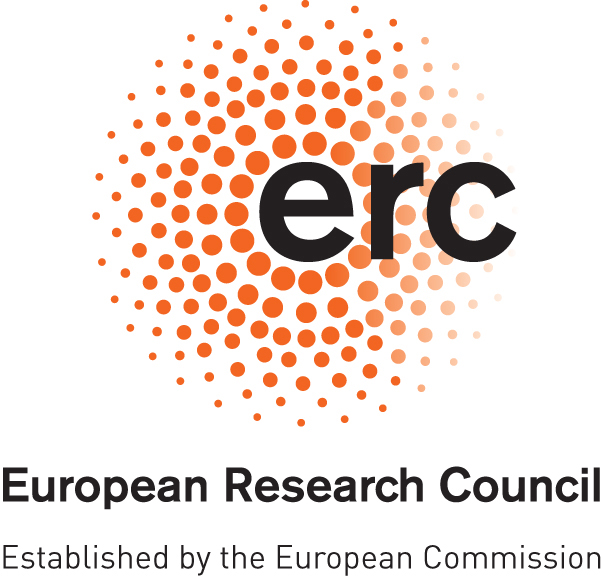}%
\end{textblock}
\begin{textblock}{20}(12.55, 2.5)
\includegraphics[width=40px]{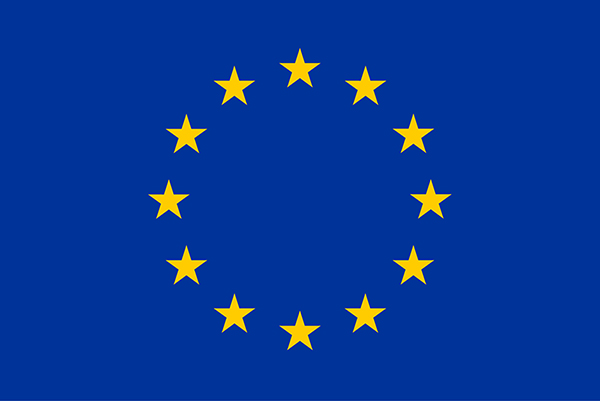}%
\end{textblock}

\thispagestyle{empty}

\newpage
    \thispagestyle{empty}
	\tableofcontents
	
\newpage
\setcounter{page}{1}

\section{Introduction}
\label{sec:intro}
Generation theorems for graphs respecting certain containment relations and connectivity conditions have played a central role in the study of said containment relations.
Among the best-known results in this direction is the \emph{ear decomposition} of $2$-connected graphs \cite{Whitney1932}.
Given a graph $G$ and a subgraph $H\subseteq G,$ we say that a path $P\subseteq G$ is an \emph{$H$-ear} in $G$ if it shares exactly its first and last vertex with $H$ and is otherwise disjoint from $H$.
Whitney's theorem on ear-decompositions says that for every pair of $2$-connected graphs $H$ and $G$, where $H$ is a proper subgraph of $G$, there exists a sequence
\begin{equation*}
    G_0 \subseteq G_1 \subseteq \dots \subseteq G_{\ell}
\end{equation*}
such that $G_0=H$, $G_{\ell}=G$, and, for all $i\in[\ell]$, the graph $G_i$ is obtained from the graph $G_{i-1}$ by adding a single $G_{i-1}$-ear.

The consequences of this theorem are two-fold.
The first is that every $2$-connected subgraph of a $2$-connected graph can be reached by iteratively removing paths while maintaining $2$-connectivity for all intermediate graphs.
The second one is the generative result.
Since every $2$-connected graph contains a cycle as a subgraph and every proper subgraph of a cycle is no longer $2$-connected, the cycles represent a minimal, or basic, family of $2$-connected graphs.
Moreover, Whitney's theorem asserts that every $2$-connected graph can be constructed by starting with a cycle and iteratively adding ears.

It is a straightforward observation that Whitney's theorem also holds for digraphs.
Given a strongly connected digraph $H$, we say that a directed path $P$ is a \emph{directed $H$-ear} if $P$ is an $H$-ear in the underlying undirected graph.
We say that the graph $D \coloneqq H\cup P$ is obtained from $H$ by a \emph{directed-ear-addition}.
Let $\mathcal{B}_1$ be the set of all directed cycles and $\mathcal{D}_1$ be the class of all digraphs that can be generated from members of $\mathcal{B}_1$ by a sequence of directed-ear-additions.
By using Whitney's theorem, one may observe that $\mathcal{D}_1$ is exactly the class of strongly connected digraphs.

A possible reformulation of both results, the directed and the undirected one, can be stated using the \textsl{topological minor relation}.
A (di)graph $D$ is a \emph{subdivision} of a (di)graph $H$ if it is isomorphic to a (di)graph $H'$ which is obtained from $H$ by replacing the edges of $H$ with (directed) paths of length at least one that are internally vertex-disjoint from $H$, and, in case $H$ is a digraph, are directed in the same way as the edges they replace.
If $D$ is a subdivision of $H$ with exactly one additional vertex, then we say that $D$ is obtained by \emph{subdividing an edge} of $H$.
Since subdividing edges clearly preserves $2$-connectivity (and strong connectivity for digraphs), a reformulation of Whitney's theorem states that
\begin{enumerate}
    \item every $2$-connected graph can be obtained from $K_3$ by a sequence of edge-subdivisions and ear-additions, and
    \item every strongly connected digraph can be obtained from the directed two-cycle by a sequence of edge-subdivisions and ear-additions.
\end{enumerate}

What about higher notions of connectivity?
Let $H$ be a $3$-connected graph and $G$ be some graph with a minimum degree of at least three such that there exists an edge $e\in E(G)$ for which $G-e$ is a subdivision of $H$.
In this case, we say that $G$ is obtained from $H$ by a \emph{$3$-augmentation}.
It was proven by Barnette and Gr\"unbaum and, independently, Titov in their PhD thesis \cite{BarnetteG1969,titov1975constructive} that the class of $3$-connected graphs is exactly the class of graphs that can be generated from $K_4$ by sequences of $3$-augmentations.
Notice that if $G$ can be obtained from a $3$-connected graph $H$ by a sequence of $3$-augmentations, then $H$ must be a minor of $G$.

Let $H$ be a graph and $x\in \V{H}$ such that $N(x)$ is the disjoint union of two sets $X_1$ and $X_2$, both of size at least two.
We say that the graph
\begin{equation*}
    G\coloneqq (H-x)+x_1+x_2+\{x_1,x_2\}+\{ \{x_i,y\} \mid \text{for all }i\in[2]\text{ and all }y \in X_i \} 
\end{equation*}
is obtained from $H$ by \emph{$3$-expansion}.
If $G$ contains an edge $e\in E(G)$ such that $G-e=H$, we say that $G$ is obtained from $H$ by \emph{edge-addition}.

A seminal result by Tutte \cite{Tutte1961} states that the class of all graphs that can be generated from the set of wheels by sequences of $3$-expansions and edge-additions is exactly the set of $3$-connected graphs.
Seymour \cite{seymour1980splitter} strengthened Tutte's generation theorem by proving that, if $G$ and $H$ are 3-connected graphs such that $H$ is a proper minor of $G$, then there exists a 3-connected graph $K$ such that $K$ is a minor of $G$ and one of the following statements holds:
\begin{itemize}
    \item $K$ can be obtained from $H$ by 3-expansion or edge addition, or
    \item $H$ and $K$ are wheels and $|\V{H}| + 1 = |V(K)|$.
\end{itemize}

\subsection{Our result: Generating strongly 2-connected digraphs}\label{subsec:ourresult}
In this paper, we adapt ideas from the work of McCuaig \cite{mccuaig2001bracegeneration} on matching minors to the concept of butterfly-minors and prove a generation theorem for the class of strongly $2$-connected digraphs.
That is, we provide four families of operations, which allow us to generate the family $\mathcal{D}_2$ of all strongly $2$-connected digraphs starting from a concise base set $\mathcal{B}_2$.

The four operations are as follows (see~\cref{subsec:augmentations} for details):
\begin{enumerate}
    \item[(a)] The \textbf{basic augmentation} is an addition of a single edge, possibly after ``splitting'' one or two vertices,
    \item[(b)] the \textbf{chain augmentation} is the operation of ``weaving'' a directed path through an already existing edge $e$, possibly after ``splitting'' the endpoints of $e$, 
    \item[(c)] the \textbf{collarette augmentation} is the operation of subdividing two edges $e_1$ and $e_2$ forming a directed two-cycle, 
    introducing an edge from one of the new subdivision vertices, $x$, to the other subdivision vertex, $y$, and finally ``weaving'' a directed path from $y$ to $x$ through the edge $(x,y)$, and 
    \item[(d)] the \textbf{bracelet augmentation} is the operation of ``piercing'' a directed path $P$ of length two from an in-neighbour of a vertex $u$ with in- and out-degree exactly two, to an out-neighbour of $u$ through some edge which is incident with $u$ but not with both endpoints of $P$.
\end{enumerate}
See \cref{fig:augmentations1} for an illustration of these operations.
We provide formal definitions for the four types of augmentations and the operations contained therein in~\cref{subsec:augmentations}.
In~\cref{subsec:augment_discussion} we also comment on the fact that the chain augmentation allows the introduction of an unbounded number of vertices in a single step, which does not appear in similar generation theorems based on minor-like containment relations.

\begin{figure}[!ht]
    \centering
    \input{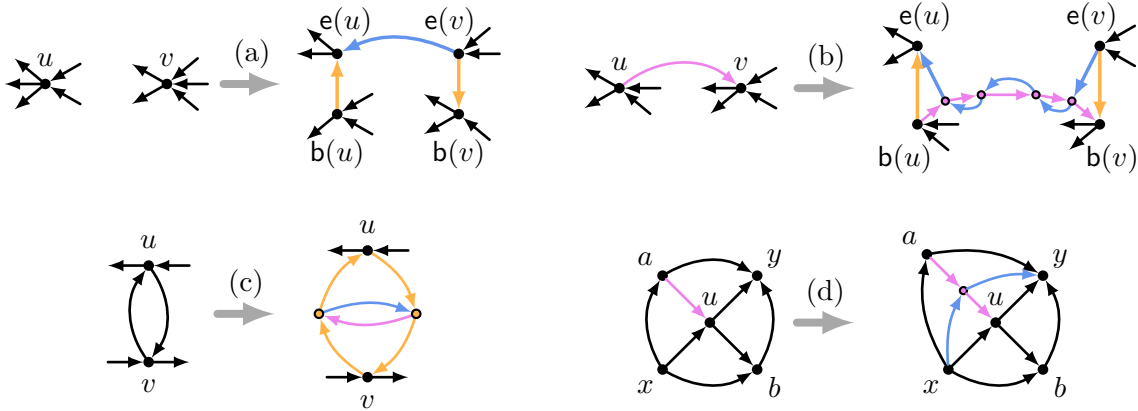}
    \caption{The four types of augmentations: (a) basic augmentation, (b) chain augmentation, (c) collarette augmentation, and (d) bracelet augmentation.}
    \label{fig:augmentations1}
\end{figure}

An edge $(u,v)$ in a digraph is said to be \emph{butterfly-contractible} if it is the only outgoing edge of $u$ or the only incoming edge of $v$.
A digraph $H$ is obtained from a digraph $D$ by \emph{butterfly-contraction} if there exists a butterfly-contractible edge $(u,v)$ in $D$ such that $H$ is isomorphic to the graph obtained from $D$ by identifying $u$ and $v$ into a single vertex and deleting all resulting loops and parallel edges.

A digraph $H$ is said to be a \emph{butterfly-minor} of a digraph $D$ if $H$ is isomorphic to a graph $H'$ that can be obtained from $D$ by a sequence of edge-deletions, vertex-deletions, and butterfly-contractions.

To state our main theorems, the remaining piece is the base set $\mathcal{B}_2$.
Let $G$ be a graph.
We denote by $\Bidirected{G}{}$ the digraph obtained from $G$ by replacing every edge $\{u, v\}\in E(G)$ with the two edges $(u,v)$ and $(v,u)$ forming a directed two-cycle, also known as a \emph{digon}.
The digraph $A_4$ is the strongly connected digraph on four vertices obtained from a directed four-cycle by connecting each pair of anti-diametrical vertices with a digon.
Then $\mathcal{B}_2$ is the set consisting of $A_4$ and all bidirected cycles $\Bidirected{C}{k}$, $k\geq 3$.
See~\cref{fig:base_class} for an illustration.

\begin{figure}[!ht]
    \centering
    \begin{tikzpicture}
		
		\node (D1) [v:ghost] {};
        \node (D2) [v:ghost,position=0:25mm from D1] {};
		\node (D3) [v:ghost,position=0:23mm from D2] {};
		\node (D4) [v:ghost,position=0:26mm from D3] {};
        \node (D5) [v:ghost,position=0:29mm from D4] {};
		\node (D6) [v:ghost,position=0:20mm from D5] {$\mathbf{\dots}$};
		
        \node (C1) [v:ghost,position=0:0mm from D1] {
			
			\begin{tikzpicture}[scale=0.8]
			
			\pgfdeclarelayer{background}
			\pgfdeclarelayer{foreground}
			
			\pgfsetlayers{background,main,foreground}
			
			\begin{pgfonlayer}{main}
			
			\node (C) [] {};
			
			\node (v1) [v:main,position=90:12mm from C] {};
			\node (v2) [v:main,position=270:12mm from C] {};
			\node (v3) [v:main,position=180:6mm from C] {};
			\node (v4) [v:main,position=0:6mm from C] {};

            \draw [e:main,->] (90:12mm) arc (90:270:12mm);
            \draw [e:main] (270:12mm) arc (270:360:12mm);
            \draw [e:main,->] (0:12mm) arc (0:90:12mm);
   
			\draw (v3) [e:main,->,bend right=15] to (v4);
			\draw (v4) [e:main,->,bend right=15] to (v3);
			
			\draw (v1) [e:main,->,bend right=15] to (v3);
			\draw (v3) [e:main,->,bend right=15] to (v2);
			\draw (v2) [e:main,->,bend right=15] to (v4);
			\draw (v4) [e:main,->,bend right=15] to (v1);
			\end{pgfonlayer}
			
			\begin{pgfonlayer}{background}
			\end{pgfonlayer}
   
			\begin{pgfonlayer}{foreground}
			\end{pgfonlayer}
			\end{tikzpicture}
			
		};

		\node (C2) [v:ghost,position=0:0mm from D2] {
			
			\begin{tikzpicture}[scale=0.8]
			
			\pgfdeclarelayer{background}
			\pgfdeclarelayer{foreground}
			
			\pgfsetlayers{background,main,foreground}
			
			\begin{pgfonlayer}{main}
			
			\node (C) [] {};
			
			\node (v1) [v:main,position=90:10mm from C] {};
			\node (v2) [v:main,position=210:10mm from C] {};
			\node (v3) [v:main,position=330:10mm from C] {};
			
			\draw (v1) [e:main,->,bend right=15] to (v2);
			\draw (v2) [e:main,->,bend right=15] to (v3);
			\draw (v3) [e:main,->,bend right=15] to (v1);
			
			\draw (v1) [e:main,->,bend right=15] to (v3);
			\draw (v2) [e:main,->,bend right=15] to (v1);
			\draw (v3) [e:main,->,bend right=15] to (v2);
			\end{pgfonlayer}
			
			\begin{pgfonlayer}{background}
			\end{pgfonlayer}
   
			\begin{pgfonlayer}{foreground}
			\end{pgfonlayer}
			\end{tikzpicture}
			
		};

    \node (C3) [v:ghost,position=0:0mm from D3] {
			
			\begin{tikzpicture}[scale=0.8]
			
			\pgfdeclarelayer{background}
			\pgfdeclarelayer{foreground}
			
			\pgfsetlayers{background,main,foreground}
			
			\begin{pgfonlayer}{main}
			
			\node (C) [] {};
			
			\node (v1) [v:main,position=90:11.5mm from C] {};
			\node (v2) [v:main,position=180:11.5mm from C] {};
			\node (v3) [v:main,position=270:11.5mm from C] {};
			\node (v4) [v:main,position=0:11.5mm from C] {};
   
			\draw (v1) [e:main,->,bend right=15] to (v2);
			\draw (v2) [e:main,->,bend left=15] to (v3);
			\draw (v3) [e:main,->,bend right=15] to (v4);
			\draw (v4) [e:main,->,bend left=15] to (v1);
			
			\draw (v2) [e:main,->,bend right=15] to (v1);
			\draw (v3) [e:main,->,bend left=15] to (v2);
			\draw (v4) [e:main,->,bend right=15] to (v3);
			\draw (v1) [e:main,->,bend left=15] to (v4);
			
			\end{pgfonlayer}
			
			\begin{pgfonlayer}{background}
			\end{pgfonlayer}	
			
			\begin{pgfonlayer}{foreground}
			\end{pgfonlayer}
			\end{tikzpicture}
		};
		
		\node (C4) [v:ghost,position=0:0mm from D4] {
			
			\begin{tikzpicture}[scale=0.8]
			
			\pgfdeclarelayer{background}
			\pgfdeclarelayer{foreground}
			
			\pgfsetlayers{background,main,foreground}
			
			\begin{pgfonlayer}{main}
			
			\node (C) [] {};
			
			\node (v1) [v:main,position=90:13mm from C] {};
			\node (v2) [v:main,position=162:13mm from C] {};
			\node (v3) [v:main,position=234:13mm from C] {};
			\node (v4) [v:main,position=306:13mm from C] {};
			\node (v5) [v:main,position=18:13mm from C] {};
			
			\draw (v1) [e:main,->,bend right=15] to (v2);
			\draw (v2) [e:main,->,bend right=15] to (v3);
			\draw (v3) [e:main,->,bend right=15] to (v4);
			\draw (v4) [e:main,->,bend right=15] to (v5);
			\draw (v5) [e:main,->,bend right=15] to (v1);
			
			\draw (v1) [e:main,->,bend right=15] to (v5);
			\draw (v2) [e:main,->,bend right=15] to (v1);
			\draw (v3) [e:main,->,bend right=15] to (v2);
			\draw (v4) [e:main,->,bend right=15] to (v3);
			\draw (v5) [e:main,->,bend right=15] to (v4);
			
			\end{pgfonlayer}
			
			\begin{pgfonlayer}{background}
			\end{pgfonlayer}	
			
			\begin{pgfonlayer}{foreground}
			\end{pgfonlayer}
			\end{tikzpicture}
			
		};
		
		\node (C5) [v:ghost,position=0:0mm from D5] {
			
			\begin{tikzpicture}[scale=0.8]
			
			\pgfdeclarelayer{background}
			\pgfdeclarelayer{foreground}
			
			\pgfsetlayers{background,main,foreground}
			
			\begin{pgfonlayer}{main}
			
			\node (C) [] {};
			
			\node (v1) [v:main,position=90:15mm from C] {};
			\node (v2) [v:main,position=150:15mm from C] {};
			\node (v3) [v:main,position=210:15mm from C] {};
			\node (v4) [v:main,position=270:15mm from C] {};
			\node (v5) [v:main,position=330:15mm from C] {};
			\node (v6) [v:main,position=30.1:15mm from C] {};
			
			\draw (v1) [e:main,->,bend right=15] to (v2);
			\draw (v2) [e:main,->,bend left=15] to (v3);
			\draw (v3) [e:main,->,bend right=15] to (v4);
			\draw (v4) [e:main,->,bend left=15] to (v5);
			\draw (v5) [e:main,->,bend right=15] to (v6);
			\draw (v6) [e:main,->,bend left=15] to (v1);
			
			\draw (v2) [e:main,->,bend right=15] to (v1);
			\draw (v3) [e:main,->,bend left=15] to (v2);
			\draw (v4) [e:main,->,bend right=15] to (v3);
			\draw (v5) [e:main,->,bend left=15] to (v4);
			\draw (v6) [e:main,->,bend right=15] to (v5);
			\draw (v1) [e:main,->,bend left=15] to (v6);
			
			\end{pgfonlayer}
			
			\begin{pgfonlayer}{background}
			\end{pgfonlayer}	
			
			\begin{pgfonlayer}{foreground}
			\end{pgfonlayer}
			\end{tikzpicture}
		};
		
	\end{tikzpicture}
    \caption{The \emph{base class} $\mathcal{B}_{2}$: The digraph $A_4$ (far left) and the family of bidirected cycles (everything else).}
    \label{fig:base_class}
\end{figure}
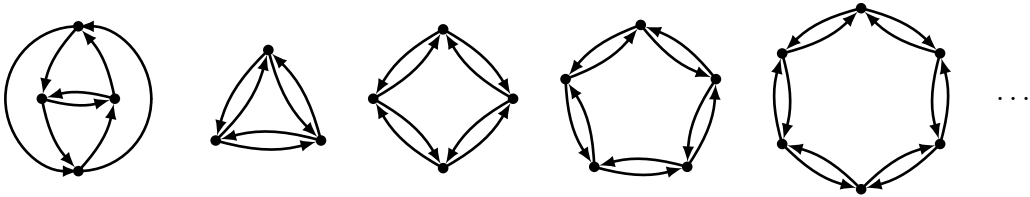

\begin{theorem}\label{thm:mainthm1}
    The class $\mathcal{D}_2$ of digraphs that can be generated from the graphs in $\mathcal{B}_2$ by sequences of basic, chain, collarette, and bracelet augmentations is exactly the class of all strongly $2$-connected digraphs.
    \end{theorem}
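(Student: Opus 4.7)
The plan is to deduce Theorem~\ref{thm:mainthm1} from the stronger \emph{sequence theorem} announced in the abstract: whenever $H$ and $D$ are strongly $2$-connected digraphs with $H$ a butterfly-minor of $D$, there is a chain $H = D_0, D_1, \ldots, D_n = D$ of strongly $2$-connected butterfly-minors of $D$ in which each $D_i$ is obtained from $D_{i-1}$ by exactly one basic, chain, collarette, or bracelet augmentation. Granting this statement, Theorem~\ref{thm:mainthm1} splits into two parts: soundness (every digraph in $\mathcal{D}_2$ is strongly $2$-connected) and completeness (every strongly $2$-connected digraph lies in $\mathcal{D}_2$).

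For soundness I would first verify directly that every member of $\mathcal{B}_2$ is strongly $2$-connected: in $\Bidirected{C}{k}$ with $k\geq 3$, removing any vertex leaves a bidirected path on at least two vertices, which remains strongly connected, and the four single-vertex deletions of $A_4$ are checked by inspection. Next I would go through the four augmentation types and check that each preserves strong $2$-connectivity via a finite case analysis: after removing an arbitrary vertex $w$ from the augmented digraph, any two remaining vertices are connected by two internally disjoint dipaths built from dipaths inherited from the strongly $2$-connected source digraph together with the arcs and paths introduced by the augmentation, using in particular the new $\mathsf{b}(u)\to\mathsf{e}(u)$ edges in the basic and chain cases and the two alternative routes through the woven or piercing paths in the collarette and bracelet cases.

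For completeness, let $D$ be strongly $2$-connected. I would select a strongly $2$-connected butterfly-minor $H$ of $D$ that is minimal under the butterfly-minor order (such an $H$ exists by well-foundedness on finite digraphs, with $D$ itself as an initial candidate), and then invoke the auxiliary claim that every minimal strongly $2$-connected digraph belongs to $\mathcal{B}_2$, i.e., is isomorphic to $A_4$ or to $\Bidirected{C}{k}$ for some $k\geq 3$. Plugging this $H \in \mathcal{B}_2$ and $D$ into the sequence theorem yields an augmentation chain from $H$ to $D$, witnessing $D \in \mathcal{D}_2$.

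The main obstacle in the plan is the auxiliary classification of the butterfly-minor-minimal strongly $2$-connected digraphs. Here I would argue structurally that in such a minimal $H$ every vertex must have in- and out-degree exactly $2$ (otherwise some incident edge is butterfly-contractible or can be deleted without breaking strong $2$-connectivity), and each edge must either lie in a digon or be pinned by a tight pair of internally disjoint dipaths that prevent its removal. A careful case analysis of the resulting digon pattern then forces $H$ either to form a single ``digon cycle'', i.e., some $\Bidirected{C}{k}$, or, when the digons do not close up into a single cycle, to collapse to the sporadic digraph $A_4$ as the only remaining configuration. This classification, combined with the sequence theorem, completes the proof of Theorem~\ref{thm:mainthm1}.
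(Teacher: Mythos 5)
Your overall structure matches the paper's proof of Theorem~\ref{thm:mainthm1}: deduce it from the sequence theorem (Theorem~\ref{thm:mainthm2}) once you have a member of $\mathcal{B}_2$ as a butterfly-minor of $D$, and separately check that membership in $\mathcal{D}_2$ implies strong $2$-connectivity by verifying each augmentation preserves it (that is the paper's Theorem~\ref{thm:all_constructed_graphs_are_strongly_2_connected}) together with the easy check on $\mathcal{B}_2$. The one substantive place where you deviate is how you produce the $\mathcal{B}_2$-minor: the paper simply invokes Theorem~\ref{thm:base_class_minor}, which is Wiederrecht's result from \cite{wiederrecht2020dtwone} that every strongly $2$-connected digraph contains a member of $\mathcal{B}_2$ as a butterfly-minor. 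You instead propose the equivalent formulation via butterfly-minor-minimal strongly $2$-connected digraphs and well-foundedness, and then attempt to re-derive the classification of those minimal digraphs from scratch; the two formulations are indeed equivalent by the minimality argument you give.

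The gap is in that re-derivation. The assertion that a butterfly-minor-minimal strongly $2$-connected digraph has every vertex of in- and out-degree exactly $2$ does not follow from the parenthetical you offer. Strong $2$-connectivity forces degree at least $2$ in each direction, but it is not clear that out-degree (say) $3$ at some vertex $v$ yields an out-edge whose deletion keeps the digraph strongly $2$-connected, nor that some incident edge becomes butterfly-contractible in a useful way (contraction can suppress parallel edges and alter connectivity non-monotonically, so it does not obviously preserve strong $2$-connectivity). Similarly, the ``careful case analysis of the digon pattern'' that forces $H$ to be $A_4$ or a bidirected cycle is exactly where the real work is, and it is not carried out; note also that $\mathcal{B}_2$ is itself an infinite butterfly-minor antichain, so any such classification argument must in particular explain why no other infinite families of minimal digraphs arise. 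This is precisely the content of Wiederrecht's theorem and is not a short observation. Once you cite it, as the paper does, the rest of your plan goes through.
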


\begin{restatable}{theorem}{directedSplitterTheorem}
    \label{thm:mainthm2}
    If $D$ and $H$ are strongly $2$-connected digraphs and $H$ is a butterfly-minor of $D$ then there exists a sequence $D_0, D_1, \dots, D_{n-1}, D_n$ such that
\begin{itemize}
    \item $D_0 = H$, $D_n = D$,
    \item $D_i$ is a strongly 2-connected digraph for $0 \leq i \leq n$, and
    \item for every $i\in[n]$, $D_{i-1}$ is a butterfly-minor of $D_i$ and $D_i$ is obtained from $D_{i-1}$ by a basic, chain, collarette, or bracelet augmentation.
\end{itemize}
\end{restatable}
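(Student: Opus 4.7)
The plan is to prove Theorem~\ref{thm:mainthm2} by induction on the parameter $\mu(H, D) \coloneqq (|V(D)| - |V(H)|) + (|E(D)| - |E(H)|)$. The base case $\mu = 0$ forces $D \cong H$, and the trivial sequence $D_0 = H = D$ works. For the inductive step, it suffices to establish the following \emph{single-step lemma}: whenever $H$ is a proper strongly $2$-connected butterfly-minor of a strongly $2$-connected digraph $D$, there exists a strongly $2$-connected digraph $D^{\ast}$ such that $H$ is a butterfly-minor of $D^{\ast}$, $D^{\ast}$ is a butterfly-minor of $D$, and $D^{\ast}$ arises from $H$ by a single basic, chain, collarette, or bracelet augmentation. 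Given this lemma, the induction hypothesis applied to the pair $(D^{\ast}, D)$ yields a sequence $D^{\ast} = D_1, D_2, \ldots, D_n = D$; prepending $D_0 = H$ completes the proof.

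To prove the single-step lemma, I would fix a butterfly-minor model of $H$ in $D$, i.e.~a family of pairwise vertex-disjoint branch sets $\{X_h\}_{h \in V(H)}$ such that each induced subgraph $D[X_h]$ can be butterfly-contracted to a single vertex representing $h$, together with a representative edge in $D$ for every edge of $H$. Write $D^{\circ}$ for the subdigraph of $D$ consisting of the branch sets and representative edges; then $D^{\circ}$ butterfly-contracts onto $H$. The strategy is to look for a minimal proper strongly $2$-connected butterfly-minor of $D$ that properly contains $H$ (under the butterfly-minor relation) and to show that any such minimal extension is realised by one of the four prescribed augmentations. The natural candidates producing the extension are: a single edge of $E(D) \setminus E(D^{\circ})$ whose endpoints lie in distinct branch sets (after possibly ``splitting'' one or both endpoints at the points recorded by the $\mathsf{e}/\mathsf{b}$ markers in Figure~\ref{fig:augmentations1}(a), giving a basic augmentation); a short directed subpath of $D$ whose internal vertices lie outside the model and which can be ``woven'' through an edge of $H$ (yielding a chain augmentation); or a more constrained local configuration in $D$ around a digon or a degree-$4$ vertex of $H$ forcing a collarette or bracelet augmentation.

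The main obstacle is that naive extensions may destroy strong $2$-connectivity: adding an arbitrary edge or inserting a single external vertex can create a directed cut of size~$1$ (in- or out-degree issues at a split endpoint), so one must simultaneously perform vertex splits. The four augmentation families are designed to be exactly those atomic operations that preserve strong $2$-connectivity while remaining ``within'' $D$ as a butterfly-minor, and the proof proceeds by a McCuaig-style dichotomy. First case: some edge of $D$ not captured by the model can be promoted to $H$ (after appropriate splits) while keeping strong $2$-connectivity; this produces a basic augmentation. Second case: some branch set $X_h$ is nontrivial and contains a directed path from the entry point $\mathsf{b}(h)$ to the exit point $\mathsf{e}(h)$ that can be lifted into $H$ as a chain augmentation. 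Third case: neither of the above applies, in which case the remaining structural obstructions in $D^{\circ}$ are exactly the degenerate configurations resolved by collarette augmentations (at digons of $H$) and bracelet augmentations (at degree-$4$ vertices with in- and out-degree two in $H$).

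The hardest part of the argument will be showing that these four cases are exhaustive, i.e.~that no other local obstruction to lifting a minimal extension of $H$ into $D$ can occur. This requires a delicate cut analysis: for every strong $2$-vertex cut $\{u, v\}$ of a candidate intermediate digraph, one must trace how $\{u, v\}$ interacts with the branch set decomposition, with the edges of $D \setminus D^{\circ}$, and with the butterfly-contractibility conditions needed to recover $H$. I expect this cut-and-branch-set analysis, together with verifying that the operations of Figure~\ref{fig:augmentations1} preserve strong $2$-connectivity in every orientation, to occupy the bulk of the technical work, following and adapting the analogous case distinctions in McCuaig's brace generation theorem~\cite{mccuaig2001bracegeneration}.
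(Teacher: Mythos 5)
Your inductive framework is sound and matches the paper's overall skeleton: both proofs reduce Theorem~\ref{thm:mainthm2} to a single-step lemma (the paper's Corollary~\ref{thm:main}) asserting that whenever $H\subsetneq D$ there is a strongly $2$-connected $H$-augmentation $K$ with a $K$-expansion in $D$, and then close by induction on a size-difference parameter. The paper inducts on $|\E{D}|-|\E{H}|$ rather than your $\mu$, but this is cosmetic.

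The genuine gap is in the single-step lemma itself, which is where essentially all the content lies, and your treatment of it is not a proof plan so much as a restatement of the goal. Two specific problems. First, you work with a single \emph{fixed} butterfly-model $D^{\circ}$ and search for an augmentation visible with respect to it. This fails: an augmenting path need not exist for the model you happened to fix, and the key technical device in the paper is to \emph{move between} $H$-expansions by repeatedly ``switching'' along certain \earpaths (the switching paths of Section~\ref{sec:earpaths}, together with Observation~\ref{lem:expansion_stays_expansion_after_switching}). Without this re-rooting mechanism your ``three cases'' are not exhaustive, and the claim that the residual configurations ``are exactly the degenerate configurations resolved by collarette and bracelet augmentations'' is precisely the statement you would need to prove, not something that can be asserted. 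Second, you do not address \bad\ paths at all. These are the \earpaths that form directed cycles with a single branchset or bridge; they carry no augmentation by themselves, and the hard part of the proof is showing (via \blocking\ vertices and escapes, Section~\ref{sec:escapes}) that whenever a nontrivial branchset persists under all available switches, $D$ must contain an escape, and that escapes in turn yield a path to which Theorem~\ref{thm:add_chain} or~\ref{thm:construct_bracelet} applies. Your ``delicate cut analysis'' paragraph gestures at this but gives no mechanism for producing the required path. In short, the induction wrapper is fine, the classification of \earpaths\ into augmenting/switching/\bad\ is implicitly anticipated, but the machinery that makes the exhaustiveness argument actually go through (switching between expansions, the blocking-vertex/escape analysis) is entirely absent and is not supplied by your ``McCuaig-style dichotomy.''
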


Wiederrecht~\cite{wiederrecht2020dtwone} proved that every strongly $2$-connected digraph contains a member of $\mathcal{B}_2$ as a butterfly-minor.

\begin{theorem}[\cite{wiederrecht2020dtwone}]
    \label{thm:base_class_minor}
    Every strongly 2-connected digraph contains a member of $\mathcal{B}_2$ as a butterfly-minor.
\end{theorem}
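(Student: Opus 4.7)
The plan is to prove the statement by induction on $|V(D)|+|E(D)|$, preserving strong $2$-connectivity at each reduction step. The base case covers small strongly $2$-connected digraphs. Since strong $2$-connectivity forces every vertex to have in-degree and out-degree at least $2$, the smallest strongly $2$-connected digraph has $3$ vertices and is $\Bidirected{C}{3}\in\mathcal{B}_2$; the digraphs on $4$ vertices can be checked by inspection to contain $\Bidirected{C}{3}$, $\Bidirected{C}{4}$, or $A_4$ as a butterfly-minor.

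For the inductive step, let $D$ be strongly $2$-connected. The first attempt is an edge deletion: if there exists an edge $e\in E(D)$ such that $D-e$ is still strongly $2$-connected, then the induction hypothesis applied to $D-e$ yields a member of $\mathcal{B}_2$ as a butterfly-minor of $D-e$, which is therefore also a butterfly-minor of $D$. Hence we may assume that $D$ is \emph{critically} strongly $2$-connected, in the sense that every edge is essential for strong $2$-connectivity. Criticality forces sharp structural constraints: a short argument using Menger-type reasoning shows that every vertex has in-degree and out-degree exactly $2$, so the underlying undirected multigraph is $4$-regular and $|E(D)|=2|V(D)|$.

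The core of the argument is to show that a critical $D$ contains a member of $\mathcal{B}_2$ directly. To do so, I would take a shortest directed cycle $C=v_1v_2\cdots v_kv_1$ in $D$. Strong $2$-connectivity supplies, for each arc $(v_i,v_{i+1})$ of $C$ and each $v_j\in V(C)\setminus\{v_i,v_{i+1}\}$, a directed $v_{i+1}$-to-$v_i$ path avoiding both the arc $(v_i,v_{i+1})$ and the vertex $v_j$. Choosing such reverse paths $R_i$ in a minimal way and combining them with $C$ produces a subdigraph from which one can extract a bidirected cycle or $A_4$: delete the auxiliary edges that prevent butterfly-contractibility along the interior of each $R_i$, then butterfly-contract each reverse path to a single arc $(v_{i+1},v_i)$. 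If all $R_i$ are internally disjoint from $C$ and from each other, this produces exactly $\Bidirected{C}{k}$. Shortness of $C$ combined with $4$-regularity controls how the $R_i$ may interact.

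The main obstacle is the last case: when the reverse paths share vertices with $C$ or with one another. There the structure need not directly contract to $\Bidirected{C}{k}$, and one must argue that the pattern of intersections is so restricted by the $4$-regular critical structure that a shorter bidirected cycle, or the $A_4$ configuration (a digon pair on two pairs of antipodal vertices of a directed $4$-cycle), can always be read off after appropriate deletions and butterfly-contractions. This case analysis is the technical heart of the proof and is where the special role of $A_4$ in $\mathcal{B}_2$ becomes indispensable: $A_4$ is precisely the configuration that arises when two reverse paths are forced to cross through a single "hub" vertex of the cycle.
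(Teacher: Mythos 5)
The paper does not prove this statement at all; it is imported as \cref{thm:base_class_minor} from~\cite{wiederrecht2020dtwone}, so there is no in-paper proof to compare your route against. Judged on its own, your proposal has a fatal gap at the step you lean on most heavily: edge-criticality does \emph{not} force every vertex to have in- and out-degree exactly $2$. Consider the bidirection of the complete bipartite graph $K_{2,m}$ for any $m\geq 3$, with sides $\{a,b\}$ and $\{c_1,\dots,c_m\}$ and every edge replaced by a digon. This digraph is strongly $2$-connected (deleting one vertex leaves a bidirected star or a bidirected $K_{2,m-1}$), and every edge is critical: after deleting, say, the edge $(a,c_1)$ and then the vertex $b$, the vertex $c_1$ has in-degree zero and is unreachable from $a$. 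Yet $a$ and $b$ have in- and out-degree $m>2$. (This mirrors the classical fact that minimally $2$-connected undirected graphs such as $K_{2,m}$ need not be regular.) Everything downstream of your regularity claim --- the count $|E(D)|=2|V(D)|$, the control over how the reverse paths $R_i$ can interact, and the identification of $A_4$ as the unique ``hub'' configuration --- therefore rests on a false premise.

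Even setting that aside, the argument is not carried out where the theorem is actually hard. You yourself flag the case in which the reverse paths meet $C$ or one another as ``the technical heart of the proof'' and offer only the assertion that a member of $\mathcal{B}_2$ ``can always be read off''; no case analysis is given, and this is exactly where the difficulty lives (it is also where naive contraction arguments tend to fail for butterfly-minors, cf.\ the anti-chain phenomenon in \cref{fig:antichain}: one cannot shorten bidirected paths by butterfly-contractions). The easy direction --- if the $R_i$ are internally disjoint from $C$ and from each other, delete all edges outside $C\cup\bigcup_i R_i$ and contract to obtain $\Bidirected{C}{k}$ --- is fine, but as written the proposal is a plan whose key structural lemma is false and whose central case distinction is unexecuted.
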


Additionally, one can observe that our four types of augmentations preserve strong $2$-connectivity.

\begin{restatable}{theorem}{augmentationAgainStronglyTwoConnected}
    \label{thm:all_constructed_graphs_are_strongly_2_connected}
    Let $D$ be a strongly $2$-connected digraph and let $D^{\star}$ be obtained from $D$ by a basic, chain, collarette, or bracelet augmentation.
    Then $D^{\star}$ is strongly $2$-connected.
\end{restatable}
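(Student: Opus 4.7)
The plan is to verify, for each of the four augmentation types in turn, that the resulting digraph $D^{\star}$ has at least three vertices and that $D^{\star}-w$ is strongly connected for every $w\in V(D^{\star})$. The bound on $|V(D^{\star})|$ is immediate, since $|V(D)|\geq 3$ and no augmentation decreases the vertex count. The rest relies on a uniform tool: a \emph{projection} $\pi\colon V(D^{\star})\to V(D)$ sending every unchanged vertex to itself, sending the two halves of a split vertex $u$ (that is, $\mathsf{b}(u)$ and $\mathsf{e}(u)$) both to $u$, and sending each new internal vertex (a subdivision vertex on a woven path, the collarette vertices $x,y$, or the bracelet piercing vertex) to a canonical endpoint that lies in $V(D)$. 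Given $w\in V(D^{\star})$, the strong $2$-connectivity of $D$ makes $D-\pi(w)$ strongly connected, and one can lift any directed $a$-$b$-path in $D-\pi(w)$ to a directed $\pi^{-1}(a)$-$\pi^{-1}(b)$-path in $D^{\star}-w$ by replacing each traversed edge $(p,q)$ with either the corresponding unchanged edge, the split edge $(\mathsf{b}(p),\mathsf{e}(p))$, or a carefully chosen detour through the local gadget introduced by the augmentation.

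First I would treat the \textbf{basic augmentation}, which only splits one or two vertices and adds a single edge. The split edge $(\mathsf{b}(u),\mathsf{e}(u))$ together with the extra augmentation edge is enough to reroute any path that originally passed through $u$, and deleting one of $\mathsf{b}(u),\mathsf{e}(u)$ amounts, after contracting the remaining half, to deleting $u$ in $D$; hence $D-u$ strongly connected implies $D^{\star}-w$ strongly connected. The \textbf{collarette augmentation} is almost as local: the two new vertices $x,y$ lie on a subdivided digon whose two extra edges $(x,y)$ and $(y,x)$ provide, together with the woven detour, a bidirectional gadget that survives the deletion of any single one of $u,v,x,y$ while still routing the pre-existing two paths through the digon. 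For the \textbf{bracelet augmentation} one uses that $u$ has in- and out-degree exactly two in $D$ together with the freshly inserted piercing vertex: deleting the piercing vertex recovers $D$ itself, while deleting $u$ or any neighbour of $u$ reduces to deleting the same vertex in $D$ after a straightforward rerouting through the piercing path and the remaining arcs incident to $u$.

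The main obstacle is the \textbf{chain augmentation}, because it may insert an unbounded number of internal subdivision vertices $x_1,\dots,x_k$ on the woven path, so one cannot simply reduce to the undeleted case by a finite local check; instead one must argue uniformly in $k$. The key observation is that each $x_i$ sits on \emph{two} oppositely oriented directed paths of the gadget: the forward woven path $\mathsf{e}(u)\to x_1\to\dots\to x_k\to\mathsf{b}(v)$ together with the chain's ``reverse'' zig-zag that follows the original edge $(u,v)$. Consequently, for any internal $x_i$, deleting $x_i$ still leaves the remaining $x_j$'s mutually reachable, and each is reachable from and to both $\mathsf{e}(u)$ and $\mathsf{b}(v)$ via the surviving subpaths and the reversed edge $(v,u)$ of $D$; combining this with the strong connectivity of $D-u$ and $D-v$ yields strong connectivity of $D^{\star}-x_i$. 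The case split $w$ being (i) an untouched vertex, (ii) a half of a split vertex, or (iii) an internal vertex of a gadget, then covers all possibilities, and the same case analysis works verbatim for the other three augmentations with a much smaller gadget, which is why the chain augmentation is the only one that genuinely requires this uniform-in-$k$ argument.
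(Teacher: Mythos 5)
Your high-level strategy matches the paper's: project the deleted vertex $w$ to $\hat w\in V(D)$, invoke strong $2$-connectivity to obtain a path in $D-\hat w$, and lift it into $D^{\star}-w$. The paper organizes this by defining, for each augmentation, a set $W\subseteq V(D^{\star})$ of gadget vertices and proving two facts separately: (a) any two vertices outside $W\cup\{z\}$ are joined in $D^{\star}-z$ by lifting a path from $D-\hat z$, and (b) every $w\in W$ has two paths, disjoint except at $w$, into $V(D^{\star})\setminus W$ and two out of it, so no single deletion can strand a gadget vertex. Your sketch supplies (a) but only gestures at (b); the phrase ``$\pi^{-1}(a)$-$\pi^{-1}(b)$-path'' leaves unresolved exactly the case when the desired endpoints are both new gadget vertices projecting to the same vertex of $D$, which is what (b) is for.

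There is also a concrete misreading and some false claims. You describe the collarette as producing ``two extra edges $(x,y)$ and $(y,x)$,'' but neither exists in $D^{\star}$: after the $y$-$x$-chain is woven through $(x,y)$, that edge is \emph{deleted} and replaced by a chain gadget of arbitrary length, so the case split ``deletion of any single one of $u,v,x,y$'' omits the internal collarette vertices, which need the same uniform-in-$k$ treatment as the chain. In the chain analysis itself, ``deleting $x_i$ still leaves the remaining $x_j$'s mutually reachable'' is false: the bidirected internal path splits into a left cluster interfacing only with the $u$-side and a right cluster interfacing only with the $v$-side, and reconnecting them requires a path through $D$ minus the projected deleted vertex, not local structure. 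Moreover, ``the reversed edge $(v,u)$ of $D$'' is not assumed to exist by the augmentation, so it cannot be used. Finally, ``deleting the piercing vertex recovers $D$ itself'' in the bracelet case is not literal (it yields $D$ minus the subdivided edge), though the intended conclusion still follows from strong $2$-connectivity.
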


This allows us to establish that \cref{thm:mainthm2} directly implies \cref{thm:mainthm1} as follows.

\begin{proof}[Proof of \cref{thm:mainthm1}]
    Let $D$ be a strongly $2$-connected digraph.
    By \cref{thm:base_class_minor}, it contains a digraph $H$ from $\mathcal{B}_2$ as butterfly-minor.
    Now \cref{thm:mainthm2} yields the desired construction sequence of $D$ from $H$ using only our four types of augmentations.
\end{proof}

\subsection{Splitter theorems}\label{subsec:intro_butterfly}

Seymour's theorem on $3$-connected graphs, as described above, is often called a \textsl{splitter theorem}.
That is because it can be used to \textsl{split} different graph classes or, in other words, to show that two graph classes are disjoint.
A striking example is discussed in \cite{mccuaig2001bracegeneration}.
Such splitter theorems do not only exist for the minor relation, or for graphs even.
In fact, Seymour's theorem on $3$-connected graphs was originally proven for $3$-connected regular matroids.

Another setting where such theorems exist is the vertex minor\footnote{A \emph{local complementation} of a vertex $v$ in a graph $G$ is the operation of replacing the graph $G[N(v)]$ with its complement in $G$. A graph $H$ is a \emph{vertex minor} of a graph $G$ if $H$ can be obtained from $G$ by a sequence of local complementations and vertex deletions.} relation.
Geelen and Oum \cite{GeelenO2009} proved that if $G$ is sufficiently large compared to $H$ and contains $H$ as a vertex minor, then there exists a vertex which can be removed in at least two of three possible ways while maintaining the presence of $H$ as a vertex minor.
Moreover, if $G$ and $H$ are both \emph{prime}\footnote{A notion of higher connectivity for vertex minors. A graph is prime if its vertex set cannot be partitioned into two sets, both of size at least two, such that the edge cut between the two sets induces a complete bipartite graph.} and $H$ is a proper vertex minor of $G$ there exists a sequence $G_0, G_1,\dots, G_n$ where $G_0=H$, $G_n=G$, and $G_{i-1}$ is a prime graph obtained from $G_i$ by either deleting a vertex or locally complementing and then deleting a vertex \cite{MR2694410}.

Finally, we have to mention the splitter theorems for \textsl{matching minors} by McCuaig \cite{mccuaig2001bracegeneration} for bipartite graphs and by Norin and Thomas \cite{NorinT2007} for non-bipartite graphs.
Two major results on the matching minor relation were its inception in Little's characterisation of bipartite Pfaffian graphs \cite{Little1975} and the \textsl{tight cut decomposition} of Lovász \cite{Lovasz1987}.
Moreover, a theorem of Seymour and Thomassen \cite{SeymourT1987}, which later turned out to be equivalent to Little's result, led to the discovery of the butterfly-minor relation.
On their way to solving the recognition problem for bipartite Pfaffian graphs, also known as \emph{P\'olya's Permanent Problem}, McCuaig proved a splitter theorem for \emph{braces}\footnote{Lovász's tight cut decomposition asserts that every graph with a perfect matching can be decomposed into a unique list of bipartite graphs, \emph{braces}, and non-bipartite graphs \emph{bricks}. Moreover, a bipartite graph $B$ with a perfect matching contains a brace $H$ as a matching minor if and only if a brace of $B$ contains $H$ as a matching minor.}.
It is worth pointing out that McCuaig claims a generation result for strongly $2$-connected digraphs similar to ours in their paper \cite{mccuaig2001bracegeneration}\footnote{page 127, reference [17] in the paper}.
However, the reference points to an unpublished manuscript, and despite all efforts, we were not able to reach McCuaig or find this manuscript.

\section{Preliminaries}
\label{sec:preliminaries}
In this paper, we work with finite and simple digraphs, that is, we do not allow loops or parallel edges.
Given two integers $a,b\in\Z$ we denote the set $\{z\in\Z \mid a\leq z\leq b\}$ by $[a,b].$
In case $a>b$, the set $[a,b]$ is empty.
For an integer $p\geq 1,$ we set $[p]\coloneqq [1,p]$.

Given two digraphs $D_1$ and $D_2$, we denote by $D_1\cap D_2$ the graph $(V(D_1)\cap V(D_2),E(D_1)\cap E(D_2))$ and by $D_1\cup D_2$ the digraph $(V(D_1)\cup V(D_2),E(D_1)\cup E(D_2))$.
For a vertex set $S\subseteq \V{D}$ of a digraph $D$ we denote by $D[S]$ the digraph $(S, E(D)\cap \{(u,v)\mid u,v\in S\})$ and by $D-S$ the digraph $D[\V{D}\setminus S]$.
In case $S=\{ v\}$ is a singleton, we drop the set notation and write $D-v$ instead of $D-\{ v\}$.
Finally, for digraphs $D$ and $H$ we define $D \graphminus H$ as $D - \V{H}$.
Similarly, for a set $F\subseteq \E{D}$ of edges we denote by $D-F$ the digraph $(\V{D},\E{D}\setminus F)$ and if $F=\{ e\}$ we write $D-e$ instead of $D-\{ e\}$.
If $(u,v)$ is a directed edge with $(u,v)\notin \E{D}$ but $u,v\in \V{D}$, we write $D+(u,v)$ for the digraph $D\cup (\{ u,v\},\{ (u,v)\})$.
We call this operation \emph{adding an edge}.

Let $D$ be a digraph.
For a vertex $v\in \V{D}$ we denote by $\OutNeighbours{D}{v}$ the \emph{out-neighbourhood} of $v$ in $D$, that is the set of vertices $x\in \V{D}$ such that $(v,x)\in \E{D}$.
Similarly, $\InNeighbours{D}{v}$ denotes the \emph{in-neighbourhood} of $v$ in $D$, that is the set of vertices $x\in \V{D}$ with $(x,v)\in \E{D}$.
In case the digraph $D$ is understood from the context, we drop the subscript and write $\OutNeighbours{}{v}$ and $\InNeighbours{}{v}$ respectively.

A \emph{(directed) path} $P$ of \emph{length $k$} in $D$ is a sequence of distinct vertices $v_1,\dots,v_{k+1}$ such that $(v_{i},v_{i+1}) \in E({D})$ for all $i\in[k]$.
The vertex $v_1$ is called the \emph{start-vertex} of $P$, denoted $\Start{P}$, and the vertex $v_{k+1}$ is the \emph{end-vertex} of $P$, denoted $\End{P}$.
We also refer to $P$ as a $\Start{P}$-$\End{P}$-path.
We often identify $P$ with the digraph $(\{v_1,\dots,v_{k+1}\},\{(v_i,v_{i+1})\mid i\in[k]\})$, and thus can use notation such as $\V{P}$ and $\E{P}$ to refer to the vertex and edge set of $P$ respectively.
Every directed path $P$ induces an order $<_{P}$ on $\V{P}$, that is, for $u,v \in \V{P}$ we write $u <_{P} v$ if $u$ occurs on the path $P$ before $v$. 

The subpath of $P$ starting in a vertex $a \in \V{P}$ and ending at a vertex $b \in \V{P}$ is denoted by $aPb$.
We also write $aP$ for $aP\End{P}$ and $Pa$ for $\Start{P}Pa$.
We say that two paths $P$ and $Q$ are \emph{internally disjoint} if $\V{P}\cap V(Q)\subseteq \{\Start{P},\End{P},\Start{Q},\End{Q} \}$.
Given two internally disjoint paths $P$ and $Q$ where the end-vertex of $P$ is the same as the start-vertex of $Q$, we denote the concatenation of $P$ and $Q$ (i.e.~the $\Start{P}$-$\End{Q}$-path $P\cup Q$) by $P\cdot Q$.
When concatenating two subpaths, we write $PaQ$ instead of $Pa \cdot aQ$.

The \emph{underlying undirected graph} of $D$ is the undirected graph $G$ with vertex set $\V{D}$ and edge set $\{ \{x, y\} \mid \{(x,y),(y,x)\}\cap \E{D}\neq\emptyset \}$.
The digraph $D$ is said to be \emph{weakly connected} if its underlying undirected graph is connected.
We say that $D$ is \emph{strongly connected} if for every pair of vertices $x,y\in \V{D}$ there exists a directed $x$-$y$-path and a directed $y$-$x$-path in $D$.
For a positive integer $k$, $D$ is \emph{strongly $k$-connected} if $|\V{D}|\geq k+1$ and $D-S$ is strongly connected for all sets $S\subseteq \V{D}$ with $|S|\leq k-1$.

\paragraph{Butterfly-minor models.}

A \emph{butterfly-model}, or just \emph{model}, of $H$ in $D$ is a function $\mu$ which assigns to every $x  \in \V{H} \cup \E{H}$ a subgraph $\mu(x) \subseteq D$ such that 
\begin{itemize}
    \item for $u \neq v \in \V{H}$, $\mu(u) \cap \mu(v) = \emptyset$,
    \item each $\mu(v)$ is the union of an in-branching and an out-branching which are disjoint except for their common root, which we refer to as the \emph{root} of $\mu(v)$, and
    \item for each $e = (s, t) \in \E{H}$, $\mu(e)$ is a path in $D$ starting at a vertex of the out-branching of $\mu(s)$ and ending at a vertex of the in-branching of $\mu(t)$ which is internally vertex disjoint from $\mu(v)$ and $\mu(e')$ for all $v \in \V{H}$ and $e' \in \E{H} \setminus \{ e\}$. 
\end{itemize}
  
We use $\mu(H)$ to denote the union of all $\mu(x)$, where $x\in \V{H}\cup \E{H}$.

These models yield a more constructive way of considering the butterfly-minor relation as a digraph $H$ is a butterfly-minor of $D$ if, and only if, it has a butterfly-model in $D.$

\begin{lemma}[{\cite{amiri2016erdos}}]
  Let $H, D$ be digraphs.
  $H$ is a butterfly-minor of $D$ if, and only if, $H$ has a butterfly-model in $D$.
\end{lemma}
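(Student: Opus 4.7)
The plan is to prove both implications. For \emph{minor $\Rightarrow$ model} I would induct on the length of an operation-sequence realising $H$ as a butterfly-minor of $D$, while for \emph{model $\Rightarrow$ minor} I would explicitly convert a given model into an operation-sequence.

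\emph{Forward direction.} The induction is on the number of operations. In the base case $H=D$ the identity model works (each $\mu(v) = (\{v\},\emptyset)$, each $\mu(e)$ a single edge). Edge- and vertex-deletions merely restrict an existing model to the surviving vertices and edges, and this restriction satisfies all three model conditions. The delicate case is butterfly-contraction of an edge $(u,v)$ into a new vertex $w$; assume without loss of generality that $(u,v)$ is the unique outgoing edge of $u$, the other case being symmetric. Write $\mu(u)=T_u^-\cup T_u^+$ and $\mu(v)=T_v^-\cup T_v^+$ for the branching decompositions with roots $r_u,r_v$, and let $P=\mu((u,v))$ run from some $p_u\in T_u^+$ to some $p_v\in T_v^-$. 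The key observation is that $P$ is the \emph{only} edge-path of $\mu$ emanating from $T_u^+$, so inside the new vertex-image $T_u^+$ may be replaced by its unique $r_u$-to-$p_u$ subpath $Q_u$. Writing $Q_v$ for the $p_v$-to-$r_v$ subpath of $T_v^-$, I would set
\[
\mu'(w) \coloneqq \bigl(T_u^- \cup Q_u\cdot P\cdot Q_v \cup T_v^-\bigr) \,\cup\, T_v^+,
\]
treating the first summand as an in-branching and $T_v^+$ as an out-branching, both rooted at $r_v$. Every non-root vertex of this purported in-branching has exactly one outgoing edge inside it (its parent-edge in $T_u^-$, the next edge along $Q_u\cdot P\cdot Q_v$, or its parent-edge in $T_v^-$), and the two branchings meet only at $r_v$. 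All remaining $\mu'$-images coincide with $\mu$: an edge $(w,x)\in \E{D_{i+1}}$ must originate from $(v,x)\in \E{D_i}$ (since $u$ has no other outgoing edges) and its path still begins in $T_v^+$, while an edge $(x,w)$ originates from $(x,u)$ or $(x,v)$ and its path still ends in $T_u^-$ or $T_v^-$.

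\emph{Backward direction.} Given a model $\mu$ of $H$ in $D$, I would execute three phases. First, delete every vertex and every edge of $D$ not lying in $\mu(H)$, producing a restricted digraph $D'$. Second, for each $v\in \V{H}$, iteratively butterfly-contract leaves of $T_v^-$ and $T_v^+$: a non-root leaf $\ell$ of $T_v^-$ lies in no other vertex-image, is disjoint from $T_v^+$ (since the two branchings share only the root), and is not the start of any edge-path (edge-paths emanate from out-branching vertices), so its unique outgoing edge in $D'$ is the parent-edge in $T_v^-$ and is therefore butterfly-contractible; the symmetric statement handles $T_v^+$. After processing every $v$, each $\mu(v)$ collapses to its root $r_v$. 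Third, for each $e\in \E{H}$, contract $\mu(e)$ edge-by-edge; by internal-disjointness every internal vertex of $\mu(e)$ has exactly one incoming and one outgoing edge in the current digraph, so each edge along the path is butterfly-contractible. Simplicity of $H$ ensures that no parallel edges or loops are produced at the end, and the resulting digraph is isomorphic to $H$.

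\emph{Main obstacle.} The hard part will be verifying, in the forward direction, that the spine construction really yields an in-branching plus an out-branching meeting only at $r_v$. This is exactly where butterfly-contractibility is indispensable: without the assumption that $(u,v)$ is $u$'s only outgoing edge, the branching $T_u^+$ could fork and contribute multiple outgoing edges at some interior vertex of $Q_u$, violating the uniqueness of outgoing edges required by the in-branching. Everything else (handling edge-paths, dropping a redundant path when $(v,u)$ becomes a loop or when $(x,u),(x,v)$ become parallel edges, etc.) is routine bookkeeping.
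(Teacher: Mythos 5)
The paper does not prove this lemma; it is imported verbatim from the cited reference \cite{amiri2016erdos}, so there is no in-paper proof to compare against. Your blind proof is, however, essentially correct and is the natural direct argument for this equivalence. A few remarks.

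In the forward direction, your spine construction for butterfly-contraction is the right move. The verification you flag as the ``main obstacle'' does go through: writing $T := T_u^- \cup (Q_u\cdot P\cdot Q_v) \cup T_v^-$, the disjointness axioms of the model (branching-disjointness inside $\mu(u)$ and $\mu(v)$, $\mu(u)\cap\mu(v)=\emptyset$, internal disjointness of $P$) force the union to be a tree in which every non-$r_v$ vertex has exactly one out-edge, and $T\cap T_v^+ = \{r_v\}$, so $\mu'(w)$ is a legitimate vertex-image rooted at $r_v$. The only place butterfly-contractibility is used is exactly where you say it is: it guarantees that $T_u^+$ sources only the single edge-path $P$, so pruning $T_u^+$ to $Q_u$ loses nothing. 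You are also right that the ``bookkeeping'' cases—the digon $(v,u)$ becoming a loop, two edge-paths into $\mu(u)$ and $\mu(v)$ becoming parallel after identification—are handled simply by discarding one of the two redundant edge-paths; both surviving choices end in the new in-branching $T$, so either works.

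In the backward direction your three-phase plan is sound. One point worth stating explicitly (you gesture at it with ``leaves''): it is not just leaves of $T_v^-$ whose parent edge is contractible—\emph{every} non-root vertex of $T_v^-$ has a unique out-edge in the restricted digraph $\mu(H)$ (its parent edge in $T_v^-$), since it lies in no other vertex-image, is disjoint from $T_v^+$, and can only be the tail of an edge-path edge if it were in the out-branching. So the contraction order is immaterial; contracting from the leaves in is simply the cleanest way to see termination. Symmetrically for $T_v^+$ via unique in-edges, and then each $\mu(e)$ collapses to a single edge because its internal vertices have in- and out-degree one in the surviving digraph. Simplicity of $H$ ensures the end result has no loops or parallel edges, as you note. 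Overall this is a complete and correct proof of the cited equivalence.
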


\section{The directed splitter theorem}\label{sec:formalintro}
In this section, we present the formal definitions necessary to state our main results rigorously and provide an overview of our proof strategy.

Let $H$ and $D$ be digraphs.
If $H$ is a butterfly-minor of $D$ then there exists some minimal subgraph $H'$ such that $H$ can be obtained from $H'$ by only performing butterfly-contractions.
Our strategy towards proving \cref{thm:mainthm1,thm:mainthm2} is to fix such a subgraph $H'$ and then manipulate it to find a larger digraph $H''$ as a butterfly-minor.
To do this, we need the following notions.

\paragraph{In- and out-trees and stars.}

Let $H$ and $D$ be digraphs and $\mu$ be a butterfly-model of $H$ in $D$, then we refer to the subgraph $\mu(H)$ as an \emph{$H$-expansion}.
Now fix a vertex $v\in \V{H}$ and consider the digraph $\mu(v)$.
Let $\intree{v}$ be the minimal in-arborescence in $\mu(v)$ containing all vertices of $\mu(v)$ that have in-degree at least two with respect to $\mu(H)$.
Let $\outtree{v}$ be the minimal out-arborescence in $\mu(v)$ containing all vertices of $\mu(v)$ that have out-degree at least two with respect to $\mu(H)$. %, see \cref{fig:trees}.
Next, let $\instar{v}$ be the maximal in-arborescence containing $\intree{v}$ such that all vertices but its root have out-degree one with respect to $\mu(H)$.
Similarly, let $\outstar{v}$ be the maximal out-arborescence containing $\outtree{v}$ such that all vertices but its root have in-degree one with respect to $\mu(H)$.
See \cref{fig:stars_and_trees} for an illustration of these definitions.

Note that for every $(u,v) \in \E{H}$, if the subgraph $\outstar{u} \cap \instar{v}$ is non-empty, then it is a directed path.
For every edge $(u,v) \in \E{H}$ we call the path induced by the edge-set
\begin{equation*}
    \Set{e \in \E{\mu(H)}~|~\Head{e}\in \V{\instar{v}},\Tail{e} \in \V{\outstar{u}}}
\end{equation*}
the \emph{$u$-$v$-bridge of $\mu(H)$}, or simply \emph{$u$-$v$-bridge} if the expansion is clear from the context.
We remark that digraphs of the form $\outstar{u} \cup \instar{v} \cup \Set{e~|~\Head{e}\in \V{\instar{v}},\Tail{e}\in \V{\outstar{u}}}$ consist precisely of $\outstar{u} \cup \instar{v}$ and the $u$-$v$-bridge.

Also, we call the directed path $\outstar{v} \cap \instar{v}$ the \emph{\redpath} of $v$ in $\mu$ and denote it by $\rpath{v}$.
Notice that $\rpath{v}$ contains the root of $\mu(v)$.

The \emph{branchset} of a vertex $v \in \V{H}$ is defined as $\outtree{v}\cup\intree{v}\cup\rpath{v}$.
Please note that the objects $\outstar{v}$, $\instar{v}$, $\rpath{v}$, $\intree{v}$, and $\outtree{v}$ are defined with respect to a fixed butterfly-model $\mu$.
Usually, the $H$-expansion $\mu(H)$ can be understood from the context.
In case ambiguity arises, we make this explicit by replacing $v$ with $v,\mu(H)$ in the subscript, i.e.~$\outstar{v,\mu(H)}$.

Let us proceed by formally introducing our four augmentations which were sketched in \cref{subsec:ourresult}.
Before we can do so, we need two preliminary operations.

\paragraph{Splits.}

Let $D$ be a digraph and $v$ a vertex in it with $\OutNeighbours{D}{v} = N_b \disjointUnion N_e$ such that $|N_b| \geq 1$ and $\Abs{N_e} \geq 2$.
We obtain a digraph $D'$ from $D$ by removing $v$ and replacing it with two vertices $\base{v}$, the \emph{base}, and $\exposed{v}$, the \emph{exposed vertex}, with the addition of the following edges.
For all $x \in \InNeighbours{D}{v}$ we add the edge $(x,\base{v})$, for all vertices $x \in N_b$ we add the edge $(\base{v},x)$, for all vertices $x \in N_e$ we add the edge $(\exposed{v},x)$, and, finally, we add the single edge $(\base{v},\exposed{v})$.
The digraph $D'$ is said to be obtained by an \emph{out-split} at $v$, see \cref{fig:out-split} for an illustration.

\begin{figure}[!ht]
    \centering
    \begin{tikzpicture}
        
		\node (D1) [v:ghost] {};
        \node (D2) [v:ghost,position=0:50mm from D1] {};
		
        \node (C1) [v:ghost,position=0:0mm from D1] {
			
			\begin{tikzpicture}[scale=0.8]
			
			\pgfdeclarelayer{background}
			\pgfdeclarelayer{foreground}
			
			\pgfsetlayers{background,main,foreground}
			
			\begin{pgfonlayer}{main}
			
                \node (C) [] {};
                \node (L) [position=180:28mm from C] {};
                \node (R) [position=0:40mm from C] {};
                \node (T_L) [position=180:5mm from C] {};
                \node (T_R) [position=0:5mm from C] {};

                \node (label) [position=90:5mm from C] {};
                \draw [e:main,->,line width=3pt,color=Gray] (T_L) to (T_R);
    
                \node (v) [v:main,position=180:5mm from L] {};
                \node (v_label) [v:ghost,position=90:4mm from v] {$v$};

                \node (in_b_L) [v:main,position=180:9mm from v] {};
                \node (in_a_L) [v:main,position=90:6mm from in_b_L] {};
                \node (in_c_L) [v:main,position=270:6mm from in_b_L] {};
                \node (in_label_L) [v:ghost,position=135:15mm from in_b_L] {$\InNeighbours{D}{v}$};

                \node (out_b_1_L) [v:main,position=20:14mm from v] {};
                \node (out_b_2_L) [v:main,position=90:6mm from out_b_1_L] {};
                \node (out_e_1_L) [v:main,position=340:14mm from v] {};
                \node (out_e_2_L) [v:main,position=270:6mm from out_e_1_L] {};
                \node (out_label_L) [v:ghost,position=155:13mm from out_b_2_L] {$\OutNeighbours{D}{v}$};
                \node (Nb_L) [v:ghost,position=0:8mm from out_b_2_L] {$N_b$};
                \node (Ne_L) [v:ghost,position=0:8mm from out_e_2_L] {$N_e$};

                \draw [e:main,->] (in_a_L) to (v);
                \draw [e:main,->] (in_b_L) to (v);
                \draw [e:main,->] (in_c_L) to (v);

                \draw [e:main,->] (v) to (out_b_1_L);
                \draw [e:main,->] (v) to (out_b_2_L);
                \draw [e:main,->] (v) to (out_e_1_L);
                \draw [e:main,->] (v) to (out_e_2_L);

                \node (v_base) [v:main,position=180:5mm from R,fill=PastelOrange] {};
                \node (v_base_label) [v:ghost,position=330:9.5mm from v_base] {$\base{v}$};
                \node (v_exposed) [v:main,position=0:23mm from v_base,fill=PastelOrange] {};
                \node (v_exposed_label) [v:ghost,position=30:7.5mm from v_exposed] {$\exposed{v}$};

                \node (in_b_R) [v:main,position=180:9mm from v_base] {};
                \node (in_a_R) [v:main,position=90:6mm from in_b_R] {};
                \node (in_c_R) [v:main,position=270:6mm from in_b_R] {};
                \node (in_label_R) [v:ghost,position=135:15mm from in_b_R] {$\InNeighbours{D}{v}$};

                \node (out_b_1_R) [v:main,position=20:14mm from v_base] {};
                \node (out_b_2_R) [v:main,position=90:6mm from out_b_1_R] {};
                \node (out_e_1_R) [v:main,position=340:14mm from v_exposed] {};
                \node (out_e_2_R) [v:main,position=270:6mm from out_e_1_R] {};
                \node (Nb_R) [v:ghost,position=0:8mm from out_b_2_R] {$N_b$};
                \node (Ne_R) [v:ghost,position=0:8mm from out_e_2_R] {$N_e$};

                \draw [e:main,->] (in_a_R) to (v_base);
                \draw [e:main,->] (in_b_R) to (v_base);
                \draw [e:main,->] (in_c_R) to (v_base);

                \draw [e:main,->] (v_base) to (out_b_1_R);
                \draw [e:main,->] (v_base) to (out_b_2_R);
                \draw [e:main,->] (v_exposed) to (out_e_1_R);
                \draw [e:main,->] (v_exposed) to (out_e_2_R);

                \draw [e:main,->,color=PastelOrange,line width=1.5pt] (v_base) to (v_exposed);

            \end{pgfonlayer}
			
			\begin{pgfonlayer}{background}

                \node [ellipse, fill=Gray, draw=Gray,minimum width=1.8cm,minimum height=0.6cm,color=Gray,opacity=0.2,rotate=90,position=0:0mm from in_b_L] {};
                \node [ellipse, fill=Gray, draw=Gray,minimum width=2.6cm,minimum height=0.9cm,color=Gray,opacity=0.2,rotate=90,position=0:13mm from v] {};
                \node [ellipse, fill=white, draw=white,minimum width=10mm,minimum height=5mm,color=white,rotate=90,position=90:3mm from out_b_1_L] {};
                \node [ellipse, fill=white, draw=white,minimum width=10mm,minimum height=5mm,color=white,rotate=90,position=270:3mm from out_e_1_L] {};
                \node [ellipse, fill=CornflowerBlue, draw=CornflowerBlue,minimum width=10mm,minimum height=5mm,color=CornflowerBlue,opacity=0.3,rotate=90,position=90:3mm from out_b_1_L] {};
                \node [ellipse, fill=LavenderMagenta, draw=LavenderMagenta,minimum width=10mm,minimum height=5mm,color=LavenderMagenta,opacity=0.3,rotate=90,position=270:3mm from out_e_1_L] {};
                
                \node [ellipse, fill=Gray, draw=Gray,minimum width=1.8cm,minimum height=0.6cm,color=Gray,opacity=0.2,rotate=90,position=0:0mm from in_b_R] {};
                \node [ellipse, fill=CornflowerBlue, draw=CornflowerBlue,minimum width=10mm,minimum height=5mm,color=CornflowerBlue,opacity=0.3,rotate=90,position=90:3mm from out_b_1_R] {};
                \node [ellipse, fill=LavenderMagenta, draw=LavenderMagenta,minimum width=10mm,minimum height=5mm,color=LavenderMagenta,opacity=0.3,rotate=90,position=270:3mm from out_e_1_R] {};

			\end{pgfonlayer}

			\begin{pgfonlayer}{foreground}
			\end{pgfonlayer}
			\end{tikzpicture}
			
		};

    \end{tikzpicture}
    \caption{An out-split at the vertex $v$ with $\OutNeighbours{D}{v} = N_b \disjointUnion N_e$.
        We require $|N_e| \geq 2$ and $|N_b| \geq 1$.
        Notice that the edge $(\base{v},\exposed{v})$ is butterfly-contractible.}
    \label{fig:out-split}
\end{figure}

Similarly, let $D$ be a digraph and $v$ a vertex in it with $\InNeighbours{D}{v} = N_b \disjointUnion N_e$ such that $|N_b| \geq 1$ and $|N_e| \geq 2$.
We obtain a digraph $D'$ from $D$ by removing $v$ and replacing it with two vertices $\base{v}$, the \emph{base}, and $\exposed{v}$, the \emph{exposed vertex}, with the addition of the following edges.
For all $x \in \OutNeighbours{D}{v}$ we add the edge $(\base{v},x)$, for all vertices $x \in N_b$ we add the edge $(x,\base{v})$, for all vertices $x \in N_e$ we add the edge $(x,\exposed{v})$, and, finally, we add the single edge $(\exposed{v},\base{v})$.
The digraph $D'$ is said to be obtained by an \emph{in-split} at $v$.

Note that if the original digraph $D$ is strongly 2-connected, then after an out- or in-split the exposed vertex is the only vertex in the obtained digraph $D'$ that does not have at least two out-neighbours and at least two in-neighbours.
Moreover, the unique edge joining the base and the exposed vertex of $v$ in $D'$ is either the only incoming edge or the only outgoing edge of the exposed vertex and thus butterfly-contractible.
Contracting this edge restores the digraph $D$.
Hence, $D$ is a butterfly-minor of every digraph $D'$ obtained from $D$ via out- or in-splitting a vertex.

\paragraph{Chains.}

Let $D$ be a digraph and $e = (u,v) \in \E{D}$.
Let $Q = (u=v_1, v_2, \dots, v_l=v)$ be a directed $u$-$v$-path of length at least $2$ which is internally disjoint from $D$.
Now let $x$ and $y$ be two vertices, not necessarily distinct, of $D$ and $P=(x,v_{l-1},\dots,v_2,y)$.
We say that the digraph $D' \coloneqq \Brace{D - e} \cup P \cup Q$ is obtained by \emph{adding an $x$-$y$-chain through $(u,v)$}. 
We then also say that the paths $P$ and $Q$ form a \emph{chain} in $D'$.
The \emph{length} of a chain is the number of internal vertices in $Q$.
See \cref{fig:adding_a_chain} for an illustration.

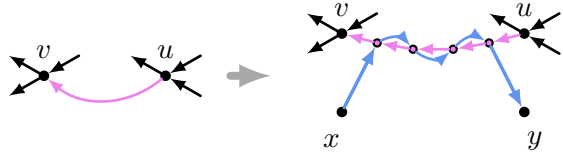
\begin{figure}[!ht]
    \centering
    \begin{tikzpicture}[scale=0.8]
			
        \pgfdeclarelayer{background}
        \pgfdeclarelayer{foreground}
        
        \pgfsetlayers{background,main,foreground}
        
        \begin{pgfonlayer}{main}
        
            \node (C) [] {};
            \node (L) [position=180:24mm from C] {};
            \node (RP) [position=0:30mm from C] {}; 
            \node (R) [position=90:7mm from RP] {};
            \node (T_L) [position=180:5mm from C] {};
            \node (T_R) [position=0:5mm from C] {};

            \node (u_L) [v:main,position=180:10mm from L] {};
            \node (v_L) [v:main,position=0:10mm from L] {};

            \node (u_label_L) [v:ghost] [position=90:4mm from u_L] {$v$};
            \node (v_label_L) [v:ghost] [position=90:4mm from v_L] {$u$};

            \node (u_out_1_L) [v:ghost,position=150:7mm from u_L] {};
            \node (u_out_2_L) [v:ghost,position=210:7mm from u_L] {};
            \node (u_in_1_L) [v:ghost,position=30:7mm from u_L] {};
            
            \node (v_out_1_L) [v:ghost,position=150:7mm from v_L] {};
            \node (v_in_1_L) [v:ghost,position=30:7mm from v_L] {};            
            \node (v_in_2_L) [v:ghost,position=330:7mm from v_L] {};

            \draw [e:main,->] (u_L) to (u_out_1_L);
            \draw [e:main,->] (u_L) to (u_out_2_L);
            \draw [e:main,->] (u_in_1_L) to (u_L);
            
            \draw [e:main,->] (v_L) to (v_out_1_L);
            \draw [e:main,->] (v_in_1_L) to (v_L);
            \draw [e:main,->] (v_in_2_L) to (v_L);

            \draw [e:main,->,bend left=40,color=LavenderMagenta] (v_L) to (u_L);
            
            \draw [e:main,->,line width=3pt,color=Gray] (T_L) to (T_R);

            \node (u_R) [v:main,position=180:15mm from R] {};
            \node (v_R) [v:main,position=0:15mm from R] {};

            \node (u_label_R) [v:ghost] [position=90:4mm from u_R] {$v$};
            \node (v_label_R) [v:ghost] [position=90:4mm from v_R] {$u$};

            \node (uP_R) [v:main,position=270:13mm from u_R] {};
            \node (vP_R) [v:main,position=270:13mm from v_R] {};

            \node (x_1) [v:main,position=345:6mm from u_R, fill=LavenderMagenta,scale=0.85] {};
            \node (x_2) [v:main,position=350:6mm from x_1, fill=LavenderMagenta,scale=0.85] {};
            \node (x_4) [v:main,position=195:6mm from v_R, fill=LavenderMagenta,scale=0.85] {};
            \node (x_3) [v:main,position=190:6mm from x_4,, fill=LavenderMagenta,scale=0.85] {};

            \node (u_label_R) [v:ghost] [position=250:5mm from uP_R] {$x$};
            \node (v_label_R) [v:ghost] [position=290:5mm from vP_R] {$y$};

            \node (u_out_1_R) [v:ghost,position=150:7mm from u_R] {};
            \node (u_out_2_R) [v:ghost,position=210:7mm from u_R] {};
            \node (u_in_1_R) [v:ghost,position=30:7mm from u_R] {};
            
            \node (v_out_1_R) [v:ghost,position=150:7mm from v_R] {};
            \node (v_in_1_R) [v:ghost,position=30:7mm from v_R] {};            
            \node (v_in_2_R) [v:ghost,position=330:7mm from v_R] {};

            \draw [e:main,->] (u_R) to (u_out_1_R);
            \draw [e:main,->] (u_R) to (u_out_2_R);
            \draw [e:main,->] (u_in_1_R) to (u_R);
            
            \draw [e:main,->] (v_R) to (v_out_1_R);
            \draw [e:main,->] (v_in_1_R) to (v_R);
            \draw [e:main,->] (v_in_2_R) to (v_R);

            \draw [e:main,->,color=LavenderMagenta] (v_R) to (x_4);
            \draw [e:main,->,color=LavenderMagenta] (x_4) to (x_3);
            \draw [e:main,->,color=LavenderMagenta] (x_3) to (x_2);
            \draw [e:main,->,color=LavenderMagenta] (x_2) to (x_1);
            \draw [e:main,->,color=LavenderMagenta] (x_1) to (u_R);

            \draw [e:main,->,color=CornflowerBlue,line width=1.3pt] (uP_R) to (x_1);
            \draw [e:main,->,color=CornflowerBlue,bend left=40] (x_1) to (x_2);
            \draw [e:main,->,color=CornflowerBlue,bend right=40] (x_2) to (x_3);
            \draw [e:main,->,color=CornflowerBlue,bend left=40] (x_3) to (x_4);
            \draw [e:main,->,color=CornflowerBlue,line width=1.3pt] (x_4) to (vP_R);

        \end{pgfonlayer}
        
        \begin{pgfonlayer}{background}
        \end{pgfonlayer}

        \begin{pgfonlayer}{foreground}
        \end{pgfonlayer}
        \end{tikzpicture}
    \caption{Adding an $x$-$y$-chain of length four through the edge $(u,v)$.}
    \label{fig:adding_a_chain}
\end{figure}

The operation of adding an $x$-$y$-chain through $(u,v)$ can be described as ``weaving'' a path through the edge $(u,v)$.
This operation allows us to replace a single edge with a bidirected path of arbitrary length.
Notice that removing the edges of $P$
leaves a digraph that is obtained by subdividing the edge $(u,v)$.
It follows that any digraph $D'$ that is obtained from $D$ by adding an $x$-$y$-chain through $(u,v)$ contains $D$ as a butterfly-minor.

\subsection{The four types of augmentations}\label{subsec:augmentations}

We are now ready to describe our four types of augmentations.
An example for each of the four types of augmentations can be found in \cref{fig:augmentations1}.

Let $H, H', D$ be digraphs.
For $\lambda \in \{\text{basic, chain, collarette, bracelet}\}$, $H'$ is a \emph{$\lambda$~$H$-augmentation}, for short an \emph{$H$-augmentation}, if it is obtained from $H$ by performing a $\lambda$~augmentation as defined below.
We say that $D$ \emph{admits} a $\lambda$ $H$-augmentation if $D$ contains a butterfly-minor that is a $\lambda$ $H$-augmentation.

Let $D$ be a digraph and let $u, v$ be distinct vertices of $D$.
We define a digraph $D'$ obtained from $D$ as follows.
\begin{itemize}
    \item setting $D'\coloneqq D$, or
    \item performing an out-split at $u$, or
    \item performing an in-split at $v$, or
    \item performing an out-split at $u$ and an in-split at $v$.
\end{itemize}
For both $x \in \{u,v\}$ we define
\begin{align*}
    \mathsf{b}(x) &{}\coloneqq \begin{cases}  \base{x} & \text{in case we performed a split at $x$}\\
    x & \text{otherwise, and}
    \end{cases}\\
    \mathsf{e}(x) &{}\coloneqq \begin{cases}  \exposed{x} & \text{in case we performed a split at $x$}\\
    x & \text{otherwise.}
    \end{cases}
\end{align*}

\paragraph{Basic augmentation:}
We say that a digraph $D^*$ is obtained from $D$ by a \emph{basic augmentation} if either
\begin{itemize}
    \item $D^*\coloneqq D' + (\mathsf{e}(v), \mathsf{e}(u))$ for $(\mathsf{e}(v), \mathsf{e}(u)) \notin E(D')$, or
    \item $D^*$ is obtained from $D$ by performing firstly an out-split at some vertex $w \in V(D)$, secondly an in-split at $\base{w}$ and finally adding the edge $(\exposed{\base{w}}, \exposed{w})$.
\end{itemize}

\paragraph{Chain augmentation:}
Let $e$ be an edge of $D'$ with $\Tail{e} \in \{\mathsf{b}(u), \mathsf{e}(u)\}$ and $\Head{e} \in \{\mathsf{b}(v), \mathsf{e}(v)\}$.
We say that a digraph $D^*$ obtained from $D'$ by adding an $\mathsf{e}(v)$-$\mathsf{e}(u)$-chain through the edge $e$ is obtained from $D$ by a \emph{chain augmentation}.

\paragraph{Collarette augmentation:}
Let $u$ and $v$ be distinct vertices of $D$ such that $(v,u),(u,v)\in \E{D}$.
Let $D''$ be the digraph obtained from $D$ by subdividing the edge $(v,u)$ with a vertex $x$ and subdividing the edge $(u,v)$ with a vertex $y$.
We say that a digraph $D^*$ is obtained from $D$ by a \emph{collarette augmentation} if it is obtained from $D''$ by first adding the edge $(x,y)$ and then a $y$-$x$-chain through $(x,y)$.

\paragraph{Bracelet augmentation:}
Finally, let $w\in \V{D}$ be a vertex with $\InNeighbours{}{w}=\{x,a\}$ and $\OutNeighbours{}{w}=\{ y,b\}$ such that
\begin{itemize}
    \item either $a=y$ or $(x,a), (a,y) \in \E{H}$, and
    \item either $x=b$ or $(x,b), (b,y) \in \E{H}$.
\end{itemize}
We say that a digraph $D^*$ is obtained from $D$ by a \emph{bracelet augmentation} if either
\begin{enumerate}
    \item $D^*$ is obtained from $D$ by subdividing the edge $(a,w)$ with a vertex $z$ and adding the edges $(x,z)$ and $(z,y)$, or
    \item $D^*$ is obtained from $D$ by subdividing the edge $(w,b)$ with a vertex $z$ and adding the edges $(x,z)$ and $(z,y)$.
\end{enumerate}

\subsection{A note on the types of augmentations}\label{subsec:augment_discussion}

One of the four types of augmentations appearing in our main theorems allows the introduction of an unbounded number of vertices in a single step.
This is in stark contrast to similar generation theorems based on minor-like containment relations and appears to be more reminiscent of the subgraph relation that plays a role in Whitney's theorem on ear decompositions~\cite{Whitney1932}.
This is due to two reasons in particular.
The first one is rooted in the following simple observation:
A graph $G$ is $2$-connected if and only if $\Bidirected{G}{}$ is strongly $2$-connected.
Thus, \cref{thm:mainthm1,thm:mainthm2} must necessarily contain Whitney's result on ear-decompositions for $2$-connected undirected graphs.
To see that this is indeed true observe that the addition of an undirected $H$-ear $P$ can be emulated in two steps.
Let $x$ and $y$ be the endpoints of $P$.
\begin{enumerate}
    \item If $\{x, y\}\in \E{H}$, then first use a chain augmentation to ``weave'' a directed $x$-$y$-path of length $|P|-1$ through the edge $(y,x) \in E(\Bidirected{H}{})$ and then add the edge $(y, x)$ back in using a basic augmentation.
    \item If $\{x, y\} \not\in \E{H}$, then first add the edge $(y, x)$ using a basic augmentation and then ``weave'' in a directed $x$-$y$-path of length $|P|-1$ through the edge $(y,x)$.
\end{enumerate}
The second reason can be found in the existence of infinite anti-chains in the butterfly-minor relation.

\paragraph{Butterfly-minors and infinite anti-chains.}

It is a well-established fact that the butterfly-minor relation is not a well-quasi-order on the family of digraphs.
In fact, there even exist infinite anti-chains purely consisting of strongly $2$-connected digraphs.
For this reason, it appears unlikely that a generation theorem for strongly $2$-connected digraphs exists that is compatible with the butterfly-minor relation and only uses a finite number of local operations.
However, it appears that the difficulties arising from the existence of strongly $2$-connected butterfly-minor anti-chains for a generation theorem boil down to a single, well-describable phenomenon.
Let $P$ be a path on at least three vertices.
Observe that deleting any edge from $\Bidirected{P}{}$ results in a digraph that is not strongly connected.
However, the only way to create a butterfly-contractible edge in $\Bidirected{P}{}$ is by deleting some edge.
Let $Q$ be another path with $|\E{P}|-|E(Q)|=1$, and for each $R\in\{ P, Q\}$ let $H_R$ be the graph obtained from $R$ by introducing, for each endpoint $x$ of $R$, a four-cycle that has exactly the vertex $x$ in common with the rest of the graph.
As a result, $\Bidirected{H}{P}$ does not contain $\Bidirected{H}{Q}$ as a butterfly-minor, while $H_Q$ can be obtained from $H_P$ by contracting a single edge of $P$.
See \cref{fig:antichain} for an illustration.

\begin{figure}[!ht]
    \centering
    \begin{tikzpicture}
		
		\node (D1) [v:ghost] {};
        \node (D2) [v:ghost,position=0:50mm from D1] {};
		
        \node (C1) [v:ghost,position=0:0mm from D1] {
			
			\begin{tikzpicture}[scale=0.8]
			
			\pgfdeclarelayer{background}
			\pgfdeclarelayer{foreground}
			
			\pgfsetlayers{background,main,foreground}
			
			\begin{pgfonlayer}{main}
			
			\node (C) [] {};

            \node (a) [v:main] {};
            \node (b) [v:main,position=0:9mm from a] {};
            \node (d) [v:main,position=0:9mm from b] {};
            
            \node(x_1) [v:main, position=135:9mm from a] {};
            \node(y_1) [v:main, position=225:9mm from x_1] {};
            \node(z_1) [v:main, position=315:9mm from y_1] {};

            \node(x_2) [v:main, position=315:9mm from d] {};
            \node(y_2) [v:main, position=45:9mm from x_2] {};
            \node(z_2) [v:main, position=135:9mm from y_2] {};

            \draw[e:main,->,bend left=25] (a) to (b);
            \draw[e:main,->,bend left=25] (b) to (a);
            \draw[e:main,->,bend right=25] (b) to (d);
            \draw[e:main,->,bend right=25] (d) to (b);

            \draw[e:main,->,bend right=25] (a) to (x_1);
            \draw[e:main,->,bend left=25] (x_1) to (y_1);
            \draw[e:main,->,bend right=25] (y_1) to (z_1);
            \draw[e:main,->,bend left=25] (z_1) to (a);

            \draw[e:main,->,bend right=25] (x_1) to (a);
            \draw[e:main,->,bend left=25] (y_1) to (x_1);
            \draw[e:main,->,bend right=25] (z_1) to (y_1);
            \draw[e:main,->,bend left=25] (a) to (z_1);

            \draw[e:main,->,bend right=25] (d) to (x_2);
            \draw[e:main,->,bend left=25] (x_2) to (y_2);
            \draw[e:main,->,bend right=25] (y_2) to (z_2);
            \draw[e:main,->,bend left=25] (z_2) to (d);

            \draw[e:main,->,bend right=25] (x_2) to (d);
            \draw[e:main,->,bend left=25] (y_2) to (x_2);
            \draw[e:main,->,bend right=25] (z_2) to (y_2);
            \draw[e:main,->,bend left=25] (d) to (z_2);

            \end{pgfonlayer}
			
			\begin{pgfonlayer}{background}
			\end{pgfonlayer}
   
			\begin{pgfonlayer}{foreground}
			\end{pgfonlayer}
			\end{tikzpicture}
			
		};
  
		\node (C2) [v:ghost,position=0:0mm from D2] {
			
			\begin{tikzpicture}[scale=0.8]
			
			\pgfdeclarelayer{background}
			\pgfdeclarelayer{foreground}
			
			\pgfsetlayers{background,main,foreground}
			
			\begin{pgfonlayer}{main}
			
			\node (C) [] {};

            \node (a) [v:main] {};
            \node (b) [v:main,position=0:9mm from a] {};
            \node (c) [v:main,position=0:9mm from b] {};
            \node (d) [v:main,position=0:9mm from c] {};
            
            \node(x_1) [v:main, position=135:9mm from a] {};
            \node(y_1) [v:main, position=225:9mm from x_1] {};
            \node(z_1) [v:main, position=315:9mm from y_1] {};

            \node(x_2) [v:main, position=315:9mm from d] {};
            \node(y_2) [v:main, position=45:9mm from x_2] {};
            \node(z_2) [v:main, position=135:9mm from y_2] {};

            \draw[e:main,->,bend left=25] (a) to (b);
            \draw[e:main,->,bend left=25] (b) to (a);
            \draw[e:main,->,bend right=25] (b) to (c);
            \draw[e:main,->,bend right=25] (c) to (b);
            \draw[e:main,->,bend left=25] (c) to (d);
            \draw[e:main,->,bend left=25] (d) to (c);

            \draw[e:main,->,bend right=25] (a) to (x_1);
            \draw[e:main,->,bend left=25] (x_1) to (y_1);
            \draw[e:main,->,bend right=25] (y_1) to (z_1);
            \draw[e:main,->,bend left=25] (z_1) to (a);

            \draw[e:main,->,bend right=25] (x_1) to (a);
            \draw[e:main,->,bend left=25] (y_1) to (x_1);
            \draw[e:main,->,bend right=25] (z_1) to (y_1);
            \draw[e:main,->,bend left=25] (a) to (z_1);

            \draw[e:main,->,bend right=25] (d) to (x_2);
            \draw[e:main,->,bend left=25] (x_2) to (y_2);
            \draw[e:main,->,bend right=25] (y_2) to (z_2);
            \draw[e:main,->,bend left=25] (z_2) to (d);

            \draw[e:main,->,bend right=25] (x_2) to (d);
            \draw[e:main,->,bend left=25] (y_2) to (x_2);
            \draw[e:main,->,bend right=25] (z_2) to (y_2);
            \draw[e:main,->,bend left=25] (d) to (z_2);

            \end{pgfonlayer}
			
			\begin{pgfonlayer}{background}
			\end{pgfonlayer}
   
			\begin{pgfonlayer}{foreground}
			\end{pgfonlayer}
			\end{tikzpicture}
			
		};

	\end{tikzpicture}
    \caption{Two members of an infinite anti-chain of the butterfly-minor relation: the digraph $\Bidirected{H}{Q}$ on the left and the digraph $\Bidirected{H}{P}$ on the right.
    While $H_P$ contains $H_Q$ as a minor, $\Bidirected{H}{Q}$ and $\Bidirected{H}{P}$ are incomparable under the butterfly-minor relation.}
    \label{fig:antichain}
\end{figure}
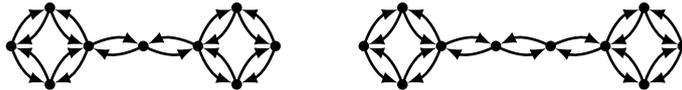
Notice that our base family $\mathcal{B}_2$ is also an infinite anti-chain of the butterfly-minor relation.
We mention the example above because it illustrates one of the major deficiencies of the butterfly-minor relation:
butterfly-minors are unable to reduce the length of ``bidirected paths'', that is, digraphs which are obtained from an undirected path by replacing every edge with a digon while maintaining structures connected to the endpoints of such a path.
This is precisely the reason why our four types of augmentations must contain the operation of ``weaving'' a directed path through another edge.

In the presence of \cref{thm:mainthm1,thm:mainthm2}, it appears that these ``bidirected paths'' could be the only obstruction to digraphs being well-quasi-ordered by butterfly-minors.
A result by Muzi~\cite{muzi2017paths}
shows that every butterfly-minor-closed class of digraphs excluding an \emph{antidirected path}\footnote{A digraph is an antidirected path if its underlying undirected graph is a path and, by reversing the direction of every other edge one obtains a directed path.} is well-quasi-ordered under the butterfly-minor relation.
The family of antidirected paths is a well-known infinite anti-chain for the butterfly-minor relation consisting exclusively of acyclic digraphs.
Notice that every bidirected path contains two edge-disjoint antidirected paths as subgraphs.

\subsection{Augmentations preserve strong \texorpdfstring{$2$}{2}-connectivity}
\label{sec:construction_str_2_conn}

Before we delve into the more involved part of the proof for our main theorem, we establish that the four types of augmentations defined above indeed preserve strong $2$-connectivity.

\augmentationAgainStronglyTwoConnected*

\begin{proof}
    \setcounter{claimcounter}{0}

    We have that $D^{*}$ is a $\lambda$~$D$-augmentation where $\lambda \in \Set{\text{basic, chain, collarette, bracelet}}$.
    
    First, we consider the very special case that $\Abs{V(D)} = 3$, i.e.~$D$ is isomorphic to $\Bidirected{K}{3}$.
    Note that no split can be performed on $D$ since any vertex of $D$ has in- and out-degree $2$.
    Since $D$ is complete, no basic augmentation can be applied to $D$.
    If $\lambda = \text{chain}$, $\mathsf{e}(u) = \mathsf{b}(u)$ and $\mathsf{e}(v) = \mathsf{b}(v)$ and thus $D^*$ is strongly $2$-connected.
    Similarly, if $\lambda \in \Set{\text{collarette, bracelet}}$, $D^*$ is strongly $2$-connected.

    Thus, we can assume that $\Abs{V(D)} \geq 4$.
    We have to show that for arbitrary distinct vertices $x, y, z \in V(D^*)$, there is an $x$-$y$-path in $D^{*} - z$.
    
    Depending on the $D$-augmentation performed,
    we define a set of vertices $W \subseteq V(D^{*})$ which
    are ``related'' to the augmentation.
    We ensure that for every vertex $w \in W$
    there exist two $w$-$(V(D^{*}) \setminus W)$-paths that are disjoint apart from $w$ as well as two $(V(D^{*}) \setminus W)$-$w$-paths that are disjoint apart from $w$.
    Further, we show that:
    \begin{claim}\label{claim:strong_2conn} Let $x, y \in V(D^{*}) \setminus (W \cup \{z\})$. Then, there exists an $x$-$y$-path in $D^{*} - z$.
    \end{claim}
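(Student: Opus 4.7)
The plan is to reduce the existence of an $x$-$y$-path in $D^{*}-z$ to a path-finding problem in the strongly $2$-connected digraph $D$, and then lift the resulting path back up to $D^{*}$, invoking the two-path property of $W$-vertices as a detour mechanism only where the lift would otherwise cross $z$. To set this up, introduce a natural projection $\phi \colon V(D^{*}) \setminus W \to V(D)$ which collapses each split pair $\{\mathsf{b}(v),\mathsf{e}(v)\}$ back to $v$ and is the identity on vertices not affected by any split. Up to $\phi$, the induced subgraph $D^{*}[V(D^{*}) \setminus W]$ agrees with $D$ except that the edge $e$ of $D$ that is ``weaved through'' by a chain or collarette augmentation is missing; any such $e$ is compensated in $D^{*}$ by a designated replacement path $\Pi_{e}$ whose internal vertices all lie in $W$.

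Now distinguish two cases. If $z \notin W$, set $z' := \phi(z) \in V(D)$; strong $2$-connectivity of $D$ yields a $\phi(x)$-$\phi(y)$-path $P$ in $D - z'$, which I would lift edge by edge to a walk in $D^{*} - z$: edges of $P$ not meeting a split are kept as is, edges at a split are routed through the appropriate base-exposed edge (using the copy distinct from $z$), and the edge $e$, when used, is replaced by $\Pi_{e}$, which lies entirely in $W$ and so avoids $z$. If $z \in W$, then since $D - e$ is still strongly connected whenever $D$ is strongly $2$-connected (by directed Menger applied to $(u,v)$ and to any other pair), a $\phi(x)$-$\phi(y)$-path $P$ in $D - e$ exists and lifts directly to a path in $V(D^{*}) \setminus W$, which is contained in $V(D^{*}) \setminus \{z\}$, so no detour through $W$ is needed at all.

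The main obstacle I anticipate is the interaction of splits with the removal of $z$ when $z$ is itself the exposed copy of a split, say $z = \mathsf{e}(q)$: the lifted path must then interact with $q$ only through $\mathsf{b}(q)$, which may force a particular choice of $\phi(x)$-$\phi(y)$-path in $D$, and when $x$ or $y$ equals $\mathsf{b}(q)$ the usual ``remove $z'$ from $D$'' step breaks down and has to be replaced by picking one of the two internally disjoint $q$-$\phi(y)$-paths given by strong $2$-connectivity whose first edge points into $N_{b}$. The two-path property of each $w \in W$ is exactly the ingredient that guarantees, in the remaining subcases where $\Pi_{e}$ partially clashes with $z$, enough alternative routes from $w$ into the core $V(D^{*}) \setminus W$ to complete the lift via a purely local detour.
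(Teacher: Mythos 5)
Your high-level strategy --- project to $D$, find a path via strong $2$-connectivity, lift back --- matches the paper, and your case $z\notin W$ is essentially the paper's argument. The case $z\in W$ has a genuine gap. You assert that a $\phi(x)$-$\phi(y)$-path $P$ in $D-e$ ``lifts directly to a path in $V(D^*)\setminus W$'', but this fails whenever $P$ passes through a vertex $q$ affected by a split: in $D^*$ every in-edge of $q$ enters $\mathsf{b}(q)\in W$, so the lift must visit $\mathsf{b}(q)$, and must also visit $\mathsf{e}(q)\in W$ when $P$ exits $q$ to $N_e$. Since $x,y\notin W$, nothing forces $P$ to avoid $q$, so when $z=\mathsf{b}(q)$ the lift simply cannot avoid $z$, and when $z=\mathsf{e}(q)$ you would need to force $P$ to exit $q$ only through $N_b$-edges --- a constraint you name at the end but do not show how to enforce (and the worry about $x$ or $y$ equalling $\mathsf{b}(q)$ is moot, as $x,y\notin W$). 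Two smaller points: the claim that $D^*[V(D^*)\setminus W]$ agrees with $D-e$ modulo $\phi$ is false (that induced subgraph is $D$ with $u$ and $v$ deleted, and $\phi$ is the identity on its stated domain), and for basic and bracelet augmentations no edge $e$ of $D$ is deleted, so ``find a path in $D-e$'' is undefined there.

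The paper avoids all of this by deleting a vertex of $D$ rather than an edge: it sets $\hat z\in V(D)$ to be the original (pre-split, pre-subdivision) vertex that $z$ came from when $z\in W$, and $\hat z:=z$ otherwise, then takes an $x$-$y$-path in $D-\hat z$ by strong $2$-connectivity. Because that path avoids $\hat z$ in $D$, its lift never touches $\mathsf{b}(\hat z)$, $\mathsf{e}(\hat z)$, nor any chain or subdivision vertex hanging off $\hat z$, and so it avoids $z$ for free. Your observation that $D-e$ is strongly connected when $D$ is strongly $2$-connected is correct, but it is the wrong deletion for this lift.
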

    
    Assuming the stated property of $W$ and \cref{claim:strong_2conn}, we can show how to find an $x$-$y$ path in the case that at least one of $x$ or $y$ is in $W$.
    Without loss of generality, let $x \in W$.
    By the previous observation for the vertices in $W$, there exist two $x$-$(V(D^*) \setminus W)$-paths that are disjoint apart from $x$ and therefore there is an $x$-$(V(D^*) \setminus W)$-path $P$ in $D^* - z$.
    Similarly, if $y \in W$, then there exists a $(V(D^*) \setminus W)$-$y$-path $Q$ in $D^* - z$.
    If $y \not\in W$, we set $Q \coloneqq  \{ y \}$.
    Then, by \cref{claim:strong_2conn}, there exists a path $R$ in $D^* - z$ starting at $\End{P}$ and ending in $\Start{Q}$.
    Thus, $P \cup R \cup Q$ yields an $x$-$y$-walk in $D^* - z$, which gives us the required $x$-$y$-path in $D^{*} - z$.

    Hence, it suffices to appropriately define the set $W$ and prove \cref{claim:strong_2conn} for each possible type $\lambda$.
    
    \paragraph{Consider the case that $\lambda = \text{basic}$.}
    Let $u, v \in \V{D}$ be the unique vertices such that the edge added by the augmentation corresponds to the edge $(\mathsf{e}(v), \mathsf{e}(u)) \in E(D^{*})$.
    Let $W\coloneqq  \{\mathsf{b}(u), \mathsf{e}(u), \mathsf{b}(v), \mathsf{e}(v) \}$.
    Firstly, observe that the in- and out-degree conditions on $u$ and $v$ guarantee that the vertices in $W$ have the required property.

    \begin{claimproof}[Proof of \cref{claim:strong_2conn}]
        Let $\hat z\coloneqq  u$ if $z \in \{ \mathsf{b}(u), \mathsf{e}(u) \}$, let $\hat z\coloneqq  v$ if $z \in \{ \mathsf{b}(v), \mathsf{e}(v) \}$, or otherwise $\hat z \coloneqq  z$.
        Then there is an $x$-$y$-path in $D - \hat z$ as $D$ is strongly 2-connected.
        By possibly inserting the edges $(\mathsf{b}(u), \mathsf{e}(u))$ and/or $(\mathsf{e}(v), \mathsf{b}(v))$ this path extends to an $x$-$y$-path in $D^* - z$.
    \end{claimproof}
    
    \paragraph{Consider the case that $\lambda = \text{chain}$.}
    Let $u, v \in \V{D}$ be the unique vertices such that $D^{*}$ is obtained from $D$ by the addition of an $\mathsf{e}(v)$-$\mathsf{e}(u)$-chain through one of the edges in $\Set{(\mathsf{b}({u}),\mathsf{b}({v})),(\mathsf{b}({u}),\mathsf{e}({v})),(\mathsf{e}({u}),\mathsf{b}({v})),(\mathsf{e}({u}),\mathsf{b}({v})),(\mathsf{e}(u),\mathsf{e}(v))}$.
    Then, define $W$ as the union of $\{\mathsf{b}(u), \mathsf{e}(u), \mathsf{b}(v), \mathsf{e}(v) \}$ and the set of internal vertices of the chain.
    Observe that the in- and out-degree conditions on $u$ and $v$ and the definition of a chain guarantee that the vertices in $W$ have the required property.

    \begin{claimproof}[Proof of \cref{claim:strong_2conn}]
    Let $\hat z\coloneqq  u$ if $z \in W \setminus \{ \mathsf{b}(v), \mathsf{e}(v) \}$, let $\hat z\coloneqq  v$ if $z \in \{ \mathsf{b}(v), \mathsf{e}(v) \}$, or otherwise $\hat z \coloneqq z$.
    Then there is an $x$-$y$-path in $D - \hat z$ as $D$ is strongly 2-connected.
    By possibly inserting the edges $(\mathsf{b}(u), \mathsf{e}(u))$ and/or $(\mathsf{e}(v), \mathsf{b}(v))$, or replacing the edge $(u, v)$ with the $\mathsf{b}(u)$-$\mathsf{b}(v)$-path that is part of the chain, this path extends to an $x$-$y$-path in $D^* - z$.
    \end{claimproof}
        
    \paragraph{Consider the case that $\lambda = \text{collarette}$.}
    Let $u, v \in \V{D}$ be the unique vertices such that $D^{*}$ is the digraph obtained from $D$ by subdividing the edge $(v,u)$ with a vertex $a$, subdividing the edge $(u,v)$ with a vertex $b$, adding the edge $(a,b)$, and then adding a $b$-$a$-chain through $(a,b)$.
    Then, define $W$ as the union of $\Set{u, v, a, b}$ and the set of internal vertices of the chain.

    Observe that, since $D$ is strongly $2$-connected and $\Abs{V(D)} \geq 4$, we can guarantee that the vertices in $W$ have the required property.
    Indeed, assume it is false. 
    In particular, assume that there is a vertex $w \in W$ such that there are no two paths $w$-$(V(D^{*}) \setminus W)$ which are disjoint apart from $w$.
    This in fact implies that in $D$ both $u$ and $v$ have the same unique out-neighbour, which contradicts the fact that $D$ is strongly $2$-connected.
    The other case is symmetrical.

    \begin{claimproof}[Proof of \cref{claim:strong_2conn}]
    First, observe that if $z$ is any of the internal vertices of the added $b$-$a$-chain then we immediately conclude.
    Indeed, any $x$-$y$-path in $D$ can be extended to a $x$-$y$ path in $D^{*}$ avoiding such a $z$.
    Otherwise, let $\hat z \coloneqq  u$ if $z = a$, let $\hat z \coloneqq  v$ if $z = b$, or otherwise $\hat z \coloneqq  z$.
    Then there is an $x$-$y$-path in $D - \hat z$ as $D$ is strongly 2-connected.
    By possibly replacing the edge $(u, v)$ with the path $(u, b), (b, v)$ and possibly replacing the edge $(v, u)$ with the path $(v, a), (a, u)$, this path extends to an $x$-$y$-path in $D^* - z$.
    \end{claimproof}

    \paragraph{Consider the case that $\lambda = \text{bracelet}$.}
    Let $w \in \V{D}$ be the unique vertex with $\InNeighbours{}{w}=\{u,a\}$ and $\OutNeighbours{}{w}=\{v,b\}$, where either $a=v$ or $(u,a), (a,v) \in \E{D}$, and either $u=b$ or $(u,b), (b,v) \in \E{D}$, such that $D^*$ is obtained from $D$ by
    subdividing either the edge $(a,w)$ or the edge $(w, b)$ with a vertex $w'$ and adding the edges $(u,w')$ and $(w',v)$.
    We define $W = \Set{w, w'}$ and observe that the in- and out-degrees as well as the way the vertices in $\Set{ a, v, u, b, w }$ are connected, guarantees that the vertices in $W$ have the required property.

    \begin{claimproof}[Proof of \cref{claim:strong_2conn}]
    Let $\hat z \coloneqq w$ if $z = w'$, or otherwise $\hat z \coloneqq z$.
    Then there is an $x$-$y$-path in $D - \hat z$ as $D$ is strongly 2-connected.
    By possibly replacing the edges $(a, w)$ or $(w, b)$ by the paths $(a,w'), (w', w)$, or $(w, w'), (w', b)$ this path extends to an $x$-$y$-path in $D^* - z$.
    \end{claimproof}

This concludes the proof.
\end{proof}

Let $\Inverted{D}$ be the digraph obtained from a digraph $D$ by inverting all edges.
In some proof steps, it is necessary to distinguish cases where some vertex has large in- or large out-degree.
In these situations it is convenient to only treat one of the two cases and complete the other by using a symmetry argument.
In our case this symmetry argument would be to say that inverting all edges in the involved digraphs yields the situation of the case we treated explicitly.
The purpose of the next lemma is to give us the right to make such an argument.

\begin{lemma} \label{lem:inverted_augmentation}
    Let $D$, $H$, and $K$ be strongly $2$-connected digraphs such that $H$ and $K$ are butterfly-minors of $D$ and $K$ is an $H$-augmentation.
    Then $\Inverted{H}$ and $\Inverted{K}$ are butterfly-minors of $\Inverted{D}$ and $\Inverted{K}$ is an $\Inverted{H}$-augmentation.
\end{lemma}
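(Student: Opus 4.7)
The plan is to prove the lemma in two stages: first, establish that inversion is compatible with the butterfly-minor relation, and second, verify that each of the four augmentation types is preserved (as a type) under inversion.

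For the first stage, I would observe that inversion $D \mapsto \Inverted{D}$ is an involution on digraphs that clearly preserves edge and vertex deletions. The key point is butterfly-contraction: an edge $(u,v) \in \E{D}$ is butterfly-contractible if it is the unique outgoing edge of $u$ or the unique incoming edge of $v$. Under inversion, this edge becomes $(v,u) \in E(\Inverted{D})$, and the two conditions translate to $(v,u)$ being the unique outgoing edge of $v$ in $\Inverted{D}$ (which corresponds precisely to $(u,v)$ being the unique incoming edge of $v$ in $D$) or the unique incoming edge of $u$ in $\Inverted{D}$. Hence butterfly-contractibility is symmetric under inversion, and the digraph obtained by contracting $(u,v)$ in $D$ and then inverting is isomorphic to the digraph obtained by inverting $D$ and then contracting $(v,u)$. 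An induction on the length of the butterfly-minor-witnessing sequence then yields that $\Inverted{H}$ is a butterfly-minor of $\Inverted{D}$, and the same argument applied to $\Inverted{K}$ and $\Inverted{D}$ gives the other half.

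For the second stage, I would treat each augmentation type $\lambda \in \{\text{basic, chain, collarette, bracelet}\}$ separately and show that the inverted operation is again of type $\lambda$. The first observation to set up is that an out-split at a vertex $v$ in $D$, followed by inversion, is exactly an in-split at $v$ in $\Inverted{D}$ (with the roles of the base vertex $\base{v}$ and exposed vertex $\exposed{v}$ naturally preserved, since the butterfly-contractible edge $(\base{v},\exposed{v})$ becomes the butterfly-contractible edge $(\exposed{v},\base{v})$); symmetrically, in-splits become out-splits. This shows that the auxiliary digraph $D'$ appearing in the setup of the augmentations in~\cref{subsec:augmentations} is transformed under inversion into the corresponding $D'$ obtained from $\Inverted{D}$ by the dual choice of splits, with the roles of $u$ and $v$ interchanged.

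Given this, the case checks are routine. For a basic augmentation, adding the edge $(\mathsf{e}(v), \mathsf{e}(u))$ becomes, after inversion, the addition of $(\mathsf{e}(u), \mathsf{e}(v))$ after the dual splits in $\Inverted{D}$, which is a basic augmentation with the roles of $u$ and $v$ swapped; the subcase with the double split at a single vertex $w$ is similarly symmetric. For a chain augmentation, the weaving paths $P$ and $Q$ through an edge $e$ have their directions reversed under inversion, so the resulting configuration in $\Inverted{D}$ is a chain of the same length through the inverted edge, from $\mathsf{e}(u)$ to $\mathsf{e}(v)$. For a collarette augmentation, the digon $(u,v),(v,u)$ is preserved as a digon under inversion, the subdivision vertices $x$ and $y$ switch roles, the added edge $(x,y)$ becomes $(y,x)$, and the $y$-$x$-chain becomes an $x$-$y$-chain, yielding a collarette augmentation of $\Inverted{D}$. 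For a bracelet augmentation at a vertex $w$ with prescribed in- and out-neighbourhoods, inversion swaps the roles of $\InNeighbours{}{w}$ and $\OutNeighbours{}{w}$, swaps $a$ with $b$ and $x$ with $y$, and turns case (i) (subdividing $(a,w)$) into case (ii) (subdividing $(w,b)$) of the dual augmentation, and vice versa. In each case the output of the inverted construction matches the definition of the same augmentation type applied to $\Inverted{H}$, completing the proof. The main obstacle is simply bookkeeping: keeping track of which vertex plays the role of $u$ versus $v$, and of $\base{\cdot}$ versus $\exposed{\cdot}$, under the reversal, but no new idea is needed beyond the symmetry of butterfly-contractibility.
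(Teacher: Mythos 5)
Your proposal takes essentially the same approach as the paper's proof: first establish that butterfly-contractibility (and hence the butterfly-minor relation) is preserved under inversion via the same symmetry observation plus an induction, and then argue that each augmentation type is preserved with the roles of $u$ and $v$ (and splits) swapped. Your second stage is more explicit than the paper's, which dispenses with the augmentation part in one short paragraph by noting the out-degree/in-degree role swap, but the underlying argument is the same.
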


\begin{proof}
   First notice that for every digraph $J$ and every directed walk $W$ in $J$ we have that $\Inverted{W}$ is also a directed walk in $\Inverted{J}$.
   This has the following important consequences.
    \begin{itemize}
        \item if $C$ is a directed cycle in $J$, then $\Inverted{C}$ is a directed cycle in $\Inverted{J}$,
        \item if $\{P_1,\dots,P_{\ell} \}$ is a collection of pairwise vertex-disjoint paths in $J$, then $\{\Inverted{P_1},\dots,\Inverted{P_{\ell}} \}$ is a collection of pairwise vertex-disjoint paths in $\Inverted{J}$ such that $\Start{P_i}=\End{\Inverted{P_i}}$ and $\End{P_i}=\Start{\Inverted{P_i}}$.
    \end{itemize}
    From this list of observations it follows immediately that $\Inverted{D}$, $\Inverted{H}$, and $\Inverted{K}$ are strongly $2$-connected.

    By definition, a digraph $F$ is a butterfly-minor of a digraph $J$ if it can be obtained from $J$ by a sequence of vertex deletions, edge deletions, and butterfly-contractions.
    Let $(u,v)$ be an edge in $J$.
    It is butterfly-contractible if and only if it is either the only edge of $J$ with head $v$ or the only edge of $J$ with tail $u$.
    Notice that in this case we get that $(v,u)$ is either the only edge of $\Inverted{J}$ with tail $v$ or the only edge of $\Inverted{J}$ with head $u$.
    Therefore, an edge $(u,v)$ of $J$ is butterfly-contractible if and only if $(v,u)$ is butterfly-contractible in $\Inverted{J}$.

    We now prove by induction on $(|E(J)|-|E(F)|)+(|V(J)|-|V(F)|)$ that $F$ being a butterfly-minor of $J$ implies that $\Inverted{F}$ is a butterfly-minor of $\Inverted{J}$.
    In case $(|E(J)|-|E(F)|)+(|V(J)|-|V(F)|)=0$ we have $F=J$ and thus $\Inverted{F}=\Inverted{J}$ and we are done.
    
    Now suppose there is some $x\in V(J)\cup E(J)$ such that $J-x$ contains $F$ as a butterfly-minor.
    Then we may use the induction hypothesis and obtain that $\Inverted{F}$ is a butterfly-minor of $\Inverted{J}-x$ and thus, $\Inverted{F}$ is a butterfly-minor of $\Inverted{J}$.

    Therefore, we may assume that no such $x$ exists and $F$ can be obtained from $J$ purely by butterfly-contractions.
    Let $(u,v)\in E(J)$ be some butterfly-contractible edge and let $J'$ be obtained from $J$ by contracting $(u,v)$ and let $x$ be the resulting vertex.
    By our discussion above, $(v,u)$ is butterfly-contractible in $\Inverted{J}$.
    Let $\Inverted{J}^{\star}$
    be obtained by contracting $(v,u)$ in $\Inverted{J}$ and let $y$ be the resulting vertex.
    Notice that $\Inverted{J}^{\star}-y=\Inverted{J'}-x$.
    Now, if some edge $(z,w)$, $z\in\{ u,v\}$, $w\in V(J)\setminus\{ u,v\}$, exists in $\Inverted{J}$, then $(w,z)$ exists in $J$, and while $y$ is the tail of the resulting edge in $\Inverted{J}^{\star}$, $x$ is the head of the resulting edge in $J'$, hence $x$ is the head of the resulting edge in $\Inverted{J'}$.
    The same holds true (with the words ``head'' and ``tail'' swapped) if $(w,z)\in E(\Inverted{J})$.
    Therefore, we obtain $\Inverted{J'}=\Inverted{J}^{\star}$.
    As $J'$ still contains $F$ as a butterfly-minor, $\Inverted{J'}$ is a butterfly-minor of $\Inverted{J}$, and $(|E(J')|-|E(F')|)+(|V(J')|-|V(F')|)<(|E(J)|-|E(F)|)+(|V(J)|-|V(F)|)$, the claim now follows from the induction hypothesis.

    Thus, it follows that both $\Inverted{H}$ and $\Inverted{K}$ are butterfly-minors of $\Inverted{D}$.

    What remains to discuss is that $\Inverted{K}$ is indeed an $\Inverted{H}$-augmentation.
    To see that this is true let $u$ and $v$ be the (up to two) vertices involved in the $H$-augmentation that yields $K$.
    If an out-split is performed at $u$ this means $u$ has out-degree at least three in $H$ and if an in-split is performed at $v$ this means $v$ has in-degree at least three in $H$.
    Observe that in $\Inverted{H}$, $v$ has out-degree at least three and $u$ has in-degree at least three.
    Notice that this allows us to perform the corresponding $\Inverted{H}$-augmentation with the roles of the vertices $u$ and $v$ swapped.
    Hence, $\Inverted{K}$ is indeed an $\Inverted{H}$-augmentation.
\end{proof}

\subsection{Outline of the proof}
\label{sec:proofoutline}

We prove \cref{thm:mainthm2} by induction on the difference between the number of edges in $D$ and the number of edges in $H$.
The base of this induction is given when $|\E{D}|=|\E{H}|$ as this implies $D \cong H$.
So $|\E{D}|>|\E{H}|$ may be assumed.
From this point onward, it suffices to show that there exists a strongly $2$-connected graph $K$ such that $K$ is a butterfly-minor of $D$ and an $H$-augmentation.
Since the latter implies $|E(K)|>|\E{H}|$, \cref{thm:mainthm2} then follows by induction.
That means that most of the proof for~\cref{thm:mainthm2} implies the following corollary.

\begin{corollary}\label{thm:main}
        Let $D$ and $H$ be strongly $2$-connected digraphs and $H'\subsetneq D$ be an $H$-expansion.
        Then there exists a strongly $2$-connected digraph $K$ such that $D$ contains a $K$-expansion $K'$ as a subgraph and $K$ is an $H$-augmentation.
\end{corollary}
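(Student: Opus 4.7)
The plan is to fix an $H$-expansion $H'$ in $D$ that is inclusion-wise or lexicographically minimal under a suitable measure (for instance, first minimizing $|V(H')|$, then $|E(H')|$, then some secondary complexity of the in- and out-stars of the associated model $\mu$). Minimality will rule out ``short-cut'' attachments that could be rerouted to shrink $H'$. Since $H'\subsetneq D$, either there is an edge of $E(D)\setminus E(H')$ with both endpoints in $V(H')$, or $V(D)\setminus V(H')$ is nonempty; in the latter case, strong $2$-connectivity of $D$ together with Menger's theorem yields a directed path through $D\setminus V(H')$ whose endpoints lie in $V(H')$. Either way we obtain a piece of ``extra'' structure $W$ attached to $\mu(H)$ at two vertices $p,q$.

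Next I would classify $p$ and $q$ by the refined anatomy of $\mu(H)$: each branch set $\mu(v)$ partitions into $\intree{v}$, $\outtree{v}$ and $\rpath{v}$; these extend to $\instar{v}$ and $\outstar{v}$; and for every edge $(u,v)\in E(H)$ there is a $u$-$v$-bridge. Every vertex of $\mu(H)$ lies in exactly one of these pieces, so the pair $(p,q)$ falls into finitely many configuration types. Using the minimality of $H'$ (together with the in/out-degree constraints implicit in the star/bridge definitions), I would argue that each type collapses to exactly one of the following patterns, each matching a prescribed augmentation:
\begin{enumerate}
  \item $W$ is a single edge joining an $\outstar{u}$-vertex to an $\instar{v}$-vertex, possibly necessitating out-/in-splits at $u,v$ to realize the attachment as a basic $H$-augmentation;
  \item $W$ is a longer path with analogous endpoints attaching through an existing bridge, yielding a chain augmentation;
  \item $p,q$ sit on a digon structure in $\mu(H)$ that forces the simultaneous subdivision of two antiparallel edges plus a new back-and-forth path, i.e.\ a collarette augmentation;
  \item $p,q$ attach around an in/out-degree-$2$ branch vertex of $H$ in the constrained pattern defining a bracelet augmentation.
\end{enumerate}
Symmetry under edge reversal, available by \cref{lem:inverted_augmentation}, cuts the case analysis roughly in half.

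Once $K$ is extracted, \cref{thm:all_constructed_graphs_are_strongly_2_connected} guarantees that $K$ is strongly $2$-connected, and by construction we have built a $K$-expansion $K'$ inside $D$ (obtained from $H'$ by incorporating $W$ together with the split/subdivision gadgets dictated by the chosen augmentation type); this yields the conclusion of the corollary and then feeds into the induction proving \cref{thm:mainthm2}.

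The principal obstacle is step two: showing that the four augmentation patterns are genuinely exhaustive. The danger is that an extra edge or path might attach in a pattern that fits none of the four types, and the only way to exclude this is to exhibit, in each such would-be bad case, either a strictly smaller $H$-expansion in $D$ (contradicting minimality) or a violation of strong $2$-connectivity of $D$. The collarette and bracelet cases are the most delicate, because they govern very rigid local configurations (digons and degree-$(2,2)$ branch vertices) where the generic edge-rerouting arguments that dispose of the basic and chain cases break down, and where one must argue carefully that the specific subdivision-and-weave operation is forced rather than optional.
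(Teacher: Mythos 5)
There is a genuine gap. Your plan is to fix a \emph{single} expansion $H'$ that is globally minimal under some measure and then case-split on the pair $(p,q)$ of attachment points of an extra edge or path $W$, hoping that minimality forces each configuration into one of the four augmentation types. That is not how the argument in the paper works, and I do not think it can be made to work as stated. The paper does not pick one minimal expansion: it repeatedly \emph{changes} the expansion using the switching operation (\cref{lem:expansion_stays_expansion_after_switching}), and different cases require choosing the expansion with a different optimality property (e.g.\ Case~2 minimizes $\rpath{v}$ only among expansions where $\rpath{v}$ is non-trivial, Case~5 actively produces an expansion where some $\outstar{u}\cap\instar{v}$ has an internal vertex). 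A single lexicographic minimization is in tension with both needs and does not obviously yield a useful normal form.

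More importantly, your plan is missing the central machinery that makes the exhaustiveness argument go through. The paper first classifies every $H'$-\earpath into exactly one of three types — \emph{augmenting}, \emph{switching}, or \emph{\bad} (\cref{lem:different-earpaths}). Augmenting paths give basic augmentations directly; switching paths merely let you move to a different expansion; but \emph{\bad} paths — those forming a directed cycle with a single $\instar{v}\cup\outstar{v}$ — give nothing on their own and are the real obstruction. Your proposal contains no analogue of the \bad{} type and no strategy for it. The paper handles it via \emph{\blocking{} vertices} and \emph{escapes}: when every switching move is blocked, strong $2$-connectivity forces the existence of an escape (a carefully laced concatenation of \bad{} and non-parallel switching segments, \cref{lem:existence_of_escape}), and escapes are then converted into chain, collarette, or bracelet augmentations (\cref{lem:non-proper_escape,lem:proper_escape}, building on \cref{thm:add_chain,thm:construct_bracelet,thm:construct_bracelet_two}). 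Nothing in your sketch engages with paths that wind back and forth through a branch set or through a subdivided edge more than once; yet this is precisely the phenomenon your item (2) (``attaching through an existing bridge'') glosses over and the collarette/bracelet items require. You correctly flag these as the delicate cases, but offer no mechanism for them, and the minimality heuristic is not enough: a \bad{} path does not let you shrink $H'$, so minimality gives no contradiction. In short, the high-level shape (find extra structure, classify its attachment, invoke \cref{lem:inverted_augmentation} for symmetry, conclude via \cref{thm:all_constructed_graphs_are_strongly_2_connected}) is fine, but the load-bearing part — the augmenting/switching/\bad{} trichotomy, the switching calculus, and the escape argument — is absent, and without it the exhaustiveness claim in step two is unproved.
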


So the essence of the discussion is to facilitate a single step in the sequence promised by the conclusion of \cref{thm:mainthm2}.
See~\cref{fig:generating} for an illustration of this process.

\begin{figure}[!ht]
    \centering
    \begin{tikzpicture}[scale=0.8]
        
        \node (H) [v:ghost,inner sep=3pt] {$D_0$};
        \node (K) [v:ghost,position=0:30mm from H,inner sep=3pt] {$D_1$};
        \node (HP) [v:ghost,position=90:24mm from H,inner sep=3pt] {$D_0'$};
        \node (KP) [v:ghost,position=90:24mm from K,inner sep=3pt] {$D_1'$};
        \node (D2) [v:ghost,position=0:30mm from K,inner sep=3pt] {$D_2$};
        \node (D2P) [v:ghost,position=90:24mm from D2,inner sep=3pt] {$D_2'$};
        \node (D3a) [v:ghost,position=0:9mm from D2] {};
        \node (D3b) [v:ghost,position=0:8mm from D3a,inner sep=5pt] {$\cdots$};
        \node (D3bP) [v:ghost,position=90:24mm from D3b,inner sep=5pt] {$\cdots$};
        \node (D3c) [v:ghost,position=0:8mm from D3b] {};
        \node (Dn-1) [v:ghost,position=0:12mm from D3c,inner sep=3pt] {$D_{n-1}$};
        \node (Dn-1P) [v:ghost,position=90:24mm from Dn-1,inner sep=3pt] {$D_{n-1}'$};
        \node (Dn) [v:ghost,position=0:30mm from Dn-1,inner sep=3pt] {$D_n$};
        \node (DnP) [v:ghost,position=90:24mm from Dn,inner sep=3pt] {$D_n'$};
        \node (D) [v:ghost,position=90:30mm from DnP,inner sep=3pt,ellipse,minimum width=2.5pt, minimum height=1.7pt] {$D\in\mathcal{D}_2$};

        \node (labelalignment) [v:ghost,position=90:12mm from DnP] {};
        \node (subset_n) [v:ghost,position=180:4mm from labelalignment] {\small $=$};
        \node (subset_n-1) [v:ghost,position=180:20mm from subset_n] {\small $\subsetneq$};
        \node (subset_3) [v:ghost,position=180:12.5mm from subset_n-1] {\small $\cdots$};
        \node (subset_2) [v:ghost,position=180:18.5mm from subset_3] {\small $\subsetneq$};
        \node (subset_1) [v:ghost,position=180:29.5mm from subset_2] {\small $\subsetneq$};
        \node (subset_0) [v:ghost,position=180:27.2mm from subset_1] {\small $\subsetneq$};

        \node (Hintermediate) [v:ghost,position=90:9mm from H] {};
        \node (H_expansion_label) [v:ghost,position=180:5.5mm from Hintermediate] {\small exp.};
        \node (Kintermediate) [v:ghost,position=90:9mm from K] {};
        \node (K_expansion_label) [v:ghost,position=180:5.5mm from Kintermediate] {\small exp.};
        \node (D2intermediate) [v:ghost,position=90:9mm from D2] {};
        \node (D2_expansion_label) [v:ghost,position=180:5.5mm from D2intermediate] {\small exp.};
        \node (Dn-1intermediate) [v:ghost,position=90:9mm from Dn-1] {};
        \node (Dn-1_expansion_label) [v:ghost,position=180:5.5mm from Dn-1intermediate] {\small exp.};
        \node (Dnintermediate) [v:ghost,position=90:9mm from Dn] {};
        \node (Dn_expansion_label) [v:ghost,position=180:4mm from Dnintermediate] {\small $\cong$};
        \node (HKintermediate) [v:ghost,position=0:15mm from H] {};
        \node (HK_augmentation_label) [v:ghost,position=270:5mm from HKintermediate] {\small aug.};
        \node (KD2intermediate) [v:ghost,position=0:15mm from K] {};
        \node (KD2_augmentation_label) [v:ghost,position=270:5mm from KD2intermediate] {\small aug.};
        \node (D2Dn-1intermediate) [v:ghost,position=0:0mm from D3b] {};
        \node (D2Dn-1_augmentation_label) [v:ghost,position=270:5mm from D2Dn-1intermediate] {\small aug.};
        \node (Dn-1Dnintermediate) [v:ghost,position=0:15mm from Dn-1] {};
        \node (Dn-1Dn_augmentation_label) [v:ghost,position=270:5mm from Dn-1Dnintermediate] {\small aug.};

        \draw [e:main,->,line width=1.4pt] (H) to (HP);
        \draw [e:main,->,bend left=15] (HP) to (D);
        \draw [e:main,->,out=30,in=186,color=PastelOrange,line width=1.5pt] (KP) to (D);
        \draw [e:main,->,bend left=10] (D2P) to (D);
        \draw [e:main,->,bend left=5] (Dn-1P) to (D);
        \draw [e:main,->] (DnP) to (D);
        \draw [e:main,->,color=PastelOrange,line width=1.5pt] (H) to (K);
        \draw [e:main,->,color=PastelOrange,line width=1.5pt] (K) to (KP);
        \draw [e:main,->,line width=1.1pt] (K) to (D2);
        \draw [e:main,line width=1.1pt] (D2) to (D3a);
        \draw [e:main,->,line width=1.1pt] (D3c) to (Dn-1);
        \draw [e:main,->,line width=1.1pt] (Dn-1) to (Dn);
        \draw [e:main,->,line width=1.1pt] (D2) to (D2P);
        \draw [e:main,->,line width=1.1pt] (Dn-1) to (Dn-1P);
        \draw [e:main,->,line width=1.1pt] (Dn) to (DnP);

        \node (HH) [v:ghost,inner sep=3pt,position=270:5mm from H] {\begin{tikzpicture} \node (H) [rotate=90] {$=$}; \end{tikzpicture}};
        \node (HHH) [v:ghost,inner sep=3pt,position=270:5mm from HH] {$H$};
        \node (KK) [v:ghost,inner sep=3pt,position=270:5mm from K] {\begin{tikzpicture} \node (K) [rotate=90] {$=$}; \end{tikzpicture}};
        \node (KKK) [v:ghost,inner sep=3pt,position=270:5mm from KK] {$K$};

    \end{tikzpicture}
    \caption{A diagram of the generation sequence of a strongly $2$-connected digraph $D$ from a strongly $2$-connected butterfly-minor $H$ of $D$.
    The step from $D_0=H$ to $D_1=K$ is the step isolated in~\cref{thm:main}.
    This is the step that is made explicit in the inductive proof of~\cref{thm:mainthm2}.}
    \label{fig:generating}
\end{figure}
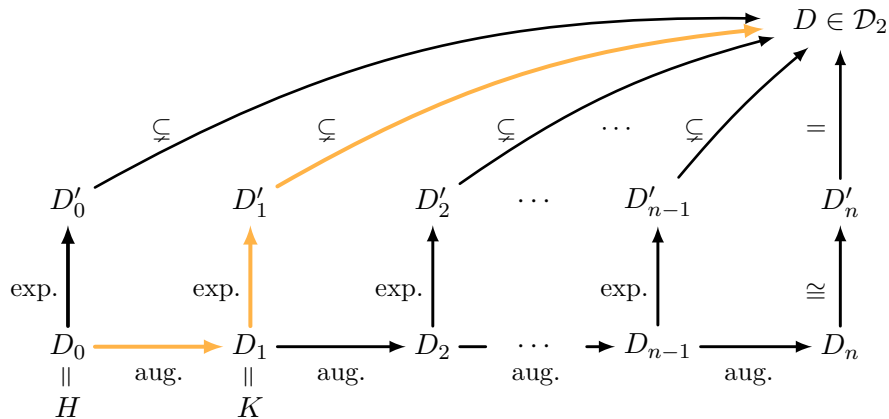

Let $H'$ be some $H$-expansion in $D$ and let $P$ be a path which is internally disjoint from $H'$ but which has both endpoints in $H'$.
Such a path is called an \emph{$H'$-\earpath} later on.
The engine of our proof is an analysis of how such $H'$-\earpaths behave with respect to $H'$.
In~\cref{sec:earpaths}, we provide the formal definition together with a full classification of $H'$-\earpaths into the three categories \emph{augmenting}, \emph{\bad}, and \emph{switching}.
The properties of \emph{switching} paths are also discussed here:
Switching paths are paths that attach to $H'$ in a way that makes them somewhat ``parallel'' to the $H$-expansion.
This allows to replace (or switch) a path within $H'$ with the switching path to obtain a different $H'$-augmentation.
See~\cref{lem:expansion_stays_expansion_after_switching} for the details.
Switching paths do not allow the construction of an $H$-augmentation on their own; however, in most cases, they can be used to ``optimise'' the $H$-expansion $H'$, which happens in various ways throughout \cref{sec:obtaining_the_augmentations,sec:escapes,sec:mainproof}.

In \cref{sec:obtaining_the_augmentations}, we show that \emph{augmenting} $H'$-\earpaths immediately give rise to an $H$-augmentation $K$ such that $D$ contains a $K$-expansion.
However, augmenting paths are the only kind for which this is so clear.
We also show in this section that whenever we encounter a path $P$ which is an $H'$-\earpath itself, but a path that ``weaves\footnote{We call this property to be \emph{laced} with another path and describe the exact situation in \cref{thm:add_chain}.}'' through some path in $H'$ representing an edge of $H$, we may also find our $H$-augmentation $K$ together with its $K$-expansion $K'$ in $D$.

We have now dealt with both the good (augmenting) and the ugly (switching).
Therefore, what is left is the \bad.
\Bad paths are, essentially, such $H'$-\earpaths that form directed cycles together with the branchset of a single vertex of $H$ or the edge between two vertices of $H$.
In most cases, a \bad path represents one of the intermediate edges of some path that was introduced as part of a chain augmentation (recall \cref{fig:adding_a_chain} for example).
Any such intermediate edge of a chain identifies a vertex of $H'$ that witnesses $H'$ not to be strongly $2$-connected itself (see \cref{subsec:exist_escapes} for a formal discussion of this situation).
Since $D$ itself is strongly $2$-connected, this yields a special kind of \bad path which we call an \emph{escape}.
The purpose of~\cref{sec:escapes} is to identify sufficient conditions for an escape to exist and to show that whenever we find an escape, we also find some path that has the properties required by~\cref{thm:add_chain} eventually leading to an $H$-augmentation $K$ together with its expansion $K'$ in $D$.

Finally, we combine the tools obtained in the previous sections in~\cref{sec:mainproof} to provide a full proof of~\cref{thm:mainthm2}.

\section{Expansions and \earpaths}
\label{sec:expansions}
\label{sec:earpaths}

This section is concerned with some basic properties of directed paths.
In particular those paths that start and end in some prescribed subgraph $H$ and are otherwise disjoint from $H$.

\paragraph{\Earpaths.}

Let $D$ be a digraph and $J$ be a subdigraph of $D$.
An \emph{\earpath} of $J$, or \emph{$J$-\earpath}, is either an edge $(u,v)\in \E{D}$ with $u,v\in V(J)$ but $(u,v)\notin E(J)$, or a directed path in $D$ that starts and ends in $J$ but is internally disjoint from $J$.

\subsection{Different types of \earpaths}

We distinguish between different types of \earpaths for butterfly-models.
In the following let $D$ and $H$ be digraphs and and $H'$ be an $H$-expansion in $D$.

\paragraph{Switching paths.}
An $H'$-\earpath $P$ is called \emph{switching}, or a \emph{switching path of $H'$}, if there is $(u,v) \in \E{H}$ such that
\begin{enumerate}[label=\textbf{[S\arabic*]},itemindent=1em]
\item \label{item:switching_models_1} $\Start{P} \in \V{\outstar{u} \cup \intree{v} \cup \rpath{v}}$ and $\End{P} \in \V{\rpath{u} \cup \outtree{u} \cup \instar{v}}$,
\item \label{item:switching_stars_2} either $\Start{P} \in \V{\outstar{u}}$, or $\End{P} \in \V{\instar{v}}$, and
\item \label{item:switching_acyclic_3} $\outstar{u} \cup \instar{v} \cup \Set{e~|~\Head{e}\in \V{\instar{v}},\Tail{e}\in \V{\outstar{u}}} \cup P$ is acyclic.
\end{enumerate}
Further we call $P$ \emph{parallel-switching} if there is a $\Start{P}$-$\End{P}$-path in $\outstar{u} \cup \instar{v} \cup \Set{e~|~\Head{e}\in \V{\instar{v}},\Tail{e}\in \V{\outstar{u}}}$.
Otherwise, $P$ is called \emph{non-parallel-switching}.
See~\cref{fig:switching_paths} for an illustration.

\begin{figure}[!ht]
    \centering
    \includegraphics[scale=1.7]{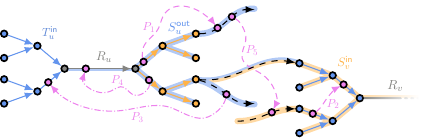}
    \caption{The models of two vertices $u$ and $v$.
    	The trees \textcolor{CornflowerBlue}{$\intree{u}$} and \textcolor{CornflowerBlue}{$\intree{v}$} are depicted in \textcolor{CornflowerBlue}{blue}, while the tree \textcolor{PastelOrange}{$\outtree{u}$} is depicted in \textcolor{PastelOrange}{yellow}.
    	The star \textcolor{CornflowerBlue}{$\outstar{u}$} is marked in \textcolor{CornflowerBlue}{blue} and the star \textcolor{PastelOrange}{$\instar{v}$} is marked in \textcolor{PastelOrange}{yellow}.
    	The two \textcolor{DarkGray}{\redpaths} are depicted in \textcolor{DarkGray}{grey}.
    	Additionally, the \textcolor{LavenderMagenta}{pink} dashed paths are \textcolor{LavenderMagenta}{switching} paths, of these \textcolor{LavenderMagenta}{$P_1$} is parallel, while \textcolor{LavenderMagenta}{$P_2$} and \textcolor{LavenderMagenta}{$P_5$} are non-parallel.
    	The \textcolor{LavenderMagenta}{pink} dash-dotted paths, \textcolor{LavenderMagenta}{$P_3$} and \textcolor{LavenderMagenta}{$P_4$}, are both \textcolor{LavenderMagenta}{bad}.}
    \label{fig:switching_paths}
    \label{fig:bad_paths}
    \label{fig:stars_and_trees}
\end{figure}

Let $P$ be an  $H'$-switching path. 
Observe that there is a unique path $Q$ in $H'$ such that the graph $H''$ obtained from $H' \cup P$ by deleting all internal vertices and all edges of $Q$ is an $H$-expansion;
We say $H''$ is obtained from $H'$ by \emph{switching onto $P$}.
We formalise this observation below, the proof provides more precisely how $Q$ can be obtained.

\begin{observation}
    \label{lem:expansion_stays_expansion_after_switching}
    Let $D,H$ be strongly $2$-connected digraphs and $H'$ be an $H$-expansion in $D$.
    Let $P$ be an $H'$-\earpath that is switching, then the graph $H''$ obtained from $H'$ by switching onto $P$ is an $H$-expansion in $D$.
\end{observation}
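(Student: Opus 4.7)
The plan is to proceed by case analysis on the positions of $\Start{P}$ and $\End{P}$ as constrained by~\ref{item:switching_models_1} and~\ref{item:switching_stars_2}. Let $(u,v)\in\E{H}$ be the edge witnessing that $P$ is switching. Up to the natural symmetry between $u$ and $v$, three cases arise: (a) $\Start{P}\in\V{\outstar{u}}$ and $\End{P}\in\V{\instar{v}}$; (b) $\Start{P}\in\V{\outstar{u}}$ and $\End{P}\in\V{\rpath{u}\cup\outtree{u}}$; and (c) $\Start{P}\in\V{\intree{v}\cup\rpath{v}}$ and $\End{P}\in\V{\instar{v}}$. In each case I first exhibit the unique path $Q\subseteq H'$ between $\Start{P}$ and $\End{P}$ whose internal vertices and edges are to be removed, and then verify that $H''$, obtained from $H'\cup P$ by deleting the internal vertices and edges of $Q$, is an $H$-expansion by exhibiting a new model $\mu'$.

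In case (a), consider the subgraph $\outstar{u}\cup B\cup\instar{v}$, where $B$ is the $u$-$v$-bridge of $H'$. Its underlying undirected graph is a tree: two disjoint arborescences joined by the bridge. Hence a unique undirected $\Start{P}$-$\End{P}$-path exists, namely the concatenation of the unique directed $\Start{P}$-to-$s_B$ path inside $\outstar{u}$, the bridge $B$, and the unique directed $t_B$-to-$\End{P}$ path inside $\instar{v}$, where $s_B$ and $t_B$ denote the tail and head of $B$. This is in fact a directed path, and acyclicity~\ref{item:switching_acyclic_3} confirms it is simple. In cases (b) and (c), $Q$ lies inside the branchset of $u$ (respectively $v$): since $\mu(u)=\outstar{u}\cup\instar{u}$ is, as an undirected graph, a tree formed by two arborescences meeting at the common root of $\mu(u)$, and both endpoints of $P$ are vertices of $\outstar{u}$ (using that $\rpath{u}\cup\outtree{u}\subseteq\outstar{u}$), there is a unique undirected $\Start{P}$-$\End{P}$-path $Q$ inside $\mu(u)$; case (c) is symmetric inside $\mu(v)$.

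Once $Q$ is fixed, $\mu'$ is defined by leaving $\mu'(w):=\mu(w)$ for $w\in\V{H}\setminus\{u,v\}$ and $\mu'(e):=\mu(e)$ for every $e\in\E{H}$ not incident to $u$ or $v$. In case (a), $P$ becomes the core of the new $u$-$v$-bridge, while the parts of $\outstar{u}$ and $\instar{v}$ off $Q$ provide the new out- and in-branchings for $\mu'(u)$ and $\mu'(v)$. In cases (b) and (c), the internal vertices of $P$ are absorbed into $\mu'(u)$ (respectively $\mu'(v)$), and the out-branching (respectively in-branching) of the branchset is rerouted through $P$ in place of $Q$. The main obstacle is to verify that $\mu'(u)$ and $\mu'(v)$ remain unions of an in-branching and an out-branching sharing a single common root, and that the bridges for the remaining edges of $H$ incident to $u$ or $v$ stay internally disjoint from the branchsets. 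In case (a) this is where~\ref{item:switching_acyclic_3} is essential: it guarantees that no directed cycle is created when $P$ replaces the internal part of $Q$, which is exactly what is needed to keep the new arborescence structure acyclic; in cases (b) and (c), the removal of the internal vertices of $Q$ excises precisely the portion of the old arborescence that would otherwise conflict with $P$, making the reroute well-defined. Uniqueness of $Q$ in each case follows from the fact that the host subgraph of $H'$ in which $Q$ lives is, in the undirected sense, a tree.
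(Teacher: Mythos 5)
Your case decomposition into (a), (b), (c) matches the paper's (Case~1, and the two halves of Case~2 related by \cref{lem:inverted_augmentation}), as does the overall plan of deleting the interior of a path $Q$ in $H'$ and redistributing the model. The gap is in identifying $Q$, which is the substantive content of the observation. You take $Q$ to be the unique undirected $\Start{P}$-$\End{P}$-path inside $\outstar{u}\cup B\cup\instar{v}$ (and analogously in cases (b), (c)) and assert it is directed. It is not in general: condition \ref{item:switching_models_1} only forces $\Start{P}\in\V{\outstar{u}}$, so $\Start{P}$ may lie on a branch of $\outstar{u}$ leading towards a vertex $w\neq v$ of $H$, and then the unique undirected walk from $\Start{P}$ towards the $u$-$v$-bridge must traverse some edges of the out-arborescence $\outstar{u}$ against their orientation. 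Condition \ref{item:switching_acyclic_3} does not rule this out, because no directed cycle through $P$ arises, and the ``unique directed $\Start{P}$-to-$s_B$ path inside $\outstar{u}$'' that you concatenate simply does not exist. The analogous situation occurs in cases (b) and (c).

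A more serious issue is that the endpoints of the deleted path need not be $\Start{P}$ and $\End{P}$ at all, and insisting that they are makes the deletion remove branching vertices of $\outtree{u}$ or $\intree{v}$ and thereby disconnect the bridges modelling other edges of $H$ at $u$ or $v$. The paper's construction avoids this. In Case~1 it deletes a subpath $q_1Qq_2$ of the $u$-$v$-bridge $Q$, where $q_1\coloneqq\Start{P}$ if $\Start{P}$ lies on $Q$ and $q_1\coloneqq\Start{Q}$ otherwise (symmetrically for $q_2$), so the deleted interior consists only of vertices of in- and out-degree one; the new $u$-$v$-bridge is then assembled from the $\V{\outtree{u}}$-$\Start{P}$-path in $\outstar{u}$, the path $P$, and the $\End{P}$-$\V{\intree{v}}$-path in $\instar{v}$. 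In Case~2, with $O\subseteq\rpath{u}\cup\outtree{u}$ the directed $\Start{\rpath{u}}$-$\End{P}$-path, it deletes $Q\coloneqq xO$, where $x$ is the last vertex of $O-\End{O}$ that is either $\Start{P}$ or has out-degree at least two in $H'$; this cut-off is exactly what guarantees that no branching vertex lies strictly between $x$ and $\End{P}$, and the new out-arborescence $\rpath{u,H''}\cup\outtree{u,H''}$ is reassembled from the remnant of $\rpath{u}\cup\outtree{u}$, the path $P$, and the $\V{\rpath{u}\cup\outtree{u}}$-$\Start{P}$-path in $\outstar{u}$. Your proposal neither adjusts the endpoints nor introduces the cut-off vertex $x$; the appeal to the claim that removing the interior of your $Q$ ``excises precisely the portion of the old arborescence that would otherwise conflict with $P$'' is precisely where the work lies and where the sketch, as stated, breaks down.
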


\begin{proof}
    Let $(u,v) \in \E{H}$ be such that $P$ is a switching path with respect to $(u,v)$.
    We consider \cref{item:switching_stars_2} and begin with the case that
    $\Start{P} \in V(\outstar{u, H'})$ \underline{and} $\End{P} \in  V(\instar{v, H'})$.

    Let $Q$ be the $u$-$v$-bridge of $H'$.
    Further, let $q_1$ be $\Start{P}$ if $\Start{P}$ lies on $Q$ and otherwise $q_1\coloneqq \Start{Q}$.
    Similarly, let $q_2$ be $\End{P}$ if $\End{P}$ lies on $Q$ and otherwise $q_2\coloneqq \End{Q}$.
    Note that $q_1 <_{Q} q_2$ since $\outstar{u, H'} \cup \instar{v, H'} \cup \Set{e~|~\Head{e}\in \V{\instar{v, H'}},\Tail{e}\in \V{\outstar{u, H'}}} \cup P$ is acyclic, that is \cref{item:switching_acyclic_3}.
    We consider the graph $H''$ obtained from $H' \cup P$ by deleting all internal vertices and all edges of $q_1 Q q_2$.
    
    Notice that in the construction of $H''$ we did neither add an outgoing edge to any vertex of $\bigcup_{w \in V(H)} \V{\intree{w, H'} \graphminus \rpath{w, H'}}$ nor an ingoing edge to any vertex of $\bigcup_{w \in V(H)} \V{\outtree{w, H'} \graphminus \rpath{w, H'}}$.
    Further, we only removed vertices and edges outside branchsets.
    Thus the branchset $\outtree{w,H'}\cup\rpath{w,H'}\cup\intree{w,H'}$ is contained in $H''$ and can be contracted into a single vertex in $H''$ for every $w \in V(H)$.
    By contracting $\outtree{w,H'}\cup\rpath{w,H'}\cup\intree{w,H'}$ for every $w \in V(H)$ in $H''$ and further contracting the unique $V(\outtree{u, H'})$-$\Start{P}$-path in $\outstar{u, H'}$ and the unique $\End{P}$-$V(\intree{v, H'})$-path in $\instar{v, H'}$, we obtain a subdivision of $H$.
    This implies that $H''$ is indeed an $H$-expansion.

    What is left is to consider the case for \cref{item:switching_stars_2} where either $\Start{P} \notin \V{\outstar{u, H'}}$ or $\End{P} \notin \V{\instar{v, H'}}$.
    We only treat the case where $\End{P} \notin \V{\instar{v, H'}}$ since the other case can be translated into this one by using~\cref{lem:inverted_augmentation}.
    Thus $\End{P} \in \V{\rpath{u, H'} \cup \outtree{u, H'}}$.
    The property \cref{item:switching_stars_2} implies that $\Start{P} \in \V{\instar{v, H'}}$.

    There exists a unique $\Start{\rpath{u, H'}}$-$\End{P}$-path $O$ in $\rpath{u, H'} \cup \outtree{u, H'}$.
    Note that $\Start{\rpath{u, H'}} \neq \End{P}$ since otherwise $\outstar{u, H'} \cup P$ contains a cycle.
    Furthermore, $\Start{P}$ is contained in $\V{O - \End{O}}$ if $\End{P} \in \V{\rpath{u, H'}}$, and otherwise the root of $\outtree{u, H'}$ is contained in $\V{O - \End{O}}$.
    Let $x$ be the last vertex of $O - \End{O}$ that is either $\Start{P}$ or has out-degree at least two in $H'$.
    We consider the graph $H''$ obtained from $H' \cup P$ by deleting all internal vertices and edges of $Q \coloneqq xO$.

    We show that $H''$ is an $H$-expansion.
    Let $\rpath{u, H''} \cup \outtree{u, H''}$ be the union of $(\rpath{u, H'} \cup \outtree{u, H'}) \cap H''$, $P$ and the unique $\V{\rpath{u, H'} \cup \outtree{u, H'}}$-$\Start{P}$-path in $\outstar{u, H'}$.
    Notice that $\rpath{u, H''} \cup \outtree{u, H'''}$ is indeed an out-arborescence, all whose vertices but its root have in-degree one in $H''$.
    By contracting $\intree{u, H'} \cup \rpath{u, H''} \cup \outtree{u, H'''}$ and $\outtree{w,H'}\cup\rpath{w,H'}\cup\intree{w,H'}$ for any $w \in V(H) \setminus \{ u \}$ in $H''$ we obtain a subdivision of $H$.
    This implies that $H''$ is indeed an $H$-expansion.
\end{proof}

\paragraph{\Bad paths.}

An $H'$-\earpath $P$ is called \emph{$H'$-\bad}, or \emph{\bad}, if $P$ starts and ends in $\V{\instar{v}} \cup \V{\outstar{v}}$ for some $v \in \V{H}$ and $\instar{v} \cup \outstar{v} \cup P$ contains a directed cycle.
See \cref{fig:bad_paths} for an illustration.

Note that $\Set{ V(\instar{v}), V(\outtree{v} \graphminus \rpath{v}): v \in \V{H}}$ is a partition of $V(H')$. Similarly, the set $\Set{ V(\intree{v} \graphminus \rpath{v}), V(\outstar{v}): v \in \V{H}}$ is a partition of $V(H')$.
\smallskip

\paragraph{Augmenting paths.}
Let $P$ be a $\mu$-\earpath and $u,v \in \V{H}$ such that $\Start{P} \in \V{\intree{u} \cup \outstar{u}}$ and $\End{P} \in \V{\instar{v} \cup \outtree{v}}$.
We say that $P$ is \emph{$H'$-augmenting} or simply \emph{augmenting}, see \cref{fig:augmenting_paths} for an illustration, if one of the following conditions is satisfied:
\renewcommand{\labelenumi}{\textbf{\theenumi}}
\renewcommand{\theenumi}{(A\arabic{enumi})}
\begin{enumerate}[labelindent=0pt,labelwidth=\widthof{\ref{last-item-augmenting}},itemindent=1em]
    \item\label{item:augmenting_3} $\Start{P} \in V(\outstar{u})$, $\End{P} \in V(\instar{v})$, $u \neq v$, and $(u, v) \notin \E{H}$,
    \item\label{item:augmenting_1} $\Start{P} \in V(\intree{u} \graphminus \rpath{u})$ and $\End{P} \in V(\outtree{v} \graphminus \rpath{v})$,
    \item\label{item:augmenting_2} $\Start{P} \in V(\intree{u} \graphminus \rpath{u})$, $\End{P} \in V(\instar{v})$, and $u \neq v$, or
    \item\label{item:augmenting_4} $\Start{P} \in V(\outstar{u})$, $\End{P} \in V(\outtree{v} \graphminus \rpath{v})$, and $u \neq v$.
    \label{last-item-augmenting}
\end{enumerate}

\begin{figure}[!ht]
    \centering
    \includegraphics[scale=1.5]{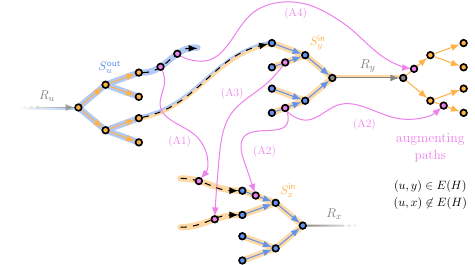}
    \caption{An illustration of the four different types of augmenting paths.}
    \label{fig:augmenting_paths}
\end{figure}

\begin{lemma}
    \label{lem:different-earpaths}
    Let $D$ and $H$ be strongly $2$-connected digraphs and $H'$ be an $H$-expansion in $D$.
    Then every $H'$-\earpath is exactly one of $H'$-switching, $H'$-\bad or $H'$-augmenting.
\end{lemma}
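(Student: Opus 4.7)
The plan is to argue by a case analysis on the location of $\Start{P}$ and $\End{P}$ under the two partitions of $\V{H'}$ just recalled. By those partitions there exist unique $u,v \in \V{H}$ such that $\Start{P}$ lies in exactly one of $\V{\outstar{u}}$ or $\V{\intree{u} \graphminus \rpath{u}}$, and $\End{P}$ lies in exactly one of $\V{\instar{v}}$ or $\V{\outtree{v} \graphminus \rpath{v}}$, yielding four main configurations. The configuration where $\Start{P} \in \V{\intree{u} \graphminus \rpath{u}}$ and $\End{P} \in \V{\outtree{v} \graphminus \rpath{v}}$ immediately matches \ref{item:augmenting_1} (regardless of whether $u=v$). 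Each of the three remaining configurations matches, respectively, \ref{item:augmenting_2}, \ref{item:augmenting_4}, or \ref{item:augmenting_3} whenever $u \neq v$, with the additional stipulation $(u,v) \notin \E{H}$ in the last one.

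The sub-configurations left untreated are (i) $\Start{P} \in \V{\outstar{u}}$, $\End{P} \in \V{\instar{v}}$ with $u \neq v$ and $(u,v) \in \E{H}$; and the three $u=v$ sub-configurations (ii) $\Start{P} \in \V{\outstar{u}}$, $\End{P} \in \V{\instar{u}}$, (iii) $\Start{P} \in \V{\intree{u} \graphminus \rpath{u}}$, $\End{P} \in \V{\instar{u}}$, and (iv) $\Start{P} \in \V{\outstar{u}}$, $\End{P} \in \V{\outtree{u} \graphminus \rpath{u}}$. In (i), $P$ is switching with witness $(u,v)$: conditions \ref{item:switching_models_1} and \ref{item:switching_stars_2} are immediate, and \ref{item:switching_acyclic_3} holds because $\outstar{u} \cup \instar{v} \cup \Set{e~|~\Head{e}\in \V{\instar{v}},\Tail{e}\in \V{\outstar{u}}}$ is a DAG whose edges all flow from $\outstar{u}$ to $\instar{v}$, and $P$ contributes only fresh internal vertices outside $H'$. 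In (ii), $P$ is always bad: concatenating $P$ with the unique out-path in $\instar{u}$ from $\End{P}$ to the root of $\mu(u)$ and then the path in $\outstar{u}$ from that root to $\Start{P}$ produces a closed directed walk containing every edge of $P$, hence a directed cycle in $\instar{u} \cup \outstar{u} \cup P$. For (iii), with (iv) handled dually via \cref{lem:inverted_augmentation}, we split on whether the out-path in $\instar{u}$ from $\End{P}$ to the root of $\mu(u)$ passes through $\Start{P}$: if yes, the same cycle argument shows $P$ is bad; if no, $\Start{P}$ is unreachable from $\End{P}$ inside $\instar{u}$, and $P$ is switching with witness $(u^*, u)$ for any in-neighbour $u^*$ of $u$ in $H$, which exists by strong $2$-connectivity of $H$. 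Here \ref{item:switching_models_1} and \ref{item:switching_stars_2} follow from $\intree{u} \cup \rpath{u} \subseteq \instar{u}$, while \ref{item:switching_acyclic_3} holds because the relevant bridge edges flow only from $\outstar{u^*}$ into $\instar{u}$ and hence do not enable reaching $\Start{P}$ from $\End{P}$.

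Exclusivity, which the ``exactly one'' assertion requires, is essentially bookkeeping: the augmenting configurations are pairwise disjoint by uniqueness of the partition assignments; being bad asserts the existence of a directed cycle in some $\instar{w} \cup \outstar{w} \cup P$ whereas condition \ref{item:switching_acyclic_3} forbids cycles in the analogous augmented DAG, so bad and switching are incompatible; and the endpoint positions forced by any augmenting case exclude the endpoint conditions of the other two classes. The main obstacle in the plan above is the correct identification of the witness edge $(u^*, u)$ or $(u, v^*)$ in the $u=v$ sub-cases and the subsequent reduction of \ref{item:switching_acyclic_3} to a reachability statement inside the in-arborescence $\instar{u}$; once this is in place, the remaining verifications are routine.
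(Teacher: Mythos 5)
Your overall strategy — partition the position of $\Start{P}$ and $\End{P}$ using the two given partitions of $\V{H'}$ and case-split — is the same as the paper's, but two of your sub-cases contain genuine errors, and the exclusivity argument is glossed over a real difficulty.

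The error in case (i) is that $P$ is \emph{not} always switching. Your justification for \ref{item:switching_acyclic_3} relies on $\outstar{u} \cup \instar{v} \cup \Set{e~|~\Head{e}\in \V{\instar{v}},\Tail{e}\in \V{\outstar{u}}}$ being a DAG ``whose edges all flow from $\outstar{u}$ to $\instar{v}$'' and $P$ contributing ``only fresh internal vertices outside $H'$.'' But $\outstar{u}$ and $\instar{v}$ overlap on a directed path $\outstar{u} \cap \instar{v}$, and the endpoints of $P$ can both lie on that overlap. If $\End{P}$ precedes $\Start{P}$ there, then $P$ closes a directed cycle with the segment of $\outstar{u} \cap \instar{v}$ from $\End{P}$ to $\Start{P}$, so \ref{item:switching_acyclic_3} fails. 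That cycle also lies in $\instar{u} \cup \outstar{u} \cup P$, so $P$ is in fact $H'$-\bad. The paper sidesteps exactly this by first separating off the sub-case where $\Start{P}$ and $\End{P}$ lie in a common $\iostar{w}$ for the same $\ast$, and only argues ``switching'' in case (i) after reducing to $\Start{P} \in \V{\outstar{u} \graphminus \instar{v}}$ and $\End{P} \in \V{\instar{v} \graphminus \outstar{u}}$, i.e.\ after both endpoints are pushed off the shared path.

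The error in case (ii) is dual: $P$ is \emph{not} always bad. Your cycle construction routes $\End{P} \rightsquigarrow \text{root of }\mu(u) \rightsquigarrow \Start{P}$, but there need not be a directed path from $\End{P}$ to the root of $\mu(u)$ inside $\instar{u}$ (that path only flows toward the root of $\instar{u}$, which is $\End{\rpath{u}}$, not $\rho_u$), nor from $\rho_u$ to $\Start{P}$ inside $\outstar{u}$ (which only reaches descendants of $\rho_u$). Concretely, if $\Start{P}$ and $\End{P}$ both lie on $\rpath{u}$ with $\Start{P} <_{\rpath{u}} \End{P}$, then $\instar{u} \cup \outstar{u} \cup P$ is acyclic and $P$ is (parallel) switching with respect to any edge $(u,w) \in \E{H}$, not bad. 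You correctly split on a reachability condition in case (iii), and the same kind of order/reachability dichotomy is needed in (i) and (ii) as well; asserting a uniform answer in those cases is where the argument breaks. Finally, your exclusivity paragraph treats ``bad $\Rightarrow$ cycle in $\instar{w} \cup \outstar{w} \cup P$'' and ``switching $\Rightarrow$ \ref{item:switching_acyclic_3}'' as contradicting each other in an ``analogous'' graph, but these acyclicity conditions speak about \emph{different} subgraphs; the paper has to actually derive that acyclicity of $\outstar{u} \cup \instar{v} \cup \text{bridge} \cup P$ forces acyclicity of $\instar{u} \cup \outstar{u} \cup P$ and $\instar{v} \cup \outstar{v} \cup P$, which is the only nontrivial part of exclusivity and should not be dismissed as bookkeeping.
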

\begin{proof}
    We begin by showing that any $H'$-\earpath is $H'$-switching, $H'$-\bad or $H'$-augmenting.
    Let $P$ be a non-augmenting $H'$-\earpath.
    Then, none of the conditions \cref{item:augmenting_1,item:augmenting_2,item:augmenting_3,item:augmenting_4} is satisfied.
    
    Note that there is some $u\in \V{H}$ such that either $\Start{P}\in V(\intree{u} \graphminus \rpath{u})$ or $\Start{P} \in V(\outstar{u})$.
    If $\Start{P}\in V(\intree{u} \graphminus \rpath{u})$, since conditions \cref{item:augmenting_1,item:augmenting_2} are not satisfied it holds that $\End{P}\notin V(\outtree{v}\graphminus \rpath{v})$ and, if $u\neq v$, $\End{P}\notin V(\instar{v})$.
    Since $\rpath{v}\subseteq \instar{v}$ it follows that in this case $\End{P}\in V(\instar{u})$.
    
    If $\Start{P} \in V(\outstar{u})$, then, since conditions \cref{item:augmenting_3,item:augmenting_4} are not satisfied, either $\End{P} \in V(\instar{u})$, or $\End{P} \in V(\instar{v})$ for some $(u,v)\in \E{H}$, or $\End{P} \in V(\outtree{u} \graphminus \rpath{u})$.
    Notice that, with $\rpath{v}\subseteq \instar{v}$, the case where $\End{P}\in V(\rpath{v})$ is equal to the case where $\End{P}\in V(\instar{v})$ for some $(u,v)\in E(H)$.
    
    All four cases above can be resolved as follows:
    If $\Start{P}$ and $\End{P}$ are contained in $\iostar{u}$ for some $* \in \Set{\variablestyle{in},\variablestyle{out}}$, $P$ is either switching or \bad by definition.
    Thus, we may suppose that either $\Start{P} \in V(\outstar{u} \graphminus \rpath{u})$, $\End{P} \in V(\instar{u} \graphminus \rpath{u})$ for some $u \in \V{H}$ or $\Start{P} \in V(\outstar{u} \graphminus \instar{v})$, $\End{P} \in V(\instar{v} \graphminus \outstar{u})$ for $(u,v) \in \E{H}$.
    In the former case, $\instar{u} \cup \outstar{u} \cup P$ contains a cycle, and thus $P$ is \bad.
    In the latter case, $\outstar{u} \cup \instar{v} \cup P$ does not contain a cycle, and thus \cref{item:switching_models_1,item:switching_stars_2,item:switching_acyclic_3} hold and $P$ is switching.

    Now we argue that every $H'$-\earpath is exactly one of $H'$-augmenting, $H'$-switching or $H'$-\bad.
    Notice that every $H'$-switching path $P$ with respect to an edge $(u,v) \in \E{H}$ has the property that $\Start{P},\End{P}\in \V{\outstar{u}}\cup\V{\instar{v}}$ by \cref{item:switching_models_1} and that $\outstar{u} \cup \instar{v}  \cup \Set{e~|~\Head{e}\in \V{\instar{v}},\Tail{e}\in \V{\outstar{u}}} \cup P$ is acyclic by \cref{item:switching_acyclic_3}.
    Thus $\instar{u} \cup \outstar{u} \cup P$ and $\instar{v} \cup \outstar{v} \cup P$ are acyclic, which implies that no $H'$-switching path can be \bad.
    Moreover, $P$ cannot be augmenting since \cref{item:augmenting_1,item:augmenting_2} cannot be satisfied by \cref{item:switching_models_1}, with $(u,v)\in \E{H}$, \cref{item:augmenting_3} cannot be satisfied, and \cref{item:switching_stars_2} also rules out \cref{item:augmenting_4}.
    
    Let $P$ be an $H'$-\bad path for some vertex $w\in \V{H}$, i.e.~$\Start{P}, \End{P} \in \V{\instar{w} \cup \outstar{w}}$.
    We prove that $P$ is not $H'$-augmenting.
    If $\Start{P} \in \V{\outstar{w}}$, $P$ is not $H'$-augmenting since $\End{P}$ is either contained in $\V{\instar{w} \cup (\outtree{w} \graphminus \rpath{w})}$ or in $\V{\instar{v}}$ for some $v \in \OutNeighbours{H}{w}$.

    If $\Start{P} \in \V{\intree{w} \graphminus \rpath{w}}$, then $\End{P} \in \V{\instar{w}}$ since there exists an $\End{P}$-$\Start{P}$-path in $\instar{w} \cup \outstar{w}$ by definition of \bad-path.
    This implies that $P$ is not $H'$-augmenting.
    Otherwise, $\Start{P} \in \V{\outstar{u}}$ for some $u \in \InNeighbours{H}{w}$.
    Then $\End{P} \in \V{\instar{w}}$ since there exists an $\End{P}$-$\Start{P}$-path in $\instar{w} \cup \outstar{w}$.
    Thus $P$ is not $H'$-augmenting.
\end{proof}

\subsection{Laced paths}\label{subsec:lacedpaths}
This subsection deals with two directed paths that intersect.
While the paths do not necessarily need to be \earpaths, normalising the intersection between two directed paths is an important tool that finds application throughout this paper.
These so-called ``laced'' paths are what gives rise to chain-augmentations in particular.

Two paths $P$ and $Q$ are \emph{laced} if every weakly connected component of $P \cap Q$ is a directed path and, additionally, if there are at least two such components, then for the vertices $z_1,w_1,\dots,z_n,w_n \in \V{P}$ with $P \cap Q = \Set{z_1Pw_1,\dots,z_nPw_n}$ for all $1 \leq i < n$ the vertex $w_i$ reaches $z_{i+1}$ on $P$ and for all $1 < j \leq n$ the vertex $w_j$ reaches $z_{j-1}$ on $Q$.
Two paths $P$ and $Q$ are \emph{properly laced} if they are laced but not disjoint.
See~\cref{fig:laced} for an illustration.

\begin{figure}[!ht]
    \centering
    \begin{tikzpicture}[scale=1]
			
	\pgfdeclarelayer{background}
	\pgfdeclarelayer{foreground}
			
	\pgfsetlayers{background,main,foreground}
			
	\begin{pgfonlayer}{main}

        \node (a) [v:main] {};
        \node (z1) [v:main,position=0:10mm from a] {};
        \node (w1) [v:main,position=0:10mm from z1] {};
        \node (z2) [v:main,position=0:10mm from w1] {};
        \node (w2) [v:main,position=0:10mm from z2] {};
        \node (z3) [v:main,position=0:10mm from w2] {};
        \node (w3) [v:main,position=0:10mm from z3] {};
        \node (z4) [v:main,position=0:10mm from w3] {};
        \node (w4) [v:main,position=0:10mm from z4] {};
        \node (b) [v:main,position=0:10mm from w4] {};
        \node (c) [v:main,position=90:12mm from b] {};
        \node (d) [v:main,position=90:12mm from a] {};

        \node (z1g) [v:ghost,position=270:0.5mm from z1] {};
        \node (w1g) [v:ghost,position=270:0.5mm from w1] {};
        \node (z2g) [v:ghost,position=270:0.5mm from z2] {};
        \node (w2g) [v:ghost,position=270:0.5mm from w2] {};
        \node (z3g) [v:ghost,position=270:0.5mm from z3] {};
        \node (w3g) [v:ghost,position=270:0.5mm from w3] {};
        \node (z4g) [v:ghost,position=270:0.5mm from z4] {};
        \node (w4g) [v:ghost,position=270:0.5mm from w4] {};

        \node (Plabel) [v:ghost,position=270:3.5mm from b] {$P$};
        \node (Qlabel) [v:ghost,position=270:3.5mm from d] {\textcolor{LavenderMagenta}{$Q$}};
        \node (z1label) [v:ghost,position=90:2.5mm from z1] {$z_1$};
        
        \node (w1label) [v:ghost,position=270:2.8mm from w1] {$w_1$};
        \node (z2label) [v:ghost,position=270:2.8mm from z2] {$z_2$};

        \node (w2label) [v:ghost,position=90:2.5mm from w2] {$w_2$};
        \node (z3label) [v:ghost,position=90:2.5mm from z3] {$z_3$};

        \node (w3label) [v:ghost,position=270:2.8mm from w3] {$w_3$};
        \node (z4label) [v:ghost,position=270:2.8mm from z4] {$z_4$};

        \node (w4label) [v:ghost,position=90:2.5mm from w4] {$w_4$};

	\end{pgfonlayer}
			
	\begin{pgfonlayer}{background}

        \draw[e:main,->] (a) to (z1);
        \draw[e:main,->] (z1) to (w1);
        \draw[e:main,->] (w1) to (z2);
        \draw[e:main,->] (z2) to (w2);
        \draw[e:main,->] (w2) to (z3);
        \draw[e:main,->] (z3) to (w3);
        \draw[e:main,->] (w3) to (z4);
        \draw[e:main,->] (z4) to (w4);
        \draw[e:main,->] (w4) to (b);

        \draw[e:main,->,color=LavenderMagenta] (z1g) to (w1g);
        \draw[e:main,->,color=LavenderMagenta] (z2g) to (w2g);
        \draw[e:main,->,color=LavenderMagenta] (z3g) to (w3g);
        \draw[e:main,->,color=LavenderMagenta] (z4g) to (w4g);

        \draw[e:main,->,color=LavenderMagenta,bend right=35] (c) to (z4);
        \draw[e:main,->,color=LavenderMagenta,bend left=35] (w4) to (z3);
        \draw[e:main,->,color=LavenderMagenta,bend right=35] (w3) to (z2);
        \draw[e:main,->,color=LavenderMagenta,bend left=35] (w2) to (z1);
        \draw[e:main,->,color=LavenderMagenta,bend right=35] (w1) to (d);
 
	\end{pgfonlayer}	
			
	\begin{pgfonlayer}{foreground}
	\end{pgfonlayer}
   
\end{tikzpicture}
    \caption{Two properly laced paths $P$ and $Q$.}
    \label{fig:laced}
\end{figure}
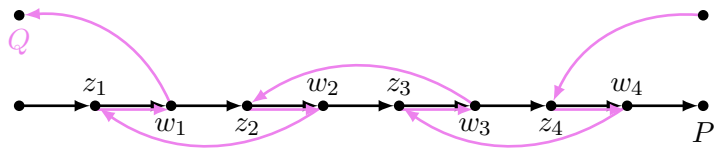

\begin{lemma}
    \label{lem:laced}
  Let $P$ and $Q$ be directed paths in a digraph $D$.
  Then, there exists a path $Q'\subseteq D$ with the same endpoints as $Q$ such that $Q' \subseteq P \cup Q$ and $P$ and $Q'$ are laced.
\end{lemma}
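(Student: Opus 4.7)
My proof plan would be to argue by induction on the number of weakly connected components of $P \cap Q$. If this number is at most one, then $Q' := Q$ is already laced with $P$, since the ordering condition in the definition of laced is vacuous whenever there are fewer than two components. So suppose $P \cap Q$ has at least two components. Because $P \cap Q$ is a subdigraph of the directed path $P$, every vertex of $P \cap Q$ has in- and out-degree at most one, and $P \cap Q$ inherits being acyclic; consequently, each of its weakly connected components is itself a directed path. Labelling these components $C_1, \dots, C_n$ in the order they appear along $P$, I would write $C_k = z_k P w_k = z_k Q w_k$.

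If $P$ and $Q$ are already laced, I am done by setting $Q' := Q$. Otherwise, using the reformulation of the laced condition that every pair of components consecutive on $P$ must appear in reverse order on $Q$, there must exist some $i \in \{1, \dots, n-1\}$ such that $C_i$ appears before $C_{i+1}$ along $Q$. I would then consider the directed subpath $S := w_i P z_{i+1}$ of $P$. Since $C_i$ and $C_{i+1}$ are consecutive among the components of $P \cap Q$ on $P$, every internal vertex of $S$ lies strictly between $w_i$ and $z_{i+1}$ on $P$ and hence avoids $\bigcup_k V(C_k) = V(P) \cap V(Q)$; in particular, no internal vertex of $S$ lies on $Q$. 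Replacing the subpath $w_i Q z_{i+1}$ of $Q$ by $S$ therefore produces a well-defined simple $\Start{Q}$-$\End{Q}$-path $\widetilde Q$, and clearly $\widetilde Q \subseteq P \cup Q$.

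Finally, I would verify that the number of weakly connected components of $P \cap \widetilde Q$ is strictly smaller than $n$: the two components $C_i$ and $C_{i+1}$, together with the newly incorporated internal vertices of $S$, collapse into a single directed subpath $z_i P w_{i+1}$ of $P$; any components of $P \cap Q$ that happened to lie on $Q$ strictly between $w_i$ and $z_{i+1}$ are dropped entirely from $\widetilde Q$; and all other components are unaffected. The induction hypothesis, applied to the pair $(P, \widetilde Q)$, then yields the desired path $Q' \subseteq P \cup \widetilde Q \subseteq P \cup Q$. The main subtlety lies precisely in insisting on merging two components that are \emph{consecutive on $P$} rather than consecutive on $Q$: for a generic ascending pair along $Q$, the $P$-subpath connecting them can run through intervening components of $P \cap Q$, so the naive replacement would fail to produce a simple path. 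Working with pairs consecutive on $P$ guarantees that the relevant $P$-segment is internally disjoint from $Q$, which is what makes the reduction go through.
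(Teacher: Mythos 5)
Your proof is correct and is essentially the same argument as the paper's, merely repackaged: the paper picks $Q'\subseteq P\cup Q$ with the fewest weakly connected components in $Q'\cap P$ and derives a contradiction from a violation of the lacing condition, while you run the identical reduction step (locate an ascending pair $C_i, C_{i+1}$ consecutive on $P$, reroute $Q$ through $w_iPz_{i+1}$) as an induction on the number of components. Both hinge on the same observation that you correctly flag as the crux — working with a pair consecutive on $P$ guarantees the connecting $P$-segment is internally disjoint from $Q$, so the rerouted path is simple and the component count strictly drops.
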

\begin{proof}
    Let $Q' \subseteq Q \cup P$ be a $\Start{Q}$-$\End{Q}$-path minimising the number of weakly connected components of $Q'\cap P$. We claim that $P$ and $Q'$ are laced. 

    For, if $Q'$ and $P$ are disjoint, then they are laced by definition. Otherwise, let $P_1, \dots, P_{\ell}$ be the weakly connected components of $P \cap Q'$ in the order in which they appear on $P$. Note that any such component is a directed path, possibly of length $0$.
    For $i\in[\ell]$ let $p_{2i-1} \coloneqq  \Start{P_i} \in \V{P}$ and let $p_{2i} \coloneqq  \End{P_i}$.
    Then $p_1, \dots, p_{2\ell}$ appear on $P$ in this order.
    Now, if $Q'$ and $P$ are not laced, then there must be an index $i\in[\ell-1]$ such that $p_{2i}$ appears on $Q'$ before $p_{2(i+1)-1}$.
    But $P' \coloneqq  p_{2i}Pp_{2i+1}$ is a subpath of $P$ disjoint from $Q'$ and thus $Q'' \coloneqq  Q'p_{2i} \cdot P' \cdot p_{2i+1}Q'$ is a $\Start{Q}$-$\End{Q}$-path such that $Q'' \cap P$ has fewer weakly connected components than $Q'\cap P$, a contradiction to the choice of $Q'$.
\end{proof}

\section{From \earpaths to augmentations}
\label{sec:obtaining_the_augmentations}

Now, by categorising different types of \earpaths, we understand how an expansion can interact with the remainder of the graph.
Next, we want to explain how the different \earpaths can be used to obtain one of the described augmentations (see~\cref{subsec:augmentations}).

\subsection{Augmenting paths give basic augmentations}

The most straightforward way how to extend an expansion to the expansion of a larger, strongly 2-connected graph is by using an augmenting path.
We show that this always yields a basic augmentation.

\begin{theorem}
    \label{thm:add_augmenting_path}
    Let $D,H$ be strongly $2$-connected digraphs, and $H'$ be an $H$-expansion in $D$.
    If $D$ contains an $H'$-augmenting path, then $D$ admits a basic $H$-augmentation.
\end{theorem}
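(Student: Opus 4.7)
The proof proceeds by case analysis on which of \ref{item:augmenting_3}--\ref{item:augmenting_4} the $H'$-\earpath $P$ satisfies; the last two are dual under inversion of all edges, so \cref{lem:inverted_augmentation} allows us to restrict attention to \ref{item:augmenting_3}, \ref{item:augmenting_1}, and \ref{item:augmenting_2}. In each case, we will exhibit a basic $H$-augmentation $K$ together with a $K$-expansion $K'$ in $D$ built from $H'$ and $P$.

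For \ref{item:augmenting_3}, we take $K \coloneqq H + (u,v)$ without any splits and build $K'$ from $H'$ by adjoining $P$ as the expansion of the new edge. If $\Start{P}$ lies internally on an $H'$-edge-path of the form $\mu(u,v')$ rather than inside the branchset of $u$, we reassign the initial segment of $\mu(u,v')$ up to and including $\Start{P}$ so that it extends the out-branching of the branchset of $u$ in $K'$; since every vertex on this segment has in-degree one in $H'$, the extended out-branching remains a valid out-arborescence. An analogous adjustment handles $\End{P}$ when it lies on some $\mu(u'',v)$.

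For \ref{item:augmenting_1}, both endpoints of $P$ lie in interior tree structures, so splits are required. Let $T_{\Start{P}}$ denote the subtree of the in-branching of $u$'s branchset rooted at $\Start{P}$, and let $N_e \subseteq \InNeighbours{H}{u}$ be the set of in-neighbours $u'$ of $u$ in $H$ for which $\mu(u',u)$ ends inside $T_{\Start{P}}$. Because $\Start{P} \in \V{\intree{u}}$, the tree $T_{\Start{P}}$ must contain a leaf of $\intree{u}$, which has in-degree at least two in $H'$, so at least two distinct $H$-in-neighbours of $u$ send their edge-paths into $T_{\Start{P}}$. Hence $|N_e| \geq 2$ and an in-split of $u$ with this $N_e$ is legal. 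Define $N_e'$ symmetrically through the subtree $T_{\End{P}}$ of the out-branching of $v$'s branchset. If $u \neq v$, take $K$ to be the basic augmentation of the first type with the in-split at $u$, the out-split at $v$, and the added edge $(\mathsf{e}(u), \mathsf{e}(v))$; in $K'$, assign the branchsets of $\exposed{u}$ and $\exposed{v}$ to be $T_{\Start{P}}$ (rooted at $\Start{P}$) and $T_{\End{P}}$ (rooted at $\End{P}$), let $P$ expand the added edge, and reroute the surviving edge-paths at $u$ and $v$ in accordance with $N_e$ and $N_e'$. If $u = v$, we instead use the second form of basic augmentation at $w \coloneqq u$: the out-split carves off $T_{\End{P}}$ as the branchset of $\exposed{u}$, the subsequent in-split at $\base{u}$ carves off $T_{\Start{P}}$ as the branchset of $\exposed{\base{u}}$, and $P$ expands the added edge $(\exposed{\base{u}}, \exposed{u})$.

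For \ref{item:augmenting_2}, the in-split at $u$ is performed exactly as in \ref{item:augmenting_1}, while no split is required at $v$: since $\End{P} \in \V{\instar{v}}$, it either already lies in the in-branching of $v$'s branchset or can be absorbed into it via the in-tendril adjustment from \ref{item:augmenting_3}. The basic augmentation $K$ then consists of this in-split together with the added edge $(\mathsf{e}(u), v)$. The principal obstacle across these constructions is verifying that $K'$ is indeed a valid $K$-expansion: each branchset must be the union of an in-branching and an out-branching sharing a common root, and the new edge-paths must be internally vertex-disjoint from the branchsets and from each other. The crucial combinatorial input is the lower bound $|N_e|, |N_e'| \geq 2$ established above, without which the required splits would be illegal.
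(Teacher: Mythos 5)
Your proposal is correct and follows essentially the same approach as the paper's proof: analyse which of the four augmenting conditions the earpath $P$ satisfies, perform an in-split at $u$ and/or out-split at $v$ exactly when the endpoint of $P$ lies in $\intree{u}\graphminus\rpath{u}$ or $\outtree{v}\graphminus\rpath{v}$ respectively, partition the neighbours via the subtree rooted at the endpoint of $P$, and build the $K$-expansion as $H'\cup P$ with $P$ modelling the added edge. The paper handles all four cases uniformly (deciding the in- and out-split independently) rather than invoking \cref{lem:inverted_augmentation} to dispose of one case, and it additionally records the bound $\Abs{N_b}\geq 1$ (immediate from $\Start{P}\notin V(\rpath{u})$) which your write-up omits but which is also required for the split to be legal.
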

\begin{proof}

Let $Q$ be an $H'$-augmenting path in $D$.
Our goal is to show that $Q$ can be used to find a basic $H$-augmentation $K$ by constructing a $K$-expansion $K'$ as a subgraph of $D$.

Since $Q$ is $H'$-augmenting, there are vertices $u, v \in \V{H}$ such that $\Start{Q} \in \V{\intree{u}} \cup \V{\outstar{u}}$ and $\End{Q} \in \V{\instar{v}} \cup\V{\outtree{v}}$ and one of the conditions \cref{item:augmenting_1,item:augmenting_2,item:augmenting_3,item:augmenting_4} is satisfied.

We proceed to define a digraph $H^{*}$ from $H$ as follows.
If conditions \cref{item:augmenting_3} or \cref{item:augmenting_4} hold, let $u^{*} = u$.
Otherwise, if conditions \cref{item:augmenting_1} or \cref{item:augmenting_2} hold, perform an in-split at $u$ in $H$ as follows.
Consider the maximal in-arborescence $T^i$ in $\intree{u}$ that is rooted at $\Start{Q}$.
This provides a partition of $\InNeighbours{H}{u}$ as follows.
Let $N^u_e$ be the set of vertices $x \in \InNeighbours{H}{u}$ such that the $x$-$u$-bridge of $H'$ ends in a leaf of $T^i$.
Also, let $N^u_b\coloneqq \InNeighbours{}{u}\setminus N^u_e$.
By definition of $\intree{u}$ we obtain $\Abs{N^u_e} \geq 2$, and since $\Start{Q} \notin \V{\rpath{u}}$ we have $\Abs{N^u_b} \geq 1$, which yields an in-split of $u$.
Let $H_{2}$ be the digraph obtained by the previous in-split and let $u^{*} = \exposed{u}$ and
\begin{equation*}
    v' = \begin{cases}
        \base{u}, & \text{ if $u = v$ in $H$, or}\\
        v, & \text{ otherwise.}
    \end{cases}
\end{equation*}

Now, if conditions \cref{item:augmenting_3} or \cref{item:augmenting_2}, let $v^{*} = v'$.
Then, if conditions \cref{item:augmenting_1} or \cref{item:augmenting_4} hold, perform an out-split at $v'$ in $H_{2}$ as follows.
Consider the maximal out-arborescence $T^o$ in $\outtree{v'}$ that is rooted at $\End{Q}$.
This yields, similar as in the previous case, a partition of $\OutNeighbours{H}{v'}$ into two sets $N^{v'}_b$ and $N^{v'}_e$ with $\Abs{N^{v'}_e} \geq 2$ and $\Abs{N^{v'}_b} \geq 1$.
Thus, we can perform an out-split of $v'$.
Let $H^{*}$ be the digraph obtained by the previous out-split and set $v^{*} = \exposed{v'}$.

Observe that $(u^*, v^*) \notin E(H^*)$. 
Indeed, in the case that condition \cref{item:augmenting_3} holds, this is implied since $(u, v) \notin \E{H}$.
In any other case, by the definition of splits, $(u^*, v^*) \notin E(H^*)$.
Then, the digraph $K \coloneqq H^{*} + (u^{*}, v^{*})$ is a basic $H$-augmentation.

Next, we show that $K' \coloneqq H' \cup Q$ is an expansion of $K$ by constructing a model $\mu$ in $D$ such that $\mu(K) = K'$.
For all vertices $w \in \V{H-u-v}$ we choose $\intree{w,K'} \coloneqq \intree{w,H'}$, $\rpath{w,K'} \coloneqq \rpath{w,H'}$ and $\outtree{w,K'} \coloneqq \outtree{w,H'}$ and then define $\mu(w) \coloneqq \intree{w,K'}\cup\rpath{w,K'}\cup\outtree{w,K'}$.
Moreover, for all $(w,x)\in \E{H-u-v}$ we define $\mu((w,x))$ to be the $w$-$x$-bridge of $K'$.
Also, for the newly added edge $\Brace{u^{*}, v^{*}}$, we define $\mu(\Brace{u^{*},v^{*}}) = Q$.

To continue, in case we do not perform an in-split of $u$, we define $\mu(u^{*})$ as follows.
In the case where $\Start{Q} \in \V{\rpath{u,H'}}$, we define $\rpath{u^{*},K'}$ to be $\rpath{u,H'}\Start{Q}$ and $\outtree{u^{*},K'}$ to be $\outtree{u,H'}\cup \Start{Q}\rpath{v,H'}$.
In the case where $\Start{Q} \in \V{\outstar{u,H'}\graphminus(\outtree{u,H'}\cup\rpath{u,H'})}$, we additionally define $\mu$ for $u^{*}$ by adding to $\outtree{u,H'}$ the unique $\V{\outtree{u, H'}}$-$\Start{Q}$-path of $\outstar{u,H'}$, obtaining $\outtree{u^{*},K'}$, and then setting $\mu(u^{*}) \coloneqq \intree{u^{*},K'} \cup \rpath{u^{*}} \cup \outtree{u^{*},K'}$.
Secondly, in the case we perform an in-split at $u$, we define $\mu(u^{*})$ to be the tree $T^i$.
Here we set $\intree{u^{*},K'} \coloneqq T^i$, while both $\rpath{u^{*}, K'}$ and $\outtree{u^{*},K'}$ are the root of $T^i$ and thus trivial.
Moreover, let $R$ be the unique path in $\intree{u,H'}$ starting in the root of $T^i$ and ending in a vertex of in-degree at least two such that all internal vertices of $R$ have in- and out-degree exactly one.
Now let $\intree{\base{u},K'} \coloneqq \intree{u,H'}\graphminus(T^i\cup (R-\End{R}))$.
Also $\rpath{\base{u},K'} = \rpath{u,H'}$, and $\outtree{\base{u},K'}=\outtree{u,H'}$ and finally, $\mu(\base{u}) \coloneqq \intree{\base{u},K'} \cup \rpath{\base{u},K'} \cup \outtree{\base{u},K'}$.

Analogously we now define $\intree{v^{*},K'}$, $\rpath{v^{*},K'}$, $\outtree{v^{*},K'}$ and then $\mu(v^{*})$, depending on whether there was an out-split at $v$ and accordingly $\intree{\base{v},K'}$, $\rpath{\base{v},K'}$, $\outtree{\base{v},K'}$ and then $\mu(\base{v})$. Note that in the case that $u = v$ in $H$ in order to define $\mu(v^{*})$ and $\mu(\base{v})$ we further refine the previously defined $\mu(\base{u})$, since in that case $v'$ becomes $\base{u}$ in $H_{2}$.

As we now have the in- and out-trees for all vertices $w\in \V{K}$, we can obtain $\outstar{w,K'}$ and $\instar{w,K'}$.
Now for every edge $(x,y)\in\E{K}$ such that $\mu((x,y))$ is not yet defined, let $\mu((x,y))$ be the $x$-$y$-bridge of $K'$.

Now observe that $\mu(K) = K'$ and indeed a butterfly-model of $K$ in $D$.
\end{proof}

\subsection{Constructing chain or collarette augmentations}

Next, we show that when finding an \earpath that is properly laced with the model of an edge of the graph we already have an expansion of, we can obtain a chain or a collarette augmentation.

\begin{theorem}
\label{thm:add_chain}
    Let $D,H$ be strongly $2$-connected digraphs, and $H'$ be an $H$-expansion in $D$.
    Let $\Brace{u,v} \in \E{H}$ and $P$ be a path in $D$ with $\Start{P} \in  V(\intree{v}) \cup V(\outstar{v})$, $\End{P} \in V(\instar{u}) \cup V(\outtree{u})$ that is properly laced with the path $Q\coloneqq \outstar{u} \cap \instar{v}$ such that $\V{P}\cap V(H')=V(P\cap Q)\cup \{\Start{P},\End{P}\}$. 
    Then $D$ admits a chain or collarette $H$-augmentation.
\end{theorem}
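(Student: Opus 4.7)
The plan is to interpret the subgraph $H' \cup P$ as essentially the image of a $K$-expansion in $D$, where $K$ is a chain or collarette $H$-augmentation whose internal chain length equals the number of weakly connected components of $P \cap Q$. First I would invoke proper lacing to decompose $P \cap Q$ into $n \geq 1$ directed subpaths $S_1, \ldots, S_n$ with $S_i = z_i P w_i = z_i Q w_i$, appearing on $P$ in this order and on $Q$ in the reverse order $S_n, \ldots, S_1$. Contracting each $S_i$ to a single vertex $s_i$ in $P \cup Q$ yields ``bubbles'' between consecutive $s_i$'s in which one side is a $Q$-piece and the other is a $P$-piece; this is exactly the chain structure of length $n$ woven through the edge playing the role of the $u$-$v$-bridge $Q$. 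The two end segments of $P$ supply the attachment arcs of the chain's inner path.

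Second, following the split bookkeeping of \cref{thm:add_augmenting_path}, I would determine splits at $u$ and $v$ from the precise locations of $\End{P}$ and $\Start{P}$: an in-split at $u$ if $\End{P} \in \V{\intree{u}} \graphminus \V{\rpath{u}}$, an out-split at $v$ if $\Start{P} \in \V{\outstar{v}} \graphminus \V{\rpath{v}}$, and no split if the endpoint already lies on the appropriate star's root path. When $(v,u) \notin \E{H}$, or more generally when the endpoints of $P$ do not implicate the $v$-$u$-bridge, I would let $K$ be the chain $H$-augmentation that threads an $\mathsf{e}(v)$-$\mathsf{e}(u)$-chain of length $n$ through the edge of the split digraph $H^{*}$ corresponding to $(u,v)$. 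If instead $(u,v)$ and $(v,u)$ form a digon in $H$ and the endpoints of $P$ force us to cross into the $v$-$u$-bridge, I would take $K$ to be the collarette $H$-augmentation obtained by subdividing both digon edges with vertices $x, y$, adding the edge $(x,y)$, and weaving the $y$-$x$-chain through $(x,y)$.

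The $K$-expansion $K'$ is then obtained from $H' \cup P$ by reusing every branchset of $H'$ outside $u$ and $v$, adapting the branchsets of $u, v$ to the split structure as in the proof of \cref{thm:add_augmenting_path}, and setting the branchset of each new chain vertex $s_i$ to the shared segment $S_i$, which is trivially the union of an in- and an out-branching meeting at a single root. The path models of the new chain edges are the alternating $Q$-pieces $w_{i+1} Q z_i$ and $P$-pieces $w_i P z_{i+1}$, together with the two end segments of $P$ and the prefix/suffix of $Q$ up to $S_n$ and after $S_1$. Internal disjointness of the model follows from the hypothesis $\V{P} \cap \V{H'} = \V{P \cap Q} \cup \{\Start{P}, \End{P}\}$.

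The main obstacle will be the case analysis: articulating cleanly which splits are necessary when $\Start{P}$ or $\End{P}$ is itself a root of a star or already lies on $Q$, and pinning down the precise criterion separating chain from collarette. The collarette is a genuine necessity only when $(v,u) \in \E{H}$ and the endpoints of $P$ would otherwise produce an inconsistent chain; establishing exactly when this happens, and verifying that in every such case the $v$-$u$-bridge is well-defined and can be subdivided to host the collarette, is where the argument requires the most care.
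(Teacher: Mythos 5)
Your construction for the first (chain) case matches the paper closely: decompose $P \cap Q$ into components $Q_1, \ldots, Q_k$, subdivide $(u,v)$ accordingly, add the reverse edges $(v_x, v_{x-1})$, determine in-/out-splits from the location of $\Start{P}$ and $\End{P}$, and verify that $H' \cup P$ is a model. Note, however, that this case covers more than you allot to it: the paper enters the chain construction whenever at most one of $\Start{P}$, $\End{P}$ lies on $F \coloneqq \instar{u} \cap \outstar{v}$, \emph{or} both lie on $F$ with $\Start{P} <_F \End{P}$ — so both endpoints on the $v$-$u$-bridge in the forward order still yields a chain, not a collarette.

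The genuine gap — which you flag yourself — is the remaining case, both endpoints on $F$ with $\End{P} \leq_F \Start{P}$, where your proposal would always reach for a collarette. That only works in a sub-subcase. Writing $I$ for the shortest $\Start{P}$-$Q$-subpath and $O$ for the shortest $Q$-$\End{P}$-subpath of $P$, a collarette is built exactly when $\End{I} \leq_Q \Start{O}$ (which the lacing forces to mean $P \cap Q$ is a single component); then $\End{O} F \Start{I}$ and $\End{I} Q \Start{O}$ butterfly-contract to the two subdivision vertices. When $\Start{O} <_Q \End{I}$ — inevitable as soon as the lacing has two or more components — those intervals overlap the wrong way and no collarette is directly available. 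The paper instead forms the new path $P' \coloneqq O \cdot \End{O} F \Start{I} \cdot I$ through $F$, checks that $P'$ is properly laced with $F$ (not $Q$), starts in $\outstar{u}$, ends in $\instar{v}$, and satisfies $\Start{P'} <_Q \End{P'}$, and thereby reduces to the first case applied to the edge $(v,u)$. This re-routing through $F$ and role-swap of $(u,v)$ with $(v,u)$ is the idea your proposal is missing; without it the case analysis does not close and the collarette claim fails for $k \geq 2$.
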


\begin{figure}[!ht]
    \centering
    \begin{subfigure}[b]{\textwidth}
    	\resizebox{\textwidth}{!}{\includegraphics{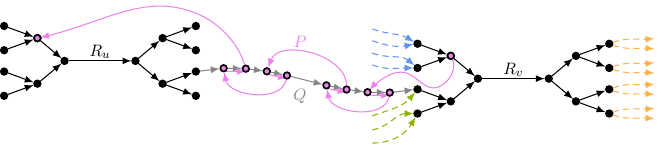}}
        \caption{Adding $P$ to the $H$-expansion $H'$.}
        \label{fig:chain-case-expansion}
    \end{subfigure}\hfill
    \begin{subfigure}[b]{\textwidth}
    \centering
		\includegraphics{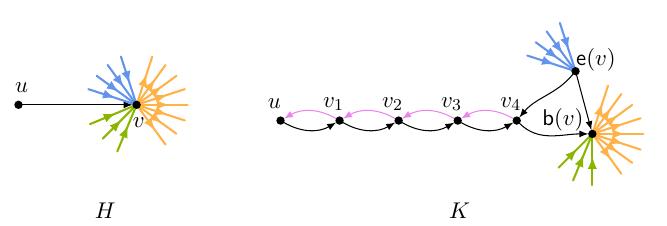}
        \caption{The resulting chain-augmentation $K$ of $H$.}
        \label{fig:chain-case-original}
    \end{subfigure}
    \caption{An example of a chain-augmentation and how it looks in the \subref{fig:chain-case-expansion} expansion and the \subref{fig:chain-case-original} actual digraph.}
    \label{fig:chain-case}
\end{figure}

\begin{proof}
    Our goal is to show that the path $P$ can be used to find a chain or collarette $H$-augmentation $K$ by constructing a $K$-expansion $K'$ as a subgraph of $D$.
    
    Let $Q_1,Q_2,\dots,Q_k$ be the weakly connected components of $P\cap Q$ ordered by occurrence along $Q$ (see~\cref{fig:chain-case-expansion}).

    \paragraph{First assume that $\Abs{\Set{\Start{P},\End{P}} \cap \V{\outstar{v} \cap \intree{u}}} \leq 1$, or both $\Start{P}$ and $\End{P}$ lie in $\outstar{v} \cap \instar{u}$ and $\Start{P} <_{\outstar{v}\cap\instar{u}} \End{P}$.}
    To obtain a graph $H_1$ from $H$, we subdivide the edge $\Brace{u,v}$ $k$ times obtaining the vertices $v_1,v_2,\dots,v_k$ in order of the resulting directed path and then add the edges $\Brace{v_x,v_{x-1}}$ for $x \in [2,k]$ (see~\cref{fig:chain-case-original}).
    
    If $\Start{P} \in V(\outstar{v})$, we add the edge $\Brace{v,v_k}$ to $H_1$ obtaining a graph $H_2$.
    Otherwise, i.e.~if $\Start{P} \in \V{\intree{v} \graphminus \rpath{v}}$, we first perform an in-split at $v$ as follows.
    Consider the maximal in-arborescence $T^i$ in $\intree{v}$ that is rooted at $\Start{P}$.
    This provides a partition of $\InNeighbours{H}{v}$ as follows.
    Let $N^v_e$ be the set of vertices $x \in \InNeighbours{H}{v}$ for which the $x$-$v$-bridge ends in a leaf of $T^i$.
    Also, let $N^v_b\coloneqq \InNeighbours{}{v}\setminus N^v_e$.
    By definition of $\intree{v}$ we obtain $\Abs{N^v_e} \geq 2$ and since $\Start{Q} \notin \V{\rpath{v}}$ we have $\Abs{N^v_b} \geq 1$, which yields the in-split of $v$ into the vertices $v_e$ and $v_b$.
    Finally, we add the edge $\Brace{v_e,v_k}$ and denote the resulting graph by $H_2$.
    
    Similarly, if $\End{P} \in \V{\instar{u}}$, we now add the edge $\Brace{v_1,u}$ to $H_2$ obtaining a graph $H_3$.
    Otherwise, i.e.~if $\End{P} \in \V{\outtree{u}\graphminus \rpath{u}}$, we first perform an out-split in $H_2$ at $u$ as follows.
    Consider the maximal out-arborescence $T^o$ in $\outtree{u}$ that is rooted at $\End{P}$.
    This yields, similar as in the previous case, a partition of $\OutNeighbours{H}{u}$ into two sets $N^u_b$ and $N^u_e$ with $\Abs{N^u_e} \geq 2$ and $\Abs{N^u_b} \geq 1$.
    Thus, we can perform an out-split of $u$ into the vertices $u_e$ and $u_b$ and add the edge $\Brace{v_1,u_e}$.
    We denote the resulting graph by $H_3$.
    
    By construction, this yields a strongly 2-connected graph $K \coloneqq H_3$, which is a chain augmentation of $H$.
    Next, we show that $K' \coloneqq H' \cup P$ is an expansion of $K$ by constructing a model $\mu$ in $D$ such that $\mu(K) = K'$.
    For all vertices $w \in \V{H-u-v}$ we choose $\intree{w,K'} \coloneqq \intree{w,H'}$, $\rpath{w,K'} \coloneqq \rpath{w,H'}$ and $\outtree{w,K'} \coloneqq \outtree{w,H'}$ and then define $\mu(w) \coloneqq \intree{w,K'}\cup\rpath{w,K'}\cup\outtree{w,K'}$.
    Moreover, for all $(w,x)\in \E{H-u-v}$ we define $\mu((w,x))$ to be the $w$-$x$-bridge of $K'$.
    
    First, for every $i\in[k]$, we define $\mu(v_i)=Q_i$.
    In particular, $\rpath{v_i,K'}\coloneqq Q_i$ and $\intree{v_i,K'}$ and $\outtree{v_i,K'}$ are trivial one-vertex trees.
    Also, for every $i\in[2,k]$, we define $\mu((v_{i-1},v_i))$ to be the subpath $\End{Q_{i-1}}P\Start{Q_{i}}$ of $P$, and $\mu((v_{i},v_{i-1}))$ to be the subpath $\End{Q_{i}}Q\Start{Q_{i-1}}$ of $Q$.
    
    We now consider two cases, depending on whether we performed an in-split at $v$ or not.
    Firstly, if $\Start{P} \in \V{\outstar{v,H'}}$, then for the newly added edge $\Brace{v,v_k}$, we define $\mu(\Brace{v,v_k}) = P\Start{Q_1}$.
    In the case where $\Start{P} \in \V{\rpath{v,H'}}$, we define $\rpath{v,K'}$ to be $\rpath{v,H'}\Start{P}$ and $\outtree{v,K'}$ to be $\outtree{v,H'}\cup \Start{P}\rpath{v,H'}$.
    In the case where $\Start{P}\in \V{\outstar{v,H'}\graphminus(\outtree{v,H
    }\cup\rpath{v,H'})}$, we additionally define $\mu$ for $v$ by adding to $\outtree{v,H'}$ the path of $\outstar{v,H'}$ starting from $\outtree{v,H'}$ and ending in $\Start{P}$, obtaining $\outtree{v,K'}$, and then setting $\mu(v)\coloneqq \intree{v,K'}\cup\rpath{v}\cup\outtree{v,K'}$.

    Secondly, in the case we perform an in-split at $v$, we define $\mu(v_e)$ to be the tree $T^i$.
    Here we set $\intree{v_e,K'}\coloneqq T^i$, while both $\rpath{v_e,K'}$ and $\outtree{v_e,K'}$ are the root of $T^i$ and thus trivial.

    Let $R$ be the unique path in $\intree{v,H'}$ starting in the root of $T^i$ and ending in a vertex of in-degree at least two such that all internal vertices of $R$ have in- and out-degree exactly one.
    Now let $\intree{v_b,K'}\coloneqq \intree{v,H'}\graphminus(T^i\cup (R-\End{R}))$.
    Also $\rpath{v_b,K'} = \rpath{v,H'}$, and $\outtree{v_b,K'}=\outtree{v,H'}$ and finally, $\mu(v_b) \coloneqq \intree{v_b,K'} \cup \rpath{v_b,K'} \cup \outtree{v_b,K'}$.

    Analogously we now define $\intree{u,K'}$, $\rpath{u,K'}$, $\outtree{u,K'}$ and then $\mu(u)$, in case there was no out-split at $u$ and otherwise $\intree{u_e,K'}$, $\rpath{u_e,K'}$, $\outtree{u_e,K'}$ and then $\mu(u_e)$ as well as $\intree{u_b,K'}$, $\rpath{u_b,K'}$, $\outtree{u_b,K'}$ and then $\mu(u_b)$.

    As we now have the in- and out-trees for all vertices $w\in \V{K}$, we can obtain $\outstar{w,K'}$ and $\instar{w,K'}$.
    Now for every edge $(x,y)\in\E{K}$ such $\mu((x,y))$ is not yet defined we choose $\mu((x,y))$ to be the $x$-$y$-bridge of $K'$.

    Now observe that $\mu(K) = K'$ is indeed a butterfly-model of $K$ in $D$.

    \paragraph{Now consider that both $\Start{P}$ and $\End{P}$ lie in $\outstar{u} \cap \instar{v}$ and $\End{P} \leq_{\outstar{v}\cap\instar{u}} \Start{P}$.}
        Let $\inP$ be the shortest $\Start{P}$-$Q$-subpath of $P$ and $\outP$ the shortest $Q$-$\End{P}$-subpath of $P$.
        Moreover, let $F\coloneqq \instar{u,H'}\cap\outstar{v,H'}$.
        We distinguish two cases depending on the order of $\End{\inP}$ and $\Start{\outP}$ on $Q$.
        See \cref{fig:collarette_case_and_last_chain_case} for an illustration.

        \begin{figure}[!ht]
            \centering
            \begin{subfigure}[b]{0.49\textwidth}
                \centering
                \begin{tikzpicture}[scale=1]
			
	\pgfdeclarelayer{background}
	\pgfdeclarelayer{foreground}
			
	\pgfsetlayers{background,main,foreground}
			
	\begin{pgfonlayer}{main}

        \node (u) [v:main] {};
        \node (v) [v:main,position=0:26mm from u] {};

        \node (x_1) [v:main,position=315:10mm from u] {};
        \node (x_2) [v:main,position=225:10mm from v] {};

        \node (y_1) [v:main,position=135:10mm from v] {};
        \node (y_2) [v:main,position=45:10mm from u] {};

        \node (y_1label) [v:ghost,position=70:4mm from y_1] {$\End{P}$};
        \node (y_2label) [v:ghost,position=110:4mm from y_2] {$\Start{P}$};
        \node (ulabel) [v:ghost,position=180:3mm from u] {$u$};
        \node (vlabel) [v:ghost,position=0:3mm from v] {$v$};

        \node (Ilabel) [v:ghost,position=110:7.5mm from x_1] {\textcolor{CornflowerBlue}{$I$}};
        \node (Olabel) [v:ghost,position=290:7.5mm from y_1] {\textcolor{LavenderMagenta}{$O$}};

        \node (Flabel) [v:ghost,position=105:5mm from u] {$F$};
        \node (Qlabel) [v:ghost,position=275:5mm from v] {$Q$};

        \node (blocktobottom) [v:ghost,position=270:3mm from x_1] {};
 
	\end{pgfonlayer}
			
	\begin{pgfonlayer}{background}

        \draw[e:main,->,bend right=25] (u) to (x_1);
        \draw[e:main,->,bend right=10] (x_1) to (x_2);
        \draw[e:main,->,bend right=25] (x_2) to (v);

        \draw[e:main,->,bend right=25] (v) to (y_1);
        \draw[e:main,->,bend right=10] (y_1) to (y_2);
        \draw[e:main,->,bend right=25] (y_2) to (u);

        \draw[e:main,->,color=CornflowerBlue] (y_2) to (x_1);
        \draw[e:main,color=LavenderMagenta,->] (x_2) to (y_1);
        
	\end{pgfonlayer}	
			
	\begin{pgfonlayer}{foreground}
	\end{pgfonlayer}
   
\end{tikzpicture}
                \caption{The marked subpaths of $F$ and $Q$ are contractible; they yield the new models of the subdividing vertices of $(v,u)$ and $(u,v)$.}
                \label{fig:collarette_case}
            \end{subfigure}\hfill
            \begin{subfigure}[b]{0.49\textwidth}
                \centering
                \begin{tikzpicture}[scale=1]
			
	\pgfdeclarelayer{background}
	\pgfdeclarelayer{foreground}
			
	\pgfsetlayers{background,main,foreground}
			
	\begin{pgfonlayer}{main}

        \node (u) [v:main] {};
        \node (v) [v:main,position=0:26mm from u] {};

        \node (x_1) [v:main,position=315:10mm from u] {};
        \node (x_2) [v:main,position=225:10mm from v] {};

        \node (y_1) [v:main,position=135:10mm from v] {};
        \node (y_2) [v:main,position=45:10mm from u] {};

        \node (x_1label) [v:ghost,position=250:4mm from x_1] {$\Start{\textcolor{PastelOrange}{P'}}$};
        \node (x_2label) [v:ghost,position=290:4mm from x_2] {$\End{\textcolor{PastelOrange}{P'}}$};
        \node (y_1label) [v:ghost,position=70:4mm from y_1] {$\End{P}$};
        \node (y_2label) [v:ghost,position=110:4mm from y_2] {$\Start{P}$};
        \node (ulabel) [v:ghost,position=180:3mm from u] {$u$};
        \node (vlabel) [v:ghost,position=0:3mm from v] {$v$};

        \node (Ilabel) [v:ghost,position=260:5mm from y_2] {\textcolor{CornflowerBlue}{$I$}};
        \node (Olabel) [v:ghost,position=280:5mm from y_1] {\textcolor{LavenderMagenta}{$O$}};

        \node (Flabel) [v:ghost,position=105:5mm from u] {$F$};
        \node (Qlabel) [v:ghost,position=275:5mm from v] {$Q$};

        \node (PPlabel) [v:ghost,position=215:7mm from y_1] {\textcolor{PastelOrange}{$P'$}};

        \node (blocktobottom) [v:ghost,position=270:3mm from x_1] {};
 
	\end{pgfonlayer}
			
	\begin{pgfonlayer}{background}

        \draw[line width=6pt,opacity=0.6,color=PastelOrange, bend right=15,line cap=round] (y_2) to (x_2);
        \draw[line width=6pt,opacity=0.6,color=PastelOrange, bend right=15,line cap=round] (x_1) to (y_1); 
        \draw[line width=6pt,opacity=0.6,color=PastelOrange, bend right=10,line cap=round] (y_1) to (y_2);

        \draw[e:main,->,bend right=25] (u) to (x_1);
        \draw[e:main,->,bend right=10] (x_1) to (x_2);
        \draw[e:main,->,bend right=25] (x_2) to (v);

        \draw[e:main,->,bend right=25] (v) to (y_1);
        \draw[e:main,->,bend right=10] (y_1) to (y_2);
        \draw[e:main,->,bend right=25] (y_2) to (u);

        \draw[e:main,->,color=CornflowerBlue,bend right=15] (y_2) to (x_2);
        \draw[e:main,color=LavenderMagenta,->,bend right=15] (x_1) to (y_1);
        
	\end{pgfonlayer}	
			
	\begin{pgfonlayer}{foreground}
	\end{pgfonlayer}
   
\end{tikzpicture}
                \caption{We can instead consider the path $P'$ that is properly laced with $F$ and for which $\Start{P'} \leq_{Q} \End{P'}$.}
                \label{fig:reduction_case}
            \end{subfigure}
            \caption{An illustration of the case that both $\Start{P}$ and $\End{P}$ lie in $\outstar{u} \cap \instar{v}$ and $\End{P} \leq_{\outstar{v}\cap\instar{u}} \Start{P}$, in the proof of \cref{thm:add_chain}, and the first segment $\inP$ as well as the last segment $\outP$ of $P$.
            There are two cases depending on the order of the end-vertices of $\inP$ and $\outP$ on $Q$ yielding \subref{fig:collarette_case} a collarette augmentation or \subref{fig:reduction_case} a different path $P'$ that allows us to reduce the situation to a previous case of the proof.}
            \label{fig:collarette_case_and_last_chain_case}
        \end{figure}
        
        \begin{description}
            \item[$\End{\inP}\leq_{Q}\Start{\outP}$:]
                In this case, we obtain $K$ by subdividing the edge $(u,v)$ with the new vertex $x$ and the edge $(v,u)$ with the new vertex $y$ and then adding the edges $(x,y)$ and $(y,x)$.
                Then $K$ is a collarette $H$-augmentation.
                To see that $H'\cup\inP\cup\outP$ is indeed a $K'$ expansion, it suffices to observe that both paths $\End{O} F \Start{I}$ and $\End{\inP}Q\Start{\outP}$ are either singletons or fully butterfly-contractible into a singleton within $K'$.
                Moreover, for every $w\in \V{H}$ it is still possible to contract $\intree{w,K'}\cup\rpath{w,K'}\cup\outtree{w,K'}$ into a single vertex.
                Hence, performing all of these butterfly-contraction leaves us with a subdivision of $K$.

            \item[$\Start{\outP}<_{Q}\End{\inP}$:]
                In this case we consider the path $P' \coloneqq \outP \cdot \End{\outP}F\Start{\inP} \cdot \inP$.
                Note that $P'$ is an $\V{\outstar{u}}$-$\V{\instar{v}}$-path that is properly laced with the path $F$ and otherwise disjoint from $K'$.
                Additionally the vertices $\Start{P'}$ and $\End{P'}$ occur on $Q$ in this order.
                Thus $P'$ and $F$ yield the situation solved in the preceding case.\qedhere
        \end{description}
\end{proof}

\subsection{Constructing bracelet augmentations}

Finally, there are two very specific case in which we end up with a bracelet augmentation.
This is described by the following two \namecref{thm:construct_bracelet}s.
In some cases of these proofs we again obtain basic, chain or collarette augmentations.

\begin{theorem}
    \label{thm:construct_bracelet}
    Let $D,H$ be strongly $2$-connected digraphs and let $H'$ be an $H$-expansion in $D$.
    Let $\Brace{x,w}, \Brace{w,y} \in \E{H}$ and let $P$ be a path in $D$ with $\Start{P} \in V(\outstar{x})$, $\End{P} \in V(\instar{y})$.
    Further, let $Q$ be either the path in $\outstar{a} \cap \instar{w}$ for some $(x, w) \neq (a, w) \in \E{H}$ or the path in $\outstar{w} \cap \instar{b}$ for some $(w,y) \neq (w, b) \in \E{H}$ such that $P$ is properly laced with $Q$, $P\cap Q$ is exactly one path and $P$ is otherwise internally disjoint from $H'$.
    Then $D$ admits either a basic~$H$-augmentation or a bracelet~$H$-augmentation.
\end{theorem}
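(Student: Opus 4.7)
Plan. By Lemma~\ref{lem:inverted_augmentation}, the two alternative choices of $Q$ in the hypothesis are interchanged by inverting all arcs of $D$, $H$, and $H'$, so I would focus on the first alternative and assume $Q = \outstar{a} \cap \instar{w}$ is the $a$-$w$-bridge for some $(a,w)\in \E{H}$ with $a\neq x$. Let $R := P \cap Q$, which by hypothesis is a single directed subpath, and decompose $P = P_1 \cdot R \cdot P_2$ where $P_1 := \Start{P} P \Start{R}$ and $P_2 := \End{R} P \End{P}$. Since $P$ is internally disjoint from $H'$ outside $R$, both $P_1$ and $P_2$ are honest $H'$-\earpaths.

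The first step is to try to extract an augmenting tail from $P$. Observe that $\Start{P_2} = \End{R} \in \V{Q} \subseteq \V{\outstar{a}}$ while $\End{P_2} = \End{P} \in \V{\instar{y}}$. If $a \neq y$ and $(a,y) \notin \E{H}$, then $P_2$ satisfies condition \cref{item:augmenting_3} of the classification in \cref{lem:different-earpaths}, so $P_2$ is $H'$-augmenting, and Theorem~\ref{thm:add_augmenting_path} promotes it to a basic $H$-augmentation admitted by $D$.

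Otherwise $a = y$ or $(a,y) \in \E{H}$, which is exactly the adjacency demanded by a bracelet on the $a$-side of $w$. Here I would construct the target bracelet $K$ from $H$ by subdividing $(a,w)$ with a new vertex $z$ and adding the edges $(x,z)$ and $(z,y)$; assuming the remaining bracelet conditions on $w$ (that $\InNeighbours{H}{w} = \{x,a\}$, $\OutNeighbours{H}{w} = \{y,b\}$ for some $b$, and the $b$-side clause) hold, $K$ is a bracelet $H$-augmentation. The $K$-expansion in $D$ is realized by $K' := H' \cup P$ via the butterfly-model $\mu_K$ defined by: $\mu_K(v) := \mu_H(v)$ for every $v \in \V{H}$, extended along the unique outstar branch of $x$ down to $\Start{P_1}$ and along the unique instar branch of $y$ back from $\End{P_2}$ exactly as in the proof of \cref{thm:add_augmenting_path}; $\mu_K(z) := R$ with in-branching $\{\Start{R}\}$ and out-branching $R$; $\mu_K((a,z)) := \Start{Q} Q \Start{R}$; $\mu_K((z,w)) := \End{R} Q \End{Q}$; $\mu_K((x,z))$ the extension-path above concatenated with $P_1$; and $\mu_K((z,y))$ the concatenation of $P_2$ with the symmetric extension-path at $y$. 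All other edge-models are inherited unchanged from $H'$, and a direct verification shows the images are pairwise interior-disjoint and that the in- and out-branchings of each $\mu_K(v)$ still satisfy the definitional constraints.

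The main obstacle is the residual case analysis when some bracelet structural condition on $w$ fails, e.g.\ $w$ has in- or out-degree exceeding $2$ in $H$, or $x \neq b$ and $\{(x,b),(b,y)\} \not\subseteq \E{H}$. In these degenerate configurations the $K$ constructed above is not a valid bracelet, and one must fall back on a basic $H$-augmentation by performing the appropriate in- or out-split at $w$ (separating the offending neighbour, e.g.\ pushing $\{x,a\}$ onto $\exposed{w}$ when $\deg_H^-(w) > 2$) and exhibiting $P$, possibly combined with a path inside the corresponding in- or out-star, as an augmenting \earpath in the split expansion, to which Theorem~\ref{thm:add_augmenting_path} applies. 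The delicate part is selecting the right split and the right endpoint of the added edge so that the resulting digraph is indeed a basic $H$-augmentation whose expansion embeds into $D$ via $H' \cup P$.
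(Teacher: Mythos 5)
Your plan correctly identifies the reduction via inversion, the decomposition of $P$ around the intersection block with $Q$, and the observation that $P_2$ (your tail) becomes an $H'$-augmenting path of type~\cref{item:augmenting_3} whenever $a\neq y$ and $(a,y)\notin E(H)$. That single observation recovers one piece of one of the four structural claims the paper needs. But the rest of the argument has a genuine gap, and you explicitly flag it yourself: you have no mechanism to establish that $\InDegree{H}{w}=\OutDegree{H}{w}=2$, that $(x,a)\in E(H)$ as well as $(a,y)\in E(H)$ (or $a=y$), and that $(x,b),(b,y)\in E(H)$ (or $x=b$). Without all four of these, the digraph $K$ you construct is simply not a bracelet augmentation, and the theorem is not proved.

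The paper's key idea, which your proposal is missing, is to use the prefix $Pc$ and the suffix $dP$ of $P$ as switching paths. Switching $H'$ onto $Pc$ produces a new $H$-expansion $H''$; switching $H''$ onto $dP$ produces $H'''$. Having assumed (via \cref{thm:add_augmenting_path}) that no $H^*$-expansion admits an $H^*$-augmenting path, one then checks that if any of the bracelet conditions on $w$ failed, some subpath of $P$ or of $H'$ (namely the paths $K$ or $L$ replaced by the switchings, or $dP$, or $Pc$) would become augmenting of type~\cref{item:augmenting_1}, \cref{item:augmenting_2}, \cref{item:augmenting_3}, or \cref{item:augmenting_4} in $H''$, $H'''$, or a further derived expansion, contradicting the standing assumption. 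Your suggested fallback — ``perform the appropriate in- or out-split at $w$ and exhibit an augmenting \earpath in the split expansion'' — is not what happens and would not straightforwardly work: the augmenting path witnessing the contradiction is not an \earpath of a split expansion of $H'$, but of the switched expansion $H''$ or $H'''$, and finding it requires exactly the chain of switchings you do not perform. Until that machinery is in place, the proposal does not establish the theorem.
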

\begin{proof}
    \setcounter{claimcounter}{0}
    By \cref{lem:inverted_augmentation}, we can assume without loss of generality that $P$ is (properly) laced with the path $Q$ in $\outstar{a} \cap \instar{w}$ for some $(x,w) \neq (a, w) \in \E{H}$.
    Note that all vertices of $Q$ have in- and out-degree one in $H'$.

    If $D$ contains an $H^*$-augmenting path for some $H$-expansion $H^*$, then \cref{thm:add_augmenting_path} implies that $D$ admits a basic $H$-augmentation.
    Therefore, in the rest of the proof, we assume that $D$ does not contain an $H^*$-augmenting path for any $H$-expansion $H^*$.
    We show that the graph $H' \cup P$ is an expansion of the graph obtained from $H$ by a bracelet augmentation with respect to $(x,w), (w,y), (a, w)$, which implies that $D$ admits a bracelet~$H$-augmentation.

    Let $c$ be the first vertex of $P$ on $Q$ and $d$ is the last vertex of $P$ on $Q$, i.e.~$P \cap Q = cQd$.
    Since $\Start{P} \in \V{\outstar{x, H'}}$, $c \in \V{\instar{w, H'}}$ and $\outstar{x, H'} \cup \instar{w, H'} \cup Pc$ is acyclic, the properties~\cref{item:switching_models_1,item:switching_stars_2,item:switching_acyclic_3} hold and the path $Pc$ is $H'$-switching.
    Let $H''$ be the $H$-expansion obtained from $H'$ by switching onto $Pc$ and let $K \subseteq H'$ be the $H''$-switching path obtained by switching onto $Pc$.
    Note that $d \neq \End{K}$ since $\InDegree{H'}{d} = 1$.
    Further, note that ${cQ} = \Brace{\intree{w, H''} \cup \rpath{w, H''}} \graphminus (\intree{w, H'} \cup \rpath{w, H'})$ by construction of $H''$.
    
    \begin{claim}
        $\InDegree{H}{w}= 2$.
    \end{claim}
    \begin{claimproof}
        Note that $\InDegree{H''}{v} = 1$ for any $v \in \V{cQ} \setminus \{c\}$ and $\InDegree{H''}{c}=2$.
        We suppose for a contradiction that $\InDegree{H}{w}>2$.
        Then there exists a vertex in $\V{\intree{w, H''} \graphminus cQ}$ and this implies ${cQ} \subseteq {\intree{w, H''} \graphminus \rpath{w, H''}}$.
        We deduce that $d \in \V{\intree{w, H''} \graphminus \rpath{w, H''}}$.
        Since $\End{P} \in \V{\instar{y, H'}} = \V{\instar{y, H''}}$, $dP$ is $H''$-augmenting of type~\cref{item:augmenting_2}, a contradiction.
    \end{claimproof}
    Then $d, \End{K} \in \V{\rpath{w, H''}}$ since $\InDegree{H''}{c}=2$.
    In particular, $d \in \V{\outstar{w, H''}}$.
    Since $\End{P} \in \V{\instar{y, H'}} = \V{\instar{y, H''}}$ and $\outstar{w, H''} \cup \instar{y, H''} \cup dP$ is acyclic, the properties~\cref{item:switching_models_1,item:switching_stars_2,item:switching_acyclic_3} hold and $dP$ is $H''$-switching.
    Let $H'''$ be the $H$-expansion obtained from $H''$ by switching onto $dP$ and let $L \subseteq H''$ be the $H'''$-switching path obtained by switching onto $dP$.
    Note that $\rpath{w, H'''}= cQd$ by construction of $H''$ and $H'''$.
    \begin{claim}
        $\OutDegree{H}{w}=2$.
    \end{claim}
    \begin{claimproof}
        Note that $\OutDegree{H'''}{v} = 1$ for any $v \in \V{\rpath{w, H''}} \setminus \{d, \End{\rpath{w, H''}}\}$ and $\OutDegree{H'''}{d}=2$ by construction of $H'''$.
        
        We suppose for a contradiction that $\OutDegree{H}{w}>2$.
        Then $d \rpath{w, H''} \subseteq \outtree{w, H'''}$, which implies $\End{K} \in \V{\outtree{w, H'''} \graphminus \rpath{w, H'''}}$.
        Since $\Start{K} \in \V{\outstar{x, H''}}=\V{\outstar{x, H'''}}$, $K$ is $H'''$-augmenting of type~\cref{item:augmenting_4}, a contradiction.
    \end{claimproof}
        Let $y \neq b \in \V{H}$ such that $\OutNeighbours{H}{w}=\Set{y, b}$.
        See \cref{fig:construct_bracelet}.
    
        Note that $\End{K}, \Start{L} \in \V{\instar{b, H'''}}$.
        Further, note that $\End{K} = \Start{\rpath{w, H'}}$ and $\Start{L} = \End{\rpath{w, H'}}$ by construction of $K$ and $L$.
    \begin{claim}
        Either $a = y$ or $(x,a), (a,y) \in \E{H}$.
    \end{claim}
    \begin{claimproof}
        We assume $a \neq y$.
        Note that $d \in \V{\outstar{a, H'}}$ and $\End{P} \in \V{\instar{y, H'}}$.
        If $(a,y) \notin \E{H}$, then $dP$ is $H'$-augmenting of type~\cref{item:augmenting_3}, a contradiction.
        Thus $(a,y) \in \E{H}$, which implies that $dP$ is $H'$-switching since $\outstar{a, H'} \cup \instar{y, H'} \cup dP$ is acyclic.
        Let $\Tilde{H}$ be the $H$-expansion obtained from $H'$ by switching onto $dP$.
        Then $c \in V(\rpath{a, \Tilde{H}} \cup \outtree{a, \Tilde{H}})$ since $\OutDegree{\Tilde{H}}{d}=2$.
        If $(x,a) \notin \E{H}$, then $Pc$ is $\Tilde{H}$-augmenting either of type~\cref{item:augmenting_3} or of type~\cref{item:augmenting_4} since $\Start{Pc} \in \V{\outstar{x, H'}} = \V{\outstar{x, \Tilde{H}}}$, a contradiction.
    \end{claimproof}

    \begin{claim}
        Either $x = b$ or $(x,b), (b,y) \in \E{H}$.
    \end{claim}
    \begin{claimproof}
        We assume $x \neq b$.
        Note that $\Start{K} \in \V{\outstar{x, H'''}}$ and $\End{K} \in \V{\instar{b, H'''}}$.
        If $(x,b) \notin \E{H}$, then $K$ is $H'''$-augmenting of type~\cref{item:augmenting_3}, a contradiction.
        Thus $(x,b) \in \E{H}$, which implies that $K$ is $H'''$-switching since $\outstar{x, H'''} \cup \instar{b, H'''} \cup K$ is acyclic.
        Let $\hat{H}$ be the $H$-expansion obtained from $H'''$ by switching onto $K$.
        Then $\Start{L} \in V(\intree{b, \hat{H}} \cup \rpath{b, \hat{H}})$ since $\InDegree{\hat{H}}{\End{K}} = 2$.
        If $(b,y) \notin \E{H}$, then $L$ is $\hat{H}$-augmenting either of type~\cref{item:augmenting_3} or of type~\cref{item:augmenting_2} since $\End{L} \in \V{\instar{y, H'''}} = \V{\instar{y, \Hat{H}}}$.
    \end{claimproof}

    Thus $H$ satisfies the conditions on $w$ and its neighbours for a bracelet augmentation.
    Let $\Bar{H}$ be the digraph obtained from $H$ by bracelet augmentation with respect to $(x,w), (w, y), (a, w)$.
    By choosing $cQd$ as the \redpath of the subdivison vertex, $H' \cup P$ becomes an $\Bar{H}$-expansion.
\end{proof}

\begin{figure}[!ht]
    \centering
        \begin{tikzpicture}[scale=1]
			
	\pgfdeclarelayer{background}
	\pgfdeclarelayer{foreground}
			
	\pgfsetlayers{background,main,foreground}
			
	\begin{pgfonlayer}{main}

        \node (w) [v:model] {$w$};
        \node (x) [v:model,position=220:18mm from w] {$x$};
        \node (y) [v:model,position=40:18mm from w] {$y$};
        \node (b) [v:model,position=320:18mm from w] {$b$};
        \node (z_1) [v:main,fill=DarkGray,position=140:10mm from w] {};
        \node (z_1g1) [v:ghost,position=240:0.7mm from z_1] {};
        \node (z_1g2) [v:ghost,position=60:0.7mm from z_1] {};
        \node (z_2) [v:main,fill=DarkGray,position=140:10mm from z_1] {};
        \node (z_2g1) [v:ghost,position=240:0.7mm from z_2] {};
        \node (z_2g2) [v:ghost,position=60:0.7mm from z_2] {};
        \node (a) [v:model,position=140:12mm from z_2] {$a$};
        \node (wg1) [v:ghost,position=220:0.7mm from w] {};
        \node (ag1) [v:ghost,position=220:0.7mm from a] {};
        \node (bg1) [v:ghost,position=220:0.7mm from b] {};
        \node (xg1) [v:ghost,position=180:0.7mm from x] {};
        \node (ug1) [v:ghost,position=180:0.7mm from z_2] {};

        \node (wg2) [v:ghost,position=40:0.7mm from w] {};
        \node (ag2) [v:ghost,position=40:0.7mm from a] {};
        \node (bg2) [v:ghost,position=40:0.7mm from b] {};
        \node (xg2) [v:ghost,position=0:0.7mm from x] {};
        \node (ug2) [v:ghost,position=290:0.7mm from z_2] {};
        \node (uug) [v:ghost,position=70:1mm from z_1] {};
        \node (yyg) [v:ghost,position=120:1mm from y] {};
	\end{pgfonlayer}
			
	\begin{pgfonlayer}{background}

        \draw [line width=2.3mm,color=CornflowerBlue!40!white,line cap=round] (z_1.center) to [bend left=30] (y.center);

        \draw [line width=1.5mm,color=LavenderMagenta!40!white,line cap=round] (ag1.center) to (wg1.center)
        (xg2.center) to [bend left=30] (ug2.center)
        (wg1.center) to (bg1.center)
        (w.center) to (y.center);
        
        \draw [line width=1.5mm,color=CornflowerBlue!40!white,line cap=round] (ag2.center) to (wg2.center)
        (wg2.center) to (bg2.center);

        \draw [line width=1.5mm,opacity=0.4,color=CornflowerBlue,line cap=round] (xg1.center) to [bend left=30] (ug1.center);

        \draw[e:main,->] (z_2) to (z_1);
        \draw[e:main,->] (a) to (z_2);
        \draw[e:main,->] (z_1) to (w);
        \draw[e:main,->] (w) to (b);
        \draw[e:main,->] (w) to (y);
        \draw[e:main,->] (x) to (w);

        \draw[e:main,->,out=120,in=248,color=PastelOrange] (x) to (z_2);
        \draw[e:main,->,out=40,in=170,color=PastelOrange] (z_1) to (y);
        
        \draw[e:main,->,color=PastelOrange] (z_2g1) to (z_1g1);
        \draw[e:main,dashed,color=DarkGray] (z_2g2) to (z_1g2);

        \draw[e:main,dashed,out=120,in=248,looseness=1.2,color=PastelOrange] (xg1) to (ug1);

       \draw[e:main,dashed,out=40,in=170,looseness=1.2,color=PastelOrange] (uug) to (yyg);
	\end{pgfonlayer}	
			
	\begin{pgfonlayer}{foreground}

        \node (clabel) [v:ghost,position=190:4mm from z_2] {$c$};
        \node (dlabel) [v:ghost,position=240:4mm from z_1] {$d$};

        \node (Plabel) [v:ghost,position=90:8mm from x] {\textcolor{PastelOrange}{$P$}};

        \node (Rlabel) [v:ghost,position=0:9mm from z_2] {\textcolor{DarkGray}{$R_{w,H'''}$}};
        
        \node (Qlabel) [v:ghost,position=0:6mm from a] {$Q$};

        \node (dPlabel) [v:ghost,position=150:10mm from y] {\textcolor{PastelOrange}{$dP$}};

        \node (Pclabel) [v:ghost,position=220:10mm from z_2] {\textcolor{PastelOrange}{$Pc$}};

        \node (Klabel) [v:ghost,position=20:10mm from x] {$K$};
        \node (Llabel) [v:ghost,position=20:10mm from w] {$L$};
	\end{pgfonlayer}
   
\end{tikzpicture}
    \caption{The $H$-expansions $H', \textcolor{LavenderMagenta}{H''}$ and \textcolor{CornflowerBlue}{$H'''$} in the proof of \cref{thm:construct_bracelet}.}
    \label{fig:construct_bracelet}
\end{figure}

We turn our attention to the second case, which differs from the first one in that $P$ intersects $Q$ in exactly two paths.

\begin{theorem}
    \label{thm:construct_bracelet_two}
    Let $D,H$ be strongly $2$-connected digraphs and let $H'$ be an $H$-expansion in $D$.
    Let $\Brace{x,w}, \Brace{w,y} \in \E{H}$ and let $P$ be a path in $D$ with $\Start{P} \in V(\outstar{x})$, $\End{P} \in V(\instar{y})$.
    Further, let $Q$ be either the path in $\outstar{a} \cap \instar{w}$ for some $(x, w) \neq (a, w) \in \E{H}$ or the path in $\outstar{w} \cap \instar{b}$ for some $(w,y) \neq (w, b) \in \E{H}$ such that $P$ is properly laced with $Q$, $P\cap Q$ is exactly two paths and $P$ is otherwise internally disjoint from $H'$.
    Then $D$ admits either a basic~$H$-augmentation, a chain~$H$-augmentation, a collarette~$H$-augmentation or a bracelet~$H$-augmentation.
\end{theorem}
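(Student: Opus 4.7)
The plan is to extend the argument of \cref{thm:construct_bracelet} to the two-component case by performing switching operations at the first and last intersection vertices of $P$ on $Q$. By \cref{lem:inverted_augmentation} I may assume without loss of generality that $Q \subseteq \outstar{a} \cap \instar{w}$ for some $(a, w) \in \E{H} \setminus \Set{(x, w)}$. If at any stage $D$ contains an augmenting \earpath with respect to any $H$-expansion under consideration then \cref{thm:add_augmenting_path} already delivers a basic $H$-augmentation, so I assume throughout that no such path exists.

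Label the two components of $P \cap Q$ as $C_1 = z_1 Q w_1$ and $C_2 = z_2 Q w_2$. Proper lacedness of $P$ and $Q$ forces the ordering $\Start{P} <_P z_1 <_P w_1 <_P z_2 <_P w_2 <_P \End{P}$ on $P$ and the reversed ordering $a <_Q z_2 <_Q w_2 <_Q z_1 <_Q w_1 <_Q w$ on $Q$. Following the template of \cref{thm:construct_bracelet}, the initial segment $\Start{P} P z_1$ is $H'$-switching with respect to $(x, w)$; switching on it yields an expansion $H''$ in which $z_1 Q w$ is absorbed into $\intree{w, H''}$ while $a Q z_1$ remains as the new $(a, w)$-bridge and still contains $C_2$ in its interior. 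I plan to then apply an analogous second switching on a suitable subpath of $P$ inside $H''$, corresponding to the final segment $w_2 P \End{P}$ whose endpoints now lie in $V(\outstar{a, H''})$ and $V(\instar{y, H''})$, producing a further expansion $H'''$; this step splits into cases depending on whether $a = y$, whether $(a, y) \in \E{H}$, and on the analogous conditions concerning the other out-neighbour $b$ of $w$.

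After both switches, the middle segment $w_1 P z_2$ of $P$ together with an appropriate subpath of $Q$ should form a properly laced pair in which only a single intersection component remains, reducing the setting to that of \cref{thm:construct_bracelet} and yielding a bracelet $H$-augmentation; in cases where the switching instead produces an \earpath properly laced with an edge bridge and with endpoints in $V(\intree{v'}) \cup V(\outstar{v'})$ and $V(\instar{u'}) \cup V(\outtree{u'})$ for the relevant edge $(u', v')$, \cref{thm:add_chain} applies and gives a chain or collarette $H$-augmentation. The remaining case, in which no such valid structure arises, is ruled out by the non-existence of augmenting paths, forcing structural conditions on the neighbours of $w$ entirely analogous to those in the proof of \cref{thm:construct_bracelet}.

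The main obstacle will be the case analysis and bookkeeping required after each switching step: I must verify that the remaining subpaths of $P$ are still \earpaths of the updated expansion and that their endpoints lie in the branch sets demanded by the subsequent application of \cref{thm:construct_bracelet} or \cref{thm:add_chain}. In particular, tracking how the absorption of portions of $Q$ into $\intree{w}$ (via the first switch) and into $\outtree{a}$ (via the second switch) redistributes the endpoints of $C_2$ between in-stars and out-stars is the delicate part, and it controls which of the constraints $a = y$, $(x, a), (a, y) \in \E{H}$, or their duals for the out-neighbour $b$ end up being forced.
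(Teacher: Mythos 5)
Your opening moves match the paper exactly: invoke \cref{lem:inverted_augmentation} to assume $Q \subseteq \outstar{a} \cap \instar{w}$, assume no augmenting paths (else \cref{thm:add_augmenting_path} finishes), label the four intersection vertices $c,d,e,f$ (your $z_1,w_1,z_2,w_2$), and switch onto $Pc$ to get $H''$ and later onto $fP$ (after establishing $(a,y)\in E(H)$, with the $a=y$ subcase handled separately via \cref{thm:add_chain}). This is all right.

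The gap is in your final step. You plan to reduce to \cref{thm:construct_bracelet} by pairing the middle segment $dPe$ with ``an appropriate subpath of $Q$''. This cannot work as stated. First, \cref{thm:construct_bracelet} demands a path $P$ with $\Start{P}\in V(\outstar{x})$ and $\End{P}\in V(\instar{y})$, but after your two switches $d$ lies in $\rpath{w,H'''}$ and $e$ lies in $\rpath{a,H'''}$ --- neither endpoint is in the required stars. Second, \cref{thm:construct_bracelet} requires $Q$ to be the \emph{entire} path $\outstar{a}\cap\instar{w}$ (or $\outstar{w}\cap\instar{b}$), not a subpath of your choosing, and $dPe$ intersects that path in exactly its two endpoints $\{d\},\{e\}$, which is two components, not one. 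What the paper actually does is a third switch: $dPe$ is itself $H'''$-switching with respect to a newly-detected edge $(w,a)\in E(H)$ (its presence being forced because $dPe$ is not of type (A3)). Switching onto $dPe$ gives $\Tilde{H}$ and a switching path $L\subseteq H'''$; combined with the path $K$ from the very first switch and the original root path $\rpath{w,H'}$, one assembles the genuinely new path $\Tilde{P} = K\End{K}\rpath{w,H'}\Start{L}L$. It is $\Tilde{P}$ (not $dPe$) whose endpoints land in $\V{\outstar{x,\Tilde{H}}}$ and $\V{\instar{a,\Tilde{H}}}$, and $\Tilde{P}$ intersects $\Tilde{Q}\coloneqq\outstar{w,\Tilde{H}}\cap\instar{y,\Tilde{H}}$ in exactly one component, so \cref{thm:construct_bracelet} applies with $(x,w),(w,a)$ as the two bracelet edges. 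Along the way the argument also requires proving $\InDegree{H}{w}=2$ and $\OutDegree{H}{w}=2$ (each via a dedicated claim), which you do not mention and which are needed to make $d$ and $e$ fall onto root paths rather than into non-trivial in- or out-trees. Without these ingredients the reduction does not close.
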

\begin{proof}
    \setcounter{claimcounter}{0}
    By \cref{lem:inverted_augmentation}, we can assume without loss of generality that $P$ is (properly) laced with the path $Q$ in $\outstar{a} \cap \instar{w}$ for some $(x,w) \neq (a, w) \in \E{H}$.
    Note that all vertices of $Q$ have in- and out-degree one in $H'$.

    If $D$ contains an $H^*$-augmenting path for some $H$-expansion $H^*$, then \cref{thm:add_augmenting_path} implies that $D$ admits a basic $H$-augmentation.
    Therefore, in the rest of the proof, we assume that $D$ does not contain an $H^*$-augmenting path for any $H$-expansion $H^*$.

    Let $c,d,e,f \in V(P) \cap V(Q)$ such that $P \cap Q = cQd \cup eQf$, and $c$ is the first vertex of $P$ on $Q$ and $f$ is the last vertex of $P$ on $Q$.
    Note that $Pc$ is $H'$-switching.
    Let $H''$ be the $H$-expansion obtained by switching onto $Pc$ and let $K \subseteq H'$ be the $H''$-switching path obtained by switching onto $Pc$.

    \begin{claim}\label{clm:a_neq_y}
        Either $a \neq y$ or there exists a chain or collarette~$H$-augmentation.
    \end{claim}
    \begin{claimproof}
        We assume that $a = y$.
        Then $\Start{dP} \in \V{\intree{w,H''}} \cup \V{\outstar{w,H''}}$ and $\End{dP} \in \V{\instar{a, H''}}$.
        Furthermore, $dP$ is properly laced with $Qc$, which implies that $dP$ is properly laced with the path in $\outstar{a, H''} \cap \instar{w, H''}$, and $dP$ is otherwise internally disjoint to $H''$.
        By~\cref{thm:add_chain}, $D$ admits a chain or collarette $H$-augmentation.
    \end{claimproof}
    By~\cref{clm:a_neq_y}, we can assume that $a \neq y$.
    We show that there exists an $H$-expansion $\Tilde{H}$ and a path $\Tilde{P}$ satisfying the conditions of~\cref{thm:construct_bracelet}.
    Then $D$ admits either a basic~$H$-augmentation or a bracelet~$H$-augmentation.
    
    If $(a,y) \notin E(H)$, then $fP$ is $H''$-augmenting of type~\cref{item:augmenting_3}, contradicting our assumption.
    Thus $(a,y) \in E(H)$, which implies that $fP$ is $H''$-switching.
    Let $H'''$ be the $H$-expansion obtained from $H''$ by switching onto $fP$.
    Note that $e \in \V{\outstar{a, H'''}}$ by construction of $H'''$.
    
    \begin{claim}\label{clm:in_degree}
        $\InDegree{H}{w}=2$.
    \end{claim}
    \begin{claimproof}
        Note that $\InDegree{H'''}{v} = 1$ for any $v \in \V{cQ} \setminus \{c\}$ and $\InDegree{H'''}{c}=2$.
        We suppose for a contradiction that $\InDegree{H}{w}>2$.
        Then there exists a vertex in $\V{\intree{w, H'''} \graphminus cQ}$ and this implies ${cQ} \subseteq {\intree{w, H'''} \graphminus \rpath{w, H'''}}$.
        We deduce that $d \in \V{\intree{w, H'''} \graphminus \rpath{w, H'''}}$.
        Thus $dPe$ is $H'''$-augmenting of type~\cref{item:augmenting_1} or type~\cref{item:augmenting_2}, a contradiction.
    \end{claimproof}
    \cref{clm:in_degree} implies that $d \in \V{\rpath{w,H'''}}$.
    Since the path $d P e$ is not $H'''$-augmenting of type~\cref{item:augmenting_4}, $e = \End{d P e} \in \V{\instar{a, H'''}}$.
    In particular, $e \in \V{\rpath{a, H'''}}$ as $e \in \V{\outstar{a, H'''}}$.
    Further, since $d P e$ is not $H'''$-augmenting of type~\cref{item:augmenting_3}, $(a,w) \in E(H)$, which implies that $d P e$ is $H'''$-switching.
    Let $\Tilde{H}$ be the $H$-expansion obtained from $H'''$ by switching onto $d P e$.
    See~\cref{fig:bracelet_two}.
    Further, let $L \subseteq H'''$ be the $\Tilde{H}$-switching path obtained by switching onto $d P e$.
    Note that $\End{L} \in \V{\instar{a,\Tilde{H}}}$.

    \begin{claim}\label{claim:out_degree}
        $\OutDegree{H}{w}=2$.
    \end{claim} 
    \begin{claimproof}
        We remark that $\Start{K} \in \V{\outstar{x,\Tilde{H}}}$.
        We suppose for a contradiction that $\OutDegree{H}{w}>2$.
        Then $\End{K} \in \V{\outtree{w,\Tilde{H}} \graphminus \rpath{w,\Tilde{H}}}$ by construction of $\Tilde{H}$.
        This implies that $K$ is $\Tilde{H}$-augmenting of type~\cref{item:augmenting_4}, a contradiction.
    \end{claimproof}
    By~\cref{claim:out_degree}, $\rpath{w, H'} \subseteq \outstar{w, \Tilde{H}} \cap \instar{y, \Tilde{H}}$.
    Note that $\End{K} = \Start{\rpath{w, H'}}$ and $\End{\rpath{w, H'}} = \Start{L}$.
    Thus $\Tilde{P}\coloneqq K \End{K} \rpath{w, H'} \Start{L} L$ is a path.

    Finally, we show that $\Tilde{P}$ satisfies the conditions of~\cref{thm:construct_bracelet} with respect to the edges $(x,w), (w,a)$ and the path $\Tilde{Q}\coloneqq \outstar{w, \Tilde{H}} \cap \instar{y, \Tilde{H}}$.
    Note that $(w,y) \neq (w,a)$.
    The path $\Tilde{P}$ has the properties $\Start{\Tilde{P}} \in \V{\outstar{x,\Tilde{H}}}$, $\End{\Tilde{P}} \in \V{\instar{a,\Tilde{H}}}$, $\Tilde{P}$ is properly laced with $\Tilde{Q}$, $\Tilde{P} \cap \Tilde{Q}$ is exactly one path, and $\Tilde{P}$ is otherwise internally disjoint to $\Tilde{H}$.
    Thus $\Tilde{P}$ satisfies indeed the conditions of \cref{thm:construct_bracelet}, which implies that $D$ admits either a basic $H$-augmentation or a bracelet $H$-augmentation.    
    \end{proof}

    \begin{figure}[!ht]
    \centering
    \resizebox{!}{5.3cm}{%
     \begin{tikzpicture}[scale=1]
			
	\pgfdeclarelayer{background}
	\pgfdeclarelayer{foreground}
			
	\pgfsetlayers{background,main,foreground}
			
	\begin{pgfonlayer}{main}

        \node (z) [v:main, fill=DarkGray] {};
        \node (g) [v:main, fill=DarkGray, position=0:10mm from z] {};
        \node (d) [v:main, fill=DarkGray, position=135:10mm from z] {};
        \node (c) [v:main, fill=DarkGray, position=135:10mm from d] {};
        \node (f) [v:main, fill=DarkGray, position=135:10mm from c] {};
        \node (e) [v:main, fill=DarkGray, position=135:10mm from f] {};
        
        \node (a) [v:model, position=135:10mm from e] {$a$};
        \node (x) [v:model, position=220:18mm from z] {$x$};
        \node (y) [v:model, position=40:18mm from g] {$y$};

        \node (ee) [v:ghost,position=220:0.7mm from e] {};
        \node (ff) [v:ghost,position=220:0.7mm from f] {};
        \node (cc) [v:ghost,position=220:0.7mm from c] {};
        \node (dd) [v:ghost,position=220:0.7mm from d] {};

        \node (x1) [v:ghost,position=190:0.7mm from x] {};
        \node (x2) [v:ghost,position=80:0.7mm from x] {};
        \node (z1) [v:ghost,position=240:0.7mm from z] {};
        \node (z2) [v:ghost,position=60:0.7mm from z] {};
        \node (c1) [v:ghost,position=210:0.7mm from c] {};
        \node (c2) [v:ghost,position=0:0.7mm from c] {};
        \node (g1) [v:ghost,position=290:0.7mm from g] {};
        \node (g2) [v:ghost,position=90:0.6mm from g] {};
        \node (y1) [v:ghost,position=260:0.7mm from y] {};
        \node (y2) [v:ghost,position=180:1mm from y] {};

\node (ap) [v:ghost,position=90:1mm from a] {};
\node (ep) [v:ghost,position=90:1mm from e] {};
\node (fp) [v:ghost,position=90:1mm from f] {};
\node (yp) [v:ghost,position=120:3mm from y] {};
         
         \node (yright) [v:ghost,position=0:10mm from y] {};
    \end{pgfonlayer}
			
 	\begin{pgfonlayer}{background}    

        \draw [line width=2.3mm,color=CornflowerBlue!40!white,line cap=round] (e.center) to [bend left=45] (d.center);

        \draw [line width=1.5mm,color=LavenderMagenta!40!white,line cap=round] (x1.center) to [bend left=35] (c1.center)
         (ff.center) to  (z1.center)
         (z1.center) to (g1.center)
         (g1.center) to (y1.center);

        \draw [line width=2.3mm,color=LavenderMagenta!40!white,line cap=round]  (g.center) to [out=330,in=270] (yright) to [out=90,in=35](a.center);

        \draw [line width=1.5mm,color=CornflowerBlue!40!white,line cap=round] (x2.center) to [bend left=32] (c2.center)
         (fp.center) to  (z2.center)
         (z2.center) to (g2.center)
         (g2.center) to (y2.center);

        \draw[e:main,->] (a) to (e);
        \draw[e:main,->] (e) to (f);
        \draw[e:main,->] (f) to (c);
        \draw[e:main,->] (c) to (d);
        \draw[e:main,->] (d) to (z);
        \draw[e:main,->] (z) to (g);
        \draw[e:main,->] (x) to (z);
        \draw[e:main,->] (g) to (y);

        \draw[e:main,->,out=120, in=240, color=PastelOrange] (x) to (c);
        \draw[e:main,->,color=PastelOrange] (cc) to (dd);\draw[e:main,<-,out=-5,in=95,color=PastelOrange] (e) to (d);
        \draw[e:main,->,color=PastelOrange] (ee) to (ff);
        \draw[e:main,->,out=20, in=140,color=PastelOrange] (f) to (y);

        \draw[e:main,dashed,->,color=DarkGray] (ap) to (ep);
        \draw[e:main,dashed,->,color=DarkGray] (ep) to (fp);
        \draw[e:main,dashed,->,color=DarkGray,out=20, in=140] (fp) to (yp);

        \draw[e:main,dashed,color=DarkGray] (z1) to (g1);

        \draw[e:main,->] (g) to [out=330,in=270] (yright.center) to [out=90,in=30] (a);
        
    \end{pgfonlayer}

    \begin{pgfonlayer}{foreground}    

        \node (elabel) [v:ghost,position=190:4mm from e] {$e$};
        \node (flabel) [v:ghost,position=190:4mm from f] {$f$};
        \node (clabel) [v:ghost,position=190:4mm from c] {$c$};
        \node (dlabel) [v:ghost,position=190:4mm from d] {$d$};

        \node (Rlabel) [v:ghost,position=320:7mm from z] {\textcolor{DarkGray}{$R_{w,H'}$}};

        \node (Pclabel) [v:ghost,position=150:9mm from x] {\textcolor{PastelOrange}{$Pc$}};

        \node (dPelabel) [v:ghost,position=55:8mm from d] {\textcolor{PastelOrange}{$dPe$}};

        \node (fPlabel) [v:ghost,position=160:15mm from y] {\textcolor{PastelOrange}{$fP$}};

        \node (Qlabel) [v:ghost,position=-10:6mm from a] {$Q$};

        \node (Klabel) [v:ghost,position=20:9mm from x] {$K$};

        \node (Llabel) [v:ghost,position=-20:15mm from g] {$L$};

    \end{pgfonlayer}

\end{tikzpicture}}
        \caption{The $H$-expansions $H', \textcolor{LavenderMagenta}{H'''}$ and \textcolor{CornflowerBlue}{$\Tilde{H}$} in the proof of \cref{thm:construct_bracelet_two}.
        The dashed gray path belongs to $\outstar{a,H'''}$ and $\outstar{a, \Tilde{H}}$.}
        \label{fig:bracelet_two}
\end{figure}

\section{Escapes}
\label{sec:escapes}

In this section, we introduce the two concepts of \emph{\blocking vertices}~(see \cref{fig:blocking_vertex}) and of \emph{escapes}.
For a given strongly $2$-connected digraph $D$ and a strongly $2$-connected butterfly-minor $H$ of $D$, a blocking vertex in a non-trivial branchset of $H$ in $D$ yields a sufficient condition for $D$ admitting an $H$-augmentation.

\begin{definition}[\Blocking vertex]
Let $D$ be a digraph and $H$ be a butterfly-minor of $D$. Let $H'$ be an expansion of $H$ in $D$ and $v \in \V{H}$. We call a vertex $r \in \V{\rpath{v}}$ \emph{\blocking for $v$} if
    \begin{itemize}
      \item there is no switching path starting in $\V{\rpath{v}r} \setminus \Set{r}$ and ending in $\V{H'}\setminus \V{\rpath{v} r}$, and
      \item there is no switching path starting in $\V{H'}\setminus \V{r\rpath{v}}$ and ending in $\V{r \rpath{v}} \setminus \Set{r}$.
    \end{itemize}
\end{definition}

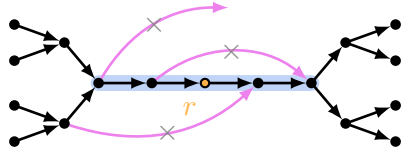
\begin{figure}[!ht]
    \centering
        \begin{tikzpicture}[scale=1]
			
    \pgfdeclarelayer{background}
	\pgfdeclarelayer{foreground}
			
	\pgfsetlayers{background,main,foreground}
			
	\begin{pgfonlayer}{main}

        \node (v) [v:main] {};
        \node (v_1) [v:main, position=0:7mm from v] {};
        \node (v_2) [v:main,fill=PastelOrange, position=0:7mm from v_1] {};
        \node (v_3) [v:main, position=0:7mm from v_2] {};
        \node (v_4) [v:main, position=0:7mm from v_3] {};

        \node (z_1) [v:main, position=50:7mm from v_4] {};
        \node (z_2) [v:main, position=310:7mm from v_4] {};
        \node (w_1) [v:main, position=20:7mm from z_1] {};
        \node (w_2) [v:main, position=340:7mm from z_1] {};
        \node (w_3) [v:main, position=20:7mm from z_2] {};
        \node (w_4) [v:main, position=340:7mm from z_2] {};

        \node (x_1) [v:main, position=130:7mm from v] {};
        \node (x_2) [v:main, position=230:7mm from v] {};
        \node (y_1) [v:main, position=160:7mm from x_1] {};
        \node (y_2) [v:main, position=200:7mm from x_1] {};
        \node (y_3) [v:main, position=160:7mm from x_2] {};
        \node (y_4) [v:main, position=200:7mm from x_2] {};

        \node (a) [v:ghost, position=30:20mm from v] {};
        
        \node (rlabel) [v:ghost, position=240:4mm from v_2] {\textcolor{PastelOrange}{$r$}};

    \end{pgfonlayer}

    \begin{pgfonlayer}{background}

        \draw [line width=6pt,opacity=0.4,color=CornflowerBlue,line cap=round] (v.center) to (v_4.center);
        
        \draw[e:main,->] (y_1) to (x_1);
        \draw[e:main,->] (y_2) to (x_1);
        \draw[e:main,->] (y_3) to (x_2);
        \draw[e:main,->] (y_4) to (x_2);
        \draw[e:main,->] (x_1) to (v);
        \draw[e:main,->] (x_2) to (v);

        \draw[e:main,->] (v) to (v_1);
        \draw[e:main,->] (v_1) to (v_2);
        \draw[e:main,->] (v_2) to (v_3);
        \draw[e:main,->] (v_3) to (v_4);

        \draw[e:main,->] (v_4) to (z_1);
        \draw[e:main,->] (v_4) to (z_2);
        \draw[e:main,->] (z_1) to (w_1);
        \draw[e:main,->] (z_1) to (w_2);
        \draw[e:main,->] (z_2) to (w_3);
        \draw[e:main,->] (z_2) to (w_4);

        \draw[e:main,->,bend left=40,color=LavenderMagenta] (v_1) to node [pos=0.5] (x1) {\textcolor{DarkGray}{$\times$}} (v_4);
        \draw[e:main,->,bend right=30,color=LavenderMagenta] (x_2) to node [pos=0.5] (x2) {\textcolor{DarkGray}{$\times$}} (v_3);
        \draw[e:main,->,bend left=30,color=LavenderMagenta] (v) to node [pos=0.5] (x3) {\textcolor{DarkGray}{$\times$}} (a);
    \end{pgfonlayer}

    \begin{pgfonlayer}{foreground}
    \end{pgfonlayer}

\end{tikzpicture}
    \caption{A \textcolor{CornflowerBlue}{root path} with a \textcolor{PastelOrange}{blocking vertex $r$} and some forbidden \textcolor{LavenderMagenta}{switching paths}.}
    \label{fig:blocking_vertex}
\end{figure}

More precisely, in this section we prove the following \namecref{thm:blocking_vertex_implies_augmentation}.

\begin{theorem} \label{thm:blocking_vertex_implies_augmentation}
    Let $D$ be a strongly $2$-connected digraph and $H$ be a strongly $2$-connected butterfly-minor of $D$.
    Further, let $H'$ be an expansion of $H$ in $D$ and let $v\in \V{H}$ be a vertex whose branchset in $H'$ is non-trivial.
    If there exists a \blocking vertex for $v$, then $D$ admits an~$H$-augmentation.
\end{theorem}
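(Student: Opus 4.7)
The plan is to combine the strong $2$-connectivity of $D$ with the blocking hypothesis to extract an $H'$-\earpath that cannot be switching, and then to feed this \earpath into one of the augmentation-producing theorems from~\cref{sec:obtaining_the_augmentations}. Let $r$ be a blocking vertex for $v$ and set $A \coloneqq \V{\rpath{v} r} \setminus \{r\}$ and $B \coloneqq \V{r \rpath{v}} \setminus \{r\}$. Since the branchset of $v$ is non-trivial, we are in one of two cases: either $\rpath{v}$ has at least two vertices, so at least one of $A$ and $B$ is non-empty, or $\rpath{v}$ is a single vertex and the non-triviality is witnessed inside $\intree{v}$ or $\outtree{v}$; the latter case reduces to the former by a symmetric argument extracting the escape through a tree branching. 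By~\cref{lem:inverted_augmentation} I may therefore assume $B \neq \emptyset$.

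I first locate a useful \earpath. Since $D$ is strongly $2$-connected, $D - r$ is strongly connected, so for any $b \in B$ and any $z \in \V{H'} \setminus \V{r \rpath{v}}$ (such $z$ exists as $|\V{H}| \geq 3$) there is a directed $z$-$b$-path $W$ in $D - r$. Let $y$ be the first vertex of $W$ in $\V{r \rpath{v}}$; since $r \notin \V{W}$, we have $y \in B$. Let $x$ be the last vertex of the prefix $Wy$ lying in $\V{H'}$; then $x \in \V{H'} \setminus \V{r \rpath{v}}$ and $P \coloneqq xWy$ is an $H'$-\earpath, possibly of length one. The second clause in the definition of a blocking vertex precisely forbids switching paths from $\V{H'} \setminus \V{r\rpath{v}}$ to $B$, so $P$ cannot be switching, and~\cref{lem:different-earpaths} forces $P$ to be either augmenting or \bad.

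If $P$ is augmenting, then~\cref{thm:add_augmenting_path} directly yields a basic $H$-augmentation. If $P$ is \bad, then because branchsets of distinct vertices of $H$ are pairwise disjoint and bridges are internally disjoint from every branchset, the endpoint $y \in \V{\rpath{v}}$ forces $P$ to be \bad with respect to $w = v$; in particular $x \in \V{\instar{v}} \cup \V{\outstar{v}}$. An analysis of the arborescences $\instar{v}$ and $\outstar{v}$ shows that the $y$-to-$x$ return segment of the cycle formed in $\instar{v} \cup \outstar{v} \cup P$ must travel forward along $\rpath{v}$ past its endpoint and then into $\outstar{v}$, so actually $x \in \V{\outstar{v}} \setminus \V{\rpath{v}}$. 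Thus $P$ is an \emph{escape}: an \earpath that leaves $H'$ to the ``outside'' and reattaches to $\rpath{v}$ strictly past $r$.

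The hard part will be extracting from this escape a laced-path configuration matching the hypothesis of one of~\cref{thm:add_chain},~\cref{thm:construct_bracelet}, or~\cref{thm:construct_bracelet_two}. The plan is a case distinction on whether $x$ lies in $\outtree{v} \setminus \V{\rpath{v}}$ or on an outgoing bridge of $v$, and in each sub-case to use~\cref{lem:expansion_stays_expansion_after_switching} to iteratively switch $H'$ onto intermediate switching paths until the laced-path template of one of the three theorems becomes manifest; the number of weakly connected components of the intersection of the resulting path with the relevant bridge then selects between~\cref{thm:construct_bracelet} (one component) and~\cref{thm:construct_bracelet_two} (two components), while otherwise one lands in the setting of~\cref{thm:add_chain}. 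The main obstacle is organising this case analysis so that the switchings required to normalise $H'$ remain available throughout---which is where the blocking condition at $r$ is essential, as it obstructs precisely the switchings that would ruin the escape---and so that the process terminates, delivering in every situation a basic, chain, collarette, or bracelet $H$-augmentation.
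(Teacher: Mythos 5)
Your proposal gets the opening moves right: using strong $2$\mbox{-}connectivity of $D - r$ to produce a path, invoking the blocking property to rule out the switching case, and invoking \cref{lem:different-earpaths} and \cref{thm:add_augmenting_path} to dispatch the augmenting case. But the proof breaks down exactly where you label it ``the hard part'': that is not a technicality to be filled in, it is the substance of the theorem, and your single bad \earpath is not the right object to feed into the augmentation machinery.

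Concretely, you produce one bad $H'$-\earpath $P$ with $\Start{P} \in V(\outstar{v}) \setminus V(\rpath{v})$ and $\End{P} \in V(r\rpath{v}) \setminus \{r\}$. This path lands in the root path $\rpath{v} = \instar{v} \cap \outstar{v}$ of $v$ itself and is internally disjoint from $H'$. None of \cref{thm:add_chain,thm:construct_bracelet,thm:construct_bracelet_two} apply to such a path: they all require a path that is \emph{properly laced} with a bridge $\outstar{u} \cap \instar{v}$ for some $(u,v) \in E(H)$, whereas your $P$ never touches any bridge internally and reattaches to the ``self-bridge'' $\rpath{v}$. The paper's escape (defined with respect to an edge $(u,v) \in E(H)$ and weaving through $\instar{v} \cap \outstar{u}$ for $u \in N^-_H(v)$) is precisely engineered to meet that laced-path shape; it is a concatenation $P_0 \cdot P_1 \cdots P_{2k}$ of alternating \earpaths and bridge segments, and its existence is extracted from a \emph{minimal} $A$-$B$-path where $A = V((\intree{v} \cup \rpath{v}r) - r)$ and $B = V((H' \graphminus \instar{v}) \cup (r\rpath{v} - r))$. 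Your arbitrary $z$-$b$-path with only its last \earpath retained gives no control over the intermediate segments, so the escape structure---and with it the eventual application of the laced-path theorems via \cref{lem:non-proper_escape,lem:proper_escape} and the re-normalisation step \cref{lem:construct_escape_with_bad_path}---is simply missing. Saying you will ``iteratively switch $H'$'' does not close this: switching operates on switching paths, you are holding a bad path, and the blocking condition is designed exactly to forbid the switchings your plan would need.

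A second, smaller gap is the handling of the case where $\rpath{v}$ is a single vertex but the branchset is non-trivial because $\intree{v}$ or $\outtree{v}$ is. Your set $B = V(r\rpath{v}) \setminus \{r\}$ is then empty and your construction produces nothing; the claim that this ``reduces to the former by a symmetric argument extracting the escape through a tree branching'' is not a reduction, it is a new argument that you have not given. The paper avoids this issue by observing that non-triviality of the branchset always forces $V(\intree{v}) \setminus \{r\}$ or $V(\outtree{v}) \setminus \{r\}$ to be non-empty (because $\intree{v}$ and $\outtree{v}$ are rooted at the two endpoints of $\rpath{v}$), and then stating \cref{lem:existence_of_escape} directly in terms of $\intree{v}$ and using \cref{lem:inverted_augmentation} for the symmetric side.
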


The proof of \cref{thm:blocking_vertex_implies_augmentation} builds on the concept of \emph{escapes}~(see \cref{fig:escape}).

\begin{definition}[Escape]
Let $D$ be a digraph and $H$ be a butterfly-minor of $D$. Let $H'$ be an expansion of $H$ in $D$ and $(u,v) \in \E{H}$.
The path $P$ obtained by concatenation of paths $P_0, P_1, \dots, P_{2k}$ for $k \geq 1$ is called an \emph{$H'$-escape with respect to $(u,v)$} if:
\begin{enumerate}[label=(\alph*)]
    \item $P_0$ is a path with $\End{P_0} \in V(\instar{v} \cap \outstar{u})$ that is either non-parallel $H'$-switching such that $\Start{P_0} \in \V{\Brace{\outstar{x} \cup \intree{v}} \graphminus \rpath{v}}$ for some $(u,v) \neq (x,v) \in \E{H}$ or $H'$-\bad such that $\Start{P_0} \in V(\intree{v} \cup \outstar{v})$,
    \item $P_i$ for $i \in [1, 2k -1]$ odd is a (possibly trivial) path in $\instar{v} \cap \outstar{u}$,
    \item $P_i$ for even $i \in [2, 2k-2]$ is a $H'$-\bad path with $\Start{P_i}, \End{P_i} \in V(\instar{v} \cap \outstar{u})$,
    \item $P_{2k}$ is a path with $\Start{P_{2k}} \in V(\instar{v} \cap \outstar{u})$ that is either non-parallel $H'$-switching such that $\End{P_{2k}} \in V(\Brace{\instar{w} \cup \outtree{u}} \graphminus R_{u})$ for some $(u,v) \neq (u,w) \in \E{H}$ or $H'$-\bad such that $\End{P_{2k}} \in V(\instar{u} \cup \outtree{u})$,
    \item the path $P$ and the path $\instar{v} \cap \outstar{u}$ are properly laced, and
    \item either $\Start{P_0} \in \V{\instar{v} \graphminus \outstar{u}}$ or $\End{P_{2k}} \in \V{\outstar{u} \graphminus \instar{v}}$.
\end{enumerate}
We call $k$ the \emph{duration} of $P$ and refer to it as $\duration{P}$.
\end{definition}
\noindent We remark that if $P$ is an $H'$-escape with respect to $(u,v)$, then $\Inverted{P}$ is an $\Inverted{H'}$-escape with respect to $(v,u)$.

In \cref{subsec:exist_escapes}, we prove the following sufficient condition for the existence of escapes.
\begin{restatable}{lemma}{ExistenceOfEscape} \label{lem:existence_of_escape}
Let $D$ be a strongly $2$-connected digraph, and let $H$ be a strongly $2$-connected butterfly-minor of $D$.
Also, let $H'$ be an expansion of $H$ in $D$ and let $v\in \V{H}$.
If there is a \blocking vertex $r$ for $v$ such that $V(\intree{v}) \setminus \Set{r} \neq \emptyset$, then either $D$ admits a basic $H$-augmentation or there is an $H'$-escape with respect to $(u,v)$ for some $u \in \InNeighbours{H}{v}$. 
\end{restatable}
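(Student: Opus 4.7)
The plan is first to reduce to the case where $D$ admits no $H'$-augmenting path: if one exists, then \cref{thm:add_augmenting_path} directly produces a basic $H$-augmentation and the lemma holds. Thereafter I may assume, via \cref{lem:different-earpaths}, that every $H'$-\earpath is either switching or bad. The blocking hypothesis on $r$ (in the precise sense given by the two conditions in the definition of a \blocking vertex) then forbids any switching $H'$-\earpath from having its endpoints on opposite sides of $r$ along $\V{H'}$.

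I would then invoke strong $2$-connectivity: since $D - r$ is strongly connected and $V(\intree{v}) \setminus \{r\}$ is non-empty, I pick a vertex $t$ in this set and extract from $D - r$ a directed path $Q$ connecting the two sides of $r$ across $\V{H'}$---for instance, from a vertex of $\V{r\rpath{v}} \setminus \{r\}$, or of $\V{\outtree{v} \cup \outstar{v}} \setminus \V{r \rpath{v}}$, toward $t$. Decomposing $Q$ into its maximal segments lying in $H'$ and its $H'$-\earpath pieces, the latter are switching or bad by the reduction, and the blocking property forces at least one of them to be bad, since switching pieces alone cannot bridge the two sides of $r$.

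The next step is to pin down $u \in \InNeighbours{H}{v}$ as the in-neighbour whose bridge $\instar{v} \cap \outstar{u}$ is traversed by $Q$, and to assemble, possibly after rerouting via \cref{lem:laced}, a path $P = P_0 \cdot P_1 \cdots P_{2k}$ whose intersection with $\instar{v} \cap \outstar{u}$ is an interleaved union of directed subpaths. The boundary pieces $P_0, P_{2k}$ (either non-parallel switching or bad) and the internal even-indexed bad pieces will satisfy the alternation demanded by conditions (a)--(d), while proper lacing (condition (e)) follows from the output of \cref{lem:laced}.

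The hard part will be condition (f), which requires at least one of $\Start{P_0}$ or $\End{P_{2k}}$ to lie genuinely outside the bridge, on the prescribed side. Here the hypothesis $V(\intree{v}) \setminus \{r\} \neq \emptyset$ is essential: the vertex $t$ anchors $\Start{P_0}$ inside $\V{\instar{v} \graphminus \outstar{u}}$---or, by the symmetric argument using \cref{lem:inverted_augmentation}, anchors $\End{P_{2k}}$ inside $\V{\outstar{u} \graphminus \instar{v}}$. Navigating the case split---whether $t$ lies on $\rpath{v}$ or strictly in $\V{\intree{v} \graphminus \rpath{v}}$, whether the extreme \earpath piece is bad or non-parallel switching, and coordinating these choices with the appropriate in-neighbour $u$---will be the main technical burden.
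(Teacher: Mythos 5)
Your outline traces the same skeleton as the paper's proof — reduce to the case with no augmenting path via \cref{thm:add_augmenting_path}, use strong $2$-connectivity of $D-r$ to extract a path $Q$ joining $V(\intree{v})\setminus\{r\}$ to the rest of $H'$, and then decompose $Q$ into $H'$-\earpath pieces alternating with segments inside $H'$. However, the decisive technical ingredient is missing: you never rule out \emph{parallel} switching pieces. By \cref{lem:different-earpaths}, after excluding augmenting paths the \earpath pieces are switching or \bad, but the escape definition demands that $P_0$ and $P_{2k}$ be \emph{non-parallel} switching or \bad and that every internal even-indexed piece be \bad. A decomposition of an arbitrary $A$-$B$-path can contain parallel switching pieces, and nothing in your argument forbids that. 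The paper handles this by choosing $Q$ with $\Abs{E(Q)\setminus E(H')}$ minimal among all such $A$-$B$-paths in $D-r$ and then showing (their Claim 1) that any parallel switching piece $\Tilde Q$ could be replaced by a parallel path $\Upsilon\subseteq \outstar{y}\cup\instar{z}\cup\{\dots\}$, which either contradicts minimality (if $r\notin V(\Upsilon)$) or contradicts $r$ being \blocking (if $r\in V(\Upsilon)$). Your appeal to \cref{lem:laced} for ``rerouting'' cannot substitute for this: lacing normalises the intersection pattern with $\outstar{u}\cap\instar{v}$ but does not eliminate parallel switching pieces.

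A second, related gap: even once the pieces are non-parallel switching or \bad, the interior even-indexed pieces must be uniformly \bad, while your argument only asserts that ``at least one'' is \bad. The paper resolves this by passing to the terminal segment $P := \Start{Q_i}Q$ where $i$ is the largest even index $\leq 2\ell-2$ with $Q_i$ non-parallel switching, so that only $P_0$ and $P_{2k}$ can be non-parallel switching. You also need (their Claim 3) an inductive argument that $\Start{P_i},\End{P_i}\in V(\outstar{u}\cap\instar{v})$ for $1\leq i\leq 2k-1$, pinning the whole interleaved structure to a single bridge; you name the identification of $u$ but do not sketch why the internal pieces cannot wander to another bridge $\outstar{x}\cap\instar{v}$ with $x\neq u$. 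Without the minimality, the terminal-segment selection, and Claim 3, the assembled path has no reason to satisfy conditions (a)--(d), and your plan does not close.
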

The proof of \cref{lem:existence_of_escape} follows the following idea:
Since $D$ is strongly $2$-connected, there exists a path in $D -r$ starting in $V(\intree{v}) \setminus \{r\}$ and ending outside $\V{\instar{v}}$.
We show that this path contains either a basic~$H$-augmentation or an $H'$-escape.

In \cref{subsec:escape-augmentations}, we prove that escapes provide augmentations.
\begin{restatable}{lemma}{NonProperEscape} \label{lem:non-proper_escape}
    Let $D$ be a strongly $2$-connected digraph and $H$ be a strongly $2$-connected butterfly-minor of $D$.
    Let $H'$ be an $H$-expansion in $D$ with an $H'$-escape of duration $1$.
    Then $D$ admits  an $H$-augmentation.
\end{restatable}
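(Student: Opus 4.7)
Let $P = P_0 \cdot P_1 \cdot P_2$ denote the given $H'$-escape of duration one with respect to $(u,v) \in \E{H}$, and set $Q \coloneqq \outstar{u,H'} \cap \instar{v,H'}$ for the $u$-$v$-bridge.  By the definition of escape, $P$ is properly laced with $Q$, with $P_1 \subseteq Q$ as an intersection component, and each of $P_0, P_2$ is either non-parallel $H'$-switching or $H'$-bad.  Condition (f) of the escape definition forces $P$ to leave the $u$-$v$-bridge on at least one end.  Throughout the argument, if any $H$-expansion $H^*$ in $D$ contains an $H^*$-augmenting path, \cref{thm:add_augmenting_path} furnishes a basic $H$-augmentation and we are done; we therefore assume no such augmenting path exists.

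The proof proceeds by case analysis on the types of $P_0$ and $P_2$; by \cref{lem:inverted_augmentation} we need only handle the cases up to reversal.  In the simplest configuration, where both $P_0$ and $P_2$ are $H'$-bad, the escape definition already places $\Start{P}$ in $V(\intree{v,H'} \cup \outstar{v,H'})$ and $\End{P}$ in $V(\instar{u,H'} \cup \outtree{u,H'})$; since $P_0$ and $P_2$ are ear-paths, $V(P) \cap V(H')$ coincides with $V(P\cap Q) \cup \{\Start{P},\End{P}\}$, so \cref{thm:add_chain} directly produces a chain or collarette augmentation.

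When one or both of $P_0, P_2$ is non-parallel switching, we first attempt to normalise the offending end via a switching operation: by \cref{lem:expansion_stays_expansion_after_switching}, switching $H'$ onto $P_0$ with respect to its defining edge $(x,v)\neq(u,v)$ yields a new $H$-expansion $H''$ in which the shortened residual path $P_1 P_2$ inherits a starting vertex in $V(\intree{v,H''})$, while the $u$-$v$-bridge and its lacing with the remainder of $P$ are preserved.  When this reduction cannot be carried out without destroying the lacing, the escape witnesses directly the hypotheses of \cref{thm:construct_bracelet} (if $P \cap Q$ is one path) or \cref{thm:construct_bracelet_two} (if $P \cap Q$ is two paths), with $v$ serving as the bracelet vertex and $(x,v)$ providing the extra in-neighbour.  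A symmetric treatment applies to $P_2$ in $\outstar{u}$ with some $(u,w)\neq(u,v)$.

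\textbf{Main obstacle.}  The principal difficulty is the bookkeeping needed to verify, in each sub-case arising from the split on $(P_0,P_2)$, that (a) the normalising switches preserve the proper lacing of the residual path with $Q$, and (b) the resulting configuration matches exactly one of the input hypotheses of \cref{thm:add_chain,thm:construct_bracelet,thm:construct_bracelet_two}.  Condition (f) of the escape definition is indispensable here: it is precisely what rules out the degenerate situation in which the transformed path would merely recover $H'$ (or a switching variant thereof) without producing a strictly larger butterfly-minor.
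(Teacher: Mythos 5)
Your high-level plan — apply \cref{thm:add_chain} in the favourable case, and otherwise switch onto a non-parallel switching end and then invoke \cref{thm:construct_bracelet} — does match the skeleton of the paper's proof, but there are several concrete gaps that prevent it from closing the argument.

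First, the paper's favourable case is not ``both $P_0, P_2$ are bad'' but rather ``$\Start{P_0} \in V(\intree{v}\cup\outstar{v})$ and $\End{P_2}\in V(\instar{u}\cup\outtree{u})$,'' which also covers a switching $P_0$ that starts in $\intree{v}$. After reducing by symmetry (via \cref{lem:inverted_augmentation}) to the case $\End{P_2}\in V(\instar{w})$ for some $(u,w)\neq(u,v)$, the paper switches onto $P_2$ (not $P_0$) and then distinguishes three sub-cases according to $\Start{P_0}$: in $\intree{v}\graphminus\outstar{v}$, in $\outstar{v}$, or in $\outstar{x}$. Your description of the switch as producing a residual path ``with starting vertex in $V(\intree{v,H''})$'' does not track what actually happens to $\End{P_0}$ — after switching onto $P_2$ it lands in $V(\rpath{u,H''}\cup\outtree{u,H''})$, which is what drives the subsequent case analysis.

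Second, and most seriously, you never use the ``no augmenting path'' assumption to force the existence of the edges that the bracelet needs. In the paper's Case 2, $P_0$ would be $H''$-augmenting unless $(v,u)\in\E{H}$; in Case 3, $P_2$ would be $H'''$-augmenting unless $(v,w)\in\E{H}$. These forced edges are exactly the hypotheses required to set up \cref{thm:construct_bracelet} (with bracelet vertex $u$ in Case 2 and $v$ in Case 3), and without them the bracelet application cannot be made. Your proposal instead gestures at a ``cannot be carried out without destroying the lacing'' criterion, which has no counterpart in the actual argument. Finally, \cref{thm:construct_bracelet_two} is not used here: for a duration-one escape $P\cap Q$ is exactly the single component $P_1$, because the ear-paths $P_0$ and $P_2$ meet $Q$ only in their one $Q$-endpoint.
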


\begin{restatable}{lemma}{ProperEscape} \label{lem:proper_escape}
Let $D$ be a digraph and $H$ be a butterfly-minor of $D$. Let $H'$ be an expansion of $H$ in $D$ with an $H'$-escape of duration at least $2$.
Then $D$ admits an $H$-augmentation.
\end{restatable}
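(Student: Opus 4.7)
The plan is to induct on the duration $k$ of the escape, using \cref{lem:non-proper_escape} as the base case $k = 1$. Let $P = P_0 \cdot P_1 \cdots P_{2k}$ be an $H'$-escape with respect to an edge $(u,v) \in \E{H}$ of duration $k \geq 2$, and set $Q \coloneqq \outstar{u} \cap \instar{v}$.

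My first move is to handle the case in which both $P_0$ and $P_{2k}$ are $H'$-\bad. In this situation, the escape definition forces $\Start{P} \in \V{\intree{v} \cup \outstar{v}}$ and $\End{P} \in \V{\instar{u} \cup \outtree{u}}$, while condition~(e) gives proper lacedness of $P$ with $Q$. Because each segment $P_i$ of the escape is either a sub-path of $Q$ or an $H'$-\earpath (and hence internally disjoint from $H'$), one has $\V{P} \cap \V{H'} = \V{P \cap Q} \cup \Set{\Start{P}, \End{P}}$. Then \cref{thm:add_chain} delivers the required chain or collarette $H$-augmentation.

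For the remaining case, I may assume via \cref{lem:inverted_augmentation} and symmetry that $P_0$ is non-parallel $H'$-switching with respect to some edge $(x, v) \in \E{H}$ with $x \neq u$. The plan is to switch onto $P_0$ using \cref{lem:expansion_stays_expansion_after_switching}, producing a new $H$-expansion $H''$ in which the $u$-$v$-bridge $Q$ is preserved verbatim (because the switch is made with respect to an edge distinct from $(u,v)$), while the vertex $\End{P_0}$, previously internal to $Q$, acquires a second in-neighbour along $P_0$ and thereby moves into $\V{\intree{v,H''}}$. I would then argue that the suffix $P_1 \cdot P_2 \cdots P_{2k}$ constitutes an $H''$-escape of duration strictly less than $k$ with respect to $(u,v)$, whose first segment is now $H''$-\bad and rooted in $\V{\intree{v,H''}}$, so that the inductive hypothesis applies.

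The principal obstacle is the technical bookkeeping attached to the switching reduction: I must certify that after switching onto $P_0$, each remaining $P_i$ retains (or improves to) a classification as $H''$-switching, $H''$-\bad, or a sub-path of the relevant $H''$-bridge, and that proper lacedness with $Q$ together with the external-endpoint condition~(f) of the escape definition survive. The key leverage is that the switch affects only the $x$-$v$-bridge, so the in- and out-stars of $u$, as well as the portions of $\instar{v}$ and $\outstar{v}$ lying outside the old $x$-$v$-bridge, are unchanged; \cref{lem:different-earpaths} then re-classifies each $P_i$ within $H''$, and a routine enumeration of the resulting configurations closes the induction.
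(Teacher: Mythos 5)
Your overall strategy — reduce the escape by switching onto a non-parallel switching terminal segment, inductively reaching either duration $1$ or a both-ends-\bad escape to which \cref{thm:add_chain} applies — is essentially the paper's idea. The paper packages the single switching step into a standalone technical lemma, \cref{lem:construct_escape_with_bad_path}, and applies it exactly twice (once on the inverted expansion to tame $P_{2k}$, then once forward to tame $P_0$), whereas you would unwind the duration one step at a time. Both yield the same endgame via \cref{thm:add_chain} and \cref{lem:non-proper_escape}.

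However, your proof has a concrete error and a genuine gap. The claim that after switching onto $P_0$ ``the $u$-$v$-bridge $Q$ is preserved verbatim (because the switch is made with respect to an edge distinct from $(u,v)$)'' is false. Switching onto $P_0$ gives the vertex $\End{P_0}$ a second in-edge, so $\End{P_0}$ leaves $\outstar{u, H''}$ (all non-root vertices of an out-star must have in-degree one). Consequently $\outstar{u,H''}\cap\instar{v,H''}$ is a \emph{proper} sub-path of the old $Q$, truncated at $\End{P_0}$, and $\intree{v,H''}$ absorbs the tail. This is precisely why the re-verification that the remaining segments form an escape in $H''$ is nontrivial: one must check that the even-indexed interior segments still begin and end in the (now shorter) $\outstar{u,H''}\cap\instar{v,H''}$, that the terminal segment $P_{2k}$ retains (or improves) its required classification and endpoint location, and that condition (f) of the escape definition persists. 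All of this is the content of \cref{lem:construct_escape_with_bad_path} in the paper; dismissing it as ``a routine enumeration'' leaves the argument unsupported. (Also, the new escape should begin at $P_2$, not at $P_1$: $P_1$ is a sub-path of $Q$, and a portion of it is absorbed into $\intree{v,H''}$ after the switch.) Your Case 1 (both terminal segments \bad) is correct as written and matches the paper's final application of \cref{thm:add_chain}, but without a careful proof that Case 2 maintains the escape invariants, the induction does not close.
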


Assuming \cref{lem:existence_of_escape,lem:non-proper_escape,lem:proper_escape} to be true we can prove \cref{thm:blocking_vertex_implies_augmentation}.
\begin{proof}[Proof of \cref{thm:blocking_vertex_implies_augmentation}]
Since the branchset of $v$ is not a singleton, either $V(\intree{v, H'}) \setminus \{r\} \neq \emptyset$ or $V(\outtree{v, H'}) \setminus \{r\} \neq \emptyset$.
In the former case, we apply \cref{lem:existence_of_escape} to obtain that either $D$ admits a basic~$H$-augmentation or there exists an $H'$-escape.
If there is an $H'$-escape, then \cref{lem:non-proper_escape,lem:proper_escape} imply that $D$ admits an $H$-augmentation.

In the latter case, $r$ is a \blocking vertex for $v$ in $\Inverted{H}$ and $\V{\intree{v, \Inverted{H}'}} \setminus \{r\} \neq \emptyset$.
Thus we can apply \cref{lem:existence_of_escape,lem:non-proper_escape,lem:proper_escape} to $\Inverted{H}$ to obtain that $\Inverted{D}$ admits a $\Inverted{H}$-augmentation.
Then $D$ admits an $H$-augmentation, by \cref{lem:inverted_augmentation}.
\end{proof}

\subsection{Sufficient condition for the existence of escapes}\label{subsec:exist_escapes}
We prove the following sufficient condition for the existence of escapes.

\ExistenceOfEscape*

\begin{proof}
\setcounter{claimcounter}{0}
We start by recalling that if $D$ contains an $H'$-augmenting path, then \cref{thm:add_augmenting_path} implies that $D$ admits a basic $H$-augmentation.
Therefore, in the rest of the proof, we assume that $D$ does not contain any $H'$-augmenting path, and under this assumption, we show the existence of an escape with respect to $(u,v)$ for some $u\in \InNeighbours{H}{v}$.

We start by considering the disjoint, non-empty sets $A\coloneqq  V(\Brace{\intree{v} \cup \rpath{v} r} - r)$ and $B\coloneqq  V(\Brace{H' \graphminus \instar{v}} \cup (r \rpath{v}- r))$. See~\cref{fig:escape}.
Keep in mind that $A\subseteq  V(\instar{v} \graphminus r \rpath{v})$. As $D$ is strongly $2$-connected there exists an $A$-$B$-path in $D - r$.
We consider such an $A$-$B$-path $Q$ with $E(Q) \setminus E(H')$ being minimal and show that some terminal segment of $Q$ forms the desired escape.

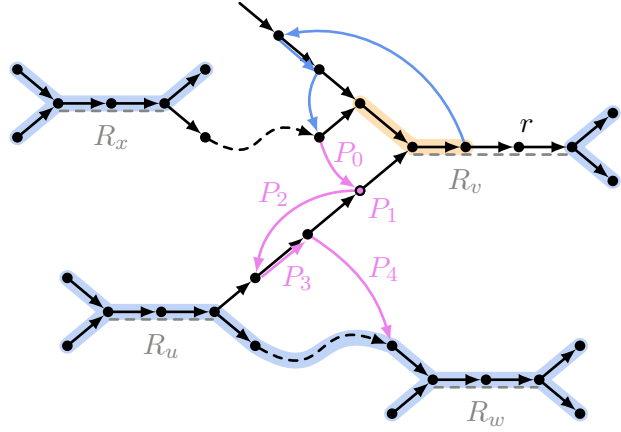
\begin{figure}[!ht]
    \centering
    \begin{tikzpicture}[scale=1]
			
	\pgfdeclarelayer{background}
	\pgfdeclarelayer{foreground}
			
	\pgfsetlayers{background,main,foreground}
			
	\begin{pgfonlayer}{main}
        \node (v) [v:main] {};
        \node (vg) [v:ghost,position=270:1mm from v] {};
        \node (v_1) [v:main,position=0:7mm from v] {};
        \node (v_2) [v:main,position=0:7mm from v_1] {};
        \node (v_3) [v:main,position=0:7mm from v_2] {};
        \node (v_3g) [v:ghost,position=270:1mm from v_3] {};
        \node (vb_1) [v:main,position=40:7mm from v_3] {};
        \node (vb_2) [v:main,position=320:7mm from v_3] {};
        \node (va_1) [v:main,position=140:9mm from v] {};
        \node (va_2) [v:main,position=220:9mm from v] {};
        \node (z_1) [v:main,position=140:7mm from va_1] {};
        \node (z_2) [v:main,position=140:7mm from z_1] {};
        \node (z_3) [v:ghost,position=140:7mm from z_2] {};

        \node (z_1g) [v:ghost,position=270:0.7mm from z_1] {};
        \node (z_2g) [v:ghost,position=180:0.6mm from z_2] {};

         \node (va_2g) [v:main,fill=LavenderMagenta,position=220:9mm from v] {};
         
        \node (q) [v:main,position=220:7mm from va_1] {};
        \node (xb_2) [v:main,position=180:15mm from q] {};
        \node (x_2) [v:main,position=140:7mm from xb_2] {};
        \node (xb_1) [v:main,position=40:7mm from x_2] {};
        \node (x_1) [v:main,position=180:7mm from x_2] {};
        \node (x_0) [v:main,position=180:7mm from x_1] {};
        \node (xa_1) [v:main,position=140:7mm from x_0] {};
        \node (xa_2) [v:main,position=220:7mm from x_0] {};

        \node (x_0g) [v:ghost,position=270:1mm from x_0] {};
        \node (x_2g) [v:ghost,position=270:1mm from x_2] {};
        
        \node (p) [v:main,position=220:9mm from va_2] {};
        \node (ub_1) [v:main,position=220:9mm from p] {};
        \node (u_2) [v:main,position=220:7mm from ub_1] {};
        \node (ub_2) [v:main,position=320:7mm from u_2] {};
        \node (u_1) [v:main,position=180:7mm from u_2] {};
        \node (u_0) [v:main,position=180:7mm from u_1] {};
        \node (ua_1) [v:main,position=140:7mm from u_0] {};
        \node (ua_2) [v:main,position=220:7mm from u_0] {};

        \node (u_0g) [v:ghost,position=270:1mm from u_0] {};
        \node (u_2g) [v:ghost,position=270:1mm from u_2] {};

        \node (pg) [v:ghost,position=270:0.7mm from p] {};
        \node (ub_1g) [v:ghost,position=0:0.7mm from ub_1] {};
        
        \node (wa_1) [v:main,position=0:18mm from ub_2] {};
        \node (w_0) [v:main,position=320:7mm from wa_1] {};
        \node (wa_2) [v:main,position=220:7mm from w_0] {};
        \node (w_1) [v:main,position=0:7mm from w_0] {};
        \node (w_2) [v:main,position=0:7mm from w_1] {};
        \node (wb_1) [v:main,position=40:7mm from w_2] {};
        \node (wb_2) [v:main,position=320:7mm from w_2] {};

        \node (w_0g) [v:ghost,position=270:1mm from w_0] {};
        \node (w_2g) [v:ghost,position=270:1mm from w_2] {};

        \node (Rxlabel) [v:ghost, position=270:4.5mm from x_1] {\textcolor{DarkGray}{$R_x$}};

        \node (Rulabel) [v:ghost, position=270:4.5mm from u_1] {\textcolor{DarkGray}{$R_u$}};

        \node (Rwlabel) [v:ghost, position=270:4.5mm from w_1] {\textcolor{DarkGray}{$R_w$}};

        \node (Rvlabel) [v:ghost, position=270:4.5mm from v_1] {\textcolor{DarkGray}{$R_v$}};

        \node (rlabel) [v:ghost, position=70:3mm from v_2] {$r$};

        \node (P0label) [v:ghost, position=330:4.5mm from q] {\textcolor{LavenderMagenta}{$P_0$}};
        
        \node (P1label) [v:ghost, position=320:4mm from va_2] {\textcolor{LavenderMagenta}{$P_1$}};

        \node (P2label) [v:ghost, position=130:7mm from p] {\textcolor{LavenderMagenta}{$P_2$}};
        
        \node (P3label) [v:ghost, position=0:5.5mm from ub_1] {\textcolor{LavenderMagenta}{$P_3$}};

        \node (P4label) [v:ghost, position=335:11mm from p] {\textcolor{LavenderMagenta}{$P_4$}};
	\end{pgfonlayer}
			
	\begin{pgfonlayer}{background}
        \draw [line width=6pt,opacity=0.4,color=PastelOrange,line cap=round] (va_1.center) to (v.center) to (v_1.center);
        
        \draw [line width=6pt,opacity=0.4,color=CornflowerBlue,line cap=round] (xa_1.center) to (x_0.center)
        (xa_2.center) to (x_0.center)
        (x_0.center) to (x_2.center)
        (x_2.center) to (xb_1.center);

        \draw [line width=6pt,opacity=0.4,color=CornflowerBlue,line cap=round] (ua_1.center) to (u_0.center)
        (ua_2.center) to (u_0.center)
        (u_0.center) to (u_2.center)
        (u_2.center) to (ub_2.center)
        (wa_1.center) to (w_0.center)
        (wa_2.center) to (w_0.center)
        (w_0.center) to (w_2.center)
        (w_2.center) to (wb_1.center)
        (w_2.center) to (wb_2.center)
        (ub_2) to [out=330,in=160,looseness=1.6] (wa_1);

        \draw [line width=6pt,opacity=0.4,color=CornflowerBlue,line cap=round] (vb_1.center) to (v_3.center)
        (vb_2.center) to (v_3.center);
  
        \draw[e:main,->] (v) to (v_1);
        \draw[e:main,->] (v_1) to (v_2);
        \draw[e:main,->] (v_2) to (v_3);
        \draw[e:main,->] (v_3) to (vb_1);
        \draw[e:main,->] (v_3) to (vb_2);
        \draw[e:main,->] (va_1) to (v);
        \draw[e:main,->] (va_2) to (v);
        \draw[e:main,->] (z_1) to (va_1);
        \draw[e:main,->] (z_2) to (z_1);
        \draw[e:main,->] (z_3) to (z_2);

        \draw[e:main,->] (q) to (va_1);

        \draw[e:main,dashed,->] (xb_2) to [out=330,in=160,looseness=1.6] (q);
        \draw[e:main,->] (x_2) to (xb_2);
        \draw[e:main,->] (x_2) to (xb_1);
        \draw[e:main,->] (x_1) to (x_2);
        \draw[e:main,->] (x_0) to (x_1);
        \draw[e:main,->] (xa_1) to (x_0);
        \draw[e:main,->] (xa_2) to (x_0);

        \draw[e:main,->] (p) to (va_2);
        \draw[e:main,->] (ub_1) to (p);
        \draw[e:main,->] (u_2) to (ub_1);
        \draw[e:main,->] (u_2) to (ub_2);
        \draw[e:main,->] (u_1) to (u_2);
        \draw[e:main,->] (u_0) to (u_1);
        \draw[e:main,->] (ua_1) to (u_0);
        \draw[e:main,->] (ua_2) to (u_0);
        \draw[e:main,dashed,->,out=330,in=160,looseness=1.6] (ub_2) to (wa_1);
        \draw[e:main,->] (wa_1) to (w_0);
        \draw[e:main,->] (wa_2) to (w_0);
        \draw[e:main,->] (w_0) to (w_1);
        \draw[e:main,->] (w_1) to (w_2);
        \draw[e:main,->] (w_2) to (wb_1);
        \draw[e:main,->] (w_2) to (wb_2);

        \draw[e:main,dashed,color=DarkGray] (vg) to (v_3g);
        \draw[e:main,dashed,color=DarkGray] (u_0g) to (u_2g);
        \draw[e:main,dashed,color=DarkGray] (x_0g) to (x_2g);
        \draw[e:main,dashed,color=DarkGray] (w_0g) to (w_2g);

        \draw[e:main,->,bend right=40,color=CornflowerBlue] (v_1) to (z_2);
        \draw[e:main,->,color=CornflowerBlue] (z_2g) to (z_1g);
        \draw[e:main,->,out=240,in=110,color=CornflowerBlue] (z_1) to (q);

        \draw[e:main,->,bend right=20,color=LavenderMagenta] (q) to (va_2);
        \draw[e:main,->,bend right=40,color=LavenderMagenta] (va_2) to (ub_1);
        \draw[e:main,->,color=LavenderMagenta] (ub_1g) to (pg);
        \draw[e:main,->,bend left=20,color=LavenderMagenta] (p) to (wa_1);
        
	\end{pgfonlayer}	
			
	\begin{pgfonlayer}{foreground}
	\end{pgfonlayer}
   
\end{tikzpicture}
    \caption{An \textcolor{LavenderMagenta}{escape} with respect to $(u,v)$ of duration $2$.
        The vertex $r$ is \blocking and the \textcolor{CornflowerBlue}{lightblue path} is the initial segment $Q \Start{P}$ of the \textcolor{PastelOrange}{$A$}-\textcolor{CornflowerBlue}{$B$}-path~$Q$ in the proof of \cref{lem:existence_of_escape}. }
    \label{fig:escape}
\end{figure}

We now prove a series of properties of $Q$.
First, we observe that $Q$ starts and ends with $H'$-\earpaths, i.e.~the first and last edge of $Q$ are not contained in $E(H')$.
This follows from the fact that 
there is neither an $A$-$\Brace{V(H') \setminus A}$-path nor an $\Brace{V(H') \setminus B}$-$B$-path in $H' - r$.

\begin{claim}
    \label{claim:escapes-bad-non-par}
    Any $H'$-\earpath $\Tilde{Q}$ contained in $Q$ is either $H'$-\bad or non-parallel $H'$-switching.
\end{claim}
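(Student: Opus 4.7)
The plan is to show that neither $H'$-augmenting nor parallel $H'$-switching paths can occur as $\Tilde{Q}$. Since \cref{lem:different-earpaths} asserts that every $H'$-\earpath is exactly one of augmenting, switching, or \bad, this will force $\Tilde{Q}$ to be either \bad or non-parallel switching, as required.

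First I would dispose of the augmenting case. The overarching running assumption of the proof (introduced at the very start of the argument for \cref{lem:existence_of_escape}) is that $D$ contains no $H'$-augmenting path, since otherwise \cref{thm:add_augmenting_path} would already deliver a basic $H$-augmentation and we would be done. In particular, no subpath of $Q$ can be augmenting.

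The heart of the argument is ruling out parallel switching, where the key tool is the minimality of $|E(Q) \setminus E(H')|$. Assume for contradiction that $\Tilde{Q}$ is a parallel $H'$-switching path with respect to some edge $(u', v') \in E(H)$. By definition there exists a $\Start{\Tilde{Q}}$-$\End{\Tilde{Q}}$-path $R$ in the $H'$-subgraph
\[
 S \coloneqq \outstar{u'} \cup \instar{v'} \cup \Set{e \in E(H') \mid \Head{e} \in V(\instar{v'}),\, \Tail{e} \in V(\outstar{u'})}.
\]
Replacing the $\Tilde{Q}$-segment of $Q$ by $R$ produces a walk $Q^{\star}$ from $\Start{Q} \in A$ to $\End{Q} \in B$ whose number of edges outside $E(H')$ is strictly smaller than that of $Q$ (since $R \subseteq H'$ while $\Tilde{Q}$ contributes at least one non-$H'$ edge). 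Extracting an $A$-$B$-subpath of $Q^\star$ inside $D-r$ then contradicts the choice of $Q$.

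The step I expect to be the main obstacle is verifying that $R$, and therefore $Q^\star$, can be chosen to avoid the blocking vertex $r$. If $v \notin \{u', v'\}$, then $r \in V(\rpath{v})$ is disjoint from the branchsets of $u'$ and $v'$, so $r \notin V(S)$ and any choice of $R$ works. In the remaining case $v \in \{u', v'\}$ I plan to invoke the \blocking property of $r$: no switching path has one endpoint in $V(\rpath{v}r) \setminus \{r\}$ and the other in $V(H') \setminus V(\rpath{v}r)$, and symmetrically for the other side. Since $\Tilde{Q}$ is a switching path that already exists inside $D-r$, its endpoints must lie on the same side of $r$ within $\rpath{v}$. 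Consequently $R$ can be routed through $\outstar{u'}$ (respectively $\instar{v'}$) together with the relevant side of $\rpath{v}$, avoiding $r$ and producing the desired contradiction.
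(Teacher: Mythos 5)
Your overall strategy mirrors the paper's closely: both dispose of the augmenting case via the standing assumption that no $H'$-augmenting path exists, and both rule out parallel switching by exploiting the minimality of $|E(Q)\setminus E(H')|$. Your framing, however, is the contrapositive of the paper's: you aim to show that the in-$H'$ replacement path $R$ (the paper's $\Upsilon$) can always be chosen to avoid $r$ and then invoke minimality, whereas the paper splits on whether $r\in V(\Upsilon)$ and, when it is, derives constraints on $\Start{\Tilde Q}$ and $\End{\Tilde Q}$ that contradict $r$ being blocking. The two framings are logically equivalent, but they require the same structural work, and that work is where your sketch falls short.

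The concrete gap is the sentence ``its endpoints must lie on the same side of $r$ within $\rpath{v}$.'' This is not what the blocking property gives you. The endpoints of $\Tilde Q$ need not lie on $\rpath{v}$ at all: since $\Tilde Q$ is switching with respect to some $(u',v')$, its endpoints can sit in $\intree{v'}\graphminus\rpath{v'}$, in $\outtree{u'}\graphminus\rpath{u'}$, in a portion of $\outstar{u'}$ or $\instar{v'}$ lying inside a bridge, etc., and for such vertices ``side of $r$'' is meaningless. Moreover, even when one tracks only the three regions $\V{\rpath{v}r}\setminus\{r\}$, $\V{r\rpath{v}}\setminus\{r\}$, and ``everything else,'' the blocking conditions permit combinations that are not ``same side'' (for example, a switching path from ``everything else'' into $\V{\rpath{v}r}\setminus\{r\}$ is not forbidden). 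So the justification for ``$R$ can be routed \ldots avoiding $r$'' does not follow as stated. What is actually needed, and what the paper supplies, is the structural fact that if the unique $\Start{\Tilde Q}$-$\End{\Tilde Q}$-path $\Upsilon$ inside $\outstar{u'}\cup\instar{v'}\cup\{\text{bridge edges}\}$ passes through $r\in\V{\rpath{v}}$, then necessarily $v\in\{u',v'\}$ and either $\Start{\Tilde Q}\in\V{\rpath{v}r-r}$ with $\End{\Tilde Q}\in\V{H'}\setminus\V{\rpath{v}r}$, or $\End{\Tilde Q}\in\V{r\rpath{v}-r}$ with $\Start{\Tilde Q}\in\V{H'}\setminus\V{r\rpath{v}}$; these are exactly the two configurations blocking forbids. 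Until you carry out this derivation (using the arborescence structure of the stars and the fact that $\Upsilon$ must traverse $r$ forward along $\rpath{v}$), the key step of your proof remains unjustified.

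Your first case ($v\notin\{u',v'\}$ implies $r\notin V(S)$) is correct, but it also deserves a short argument, since the stars extend well beyond the branchsets: one needs to observe that $r$, being a non-root vertex of both $\outstar{v}$ and $\instar{v}$, has in-degree and out-degree one in $\mu(H)$, and a vertex of in-degree one cannot belong to two distinct out-stars (nor, symmetrically, to two distinct in-stars) because the chain of unique in-predecessors would have to terminate at two different roots.
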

\begin{claimproof}
Let $\Tilde{Q}$ be an $H'$-\earpath contained in $Q$.
By assumption, $D$ does not contain any $H'$-augmenting path.
Thus, due to~\cref{lem:different-earpaths},
we have to prove that $\Tilde{Q}$ is not parallel $H'$-switching.
Suppose the contrary, towards a contradiction.

By definition of parallel $H'$-switching path, there exists $(y,z) \in E(H)$ such that there is an $\Start{P_0}$-$\End{P_0}$-path $\Upsilon$ in $\outstar{y} \cup \instar{z} \cup \Set{e~|~\Head{e}\in \V{\instar{z}},\Tail{e}\in \V{\outstar{y}}}$.
If $\Upsilon$ is disjoint from $r$, then the digraph obtained from $Q$ by replacing $\Tilde{Q}$ by $\Upsilon$
is disjoint from $r$ and
contains an $A$-$B$-path $Q'$ where $E(Q')\setminus E(H')$ is smaller than $E(Q)\setminus E(H')$, contradicting the minimality of $Q$.

Thus we can assume that $r \in \V{\Upsilon}$.
Then either $y = v$, $z \in \OutNeighbours{H}{v}$,
$\Start{\Tilde{Q}} \in \V{\rpath{v}r - r }$ and $\End{\Tilde{Q}} \in \V{(r \rpath{v} \cup \outtree{v} - r) \cup \instar{z} }$
or $y \in \InNeighbours{H}{v}$, $z=v$, 
$\Start{\Tilde{Q}} \in \V{\outstar{y} \cup (\intree{v} \cup \rpath{v} r - r)}$ and $\End{\Tilde{Q}} \in \V{r \rpath{v} - r}$.
In both cases, we obtain a contradiction to $r$ being \blocking.
\end{claimproof}

\begin{claim}
    $Q$ is not an $H'$-\earpath.
\end{claim}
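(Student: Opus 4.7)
The plan is to suppose for contradiction that $Q$ is an $H'$-\earpath; by the preceding claim, $Q$ is then either non-parallel $H'$-switching or $H'$-\bad, and in either situation I will derive a contradiction either with the blocking condition on $r$ or with the minimality of $E(Q)\setminus E(H')$. Throughout I would rely on three easy observations: $A\subseteq V(\instar{v})$; $B\subseteq V(H')\setminus V(\rpath{v}r)$; and, for any $w\neq v$, the set $V(\instar{w}\cup\outstar{w})$ is disjoint from $V(\instar{v})$.

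If $Q$ is non-parallel switching with respect to some $(y,z)\in\E{H}$, the third observation forces $z=v$. I would then split on the position of $\Start{Q}$ within $A=V((\intree{v}\cup\rpath{v}r)-r)$. If $\Start{Q}\in V(\rpath{v}r)\setminus\{r\}$, then since $\End{Q}\in B\subseteq V(H')\setminus V(\rpath{v}r)$, the path $Q$ itself falsifies the first part of the blocking condition on $r$. If $\Start{Q}\in V(\intree{v})\setminus\{\End{\rpath{v}}\}$, then $\Start{Q}\in V(H')\setminus V(r\rpath{v})$, and either $\End{Q}\in V(r\rpath{v})\setminus\{r\}$ (which falsifies the second blocking condition) or $\End{Q}\in V(\rpath{y}\cup\outtree{y})$ (in which case both alternatives of~\cref{item:switching_stars_2} fail, since $\Start{Q}\notin V(\outstar{y})$ and $\End{Q}\notin V(\instar{v})$). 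In the boundary case $\Start{Q}=\End{\rpath{v}}$, either $\End{Q}\in V(r\rpath{v})\setminus\{r\}$ and then the forward subpath of $\rpath{v}$ from $\End{Q}$ to $\End{\rpath{v}}$ closes $Q$ into a directed cycle inside $\instar{v}$ (violating~\cref{item:switching_acyclic_3}), or again both alternatives of~\cref{item:switching_stars_2} fail.

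If instead $Q$ is $H'$-\bad with respect to some $w\in\V{H}$, the disjointness observation forces $w=v$. Hence $\End{Q}\in (V(\outstar{v})\setminus V(\instar{v}))\cup V(r\rpath{v}-r)$. The key structural observation is that $\instar{v}\cup\outstar{v}$ is acyclic, with directed walks ``trapped'' by the forward direction along $\rpath{v}$. If $\End{Q}\in V(\outstar{v})\setminus V(\instar{v})$, the set of vertices reachable from $\End{Q}$ inside $\instar{v}\cup\outstar{v}$ stays outside $V(\instar{v})$, so no cycle through $\Start{Q}\in V(\instar{v})$ can be closed. If $\End{Q}\in V(r\rpath{v}-r)$, the portion of $V(\instar{v})$ reachable from $\End{Q}$ is exactly the forward subpath $\End{Q}\,\rpath{v}\,\End{\rpath{v}}$, whose only vertex lying in $A$ is $\End{\rpath{v}}$; hence $\Start{Q}=\End{\rpath{v}}$, and in particular $r\neq\End{\rpath{v}}$. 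But then $\End{\rpath{v}}\in A\cap B$, so the length-$0$ path at $\End{\rpath{v}}$ is an $A$-$B$-path in $D-r$ with no non-$H'$-edges, contradicting the minimality of $E(Q)\setminus E(H')$.

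The main obstacle will be bookkeeping of the boundary configurations (most notably $\Start{Q}=\End{\rpath{v}}$ in the switching case and the interplay of $r=\End{\rpath{v}}$ with possible emptiness of $V(r\rpath{v}-r)$ in the \bad case), together with checking that the reachability analysis inside $\instar{v}\cup\outstar{v}$ does not miss a vertex; once these boundary cases are correctly pinned down, the argument reduces to a clean case split driven by the definitions of switching and \bad paths together with the two parts of the blocking condition.
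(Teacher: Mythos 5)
Your overall strategy matches the paper's: suppose $Q$ is an \earpath, so by the preceding claim it is switching or \bad; kill switching paths with the blocking condition and \bad\ paths with acyclicity of $\instar{v}\cup\outstar{v}$. But there is a genuine gap in the setup. Your third ``easy observation'' is false: for $w\in\InNeighbours{H}{v}$ the set $V(\outstar{w})$ is \emph{not} disjoint from $V(\instar{v})$, since the $w$-$v$-bridge $\outstar{w}\cap\instar{v}$ lies in both. The fact you actually need is that $A\subseteq V(\intree{v}\cup\rpath{v})$ is disjoint from $V(\instar{w}\cup\outstar{w})$ for all $w\neq v$, a consequence of the partition $\Set{V(\intree{w}\graphminus\rpath{w}),\,V(\outstar{w})\sth w\in V(H)}$ of $V(H')$ stated just before the lemma (which your proposal never invokes). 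Moreover, even the corrected observation does not ``force $z=v$'' in the switching case: by~\cref{item:switching_models_1}, $\Start{Q}$ could lie in $V(\outstar{y})$, which only forces $y=v$ or $z=v$. Your sub-case ``$\Start{Q}\in V(\intree{v})\setminus\Set{\End{\rpath{v}}}$'' silently assumes $\Start{Q}\notin V(\outstar{y})$; that assumption happens to be repairable because $y=v$ would place $\Start{Q}$ on $\rpath{v}r-r$ (absorbed by your first sub-case), but you never say this, so as written the case split is not watertight.

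The paper sidesteps all of this by splitting $A$ cleanly into $V(\intree{v}\graphminus\rpath{v})$ and $V(\rpath{v}r-r)$. In the first part the partition fact gives $\Start{Q}\notin V(\outstar{y})$ for \emph{every} $y$, so~\cref{item:switching_stars_2} directly forces $\End{Q}\in V(\instar{v})$, and then the second blocking condition applies; in the second part the first blocking condition applies directly. For the \bad\ case, your reachability analysis is close, but the claim that the only vertex of $A$ on $\End{Q}\rpath{v}\End{\rpath{v}}$ is $\End{\rpath{v}}$ is not justified (indeed $\End{\rpath{v}}$ need not lie in $A$ at all), and the minimality detour is superfluous: once one notes $V(\End{Q}\rpath{v}\End{\rpath{v}})\subseteq V(r\rpath{v}-r)\subseteq B$ while $\Start{Q}\in A$ and $A\cap B=\emptyset$, no cycle can close, which is essentially what the paper records in one line.
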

\begin{claimproof}
    By~\cref{claim:escapes-bad-non-par}, it suffices to show that there is neither an $H'$-switching nor an $H'$-\bad path starting in $A$ and ending in $B$.
    
    Note that $A = \V{\intree{v} \cup (\rpath{v} r - r)}$.
    Any $H'$-\bad-path starting in $A$ ends in $\V{\instar{v}}$, which is disjoint from $B$.

    Any $H'$-switching path starting in $\V{\intree{v} \graphminus \rpath{v}}$ ends in $\V{\instar{v}}$.
    Since $r$ is \blocking, no $H'$-switching path starting in $\V{\intree{v} \graphminus \rpath{v}}$ ends in $\V{r \rpath{v} -r} = B \cap \V{\instar{v}}$.
    Thus it remains to consider $H'$-switching $\V{\rpath{v} r - r}$-$B$-paths.
    Since $r$ is \blocking and $B \subseteq V(H') \setminus \V{\rpath{v} r}$, no such path exists.
    Thus $Q$ is not an $H'$-\earpath.
\end{claimproof}

We can conclude that $Q$ is the concatenation of paths $Q_0, \dots, Q_{2 \ell}$ for some $\ell \in \N$ such that:
\begin{itemize}
    \item for $i \in [0,2k]$ even $Q_i$ is either a bad $H'$-\earpath or a non-parallel switching $H'$-\earpath,
    \item for $i \in [0,2k]$ odd $Q_i$ is a (possibly trivial) path in $H'$.
\end{itemize}

We are now ready to define the terminal segment $P$ of $Q$, which forms the desired escape.
If for every even $i \in [0,2 \ell -2]$ the path $Q_i$ is $H'$-bad, set $P\coloneqq Q$.
Otherwise let $i \in [0,2 \ell -2]$ be even and maximal such that $Q_i$ is non-parallel $H'$-switching and set $P \coloneqq \Start{Q_i} Q $.

By definition, $P$ is a concatenation of paths $P_0, \dots, P_{2 k}$ for some $k \in \N$ such that:
\begin{itemize}
    \item $P_i$ is either a bad $H'$-\earpath or a non-parallel switching $H'$-\earpath for $i \in \Set{0, 2k}$,
    \item $P_i$ is a bad $H'$-\earpath for $i \in [2, 2k-2]$ even,
    \item $P_i$ is a (possibly trivial) path in $H'$ for $i \in [0,2k]$ odd.
\end{itemize}

Note that $H' - (\{r\} \cup A \cup B)$ is the disjoint union of the paths in $\outstar{x} \cap \instar{v}$ for $x \in \InNeighbours{H}{v}$.
Let $u \in \InNeighbours{H}{v}$ such that $\End{P_0} \in \V{\outstar{u} \cap \instar{v}}$.
\begin{claim} \label{clm:internal_paths}
    $\Start{P_i}, \End{P_i} \in \V{\outstar{u} \cap \instar{v}}$ for any $i \in [1,2 k -1]$.
\end{claim}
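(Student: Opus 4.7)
The plan is a routine induction on $i\in[1,2k-1]$ based on the identity $\Start{P_i}=\End{P_{i-1}}$. The base case $i=1$ is immediate: $\Start{P_1}=\End{P_0}\in \V{\outstar{u}\cap\instar{v}}$ by the very choice of $u$. For the inductive step I assume $\Start{P_i}\in \V{\outstar{u}\cap\instar{v}}$ and aim to show $\End{P_i}\in \V{\outstar{u}\cap\instar{v}}$.

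The first ingredient is the minimality of $Q$ with respect to $|E(Q)\setminus E(H')|$, which I would use to confine every vertex of $P_i$ to $\V{H'}\setminus(\{r\}\cup A\cup B)$. By the remark preceding the claim, this set is the disjoint union of the bridges $\outstar{x}\cap\instar{v}$ for $x\in\InNeighbours{H}{v}$. Indeed, if some vertex $z$ of $P_i$ lay in $A$, then the terminal segment $zQ$ would be an $A$-$B$-path in $D-r$ discarding at least all edges of $P_0$, strictly reducing $|E(Q)\setminus E(H')|$ and contradicting the choice of $Q$; the symmetric argument applied to $Qz$ rules out $z\in B$. It then only remains to identify on which $x$-$v$-bridge $\End{P_i}$ sits.

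When $i$ is odd, $P_i$ is a directed subpath of $H'$ starting on the $u$-$v$-bridge and, by the preceding paragraph, avoiding $A\cup B$; since internal bridge vertices have out-degree one in $H'$, $P_i$ cannot cross between the different connected components of $H'-(\{r\}\cup A\cup B)$ and therefore $\End{P_i}$ stays on the $u$-$v$-bridge. When $i$ is even, $P_i$ is a \bad $H'$-\earpath with respect to some $w\in \V{H}$, and $\instar{w}\cup\outstar{w}\cup P_i$ contains a directed cycle. Since $\Start{P_i}\in \V{\instar{v}}\cap \V{\outstar{u}}$ we must have $w\in\{u,v\}$, and the cycle supplies a directed $\End{P_i}$-to-$\Start{P_i}$-path $R$ inside $\instar{w}\cup\outstar{w}$. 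I expect the main obstacle to be showing that $R$ can exist only if $\End{P_i}$ already lies on the \emph{same} $u$-$v$-bridge as $\Start{P_i}$; the key point is that all edges of $\instar{v}$ route forward toward $r_v$ while $\outstar{v}$ moves strictly away from $r_v$ into pieces disjoint from $\V{\instar{v}}$, so there is no way inside $\instar{v}\cup\outstar{v}$ to get from a different $x$-$v$-bridge back onto the $u$-$v$-bridge, and the case $w=u$ is handled symmetrically via $\instar{u}\cup\outstar{u}$. Carrying this out forces $\End{P_i}\in \V{\outstar{u}\cap\instar{v}}$ and closes the induction.
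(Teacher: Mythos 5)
Your proposal is correct and takes essentially the same approach as the paper: an induction on $i$ via $\Start{P_i}=\End{P_{i-1}}$, using minimality of $Q$ to confine $\Start{P_i}, \End{P_i}$ to $H'-(\{r\}\cup A\cup B)$, the disjoint-union observation for odd $i$, and the cycle in $\instar{w}\cup\outstar{w}\cup P_i$ for even $i$ to force $w\in\{u,v\}$ and then pin $\End{P_i}$ to the $u$-$v$-bridge. The only minor divergence is that the paper handles $w=u$ by the set identity $\V{\instar{u}\cup\outstar{u}}\setminus B = \V{\outstar{u}\cap\instar{v}}$ rather than by a reachability argument, but your ``symmetric'' reachability version also closes that case.
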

\begin{claimproof}
    By induction over $i \in [1, 2 k -1]$.
    
    We assume that $\End{P_{i - 1}} = \Start{P_i} \in \V{\outstar{u} \cap \instar{v}}$ for some $i \in [1,2 k -1]$.
    If $i$ is odd, $P_i$ is a path in $H' - (\{r\} \cup A \cup B)$.
    Then $\End{P_i} \in \V{\outstar{u} \cap \instar{v}}$ since $H' - (\{r\} \cup A \cup B)$ is the disjoint union of the paths in $\outstar{x} \cap \instar{v}$ for $x \in \InNeighbours{H}{v}$.

    If $i$ is even, $P_i$ is a bad path with $\End{P_i} \in \V{H' - (\{r\} \cup A \cup B)}$.
    Then there exists an $\End{P_i}$-$\Start{P_i}$-path in $\instar{y} \cup \outstar{y}$ for some $y \in \V{H}$.
    Since $\Start{P_i} \in \V{\outstar{u} \cap \instar{v}}$, $y \in \{u,v\}$.
    If $y = u$, $\End{P_i} \in \V{\instar{v} \cup \outstar{u}}$ since $\V{\instar{u} \cup \outstar{u}} \setminus B = \V{\instar{v} \cap \outstar{u}}$ and $\End{P_i} \notin B$.
    If $y = v$, $\End{P_i} \in \V{\instar{v} \cap \outstar{u}}$ since only the vertices in $\V{\instar{v} \cap \outstar{v}}$ have a path to $\Start{P_i}$ in $\instar{v} \cup \outstar{v}$.
\end{claimproof}

We show that $P$ is indeed an escape with respect to $(u,v)$:
\begin{enumerate}[label=(\alph*)]
    \item\label{item:check_escape_1} $P_0$ is a path with $\Start{P_0} \in \V{H'-r} \setminus B$ and $\End{P_0} \in V(\instar{v} \cap \outstar{u})$.
    If $P_0$ is non-parallel $H'$-switching, then $\Start{P_0} \in \V{\Brace{\outstar{x} \cup \intree{v}} \graphminus \rpath{v}}$ for some $(u,v) \neq (x,v) \in \E{H}$ since $\Start{P_0} \in \V{H' - r} \setminus B$.
    If $P_0$ is bad, then $P_0 = Q_0$ and thus $\Start{P_0} \in A$.
    This implies that $\Start{P_0} \in V(\intree{v} \cup \rpath{v}) \subseteq \V{\intree{v} \cup \outstar{v}}$,
    \item For $i \in [1, 2k -1]$ odd, $P_i$ is a path in $H' - A - B - r$.
    Since $\Start{P_i}, \End{P_i} \in \V{\outstar{u} \cap \instar{v}}$ by \cref{clm:internal_paths}, $P_i$ is a path in $\outstar{u} \cap \instar{v}$.
    \item For $i \in [2, 2k -2]$ even, $P_i$ is a $H'$-bad path with $\Start{P_i}, \End{P_i} \in \V{\outstar{u} \cap \instar{v}}$ by \cref{clm:internal_paths}.
    \item $P_{2k}$ is a path with $\Start{P_{2k}} = \End{P_{2k-1}} \in \V{\outstar{u} \cap \instar{v}}$ and $\End{P_{2k}} \in B$ that is either $H'$-bad or non-parallel $H'$-switching.
    If $P_{2k}$ is $H'$-bad, then $\End{P_{2k}} \in \V{\instar{u} \cup \outtree{u}}$ since $\End{P_{2k}} \in B$.
    If $P_{2k}$ is non-parallel $H'$-switching, then $\End{P_{2k}} \in \V{\instar{w} \cup (\outtree{u} \graphminus \rpath{u})}$ for some $(u,v) \neq (u,w) \in E(H)$ or $\End{P_{2k}} \in \V{\instar{v} \graphminus \rpath{v}}$.
    As $B \cap \V{\instar{v} \graphminus \rpath{v}} = \emptyset$ and $\End{P_{2k}} \in B$, $\End{P_{2k}} \in \V{(\outtree{u} \graphminus \rpath{u}) \cup \instar{w}}$ for some $(u,v) \neq (u,w) \in E(H)$.
    \item By \cref{lem:laced} and by minimality of $Q$, $P$ is laced with the path in $\outstar{u} \cap \instar{v}$.
    Furthermore, it is properly laced since $P \cap \outstar{u} \cap \instar{v} \neq \emptyset$,
    \item We proved in \labelcref{item:check_escape_1} that $\Start{P_0} \in \V{\instar{v} \graphminus \outstar{u}}$. \qedhere
\end{enumerate}

\end{proof}

\subsection{Escapes give augmentations}
\label{subsec:escape-augmentations}
The following \namecref{lem:non-proper_escape} shows how escapes of duration $1$ can be used to construct $H$-augmentations.

\NonProperEscape*

\begin{proof}
    Let $P$ be an $H'$-escape of duration $1$ with respect to $(u,v) \in E(H)$.
    If $\Start{P_0} \in V(\intree{v, H'} \cup \outstar{v, H'})$ and $\End{P_2} \in V(\instar{u, H'} \cup \outtree{u, H'})$, we obtain either a collarette~$H$-augmentation or a chain~$H$-augmentation by \cref{thm:add_chain}.
    Since $\Inverted{P}$ is an escape of duration $1$ in $\Inverted{H'}$ and by~\cref{lem:inverted_augmentation}, we can assume without loss of generality that $\End{P_2} \notin V(\instar{u, H'} \cup \outtree{u, H'})$, i.e.~$\End{P_2} \in V(\instar{w, H'})$ for some $(u,v) \neq (u, w) \in E(H)$, which implies that $P_2$ is non-parallel $H'$-switching.

    In the following, we consider $H$-expansions obtained from $H'$ by either switching onto $P_2$ or, if $P_0$ is non-parallel $H'$-switching, onto $P_0$.
    For a detailed explanation of how $H'$ changes under switching, we refer to the proof of~\cref{lem:expansion_stays_expansion_after_switching}.

    Let $H''$ be the $H$-expansion obtained from $H'$ by switching onto $P_2$.
    By construction of $H''$, $\End{P_0} \in V(R_{u, H''} \cup \outtree{u, H''})$ holds.
    Note that $\Start{P_0} \in \V{\intree{v, H'} \cup \outstar{v, H'} \cup \outstar{x, H'}}$ for some $(u,v) \neq (x, v) \in E(H)$ by definition of escape.
    \begin{description}
        \item[Case 1 -- $\Start{P_0} \in V(\intree{v, H'} \graphminus \outstar{v, H'})$:]
            Note that $\intree{v, H'} \graphminus \outstar{v, H'} = \intree{v, H''} \graphminus \outstar{v, H''}$ by construction of $H''$.
            Thus $\Start{P_0} \in V(\intree{v, H''} \graphminus \outstar{v, H''})$, which implies that $P_0$ is $H''$-augmenting of type~\cref{item:augmenting_1} or \cref{item:augmenting_2}.
        \item[Case 2 -- $\Start{P_0} \in V(\outstar{v, H'})$:]
            We begin by showing $\Start{P_0} \in V(\outstar{v, H''})$.
            If $\Start{P_0} \in V(\rpath{v, H'})$, then $\Start{P_0} \in V(\rpath{v, H''})$ since $\rpath{v, H'} = \rpath{v, H''}$ by construction of $H''$.
            If $\Start{P_0} \in V(\outstar{v, H'} \graphminus \rpath{v, H'})$, then $\End{P_2} \in \V{\outstar{u, H'} \graphminus \instar{v, H'}}$ by definition of escape.
            Under the condition $\End{P_2} \in \V{\outstar{u, H'}\graphminus \instar{v, H'}}$, $\outstar{v, H'} = \outstar{v, H''}$ holds by construction of $H''$.
            Thus $\Start{P_0} \in V(\outstar{v, H''})$.

            If $(v,u) \notin \E{H}$, then $P_0$ is $H''$-augmenting of type~\cref{item:augmenting_2} for $\End{P_0} \in V(R_{u, H''})$, and $H''$-augmenting of type~\cref{item:augmenting_4} for $\End{P_2} \in V(\outtree{u, H''} \graphminus \rpath{u, H''})$.
            
            Thus, by \cref{thm:add_augmenting_path}, we assume that $(v,u) \in \E{H}$ and show that we can apply \cref{thm:construct_bracelet} to $P$ in $H'$.
            Then $D$ admits either a basic~$H$-augmentation or a bracelet~$H$-augmentation.
            See~\cref{fig:escape_non_proper}.
            
            The path $P$ starts in $\Start{P_0} \in V(\outstar{v, H'})$ and ends in $\End{P_2} \in V(\instar{w, H'})$.
            Furthermore there are edges $(v,u), (u,w) \in \E{H}$ and an edge $(u,v) \in \E{H}$ such that $P$ is properly laced with the path $Q$ in $\outstar{u,H'} \cap \instar{v, H'}$, $P\cap Q$ is exactly one path and $P$ is otherwise internally disjoint to $H'$.
            Thus we can apply \cref{thm:construct_bracelet}.
        \item[Case 3 -- $\Start{P_0} \in V(\outstar{x, H'})$:] Then $P_0$ is non-parallel $H'$-switching.
            Let $H'''$ be the $H$-expansion obtained from $H'$ by switching onto $P_0$.
            Note that $\Start{P_2} \in V(\intree{v, H'''} \cup \rpath{v, H'''})$.

            We remark that $\End{P_2} \in \V{\instar{w, H'}}$ and show that $\End{P_2} \in \V{\instar{w, H'''}}$.
            If $\Start{P_0} \in \V{\instar{v, H'}}$, then $\instar{w,H'} = \instar{w, H'''}$, which implies $\End{P_2} \in \V{\instar{w, H'''}}$.
            Otherwise, $\End{P_2} \in \V{\outstar{u, H'}}$ by definition of escape.
            Then $\End{P_2} \in \V{\outstar{u, H'} \cap \instar{w, H'}}$.
            Since $\Start{P_0} \notin \V{\outstar{u, H'}}$ and $\End{P_0} \notin \V{\instar{w, H'}}$, $\outstar{u, H'} \cap \instar{w, H'} = \outstar{u, H'''} \cap \instar{w, H'''}$, which implies $\End{P_2} \in \V{\instar{w, H'''}}$.
            
            If $(v,w) \notin \E{H}$, then $P_2$ is $H'''$-augmenting of type~\cref{item:augmenting_1} for $\Start{P_2} \in \V{\intree{v, H'''} \graphminus R_{v, H'''}}$ and of type~\cref{item:augmenting_2} for $\Start{P_2} \in \V{\outstar{v, H'''}}$.

            Thus, by \cref{thm:add_augmenting_path}, we assume that $(v, w) \in \E{H}$ and show that we can apply \cref{thm:construct_bracelet} to $P$ in $H'$.
            Then $D$ admits either a basic~$H$-augmentation or a bracelet~$H$-augmentation.
            See~\cref{fig:escape_non_proper}.
            
            The path $P$ starts in $\Start{P_0} \in V(\outstar{x, H'})$ and ends in $\End{P_2} \in V(\instar{w, H'})$.
            Furthermore there are edges $(x,v), (v,w) \in \E{H}$ and an edge $(u,v) \in \E{H}$ such that $P$ is properly laced with the path $Q$ in $\outstar{u,H'} \cap \instar{v, H'}$, $P\cap Q$ is exactly one path and $P$ is otherwise internally disjoint to $H'$.
            Thus we can apply \cref{thm:construct_bracelet}.\qedhere
    \end{description}
\end{proof}

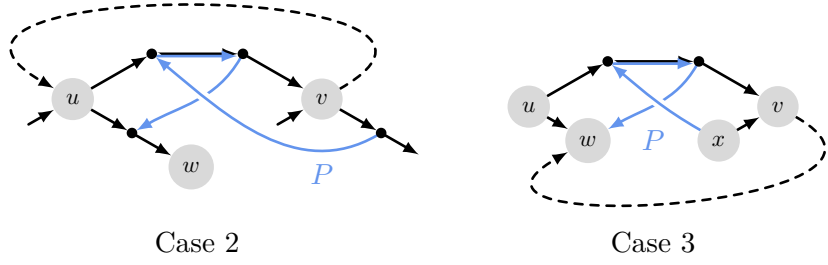
\begin{figure}[!ht]
    \centering
        \begin{tikzpicture}[scale=1]
			
	\pgfdeclarelayer{background}
	\pgfdeclarelayer{foreground}
			
	\pgfsetlayers{background,main,foreground}
			
	\begin{pgfonlayer}{main}

        \node (C) [v:ghost] {};
        \node (L) [v:ghost,position=180:30mm from C] {};
        \node (R) [v:ghost,position=0:30mm from C] {};

        \node (Llabel) [v:ghost,position=270:17mm from L] {Case 2};
        \node (Rlabel) [v:ghost,position=270:17mm from R] {Case 3};

         \node (Lfig) [v:ghost,position=0:0mm from L] {
         \begin{tikzpicture}[scale=1]
			
    \pgfdeclarelayer{background}
	\pgfdeclarelayer{foreground}
			
	\pgfsetlayers{background,main,foreground}
			
	\begin{pgfonlayer}{main}
	    \node (u) [v:model] {$u$};
        \node (z_1) [v:main,position=30:12mm from u] {};
        \node (z_1g) [v:ghost,position=330:0.6mm from z_1] {};
        \node (z_2) [v:main,position=0:12mm from z_1] {};
        \node (z_2g) [v:ghost,position=210:0.6mm from z_2] {};
        \node (v) [v:model,position=330:12mm from z_2] {$v$};

        \node (b) [v:ghost,position=210:7mm from v] {};
        \node (v_1) [v:main,position=330:9mm from v] {};
        \node (v_2) [v:ghost,position=330:6mm from v_1] {};

        \node (u_1) [v:main,position=330:9mm from u] {};
        \node (w) [v:model,position=330:9mm from u_1] {$w$};
        \node (y) [v:ghost,position=210:7mm from u] {};

        \node (Plabel) [v:ghost,position=270:10mm from v] {\textcolor{CornflowerBlue}{$P$}}; 
        
	\end{pgfonlayer}
			
	\begin{pgfonlayer}{background}
        \draw[e:main,->,white,out=330,in=210,looseness=2] (v) to (w);
        
        \draw[e:main,->] (u) to (z_1);
        \draw[e:main,->] (z_1) to (z_2);
        \draw[e:main,->] (z_2) to (v);
        \draw[e:main,->] (v) to (v_1);
        \draw[e:main,->] (v_1) to (v_2);
        \draw[e:main,->] (u) to (u_1);
        \draw[e:main,->] (u_1) to (w);
        \draw[e:main,->] (y) to (u);
        \draw[e:main,->] (b) to (v);
        \draw[e:main,dashed,->,out=30,in=150,looseness=2] (v) to (u);
        \draw[e:main,->,out=240,in=30,color=CornflowerBlue] (z_2) to (u_1);
        \draw[line width=4pt,color=white,line cap=round,out=210,in=320] (v_1) to (z_1);
        \draw[e:main,->,out=210,in=320,color=CornflowerBlue] (v_1) to (z_1);
        \draw[e:main,->,color=CornflowerBlue] (z_1g) to (z_2g);
        
 \end{pgfonlayer}	
			
	\begin{pgfonlayer}{foreground}
	\end{pgfonlayer}
   
\end{tikzpicture}};

\node (Rfig) [v:ghost,position=0:0mm from R] {
         \begin{tikzpicture}[scale=1]
			
    \pgfdeclarelayer{background}
	\pgfdeclarelayer{foreground}
			
	\pgfsetlayers{background,main,foreground}
			
	\begin{pgfonlayer}{main}
	    \node (u) [v:model] {$u$};
        \node (z_1) [v:main,position=30:12mm from u] {};
        \node (z_1g) [v:ghost,position=330:0.6mm from z_1] {};
        \node (z_2) [v:main,position=0:12mm from z_1] {};
        \node (z_2g) [v:ghost,position=210:0.6mm from z_2] {};
        \node (v) [v:model,position=330:12mm from z_2] {$v$};

        \node (x) [v:model,position=210:9mm from v] {$x$};

        \node (w) [v:model,position=330:9mm from u] {$w$};

        \node (Plabel) [v:ghost,position=300:12mm from z_1] {\textcolor{CornflowerBlue}{$P$}}; 
        
	\end{pgfonlayer}
			
	\begin{pgfonlayer}{background}
        \draw[e:main,->] (u) to (z_1);
        \draw[e:main,->] (z_1) to (z_2);
        \draw[e:main,->] (z_2) to (v);
        \draw[e:main,->] (u) to (w);
        \draw[e:main,->] (b) to (v);
        \draw[e:main,white,->,out=30,in=150,looseness=2] (v) to (u);
        \draw[e:main,->,out=240,in=30,color=CornflowerBlue] (z_2) to (w);
        \draw[line width=4pt,color=white,line cap=round,out=150,in=320] (x) to (z_1);
        \draw[e:main,->,out=150,in=320,color=CornflowerBlue] (x) to (z_1);
        \draw[e:main,->,color=CornflowerBlue] (z_1g) to (z_2g);
        \draw[e:main,->,dashed,out=330,in=210,looseness=2] (v) to (w);
 \end{pgfonlayer}	
			
	\begin{pgfonlayer}{foreground}
	\end{pgfonlayer}
   
\end{tikzpicture}};
\end{pgfonlayer}	

   	\begin{pgfonlayer}{background}
	\end{pgfonlayer}
 
	\begin{pgfonlayer}{foreground}
	\end{pgfonlayer}
 \end{tikzpicture}
    \caption{Cases 2 and 3 of the proof of \cref{lem:non-proper_escape}. The paths of $H'$ are shown in black, with the dotted path being the one whose existence we prove last.}
    \label{fig:escape_non_proper}
\end{figure}

Before showing how we obtain $H$-augmentations from escapes of duration at least $2$, we construct escapes starting with a \bad-path.

\begin{lemma}\label{lem:construct_escape_with_bad_path}
    Let $D$ be a digraph and $H$ be a strongly $2$-connected butterfly-minor of $D$.
    Further, let $H'$ be an expansion of $H$ in $D$ with an $H'$-escape $Q$ of duration at least $2$.
    Then there exists an $H^*$-escape $P$ for some $H$-expansion $H^*$ such that $P_0$ is $H^*$-bad.
    Additionally, if $Q_{2 \duration{Q}}$ is $H'$-bad, then $P_{2 \duration{P}}$ is $H^*$-bad.
\end{lemma}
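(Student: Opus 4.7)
The plan is a two-case split on the nature of the first segment $Q_0$ of the given escape $Q = Q_0 \cdot Q_1 \cdots Q_{2\duration{Q}}$.

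If $Q_0$ is already $H'$-bad, I simply take $H^{\ast} \coloneqq H'$ and $P \coloneqq Q$. The conclusion then follows immediately, and the additional claim about $P_{2\duration{P}}$ is tautological since $P_{2\duration{P}} = Q_{2\duration{Q}}$ and $H^{\ast} = H'$. Otherwise, $Q_0$ is non-parallel $H'$-switching with respect to some edge $(x,v) \in E(H)$ with $x \neq u$ (where $(u,v)$ is the edge of the escape $Q$). I let $H^{\ast}$ be the $H$-expansion obtained from $H'$ by switching onto $Q_0$, whose validity is guaranteed by \cref{lem:expansion_stays_expansion_after_switching}, and I set $P \coloneqq Q_2 \cdot Q_3 \cdots Q_{2\duration{Q}}$, re-indexed so that $P_j = Q_{j+2}$; hence $\duration{P} = \duration{Q} - 1 \geq 1$. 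The task is then to verify that $P$ is an $H^{\ast}$-escape with respect to $(u,v)$ whose first segment $P_0 = Q_2$ is $H^{\ast}$-bad, and that if $Q_{2\duration{Q}}$ was $H'$-bad, then it is still $H^{\ast}$-bad as $P_{2\duration{P}}$.

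The heart of the argument rests on understanding how the branchsets and stars update under the switching onto $Q_0$. Because $\End{Q_0}$ lies on the $u$-$v$-bridge of $H'$ (which is disjoint from the $x$-$v$-bridge whose portion is excised during the switch), after switching the vertex $\End{Q_0}$ becomes a new branch point of in-degree two, landing in $\intree{v, H^{\ast}}$. Consequently the portion of the old $u$-$v$-bridge past $\End{Q_0}$ is absorbed into $\intree{v, H^{\ast}} \subseteq \instar{v, H^{\ast}}$, while the portion before $\End{Q_0}$ becomes the new $u$-$v$-bridge $\outstar{u, H^{\ast}} \cap \instar{v, H^{\ast}}$. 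The proper lacing of $Q$ with the old $u$-$v$-bridge---in particular the fact that the bridge intersections $Q_1, Q_3, \ldots, Q_{2\duration{Q}-1}$ appear on the bridge in the reverse of their order on $Q$ (see \cref{lem:laced} and the subsequent discussion)---forces $\Start{Q_2}$ to lie past $\End{Q_0}$ on the bridge (hence in $\intree{v, H^{\ast}}$) and $\End{Q_2}$ to lie before $\End{Q_0}$ (hence on the new bridge). The bad cycle for $Q_2$ in $H^{\ast}$ then closes along the preserved sub-path of the old bridge running from $\End{Q_2}$ forward through $\End{Q_0}$ to $\Start{Q_2}$, all of which sits inside $\instar{v, H^{\ast}}$.

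The remaining verifications will be largely mechanical once this picture is set up. The odd-indexed middle pieces $Q_3, Q_5, \ldots, Q_{2\duration{Q}-1}$ of $P$ lie entirely before $\End{Q_0}$ on the old bridge, and so sit on the new $u$-$v$-bridge; the even-indexed middle pieces $Q_4, \ldots, Q_{2\duration{Q}-2}$ remain $H^{\ast}$-bad by the same back-path argument applied in the unchanged early part of the bridge; proper lacing of $P$ with the new bridge is inherited directly from the lacing of $Q$ with the old bridge; and condition (f) of the escape definition is fulfilled since $\Start{P_0} = \Start{Q_2} \in \intree{v, H^{\ast}} \setminus \outstar{u, H^{\ast}}$. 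Finally, for the additional claim: the bad-ness of $Q_{2\duration{Q}}$ transfers to $H^{\ast}$ because the switch modifies only the in-side structure near $v$, leaving $\instar{u, H^{\ast}}$ and (the relevant part of) $\outstar{u, H^{\ast}}$ unchanged near the ``$u$-side'' of the bridge where the closing back-path for $Q_{2\duration{Q}}$ lives. The main obstacle will be carefully bookkeeping the star/tree updates under switching and checking that every small invariant in the escape definition still holds; but each such check reduces to the same structural observation that only the region past $\End{Q_0}$ on the old bridge is disturbed.
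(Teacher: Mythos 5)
Your proposal follows the same route as the paper's proof of this lemma: when $Q_0$ is not bad, switch onto $Q_0$ to form $H^{\ast}$, take $P$ to be the suffix of $Q$ from $\Start{Q_2}$ onward, and observe that $\End{Q_0}$ has become a branch point so that the lacing of $Q$ with $\outstar{u,H'}\cap\instar{v,H'}$ forces $\Start{Q_2}$ into $\intree{v,H^{\ast}}\cup\rpath{v,H^{\ast}}$ while $\End{Q_2}$ remains on the truncated bridge, making $Q_2$ bad in $H^{\ast}$. The central idea and the choice of $H^{\ast}$ and $P$ match the paper exactly.

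One caution concerning your closing sentence. It is not true that the only region disturbed by the switch is the portion of the old $u$-$v$-bridge past $\End{Q_0}$: when $\Start{Q_0}\in \V{\outstar{x}}$ but does not lie on the $x$-$v$-bridge, the switch additionally reroutes the $x$-$v$-bridge and can alter $\rpath{x}\cup\outtree{x}$. This matters precisely for the verification of escape condition~(d) for $P_{2\duration{P}}=Q_{2\duration{Q}}$ (and for the ``additional claim''). The paper handles this by splitting on whether $\Start{Q_0}\in \V{\instar{v,H'}\graphminus\outstar{u,H'}}$. In the first case $\instar{u}$, $\outtree{u}$ and $\instar{w}$ for $(u,v)\neq (u,w)\in\E{H}$ are literally unchanged, so the conclusion is immediate. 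In the second case one must invoke escape condition~(f) to place $\End{Q_{2\duration{Q}}}$ in $\V{\outstar{u,H'}\graphminus\instar{v,H'}}$ and then argue that $\outstar{u,H'}\graphminus\instar{v,H'}\subseteq\outstar{u,H^{\ast}}\graphminus\instar{v,H^{\ast}}$, using that neither $\Start{Q_0}$ nor $\End{Q_0}$ lies in $\outstar{u,H'}\graphminus\instar{v,H'}$. Your single ``only the region past $\End{Q_0}$'' observation does not deliver this second case, even though the underlying approach is correct; so this last step needs the explicit case split rather than being purely mechanical.
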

\begin{proof}
        Let $Q$ be an $H'$-escape of duration at least $2$.
        Further, let $(u,v) \in E(H)$ such that $Q$ is an escape with respect to $(u,v)$.
        If $Q_0$ is $H'$-bad, then $Q$ and $H'$ are as desired.
        Thus we assume that $Q_0$ is non-parallel $H'$-switching.

        Let $H^*$ be the $H$-expansion obtained from $H'$ by switching onto $Q_0$.
        See \cref{fig:escape_construct_bad_path}.
        For a detailed explanation of how $H'$ and $H^*$ differ, we refer to the proof of~\cref{lem:expansion_stays_expansion_after_switching}.
        
        Note that $Q_2$ is $H^*$-bad with $\Start{Q_2} \in \V{\intree{v, H^*} \cup \rpath{v, H^*}}$ by construction of $H^*$.
        Further, for any even $j \in [4,2 \duration{Q} -2]$, $Q_j$ is $H^*$-bad with $\Start{Q_j}, \End{Q_j} \in \V{\outstar{u, H^*} \cap \instar{v, H^*}}$.
        For any odd $\ell \in [3,2 \duration{Q} -1]$, $Q_j \subseteq \outstar{u, H^*} \cap \instar{v, H^*}$ holds.
        In particular, $\Start{Q_{2 \duration{Q}}} \in V(\instar{v, H^*} \cap \outstar{u, H^*})$.

        We show that $\End{Q_{2 \duration{Q}}} \in \V{\instar{u, H^*} \cup \outtree{u, H^*} \cup \instar{w, H^*}}$ for some $(u,v) \neq (u,w) \in E(H)$.
        Then $Q_{2 \duration{Q}}$ is either non-parallel $H^*$-switching such that $\End{Q_{2 \duration{Q}}} \in V(\Brace{\instar{w, H^*} \cup \outtree{u, H^*}} \graphminus R_{u, H^*})$ for some $(u,v) \neq (u,w) \in \E{H}$ or $H^*$-\bad such that $\End{Q_{2 \duration{Q}}} \in V(\instar{u, H^*} \cup \outtree{u, H^*})$ since $\Start{Q_{2 \duration{Q}}} \in \V{\outstar{u, H^*} \cap \instar{v, H^*}}$.
        Furthermore, we show that if $Q_{2 \duration{Q}}$ is $H'$-bad, then $Q_{2 \duration{Q}}$ is $H^*$-bad.
        This implies that $P\coloneqq \Start{Q_2} Q$ is the desired $H^*$-escape.
        
        If $\Start{Q_0} \in \V{\instar{v, H'} \graphminus \outstar{u, H'}}$, then $H'$ and $H^*$ differ only in $\intree{v}$, the $u$-$v$-bridge and possibly in the $x$-$v$-bridge for some $(u,v) \neq (x, v) \in E(H)$.
        Thus $\outtree{u, H'} = \outtree{u, H^*}$, $\instar{u, H'} = \instar{u, H^*}$ and $\instar{w, H'} = \instar{w, H^*}$.
        This implies $\End{Q_{2 \duration{Q}}} \in \V{\instar{u, H'} \cup \outtree{u, H'} \cup \instar{w, H'}} = \V{\instar{u, H^*} \cup \outtree{u, H^*} \cup \instar{w, H^*}}$ for some $(u,v) \neq (u,w) \in E(H)$.
        In particular, if $Q_{2 \duration{Q}}$ is $H'$-bad, then $Q_{2 \duration{Q}}$ is $H^*$-bad.

        If $\Start{Q_0} \notin \V{\instar{v, H'} \graphminus \outstar{u, H'}}$, then $\Start{Q_0} \in \V{\outstar{x}}$ for some $(u,v) \neq (x,v) \in E(H)$.
        Furthermore, $\End{Q_{2 \duration{Q}}} \in \V{\outstar{u, H'} \graphminus \instar{v, H'}}$ by definition of escape.
        Thus $H'$ and $H^*$ differ only in $\intree{v}$, the $u$-$v$-bridge, the $x$-$v$-bridge, and possible $\rpath{x} \cup \outtree{x}$.
        Since $\Start{Q_0} \notin \V{\outstar{u, H'}}$ and $\End{Q_0} \notin \V{\outstar{u, H'} \graphminus \instar{v, H'}}$, $\outstar{u, H'} \graphminus \instar{v, H'} \subseteq \outstar{u, H^*} \graphminus \instar{v, H^*}$.
        Thus $\End{Q_{2 \duration{Q}}} \in \V{\outstar{u, H'} \graphminus \instar{v, H'}} \subseteq \V{\outstar{u, H^*} \graphminus \instar{v, H^*}}$, which implies that $\End{Q_{2 \duration{Q}}} \in \V{\instar{u, H^*} \cup \outtree{u, H^*} \cup \instar{w, H^*}}$ for some $(u,v) \neq (u,w) \in E(H)$.
        In particular, if $Q_{2 \duration{Q}}$ is $H'$-bad, then $Q_{2 \duration{Q}}$ is $H^*$-bad.
        This completes the proof.
\end{proof}

\begin{figure}[!ht]
    \centering
        \begin{tikzpicture}[scale=1]
			
    \pgfdeclarelayer{background}
	\pgfdeclarelayer{foreground}
			
	\pgfsetlayers{background,main,foreground}
			
	\begin{pgfonlayer}{main}
	    \node (v) [v:model] {$v$};
        \node (v_1) [v:ghost,position=150:9mm from v] {};
        \node (v_2) [v:ghost,position=210:9mm from v] {};
        \node (w) [v:model,position=330:12mm from v] {$w$};
        \node (z_1) [v:main,position=30:9mm from v] {};
        \node (z_1g) [v:ghost,position=330:0.6mm from z_1] {};
        \node (z_2) [v:main,position=30:6mm from z_1] {};
        \node (z_2g) [v:ghost,position=330:0.6mm from z_2] {};
        \node (z_3) [v:main,position=30:6mm from z_2] {};
        \node (z_3g) [v:ghost,position=330:0.6mm from z_3] {};
        \node (z_4) [v:main,position=30:6mm from z_3] {};
        \node (z_4g) [v:ghost,position=330:0.6mm from z_4] {};
        \node (u) [v:model,position=30:9mm from z_4] {$u$};
        
        \node (y) [v:main,position=150:9mm from u] {};
        \node (x) [v:model,position=150:9mm from y] {$x$};
        \node (u_1) [v:ghost,position=30:9mm from u] {};
        \node (u_2) [v:ghost,position=330:9mm from u] {};
        
        \node (Qlabel) [v:ghost,position=80:15mm from v] {\textcolor{CornflowerBlue}{$Q$}};
        \node (Q0label) [v:ghost,position=270:8mm from x] {\textcolor{CornflowerBlue}{$Q_0$}};
        \node (Q2label) [v:ghost,position=270:8mm from z_4] {\textcolor{CornflowerBlue}{$Q_2$}};
	\end{pgfonlayer}
			
	\begin{pgfonlayer}{background}

        \draw [line width=6pt,opacity=0.4,color=PastelOrange,line cap=round] (v_1.center) to (w.center);

        \draw [line width=6pt,opacity=0.4,color=PastelOrange,line cap=round]
        (v_2.center) to (u_1.center)
        (x.center) to (y.center)
        (y.center) [bend right=30] to (z_3.center);

        \draw [line width=6pt,opacity=0.4,color=PastelOrange,line cap=round] (u.center) to (u_2.center);
        
        \draw[e:main,->] (v_1) to (v);
        \draw[e:main,->] (v_2) to (v);
        \draw[e:main,->] (v) to (w);
        \draw[e:main,->] (v) to (z_1);
        \draw[e:main,->] (z_1) to (z_2);
        \draw[e:main,->] (z_2) to (z_3);
        \draw[e:main,->] (z_3) to (z_4);
        \draw[e:main,->] (z_4) to (u);
        \draw[e:main,->] (y) to (u);
        \draw[e:main,->] (x) to (y);
        \draw[e:main,->] (u) to (u_1);
        \draw[e:main,->] (u) to (u_2);

        \draw[e:main,->,color=CornflowerBlue] (z_1g) to (z_2g);
        \draw[e:main,->,color=CornflowerBlue] (z_3g) to (z_4g);

        \draw[e:main,->,out=220,in=90,color=CornflowerBlue] (y) to (z_3);
        \draw[e:main,->,out=270, in=340,color=CornflowerBlue] (z_4) to (z_1);
        \draw[e:main,->,out=150, in=70,color=CornflowerBlue] (z_2) to (v);
        
	\end{pgfonlayer}	
			
	\begin{pgfonlayer}{foreground}
	\end{pgfonlayer}
   
\end{tikzpicture}
   
    \caption{The $H$-expansions $H'$, \textcolor{PastelOrange}{$H^*$} and the path \textcolor{CornflowerBlue}{$Q$} in the proof of \cref{lem:construct_escape_with_bad_path}.}
    \label{fig:escape_construct_bad_path}
\end{figure}
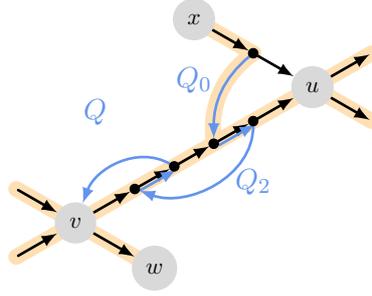

\ProperEscape*
\begin{proof}
    The graph $\Inverted{H}'$ is an expansion of $\Inverted{H}$ in $\Inverted{D}$ with an $\Inverted{H}'$-escape of duration at least $2$.
    By \cref{lem:construct_escape_with_bad_path}, there exists an $\Inverted{H}$-expansion $\Inverted{H}''$ with an $\Inverted{H}''$-escape $O$ such that $O_0$ is $H''$-bad.
    Then $P\coloneqq \Inverted{O}$ is an $H''$-escape such that $P_{2 \duration{P}}$ is $H''$-bad.

    If $P$ has duration $1$, $D$ admits an $H$-augmentation by \cref{lem:non-proper_escape}.
    Thus we can assume that $P$ has duration at least $2$ and apply \cref{lem:construct_escape_with_bad_path} to $P$ in order to obtain an $H$-expansion $H'''$ with an $H'''$-escape $Q$ such that $Q_0$ and $Q_{2 \duration{Q}}$ are $H'''$-bad.

    Let $(u,v) \in E(H)$ such that $Q$ is an $H'''$-escape with respect to $(u,v)$.
    Then $\Start{Q} \in V(\outstar{v, H'''} \cup \intree{v, H'''})$, $\End{Q} \in V(\outtree{u, H'''} \cup \instar{u, H'''})$ since $Q_0$ and $Q_{2 \duration{O}}$ are $H'''$-bad.
    Furthermore, $Q$ is properly laced with the path in $\outstar{u, H'''} \cap \instar{v, H'''}$ and otherwise internally disjoint from $H'''$ by definition of escape.
    Thus, we can apply~\cref{thm:add_chain}, which implies that $D$ admits an $H$-augmentation.
\end{proof}

\section{Directed splitter theorem}
\label{sec:mainproof}

In this section we make use of the introduced concepts and observations about them to prove the desired construction sequence for all strongly 2-connected digraphs.

\directedSplitterTheorem*

\begin{proof}
    \setcounter{claimcounter}{0}
    We show the statement by induction on $\Abs{\E{D}}-\Abs{\E{H}}$.
    The inductions basis holds true since $\Abs{\E{D}}=\Abs{\E{H}}$ implies $H \cong D$.
    
    For the induction step let $\Abs{\E{D}}>\Abs{\E{H}}$.
    We show that there exists an $H$-augmentation $K$ such that $D$ contains a $K$-expansion.
    Note that $\Abs{\E{K}} > \Abs{\E{H}}$ implies $\Abs{\E{D}}-\Abs{\E{K}} < \Abs{\E{D}}-\Abs{\E{H}}$.
    Thus we can apply the induction hypothesis to $D$ and $K$, and obtain a sequence $D_0, \dots, D_n$ as in the statement such that $D_0 = K$ and $D_n = D$.
    Then $H, D_0, \dots, D_n$ forms the desired sequence.

    For showing that there exists an $H$-augmentation $K$ such that $D$ contains a $K$-expansion we proceed with a case distinction on the structure of the expansions of $H$.
    Notice that $H^{\star}\subsetneq D$ for all $H$-expansions $H^{\star}$ in $D$.
    \begin{description}
        \item[Case 1 -- augmenting path:] Let $H'$ be an $H$-expansion in $D$ and assume that there exists an $H'$-augmenting path $Q$.
        Then, by \cref{thm:add_augmenting_path}, there exists a basic $H$-augmentation $K$ such that $K' \coloneqq H' \cup Q$ is a $K$-expansion in $D$.
    \end{description}

    \renewcommand{\labelenumi}{\textbf{\theenumi}}
    \renewcommand{\theenumi}{(\arabic{enumi})}
    \begin{enumerate}[labelindent=0pt,labelwidth=\widthof{\ref{item:assumption_1}},leftmargin=4ex]
        \item\label{item:assumption_1} Hence, from here on we may assume that \emph{no} $H$-expansion $H^{\star}$ in $D$ has an $H^{\star}$-augmenting path.
    \newcounter{enumTemp}
    \setcounter{enumTemp}{\theenumi}
    \end{enumerate}
       
    \begin{description}
        \item[Case 2 -- non-trivial branchset with in- and out-degree exactly two:]
        Let $H'$ be an $H$-expan-sion in $D$ and $v \in \V{H}$ such that $\InDegree{H}{v} = \OutDegree{H}{v} = 2$ and $\intree{v} \cup \rpath{v} \cup \outtree{v}$ is non-trivial.
        It follows from the assumption on the degrees of $v$ that $\rpath{v}=\intree{v} \cup \rpath{v} \cup \outtree{v}$ and thus, $\rpath{v}$ is non-trivial as well.
        Now choose $H''$ to be an $H$-expansion such that $\rpath{v}$ is of minimal length among all $H$-expansions for which $\rpath{v}$ is non-trivial.
        
        First, we observe that for each $H''$-switching path $Q$ with
        $|\Set{\Start{Q}, \End{Q}} \cap V(\rpath{v})| = 1$,
        we have $\Set{\Start{Q}, \End{Q}} \cap V(\rpath{v}) \subsetneq \Set{\Start{\rpath{v}}, \End{\rpath{v}}}$.
        Otherwise, one could switch onto $Q$ to obtain an $H$-expansion in which $\rpath{v}$ is shorter but still non-trivial.
        
        We distinguish two subcases:
        Either there is an $H''$-switching path $P_1$ starting in $\Start{\rpath{v}}$ and an $H''$-switching path $P_2$ ending in $\End{\rpath{v}}$ which are otherwise disjoint to $\rpath{v}$, or no such two paths exist.
        
        We begin with the former case that such paths $P_1$ and $P_2$ exist.
        By~\cref{lem:laced}, we can assume that $P_1$ and $P_2$ are laced.
        Further, by~\cref{lem:expansion_stays_expansion_after_switching}, we can obtain an $H$-expansion $H'''$ from $H''$ by switching\footnote{From now on, we apply \cref{lem:expansion_stays_expansion_after_switching} when switching onto switching paths without explicitly referring to it.} onto $P_1$.
        Let $Q_1 \subsetneq H''$ be the $H'''$-switching path obtained by switching onto $P_1$.

        We name a few vertices in $H$.
        Let $a$ be the in-neighbour of $v$ such that $\Start{P_2}\in \V{\outstar{a}}$ and let $x$ be the other in-neighbour of $v$.
        Similarly, let $b$ be the out-neighbour of $v$ such that $\End{Q_1}\in \V{\instar{b}}$ and let $y$ be the other out-neighbour of $v$.

        Notice that $\End{P_2}=\Start{Q_1}$.
        If $P_1$ and $P_2$ are disjoint, then the concatenation of $P_2$ and $Q_1$ 
        allows us to apply~\cref{thm:construct_bracelet} which yields that $D$ admits a basic or bracelet $H$-augmentation.
        
        Therefore, we may assume that $P_1$ and $P_2$ are not disjoint and therefore properly laced.
        In case $P_1\cap P_2$ has a unique component, we can again apply \cref{thm:construct_bracelet} using $P_2$ in $H'''$ to conclude that $D$ admits a basic or bracelet $H$-augmentation.
        Similarly, if $P_1\cap P_2$ has exactly two components, we can apply \cref{thm:construct_bracelet_two} using $P_2$ in $H'''$ to conclude that $D$ admits a basic, chain, collarette or bracelet $H$-augmentation.
        
        Otherwise, let $L$ be the shortest $\Start{P_2}$-$V(P_1)$-subpath of $P_2$ and let $R$ be the shortest $V(P_1)$-$\End{P_2}$-subpath of $P_2$.

        Notice first that $(a,b)\in \E{H}$ as otherwise $L$ would be an augmenting path for $H'''$ contradicting \cref{item:assumption_1}.
        Hence, $L$ and $R$ are both switching for $H'''$.

        As $P_1\cap P_2$ has at least three components, we obtain a chain augmentation witnessed by a subpath of $P_2$ as follows, see \cref{fig:degree2bracelet2} for an illustration.
        Let $H''''$ be the $H$-expansion obtained from $H'''$ by first switching onto $L$ and then onto $R$.
        This is possible since $R$ remains switching by switching onto $L$.

        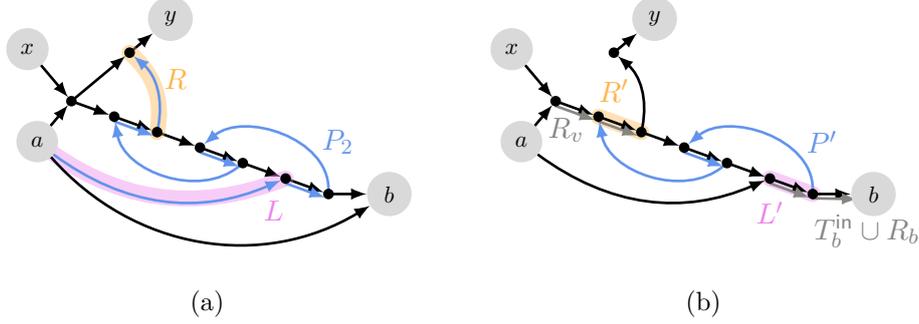
\begin{figure}[!ht]
        \centering
        \begin{subfigure}[c]{0.4\textwidth}
            \centering
            \begin{tikzpicture}[scale=1]
			
    \pgfdeclarelayer{background}
	\pgfdeclarelayer{foreground}
			
	\pgfsetlayers{background,main,foreground}
			
	\begin{pgfonlayer}{main}
	    \node (v) [v:main] {};
        \node (x) [v:model,position=130:9mm from v] {$x$};
        \node (a) [v:model,position=230:7mm from v] {$a$};

        \node (z) [v:main,position=40:10mm from v] {};
        \node (y) [v:model,position=40:7mm from z] {$y$};

        \node (v_1) [v:main,position=340:6mm from v] {};
        \node (v_1g) [v:ghost,position=250:0.6mm from v_1] {};        
        \node (v_2) [v:main,position=340:6mm from v_1] {};
        \node (v_2g) [v:ghost,position=250:0.6mm from v_2] {};        

        \node (v_3) [v:main,position=340:6mm from v_2] {};
        \node (v_3g) [v:ghost,position=250:0.6mm from v_3] {};        

        \node (v_4) [v:main,position=340:6mm from v_3] {};
        \node (v_4g) [v:ghost,position=250:0.6mm from v_4] {};        

        \node (v_5) [v:main,position=340:6mm from v_4] {};
        \node (v_5g) [v:ghost,position=250:0.6mm from v_5] {};        

        \node (v_6) [v:main,position=340:6mm from v_5] {};
        \node (v_6g) [v:ghost,position=250:0.6mm from v_6] {};

        \node (b) [v:model,position=360:8mm from v_6] {$b$};
        
        \node (Llabel) [v:ghost,position=250:4.5mm from v_5] {\textcolor{LavenderMagenta}{$L$}};
        \node (Rlabel) [v:ghost,position=330:7mm from z] {\textcolor{PastelOrange}{$R$}};
        \node (P2label) [v:ghost,position=80:7mm from v_6] {\textcolor{CornflowerBlue}{$P_2$}};
	\end{pgfonlayer}
			
	\begin{pgfonlayer}{background}

        \draw[e:main,->] (x) to (v);
        \draw[e:main,->] (a) to (v);
        \draw[e:main,->] (v) to (z);
        \draw[e:main,->] (z) to (y);
        \draw[e:main,->] (v) to (v_1);
        \draw[e:main,->] (v_1) to (v_2);
        \draw[e:main,->] (v_2) to (v_3);
        \draw[e:main,->] (v_3) to (v_4);
        \draw[e:main,->] (v_4) to (v_5);
        \draw[e:main,->] (v_5) to (v_6);
        \draw[e:main,->] (v_6) to (b);

        \draw [line width=6pt,opacity=0.4,color=LavenderMagenta,bend right=30,line cap=round] (a.center) to (v_5.center);
        \draw [line width=6pt,opacity=0.4,color=PastelOrange,bend right=30,line cap=round] (v_2.center) to (z.center);

        \draw[e:main,->,bend right=30,color=CornflowerBlue] (a) to (v_5);
        \draw[e:main,->,color=CornflowerBlue] (v_5g) to (v_6g);
        \draw[e:main,->,out=90,in=40,color=CornflowerBlue] (v_6) to (v_3);
        \draw[e:main,->,color=CornflowerBlue] (v_3g) to (v_4g);
        \draw[e:main,->,out=220,in=280,color=CornflowerBlue] (v_4) to (v_1);
        \draw[e:main,->,color=CornflowerBlue] (v_1g) to (v_2g);
        \draw[e:main,->,bend right=30,color=CornflowerBlue] (v_2) to (z);

        \draw[e:main,->,bend right=40] (a) to (b);

	\end{pgfonlayer}	
			
	\begin{pgfonlayer}{foreground}
	\end{pgfonlayer}
   
\end{tikzpicture}
   
            \caption{}
            \label{fig:deg2first}
        \end{subfigure}
        \begin{subfigure}[c]{0.4\textwidth}
            \centering
            \begin{tikzpicture}[scale=1]
			
    \pgfdeclarelayer{background}
	\pgfdeclarelayer{foreground}
			
	\pgfsetlayers{background,main,foreground}
			
	\begin{pgfonlayer}{main}
	    \node (v) [v:main] {};
        \node (vg) [v:ghost,position=250:0.6mm from v] {};
        \node (x) [v:model,position=130:9mm from v] {$x$};
        \node (a) [v:model,position=230:7mm from v] {$a$};

        \node (z) [v:main,position=40:10mm from v] {};
        \node (y) [v:model,position=40:7mm from z] {$y$};

        \node (v_1) [v:main,position=340:6mm from v] {};
        \node (v_1g) [v:ghost,position=250:0.6mm from v_1] {};        
        \node (v_2) [v:main,position=340:6mm from v_1] {};
        \node (v_2g) [v:ghost,position=250:0.6mm from v_2] {};        

        \node (v_3) [v:main,position=340:6mm from v_2] {};
        \node (v_3g) [v:ghost,position=250:0.6mm from v_3] {};        

        \node (v_4) [v:main,position=340:6mm from v_3] {};
        \node (v_4g) [v:ghost,position=250:0.6mm from v_4] {};        

        \node (v_5) [v:main,position=340:6mm from v_4] {};
        \node (v_5g) [v:ghost,position=250:0.6mm from v_5] {};        

        \node (v_6) [v:main,position=340:6mm from v_5] {};
        \node (v_6g) [v:ghost,position=250:0.6mm from v_6] {};

        \node (b) [v:model,position=360:8mm from v_6] {$b$};
        \node (bg) [v:ghost,position=200:2mm from b] {}; 
        \node (Llabel) [v:ghost,position=270:4.5mm from v_5] {\textcolor{LavenderMagenta}{$L'$}};
        \node (Rlabel) [v:ghost,position=270:5mm from z] {\textcolor{PastelOrange}{$R'$}};
        \node (P2label) [v:ghost,position=80:7mm from v_6] {\textcolor{CornflowerBlue}{$P'$}};

        \node (Rvlabel) [v:ghost,position=290:4mm from v] {\textcolor{DarkGray}{$R_v$}};
        \node (Rblabel) [v:ghost,position=260:5mm from b] {\textcolor{DarkGray}{$\intree{b}\cup \rpath{b}$}};
	\end{pgfonlayer}
			
	\begin{pgfonlayer}{background}
        \draw [line width=6pt,opacity=0.4,color=LavenderMagenta,line cap=round] (v_5.center) to (v_6.center);
        \draw [line width=6pt,opacity=0.4,color=PastelOrange,line cap=round] (v_1.center) to (v_2.center);
        
        \draw[e:main,->] (x) to (v);
        \draw[e:main,->] (a) to (v);
        \draw[e:main,->] (z) to (y);
        \draw[e:main,->] (v) to (v_1);
        \draw[e:main,->] (v_1) to (v_2);
        \draw[e:main,->] (v_2) to (v_3);
        \draw[e:main,->] (v_3) to (v_4);
        \draw[e:main,->] (v_4) to (v_5);
        \draw[e:main,->] (v_5) to (v_6);
        \draw[e:main,->] (v_6) to (b);

        \draw[e:main,->,bend right=30] (a) to (v_5);
        \draw[e:main,->,out=90,in=40,color=CornflowerBlue] (v_6) to (v_3);
        \draw[e:main,->,color=CornflowerBlue] (v_3g) to (v_4g);
        \draw[e:main,->,out=220,in=280,color=CornflowerBlue] (v_4) to (v_1);
        \draw[e:main,->,bend right=30] (v_2) to (z);

        \draw[e:main,->,color=DarkGray] (vg) to (v_1g);
        \draw[e:main,->,color=DarkGray] (v_1g) to (v_2g);

        \draw[e:main,->,color=DarkGray] (v_5g) to (v_6g);
        \draw[e:main,->,color=DarkGray] (v_6g) to (bg);

        \draw[e:main,white,->,bend right=40] (a) to (b);
	\end{pgfonlayer}	
			
	\begin{pgfonlayer}{foreground}
	\end{pgfonlayer}
   
\end{tikzpicture}
   
            \caption{}
            \label{fig:deg2second}
        \end{subfigure}
        \caption{The subcase of Case 2 in the proof of \cref{thm:mainthm2} that applies \cref{thm:add_chain}.
                If \cref{thm:construct_bracelet} is not applicable, we may assume that the intersection of $P_1$ and $P_2$ has at least two components.
                After switching onto $P_1$, we thus can obtain paths $L$ and $R$ as the first and last segment of $P_2$.
                As $L$ is not augmenting, the edge $(a,b)$ has to exist.
                We may now switch onto both subpaths $L$ and $R$ of $P_2$ \subref{fig:deg2first}.
                For the resulting $H$-expansion $H''''$ it follows that a subpath of the remaining part of $P_2$ witnesses a chain augmentation \subref{fig:deg2second} in $D$.}
        \label{fig:degree2bracelet2}
    \end{figure}

        Let $L'$ be the component of $P_1 \cap P_2$ intersecting $L$ and $R'$ be the component of $P_1 \cap P_2$ intersecting $R$.
        Now let $P'$ be the shortest subpath of $P_2$
        starting in $L'$ and ending in $R'$.
        We have that $\Start{P'} \in \V{\outstar{b,H''''}} \cup \V{\intree{b,H''''}}$ and $\End{P'} \in \V{\rpath{v,H''''}} \subseteq \V{\outtree{v,H''''}} \cup \V{\instar{v,H''''}}$ and also $P'\cap H'''' = \left(P'\cap P_1'\right)\cup\{ \Start{P'},\End{P'}\}$ where $P_1'\coloneqq \instar{b,H''''}\cap \outstar{v,H''''}\subseteq P_1$. 
        Note that as $P_1\cap P_2$ has at least three components the intersection $\instar{b,H''''}\cap \outstar{v,H''''}$ is not empty.
        By~\cref{thm:add_chain}, $P'$ witnesses that $D$ admits a chain or a collarette $H$-augmentation.
        
        We turn our attention to the case where we do not have both an $H''$-switching path $P_1$ starting in $\Start{\rpath{v}}$ and an $H''$-switching path $P_2$ ending in $\End{\rpath{v}}$ which have exactly one endpoint on $\rpath{v}$.

        Recall the above observation that, by the minimality of $\rpath{v}$, any switching path with exactly one end in $\rpath{v}$ must intersect $\rpath{v}$ in $\Start{\rpath{v}}$ or $\End{\rpath{v}}$.
        Hence, if there exists an $H''$-switching path starting in $\Start{\rpath{v}}$ and not ending on $\rpath{v}$, then there is no $H''$-switching path ending in $\End{\rpath{v}}$ and starting in $\V{D} \setminus\V{\rpath{v}}$, which makes $\Start{\rpath{v}}$ a \blocking vertex.
        Therefore, we can apply \cref{thm:blocking_vertex_implies_augmentation} to obtain that $D$ admits a basic~$H$-augmentation, a chain~$H$-augmentation, a collarette~$H$~augmentation or a bracelet~$H$-augmentation.
        Otherwise, assume that there are no $H''$-switching paths starting in $\Start{\rpath{v}}$ and ending in $\V{D}\setminus\V{\rpath{v}}$, then $\End{\rpath{v}}$ is a \blocking vertex and we can again apply \cref{thm:blocking_vertex_implies_augmentation}.
    \end{description}

    \begin{enumerate}[labelindent=0pt,labelwidth=\widthof{\ref{item:assumption_3}},leftmargin=4ex]
        \setcounter{enumi}{\theenumTemp}
        \item\label{item:assumption_3} Therefore, we may assume that in every $H$-expansion $H^{\star}$ for every $v\in \V{H}$ with in- and out-degree exactly two the path $\rpath{v,H^{\star}}$ is trivial.
    \setcounter{enumTemp}{\theenumi}
    \end{enumerate}
        
    \begin{description}
        \item[Case 3 -- non-trivial branchset with in- or out-degree at least three:]
        Let $H'$ be an $H$-expan-sion in $D$ and $v \in \V{H}$ such that $\InDegree{H}{v, H'}$ or $\OutDegree{H}{v, H'}$ is at least $3$ and $\intree{v, H'} \cup \rpath{v, H'} \cup \outtree{v, H'}$ is non-trivial.
        Without loss of generality we may assume that $\InDegree{H}{v} \geq 3$ since otherwise \cref{lem:inverted_augmentation} permits us to consider $\Inverted{H}$ and $\Inverted{D}$ instead.
        We denote by $s$ be the root of $\intree{v}$.

        If $s$ is a blocking vertex for $v$, then we can apply \cref{thm:blocking_vertex_implies_augmentation} to obtain that $D$ admits a basic~$H$-augmentation, a chain~$H$-augmentation, a collarette~$H$~augmentation or a bracelet~$H$-augmentation since the branchset of $v$ is non-trivial.

        Otherwise there exists an $H'$-switching path $Q$ starting in $V(H' \graphminus \rpath{v})$ and ending in $V(\rpath{v} - s)$.
        We may switch onto $Q$ and thereby obtain a new $H$-expansion $H''$ where $\intree{v,H''}$ is non-trivial.
        We construct an $H$-expansion $H'''$ from $H''$ by fixing $\intree{v, H''}$ and picking $\outstar{v}$ such that $\rpath{v}$ has minimal length along all possible out-stars at $v$ under fixed $\intree{v,H''}$.
        Let $r$ be the root of $\outtree{v}$ in $H'''$.
        Notice that there trivially does not exist a switching path starting in $\V{D}\setminus V(r\rpath{v})$ and ending in $V(r\rpath{v})\setminus\{ r\} = \emptyset$.
        Moreover, suppose there exists a switching path $Q$ which starts in $V(\rpath{v}r)\setminus\{ r\}$ and ends in $\V{D}\setminus V(\rpath{v}r)$.
        Then $Q$ must end in $(\outtree{v} - r) \cup \instar{w}$ for some $w \in \OutNeighbours{H}{v}$ and thus, by switching onto $Q$, we would get an $H$-expansion $H''''$ such that $\intree{v,H'''}=\intree{v,H''''}$ and $\rpath{v,H''''}\subsetneq \rpath{v,H'''}$ which contradicts our choice of $H'''$.
        Therefore, $r$ is \blocking for $v$ in $H'''$.
        This and the assumption that $V(\intree{v}) \setminus \Set{r} \neq \emptyset$ allows us to apply \cref{thm:blocking_vertex_implies_augmentation} to obtain that $D$ admits a basic~$H$-augmentation, a chain~$H$-augmentation, a collarette~$H$~augmentation or a bracelet~$H$-augmentation.
    \end{description}

    The outcome of Case 3 allows us to assume that the branchsets of all vertices $v\in \V{H}$ where $v$ has out- or in-degree at least three are trivial.
    Moreover, notice that the branchset of any vertex with $v\in \V{H}$ such that $v$ has in- and out-degree exactly two equals $\rpath{v}$ which is, by \cref{item:assumption_3}, trivial.
    Finally, as $H$ is strongly $2$-connected, all of its vertices fall into one of these two categories.
    
    \begin{enumerate}[labelindent=0pt,labelwidth=\widthof{\ref{item:assumption_4}},leftmargin=4ex]
        \setcounter{enumi}{\theenumTemp}
        \item\label{item:assumption_4} Therefore, we may assume that in every $H$-expansion $H^{\star}$ every $v\in \V{H}$ has a trivial branchset.
    \setcounter{enumTemp}{\theenumi}
    \end{enumerate}
        
    \begin{description}
        
        \item[Case 4 -- non-parallel switching path:]
        Let $H'$ be an $H$-expansion in $D$.
        Notice that if there were a non-parallel switching path $Q$, then by switching $H'$ onto $Q$, we would obtain an $H$-expansion with a non-trivial branchset, which would contradict \cref{item:assumption_4}.
    \end{description}
        
   \begin{enumerate}[labelindent=0pt,labelwidth=\widthof{\ref{item:assumption_5}},leftmargin=4ex]
        \setcounter{enumi}{\theenumTemp}
        \item\label{item:assumption_5}
        Therefore, by \cref{lem:different-earpaths}, we may assume that for every $H$-expansion $H^{\star}$ every $H^{\star}$-\earpath is either parallel switching or \bad.
    \setcounter{enumTemp}{\theenumi}
    \end{enumerate}

    With this we are ready to tackle the last but also most involved case.
    By \cref{item:assumption_4}, for every $H$-expansion $H^{\star}$ the branchset of every vertex $v \in \V{H^{\star}}$ is trivial.
    We refer to the unique vertex of $H^{\star}$ in the branchset of $v$ as $v_{H^{\star}}$.

    \begin{description}        
        \item[Case 5 -- all branchsets trivial and no non-parallel switching path:]
        Let $\widetilde{H}$ be an $H$-expansion in $D$.
        As a first step we select $H'$ to be an $H$-expansion in $D$ with the additional property that there exists $(u,v)\in \E{H}$ such that the path $\outstar{u,H'}\cap\instar{v,H'}$
        %\SW{path between stars}
        %\GS{You mean the bridge?}
        has at least one vertex.
        We achieve such an $H$-expansion $H'$ as follows.

        In case $\widetilde{H}$ satisfies the property above, we set $H' \coloneqq \widetilde{H}$.

        If no such path exists, then $\widetilde{H}\subseteq D$ is isomorphic to $H$.
        In this case, we choose $Q$ to be any $\widetilde{H}$-\earpath.
        Let $u \in \V{H}$ be the vertex such that $\Start{Q} = u_{\widetilde{H}}$ and $v \in \V{H}$ be the vertex such that $\End{Q} = v_{\widetilde{H}}$.
        As there are no augmenting paths $\Brace{u,v} \in \E{H}$ and $\Brace{u_{\widetilde{H}},v_{\widetilde{H}}} \in \E{D}$.
        Furthermore, as $D$ is simple, $Q$ has a length of at least two and $Q$ is a parallel switching path.
        We obtain the $H$-expansion $H'$ by switching $\widetilde{H}$ onto $Q$.
        
        So, in both cases, $H'$ is a subdivision of $H$, and there exists an edge $(u,v)\in \E{H}$ such that the unique $u_{H'}$-$v_{H'}$-path $Q$ in $H'$ corresponding to this edge has an internal vertex.
        We fix $u$, $v$, and $Q$ for what follows.

        \begin{claim}
            \label{claim:starts_and_ends_in_stars}
            There exist $H'$-\earpaths $\outP$ and $\inP$ as follows: the path $\outP$ starts in an internal vertex of $Q$ with $\End{\outP} \in \V{H'}\setminus\V{Q-u_{H'}}$ and the path $\inP$ ends in an internal vertex of $Q$ with $\Start{\inP} \in \V{H'}\setminus\V{Q-v_{H'}}$.
            Additionally, for any such paths we have $\End{\outP} \in \V{\instar{u}}$ and $\Start{\inP} \in \V{\outstar{v}}$.
        \end{claim}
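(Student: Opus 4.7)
The plan is to prove the two assertions separately: the existence of $\outP$ (and $\inP$), and the property that the endpoint of any such path must lie in $\V{\instar{u}}$ (respectively $\V{\outstar{v}}$).

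For existence of $\outP$, I argue by contradiction, supposing every $H'$-earpath starting at an internal vertex of $Q$ ends in $\V{Q - u_{H'}}$. Let $w$ denote the last internal vertex of $Q$. Since $D$ is strongly $2$-connected, $D - v_{H'}$ is strongly connected and so contains a directed path $P$ from $w$ to $u_{H'}$. View $P$ as an alternating sequence $w = p_0, p_1, \ldots, p_m = u_{H'}$ of its vertices in $\V{H'}$, where each consecutive pair is joined along $P$ either by a single edge of $H'$ or by an $H'$-earpath. Every internal vertex of $Q$ has exactly one out-edge in $H'$, namely to its successor on $Q$, and by the contradictory assumption every $H'$-earpath leaving an internal vertex of $Q$ ends in $\V{Q-u_{H'}}$. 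Since $v_{H'} \notin \V{P}$, an easy induction shows that every $p_i$ with $i < m$ is an internal vertex of $Q$, contradicting $p_m = u_{H'}$.

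For the endpoint constraint, let $\outP$ be any earpath as in the statement, and set $w = \Start{\outP}$, $z = \End{\outP}$. If $z = u_{H'}$ then $z \in \V{\instar{u}}$ trivially, so assume $z \in \V{H'} \setminus \V{Q}$. Under the standing assumptions of Case~5 (in particular every earpath is bad or parallel switching), and using that the internal vertex $w$ of $Q$ lies only in $\V{\outstar{u}} \cap \V{\instar{v}}$ among all stars of $H'$, the only admissible types of $\outP$ are: bad with respect to $u$, bad with respect to $v$, or parallel switching with respect to $(u, b)$ for some $(u, b) \in \E{H}$. I rule out all but the first. If $\outP$ is bad with respect to $v$, then the directed cycle in $\instar{v} \cup \outstar{v} \cup \outP$ must close via a path from $z$ back to $w$ inside the double-tree $\instar{v} \cup \outstar{v}$ at $v_{H'}$, which forces $z$ to be a tree-ancestor of $w$ on $Q$ and therefore $z \in \V{Q}$, contradicting $z \notin \V{Q}$. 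If $\outP$ is parallel switching with respect to $(u, b)$, the required $w$-$z$-path in $\outstar{u} \cup \instar{b} \cup \{\text{bridging edges}\}$ is confined to the subtree of $\outstar{u}$ rooted at $w$ (together with $v_{H'}$ when $b = v$); any escape of this subtree into $\V{H'}\setminus \V{Q}$ would be realised by a genuine $D$-edge between $Q$ and another bridge, and switching onto such an edge would yield an $H$-expansion with a non-trivial branchset, contradicting the standing assumption that every $H$-expansion has only trivial branchsets. Hence $\outP$ is bad with respect to $u$; the cycle in $\instar{u} \cup \outstar{u} \cup \outP$ cannot close inside the arborescence $\outstar{u}$ alone, so it uses $\instar{u}$, forcing $z \in \V{\instar{u}}$.

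The symmetric assertions for $\inP$ follow by applying the same arguments to $\Inverted{D}$ and invoking \cref{lem:inverted_augmentation}. The main technical obstacle I foresee will be pinning down the parallel switching case — specifically, arguing cleanly that the only admissible lateral connections from $w$ into $\V{H'} \setminus \V{Q}$ are ones that would force a non-trivial branchset via switching, which in turn is forbidden by the trivial-branchset assumption established before reaching Case~5.
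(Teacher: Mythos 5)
Your proposal is correct in substance, and the route through an explicit classification of $\outP$ is genuinely different from the paper's, but one step of your argument is muddled. For existence, your inductive argument fleshes out the paper's shorter appeal to strong $2$-connectivity; both work. For the endpoint constraint, the paper's argument is slicker: since $\Start{\outP}\in\V{\outstar{u}}$ and, by \cref{item:assumption_1} and \cref{item:assumption_5}, $\outP$ is neither augmenting of type~\cref{item:augmenting_3} nor non-parallel switching, one gets $\End{\outP}\notin\V{\instar{w}}$ for every $w\neq u$ directly, and then the fact that $\Set{\V{\instar{w}}: w\in\V{H}}$ partitions $\V{H'}$ (valid here because all branchsets are trivial) forces $\End{\outP}\in\V{\instar{u}}$ without further case distinctions. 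Your approach instead enumerates the admissible types of $\outP$ (\bad w.r.t.\ $u$, \bad w.r.t.\ $v$, or parallel switching w.r.t.\ some $(u,b)$) and eliminates the last two; the elimination of \bad-w.r.t.-$v$ is fine. But your elimination of parallel switching via the claim that an ``escape'' would be realised by a ``genuine $D$-edge between $Q$ and another bridge'' and produce a non-trivial branchset after switching onto it is not the argument that actually works, and as written it is hard to verify. The direct argument is: any $\Start{\outP}$-$\End{\outP}$-path in $\outstar{u}\cup\instar{b}\cup\Set{e: \Head{e}\in\V{\instar{b}},\Tail{e}\in\V{\outstar{u}}}$ is confined to the descendant subtree of $\Start{\outP}$ inside $\outstar{u}$, which lies entirely on $Q$, and at most reaches $v_{H'}$ beyond that when $b=v$; since $\End{\outP}\notin\V{Q}$, no such path exists, so $\outP$ would be non-parallel switching, contradicting \cref{item:assumption_5}. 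Replacing your parallel-switching elimination with this observation makes your proof correct; alternatively, the partition argument renders the explicit enumeration unnecessary.
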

        \begin{claimproof}
            As $D$ is strongly 2-connected, every internal vertex of $Q$ has out- and in-degree at least two, and $v_{H'}$ is not a cutvertex in $D$.
            Thus there exists a path $\outP$ starting at an internal vertex of $Q$ and ending in $\V{H'}\setminus\V{Q-u_{H'}}$.
            Similarly, $u_{H'}$ is not a cutvertex, and we can find a path $\inP$ ending in an internal vertex of $Q$ and starting in $\V{H'}\setminus\V{Q-v_{H'}}$.
        
            By \cref{item:assumption_1}, neither $\outP$ nor $\inP$ is augmenting.
            Therefore for every $w \notin \OutNeighbours{H}{u}$ with $w \neq u$ we have $\End{\outP} \notin \V{\instar{w}}$ and for every $t \notin \InNeighbours{H}{v}$ with $t \neq v$ we have $\Start{\inP} \notin \V{\outstar{t}}$.
            
            Also, by \cref{item:assumption_5}, neither $\outP$ nor $\inP$ is non-parallel switching.
            Thus, we have $\End{\outP} \notin \V{\instar{w}}$ for every $w \in \OutNeighbours{H}{u}$ with $w \neq u$ and $\Start{\inP} \notin \V{\outstar{t}}$ for every $t \in \InNeighbours{H}{v}$ with $t \neq v$.
            
            In particular, $\End{\outP} \notin \V{\instar{v}}$ and $\Start{\inP} \notin \V{\outstar{u}}$.   
            Therefore, the two observations above yield $\End{\outP} \in \V{\instar{u}}$ and $\Start{\inP} \in \V{\outstar{v}}$ as desired.
        \end{claimproof}
        
        Next, we want to choose specific paths that allow us to apply \cref{thm:add_chain}.
        
        \begin{claim}
            \label{claim:tidy}
            There are $H'$-\earpaths $\inPnew$ and $\outPnew$ such that
            \begin{itemize}
                \item $\inPnew$ is an $\outstar{v'}$-$\V{Q-u_{H'} - v_{H'}}$-path  and $\outPnew$ is a $\V{Q- u_{H'} - v_{H'}}$-$\instar{u'}$-path, and
                \item if $\inPnew$ and $\outPnew$ are internally disjoint, then
                there exists an $\End{\inPnew}$-$\Start{\outPnew}$-path $\Pstar$ that is properly laced with $Q$ and internally disjoint from $\inPnew$, $\outPnew$, and $H' \graphminus (Q - u_{H'} - v_{H'})$.
            \end{itemize}
        \end{claim}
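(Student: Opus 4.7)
The plan is to first establish the star conditions for any minimally chosen pair $\inPnew, \outPnew$, and then to produce the connecting path $\Pstar$ by combining the strong $2$-connectivity of $D$ with the standing assumptions \cref{item:assumption_1} and \cref{item:assumption_5}. I would select $\inPnew$ and $\outPnew$ among all $H'$-\earpaths joining $\V{H'} \setminus \V{Q - v_{H'}}$ to $\V{Q - u_{H'} - v_{H'}}$ and $\V{Q - u_{H'} - v_{H'}}$ to $\V{H'} \setminus \V{Q - u_{H'}}$, respectively, subject to minimising $\Abs{E(\inPnew \cup \outPnew) \setminus E(H')}$. Candidate paths exist by \cref{claim:starts_and_ends_in_stars}, and the argument of that claim, which used only the absence of augmenting paths (\cref{item:assumption_1}) and of non-parallel switching paths (\cref{item:assumption_5}), applies verbatim to any candidate to conclude that $\Start{\inPnew} \in \V{\outstar{v}}$ and $\End{\outPnew} \in \V{\instar{u}}$.

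For the second bullet, assume $\inPnew$ and $\outPnew$ are internally disjoint and set $a\coloneqq \End{\inPnew}$ and $b\coloneqq \Start{\outPnew}$, both interior vertices of $Q$. Let $D^{\circ}$ be the digraph obtained from $D$ by deleting the vertices of $H' \graphminus (Q - u_{H'} - v_{H'})$ together with all internal vertices of $\inPnew$ and $\outPnew$. I would show that $D^{\circ}$ admits an $a$-$b$-walk. If no such walk existed, the strong $2$-connectivity of $D$ would force every $a$-$b$-route in $D$ to pass through the deleted portion; analysing the first entry into and the last exit from $H' \graphminus Q$ along such a route yields an $H'$-\earpath. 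By \cref{lem:different-earpaths} together with \cref{item:assumption_1,item:assumption_5}, this \earpath is either parallel-switching or bad. In each case the resulting path either shortens $\inPnew$ or $\outPnew$---contradicting the minimality of $\Abs{E(\inPnew \cup \outPnew) \setminus E(H')}$---or identifies a \blocking vertex on $Q$, letting us invoke \cref{thm:blocking_vertex_implies_augmentation} to obtain the desired $H$-augmentation directly and render the claim moot. From the walk I extract a simple path, apply \cref{lem:laced} against $Q$ to make it laced with $Q$, and observe that since both of its endpoints lie on $Q$, the resulting $\Pstar$ is properly laced; the internal disjointness from $\inPnew$, $\outPnew$, and $H' \graphminus (Q - u_{H'} - v_{H'})$ is immediate from the construction of $D^{\circ}$.

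The hardest part will be the dissection of hypothetical detours through the deleted subgraph: strong $2$-connectivity of $D$ only forbids single-vertex separators, while $D^{\circ}$ has an entire subgraph removed. Turning every potential separator in $D^{\circ}$ into an \earpath of $H'$ whose existence contradicts Case~5's standing assumptions (or else yields a \blocking vertex that finishes the theorem via \cref{thm:blocking_vertex_implies_augmentation}) is a delicate case analysis, and this is where the bulk of the argument will lie.
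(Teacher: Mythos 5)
Your proposal diverges substantially from the paper's proof of \cref{claim:tidy}, and there are genuine gaps in the route you sketch. The central difficulty is your reliance on the existence of an $a$-$b$-path in $D^{\circ}$, the digraph obtained by deleting $H' \graphminus (Q - u_{H'} - v_{H'})$ \emph{and} all internal vertices of $\inPnew$ and $\outPnew$. Strong $2$-connectivity of $D$ only guarantees strong connectivity after removing a single vertex, whereas $D^{\circ}$ has an entire subgraph removed; your sketch acknowledges this is the hard part but does not supply the argument. Further, your fallback — ``identifies a \blocking vertex on $Q$, letting us invoke \cref{thm:blocking_vertex_implies_augmentation}'' — is unavailable in Case~5: \cref{thm:blocking_vertex_implies_augmentation} requires a \emph{nontrivial} branchset for some $v\in\V{H}$, but the standing assumption \cref{item:assumption_4} in Case~5 says precisely that every branchset is trivial, so the theorem cannot be applied. (Moreover, the blocking-vertex notion is defined on $\rpath{v}$, not on bridge paths such as $Q$.) The claim that the categorised \earpaths ``either shorten $\inPnew$ or $\outPnew$'' is also not substantiated: a bad path forms a cycle with $\instar{w}\cup\outstar{w}$ for some $w$, and it is not at all clear why such a path would shorten $E(\inPnew\cup\outPnew)\setminus E(H')$.

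The paper's own proof side-steps all of this by never committing to $\inPnew$ and $\outPnew$ up front. Instead it starts from $\inP$ of \cref{claim:starts_and_ends_in_stars}, takes a single auxiliary path $\Pstaraux$ from $\End{I}$ to $\V{H' \graphminus (Q - u_{H'})}$ in $D - v_{H'}$ (a digraph that is strongly connected because only one vertex has been removed), makes it laced with $Q$, and then \emph{reads off} $\outPnew$ as the terminal segment of $\Pstaraux$ after its last internal $Q$-vertex $x_o$. If this $\outPnew$ meets $\inP$ internally the second bullet is vacuous; otherwise $\inPnew$ and $\Pstar$ are both carved out of the prefix $\Pstaraux x_o$, using the last vertex $y$ of $\Pstaraux x_o$ on $\inP$ and the first $Q$-vertex $z$ after $y$. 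Disjointness of $\Pstar$ from $\inPnew$, $\outPnew$, and $H'\graphminus(Q-u_{H'}-v_{H'})$ is then automatic by construction rather than proved by contradiction. If you want to rescue the minimality route, you would need a different argument for the existence of the connecting path that uses only a single-vertex deletion; as written, the proposal has a hole that the paper's construction is specifically designed to avoid.
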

        \begin{claimproof}
            Let $\inP$ and $\outP$ be paths as provided by \cref{claim:starts_and_ends_in_stars}.
            The path $\outP$ is not needed and can be discarded.

            Let $\Pstaraux$ be some $\End{I}$-$\V{H' \graphminus (Q - u_{H'})}$-path in $D - v_{H'}$.
            By \cref{lem:laced} we can choose $\Pstaraux$ laced with $Q - u_{H'} - v_{H'}$.
            Then $\Pstaraux$ is properly laced with $Q$ since $\Start{\Pstaraux}\in V(Q)$.

            Let $x_o$ be the last intersection of $\Pstaraux$ with the internal vertices of $Q$ with respect to $\leq_{\Pstaraux}$.

            We define $\outPaux \coloneqq x_o \Pstaraux$.
            By construction, we know that $\outPaux$ starts in an internal vertex of $Q$, ends in $\V{H \graphminus (Q - u_{H'})}$ and is internally disjoint from $Q$.
            In particular, $\outPaux$ is an $H'$-\earpath.
            This allows us to apply \cref{claim:starts_and_ends_in_stars}, which yields that $\outPaux$ ends in $\instar{u_{H'}}$.

            If $\inP$ and $\outPaux$ are not internally disjoint, then we define $\inPnew \coloneqq \inP$ and $\outPnew \coloneqq \outPaux$ which have the desired properties.
            
            So now we assume that $\inP$ and $\outPaux$ are internally disjoint.
            In this case, we define $\outPnew \coloneqq \outPaux$.
            In order to obtain $\inPnew$, we need a few more steps.
            First let ${\Pstaraux}'$ be the prefix of $\Pstaraux$ until $x_0$, that is ${\Pstaraux}'=\Pstaraux x_0$.
            Note that ${\Pstaraux}'$ intersects $H'$ only in internal vertices of $Q$ and intersects $O'$ only in $x_0$.

            Note that $\inP$ and ${\Pstaraux}'$ intersect in $\End{\inP}=\Start{{\Pstaraux}'}$.
            Let $y$ be the last vertex of ${\Pstaraux}'$ on $\inP$ with respect to $\leq_{{\Pstaraux}'}$.
            Let $z$ be the first vertex (with respect to $\leq_{{\Pstaraux}'}$) that $y{\Pstaraux}'$ has on $Q$ (possibly $y =z$).
            We choose $\inPnew \coloneqq \inP y {\Pstaraux}' z$.
            As $y$ is the last intersection of ${\Pstaraux}'$ with $\inP$, the subpath $z {\Pstaraux}' x_0$ is internally disjoint from $\inPnew$ and $\outPnew$.
            We set $\Pstar \coloneqq z {\Pstaraux}' x_0$ and remark that $\Pstar$ is properly laced with $Q$ since $\Pstaraux$ is properly laced with $Q$.
            Thus we again obtain all the required properties.
            See \cref{fig:pathcases} for an illustration.
        \end{claimproof}
                    
            \begin{figure}[!ht]
                \centering
                    \input{fig_preparing_for_7.2}
                \caption{Two situations from the proof of \cref{claim:tidy} in the proof of \cref{thm:mainthm2}.
                The left side, (i), shows the situation when the path $\Pstaraux$ and $\inP$ intersect only in $\End{\inP}=\Start{\Pstaraux}$.
                In this case, the path $\Pstar$ is defined to be $\Pstaraux x_o$.
                The right side, (ii), depicts the situation where $\Pstaraux$ and $\inP$ intersect in an internal vertex.
                In this case we let $\inPnew$ turn onto $\Pstaraux$ at the last intersection of $\Pstaraux$ and $\inP$ along $\Pstaraux$ and set $\Pstar$ to be $\End{\inPnew} \Pstaraux x_0$.}
                \label{fig:pathcases}
            \end{figure}
        
        Now let $\inPnew$ and $\outPnew$ be as provided by \cref{claim:tidy}.
        We distinguish three cases as follows.

        \begin{figure}[!ht]
        \centering
        \begin{subfigure}[c]{0.4\textwidth}
            \centering
            \begin{tikzpicture}[scale=1]
			
    \pgfdeclarelayer{background}
    \pgfdeclarelayer{foreground}
			
	\pgfsetlayers{background,main,foreground}
			
	\begin{pgfonlayer}{main}

        \node (u) [v:main] {};
        \node (v) [v:main,position=0:26mm from u] {};

        \node (x_1) [v:main,position=305:14mm from u] {};
        \node (x_2) [v:main,position=235:14mm from v] {};

        \node (y_1) [v:main,position=125:14mm from v] {};
        \node (y_2) [v:main,position=55:14mm from u] {};

        \node (z_1) [v:main,position=220:4mm from y_1] {};
        \node (z_1g) [v:ghost,position=270:0.6mm from z_1] {};
       
        \node (z_2) [v:main,position=200:4mm from z_1] {};
        \node (z_2g) [v:ghost,position=270:0.6mm from z_2] {};

        \node (z_3) [v:main,position=210:4.5mm from z_2] {};
        \node (z_3g) [v:ghost,position=310:0.6mm from z_3] {};
        
        \node (z_4) [v:main,position=240:4mm from z_3] {};
        \node (z_4g) [v:ghost,position=310:0.6mm from z_4] {};

        \node (z_5) [v:main,position=270:4.5mm from z_4] {};
        \node (z_5g) [v:ghost,position=0:0.7mm from z_5] {};
       
        \node (z_6) [v:main,position=310:4mm from z_5] {};
        \node (z_6g) [v:ghost,position=0:0.7mm from z_6] {};
        
        \node (z_7) [v:main,position=330:4.5mm from z_6] {};
        \node (z_7g) [v:ghost,position=90:0.6mm from z_7] {};

        \node (z_8) [v:main,position=350:4mm from z_7] {};
        \node (z_8g) [v:ghost,position=90:0.6mm from z_8] {};

        \node (ulabel) [v:ghost,position=180:4.5mm from u] {$u_{H'}$};
        \node (vlabel) [v:ghost,position=0:4.5mm from v] {$v_{H'}$};

        \node (Flabel) [v:ghost,position=105:8mm from u] {$F$};
        \node (Qlabel) [v:ghost,position=275:8mm from v] {$Q$};
        \node (Olabel) [v:ghost,position=70:5mm from x_2] {\textcolor{LavenderMagenta}{$O'$}};
        \node (Ilabel) [v:ghost,position=260:6mm from y_1] {\textcolor{CornflowerBlue}{$I'$}};
         
	    \end{pgfonlayer}
			
	    \begin{pgfonlayer}{background}

        \draw[e:main,->,bend right=25] (u) to (x_1);
        \draw[e:main,->,bend right=10] (x_1) to (x_2);
        \draw[e:main,->,bend right=25] (x_2) to (v);

        \draw[e:main,->,bend right=25] (v) to (y_1);
        \draw[e:main,->,bend right=10] (y_1) to (y_2);
        \draw[e:main,->,bend right=25] (y_2) to (u);
        
        \draw[e:main,->,out=60,in=330,color=LavenderMagenta] (x_2) to (z_8);
        \draw[e:main,->,color=LavenderMagenta] (z_8) to (z_7);
        \draw[e:main,->,color=LavenderMagenta] (z_7) to (z_6);
        \draw[e:main,->,color=LavenderMagenta] (z_6) to (z_5);
        \draw[e:main,->,out=120,in=260,color=LavenderMagenta] (z_5) to (z_4);
        \draw[e:main,->,color=LavenderMagenta] (z_4) to (z_3);
        \draw[e:main,->,color=LavenderMagenta] (z_3) to (z_2);
        \draw[e:main,->,color=LavenderMagenta] (z_2) to (z_1);
        \draw[e:main,->,out=40,in=220,color=LavenderMagenta] (z_1) to (y_1);

        \draw[e:main,->,out=290,in=140,color=CornflowerBlue] (y_2) to (z_2);
        \draw[e:main,->,color=CornflowerBlue] (z_2g) to (z_1g);
        \draw[e:main,->,out=260,in=0,color=CornflowerBlue] (z_1) to (z_4);
        \draw[e:main,->,color=CornflowerBlue] (z_4g) to (z_3g);
        \draw[e:main,->,out=190,in=170,looseness=1.4,color=CornflowerBlue] (z_3) to (z_6);
        \draw[e:main,->,color=CornflowerBlue] (z_6g) to (z_5g);
        \draw[e:main,->,out=10,in=110,color=CornflowerBlue] (z_5) to (z_8);
        \draw[e:main,->,color=CornflowerBlue] (z_8g) to (z_7g);
        \draw[e:main,->,out=200,in=50,color=CornflowerBlue] (z_7) to (x_1);

	\end{pgfonlayer}	
			
    \begin{pgfonlayer}{foreground}
    \end{pgfonlayer}
   
\end{tikzpicture}
            \caption{}
            \label{fig:Case5collarettea}
        \end{subfigure}
        \begin{subfigure}[c]{0.4\textwidth}
            \centering
            \begin{tikzpicture}[scale=1]
			
    \pgfdeclarelayer{background}
	\pgfdeclarelayer{foreground}
			
	\pgfsetlayers{background,main,foreground}
			
	\begin{pgfonlayer}{main}
	    \node (u) [v:main] {};
        \node (v) [v:main,position=0:24mm from u] {};

        \node (g) [v:ghost,position=0:12mm from u] {};

        \node (x_1) [v:main,position=90:12mm from g] {};
        \node (x_2) [v:main,position=90:7.2mm from g,fill=DarkGray] {};
        \node (x_3) [v:main,position=90:2.4mm from g,fill=DarkGray] {};
        \node (x_4) [v:main,position=270:2.4mm from g,fill=DarkGray] {};
        \node (x_5) [v:main,position=270:7.2mm from g,fill=DarkGray] {};
        \node (x_6) [v:main,position=270:12mm from g] {};
        
        \node (ulabel) [v:ghost,position=180:3mm from u] {$u$};
        \node (vlabel) [v:ghost,position=0:3mm from v] {$v$};
	\end{pgfonlayer}
			
	\begin{pgfonlayer}{background}

        \draw[e:main,->,bend left=35] (u) to (x_1);
        \draw[e:main,->,bend left=35] (x_1) to (v);
        
        \draw[e:main,->,bend left=35] (v) to (x_6);
        \draw[e:main,->,bend left=35] (x_6) to (u);

        \draw[e:main,->,out=230,in=110,color=CornflowerBlue] (x_1) to (x_2);
        \draw[e:main,->,out=50,in=290,color=LavenderMagenta] (x_2) to (x_1);
        \draw[e:main,->,out=310,in=70,color=CornflowerBlue] (x_2) to (x_3);
        \draw[e:main,->,out=130,in=250,color=LavenderMagenta] (x_3) to (x_2);
        \draw[e:main,->,out=230,in=110,color=CornflowerBlue] (x_3) to (x_4);
        \draw[e:main,->,out=50,in=290,color=LavenderMagenta] (x_4) to (x_3);
        \draw[e:main,->,out=310,in=70,color=CornflowerBlue] (x_4) to (x_5);
        \draw[e:main,->,out=130,in=250,color=LavenderMagenta] (x_5) to (x_4);
        \draw[e:main,->,out=230,in=110,color=CornflowerBlue] (x_5) to (x_6);
        \draw[e:main,->,out=50,in=290,color=LavenderMagenta] (x_6) to (x_5);
	\end{pgfonlayer}	
			
	\begin{pgfonlayer}{foreground}
	\end{pgfonlayer}
   
\end{tikzpicture}
   
            \caption{}
            \label{fig:Case5collaretteb}
        \end{subfigure}
        \caption{The collarette augmentation from Subcase 1 of Case 5 in the proof of \cref{thm:mainthm2}.
            \subref{fig:Case5collarettea} shows the situation with the two \earpaths $\inPnew$ and $\outPnew$ for the $H$-expansion $H'$ while \subref{fig:Case5collaretteb} depicts the corresponding collarette augmentation of $H$.}
        \label{fig:Case5collarette}
    \end{figure}
        
        \paragraph{The two vertices $\End{\outPnew}$ and $\Start{\inPnew}$ both lie on the path $F \coloneqq \instar{u}\cap\outstar{v}$ and $\End{\outPnew} \leq_F \Start{\inPnew}$.}
        We distinguish three cases based on the order of the vertices $\End{\inPnew}$ and $\Start{\outPnew}$ on the path $Q$ and whether $\inPnew$ and $\outPnew$ intersect.
            \begin{description}
                \item[Subcase 1] If $\End{\inPnew} \leq_{Q} \Start{\outPnew}$, then $\inPnew$ and $\outPnew$ witness that $D$ admits a collarette augmentation with respect to the two edges $(u,v)$ and $(v,u)$.
                For an illustration of the way this collarette augmentation is witnessed by the paths $\inPnew$ and $\outPnew$, see~\cref{fig:Case5collarette}.
                \item[Subcase 2] If $\inPnew$ and $\outPnew$ are disjoint and $\Start{\outPnew} <_{Q} \End{\inPnew}$, then let $F'\coloneqq \End{\outPnew} \Brace{\outstar{v}\cap\instar{u}} \Start{\inPnew}$ and apply \cref{thm:add_chain} to $F'$ and the path $J\coloneqq\outPnew \cdot \End{\outPnew}F'\Start{\inPnew} \inPnew$.
                This yields that $D$ admits a chain or collarette augmentation of $H$ witnessed by $J$.
                
                \item[Subcase 3] If $\inPnew$ and $\outPnew$ intersect and $\Start{\outPnew} <_{Q} \End{\inPnew}$, then there is a $\Start{\outPnew}$-$\End{\inPnew}$-path $J$ in $\inPnew \cup \outPnew$, which is parallel switching.
                Let $H''$ be obtained by switching onto $J$.
                Now, we choose $\inPnew'$ to be the shortest $\Start{\inPnew}$-$\V{J}$-subpath of $\inPnew$ and $\outPnew'$ the shortest $\V{J}$-$\End{\outPnew}$-subpath.
                Now we end up in Subcase 1.
                See \cref{fig:simplesubcases} for an illustration of the two cases above.
            \end{description}

        \begin{figure}[!ht]
                \centering
                    \begin{tikzpicture}[scale=1]
			
	\pgfdeclarelayer{background}
	\pgfdeclarelayer{foreground}
			
	\pgfsetlayers{background,main,foreground}
			
	\begin{pgfonlayer}{main}

        \node (C) [v:ghost] {};
        \node (R) [v:ghost,position=0:50mm from C] {};

        \node (Llabel) [v:ghost,position=270:15mm from C] {Subcase 2};
        \node (Rlabel) [v:ghost,position=270:15mm from R] {Subcase 3};

        \node (Cfig) [v:ghost,position=0:0mm from C] {
            \begin{tikzpicture}[scale=1]
			
            	\pgfdeclarelayer{background}
	            \pgfdeclarelayer{foreground}
			
	            \pgfsetlayers{background,main,foreground}
			
		    \begin{pgfonlayer}{main}

                \node (u) [v:main] {};
                \node (v) [v:main,position=0:26mm from u] {};

                \node (x_1) [v:main,position=315:13mm from u] {};
                \node (x_2) [v:main,position=225:13mm from v] {};

                \node (y_1) [v:main,position=135:13mm from v] {};
                \node (y_2) [v:main,position=45:13mm from u] {};

                \node (ulabel) [v:ghost,position=180:4.5mm from u] {$u_{H'}$};
                \node (vlabel) [v:ghost,position=0:4.5mm from v] {$v_{H'}$};

                \node (Ilabel) [v:ghost,position=260:8.2mm from y_2] {\textcolor{CornflowerBlue}{$I'$}};
                \node (Olabel) [v:ghost,position=280:8.2mm from y_1] {\textcolor{LavenderMagenta}{$O'$}};

                \node (Flabel) [v:ghost,position=105:5mm from u] {$F$};
                \node (Qlabel) [v:ghost,position=275:5mm from v] {$Q$};

                \node (PPlabel) [v:ghost,position=225:5.1mm from y_1] {\textcolor{PastelOrange}{$P'$}};

	        \end{pgfonlayer}
			
	          \begin{pgfonlayer}{background}   

                \draw[line width=6pt,opacity=0.6,color=PastelOrange, bend right=15,line cap=round] (y_2) to (x_2);
                \draw[line width=6pt,opacity=0.6,color=PastelOrange, bend right=15,line cap=round] (x_1) to (y_1); 
                \draw[line width=6pt,opacity=0.6,color=PastelOrange, bend right=10,line cap=round] (y_1) to (y_2);

                \draw[e:main,->,bend right=25] (u) to (x_1);
                \draw[e:main,->,bend right=10] (x_1) to (x_2);
                \draw[e:main,->,bend right=25] (x_2) to (v);

                \draw[e:main,->,bend right=25] (v) to (y_1);
                \draw[e:main,->,bend right=10] (y_1) to (y_2);
                \draw[e:main,->,bend right=25] (y_2) to (u);

                \draw[e:main,->,color=CornflowerBlue,bend right=15] (y_2) to (x_2);
                \draw[e:main,color=LavenderMagenta,->,bend right=15] (x_1) to (y_1);
        
	           \end{pgfonlayer}

	           \begin{pgfonlayer}{foreground}
	           \end{pgfonlayer}
   
            \end{tikzpicture}
        };

        \node (Rfig) [v:ghost,position=0:0mm from R] {
            \begin{tikzpicture}[scale=1]
			
            	\pgfdeclarelayer{background}
	            \pgfdeclarelayer{foreground}
			
	            \pgfsetlayers{background,main,foreground}
			
	        \begin{pgfonlayer}{main}

                \node (u) [v:main] {};
                \node (v) [v:main,position=0:26mm from u] {};

                \node (x_1) [v:main,position=315:13mm from u] {};
                \node (x_2) [v:main,position=225:13mm from v] {};

                \node (y_1) [v:main,position=135:13mm from v] {};
                \node (y_2) [v:main,position=45:13mm from u] {};

                \node (z_1) [v:main,position=225:5.5mm from y_2] {};
                \node (z_1g) [v:ghost,position=180:0.45mm from z_1] {};
                \node (z_2) [v:main,position=300:4mm from z_1] {};
                \node (z_2g) [v:ghost,position=180:0.45mm from z_2] {};

                \node (z_3) [v:main,position=0:11mm from u] {};
                \node (z_3g) [v:ghost,position=270:0.45mm from z_3] {};
                \node (z_4) [v:main,position=0:4mm from z_3] {};
                \node (z_4g) [v:ghost,position=270:0.45mm from z_4] {};

                \node (z_6) [v:main,position=45:5.5mm from x_2] {};
                \node (z_6g) [v:ghost,position=180:0.45mm from z_6] {};
                \node (z_5) [v:main,position=120:4mm from z_6] {};
                \node (z_5g) [v:ghost,position=180:0.45mm from z_5] {};

                \node (ulabel) [v:ghost,position=180:4.5mm from u] {$u_{H'}$};
                \node (vlabel) [v:ghost,position=0:4.5mm from v] {$v_{H'}$};

                \node (Flabel) [v:ghost,position=105:5mm from u] {$F$};
                \node (Qlabel) [v:ghost,position=275:5mm from v] {$Q$};
                \node (Jlabel) [v:ghost,position=90:4mm from x_1] {\textcolor{PastelOrange}{$J$}};
         
	        \end{pgfonlayer}
			
	        \begin{pgfonlayer}{background}

                \draw[line width=6pt,color=PastelOrange,opacity=0.6,line cap=round] (x_1) to (z_5);
                \draw[line width=6pt,color=PastelOrange,opacity=0.6,line cap=round] (z_5) to (z_6);
                \draw[line width=6pt,color=PastelOrange,opacity=0.6,line cap=round] (z_6) to (x_2);
        
                \draw[e:main,->,bend right=25] (u) to (x_1);
                \draw[e:main,->,bend right=10] (x_1) to (x_2);
                \draw[e:main,->,bend right=25] (x_2) to (v);

                \draw[e:main,->,bend right=25] (v) to (y_1);
                \draw[e:main,->,bend right=10] (y_1) to (y_2);
                \draw[e:main,->,bend right=25] (y_2) to (u);

                \draw[e:main,->,color=CornflowerBlue] (y_2) to (z_1);
                \draw[e:main,->,color=CornflowerBlue] (z_1) to (z_2);
                \draw[e:main,->,color=CornflowerBlue] (z_2) to (z_3);
                \draw[e:main,->,color=CornflowerBlue] (z_3) to (z_4);
                \draw[e:main,->,color=CornflowerBlue] (z_4) to (z_5);
                \draw[e:main,->,color=CornflowerBlue] (z_5) to (z_6);
                \draw[e:main,->,color=CornflowerBlue] (z_6) to (x_2);

                \draw[e:main,->,color=LavenderMagenta] (x_1) to (z_5);
                \draw[e:main,->,color=LavenderMagenta] (z_5g) to (z_6g);
                \draw[e:main,->,color=LavenderMagenta,bend right=90] (z_6) to (z_3);
                \draw[e:main,->,color=LavenderMagenta] (z_3g) to (z_4g);
                \draw[e:main,->,color=LavenderMagenta,bend left=90] (z_4) to (z_1);
                \draw[e:main,->,color=LavenderMagenta] (z_1g) to (z_2g);
                \draw[e:main,->,color=LavenderMagenta] (z_2) to (y_1);
         
	        \end{pgfonlayer}	
			
	        \begin{pgfonlayer}{foreground}
	        \end{pgfonlayer}
   
            \end{tikzpicture}
        };        
 
	\end{pgfonlayer}
			
	\begin{pgfonlayer}{background}
	\end{pgfonlayer}	
			
	\begin{pgfonlayer}{foreground}
	\end{pgfonlayer}
   
\end{tikzpicture}
                \caption{Subcase 2 and 3 of Case 5 in the proof of \cref{thm:mainthm2}.}
                \label{fig:simplesubcases}
        \end{figure}
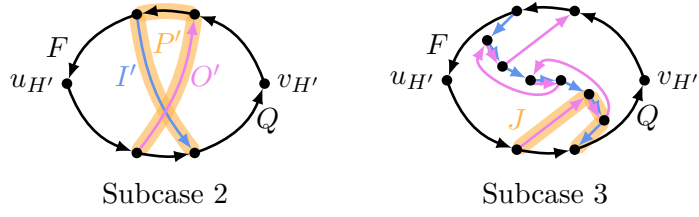

        \paragraph{The paths $\inPnew$ and $\outPnew$ are not internally disjoint and if $\Start{\inPnew}$ and $\End{\outPnew}$ lie on $F \coloneqq \instar{u}\cap\outstar{v}$ then $\Start{\inPnew} <_F  \End{\outPnew}$.}

        Let $x$ be the last vertex of $\V{\inPnew \cap \outPnew}$ along $\leq_{\outPnew}$.
        Note that $x$ is an internal vertex of $\inPnew$ and $\outPnew$ by assumption on $\inPnew$ and $\outPnew$.
        Then $J\coloneqq \inPnew x \outPnew$ is an $H'$-switching path.
        Let $H''$ be obtained by switching onto $J$.
        Let $\inPnew' \coloneqq x \inPnew$ and let $\outPnew'$ be the shortest $\Start{\outPnew}$-$\V{J}$-subpath of $\outPnew$.
        Then, with the paths $Q$, $\inPnew'$, and $\outPnew'$ we are, for $H''$, in the previous case.
        That is, $\End{\outPnew'}$ and $\Start{\inPnew'}$ lie on $F'\coloneqq \instar{u,H''}\cap\outstar{v,H''}$ and we have that $\End{\outPnew'}\leq_{F'} \Start{\inPnew'}$.

        \paragraph{The paths $\inPnew$ and $\outPnew$ are internally disjoint and if $\Start{\inPnew}$ and $\End{\outPnew}$ lie on $F \coloneqq \instar{u}\cap\outstar{v}$ then $\Start{\inPnew} <_F  \End{\outPnew}$.}
            In this case, by~\cref{claim:tidy}, there exists an $\End{\inPnew}$-$\Start{\outPnew}$-path $\Pstar$ such that we can apply \cref{thm:add_chain} using the path $\inPnew \cdot \Pstar \cdot \outPnew$, which is properly laced with $Q$, in order to obtain a witness for $D$ admitting a chain or collarette augmentation.

        With this, Case 5 is complete.
    \end{description}
    Note that as an augmentation of a strongly $2$-connected digraph is again strongly $2$-connected, the constructed sequence consists of only strongly $2$-connected digraphs.
\end{proof}

\paragraph{Acknowledgements:}
We wish to acknowledge William McCuaig.
While none of us has ever met them, their work continues to inspire us and we hope they would like the results presented in this paper.

The majority of research for this paper was conducted during the workshop on \emph{Directed minors and graph structure theory} held in 2022 and organised through the Sparse (Graphs) Coalition.
We want to thank all participants for the friendly and inspiring atmosphere during the workshop.

\bibliographystyle{alphaurl}

\end{document}